\theoremstyle{plain}
\newtheorem{thm}{Theorem}[section]
\newtheorem{lem}[thm]{Lemma}
\newtheorem{prop}[thm]{Proposition}
\newtheorem{notation}[thm]{Notation}
\def\@rst #1 #2other{#1}
\newcommand\MR[1]{\relax\ifhmode\unskip\spacefactor3000 \space\fi
  \MRhref{\expandafter\@rst #1 other}{#1}}
\newcommand{\MRhref}[2]{\href{http://www.ams.org/mathscinet-getitem?mr=#1}{MR#2}}
\theoremstyle{definition}
\newtheorem{defn}[thm]{Definition}
\newtheorem{remark}[thm]{Remark}
\numberwithin{equation}{section}
\newcommand{\dsb}{\begin{adjustwidth}{2.5em}{0pt}
\begin{footnotesize}}
\newcommand{\dse}{\end{footnotesize}
\end{adjustwidth}}
\newcommand{\ssb}{\begin{adjustwidth}{2.5em}{0pt}}
\newcommand{\sse}{\end{adjustwidth}}
\newcommand{\aryb}{\begin{eqnarray*}}
\newcommand{\arye}{\end{eqnarray*}}
\def\alb#1\ale{\begin{align*}#1\end{align*}}
\def\allb#1\alle{\begin{align}#1\end{align}}
\newcommand{\eqb}{\begin{equation}}
\newcommand{\eqe}{\end{equation}}
\newcommand{\eqbn}{\begin{equation*}}
\newcommand{\eqen}{\end{equation*}}
\newcommand{\BB}{\mathbbm}
\newcommand{\ol}{\overline}
\newcommand{\ul}{\underline}
\newcommand{\op}{\operatorname}
\newcommand{\frk}{\mathfrak}
\newcommand{\eqD}{\overset{d}{=}}
\newcommand{\ep}{\varepsilon}
\newcommand{\rta}{\rightarrow}
\newcommand{\wt}{\widetilde}
\newcommand{\wh}{\widehat} 
\newcommand{\mcl}{\mathcal}
\newcommand{\bdy}{\partial}
\newcommand{\rng}{\mathring}
\newcommand{\esssup}{{\mathrm{ess\,sup}}}
\let\originalleft\left
\let\originalright\right
\renewcommand{\left}{\mathopen{}\mathclose\bgroup\originalleft}
\renewcommand{\right}{\aftergroup\egroup\originalright}
\title{The dimension of the boundary of a Liouville quantum gravity metric ball}
 \date{ }
\author{Ewain Gwynne\footnote{\url{eg558@cam.ac.uk}}  \\ {\it University of Cambridge}}
\begin{document}

\maketitle

\begin{abstract}
Let $\gamma \in (0,2)$, let $h$ be the planar Gaussian free field, and consider the $\gamma$-Liouville quantum gravity (LQG) metric associated with $h$.  
We show that the essential supremum of the Hausdorff dimension of the boundary of a $\gamma$-LQG metric ball with respect to the Euclidean (resp.\ $\gamma$-LQG) metric is $2 - \frac{\gamma}{d_\gamma}\left(\frac{2}{\gamma} + \frac{\gamma}{2} \right)  + \frac{\gamma^2}{2d_\gamma^2}$ (resp.\ $d_\gamma-1$), where $d_\gamma$ is the Hausdorff dimension of the whole plane with respect to the $\gamma$-LQG metric. 
For $\gamma = \sqrt{8/3}$, in which case $d_{\sqrt{8/3}}=4$, we get that the essential supremum of Euclidean (resp.\ $\sqrt{8/3}$-LQG) dimension of a $\sqrt{8/3}$-LQG ball boundary is $5/4$ (resp.\ $3$). 

We also compute the essential suprema of the Euclidean and $\gamma$-LQG Hausdorff dimensions of the intersection of a $\gamma$-LQG ball boundary with the set of metric $\alpha$-thick points of the field $h$ for each $\alpha\in \mathbb R$. Our results show that the set of $\gamma/d_\gamma$-thick points on the ball boundary has full Euclidean dimension and the set of $\gamma$-thick points on the ball boundary has full $\gamma$-LQG dimension. 
\end{abstract}

\tableofcontents

\section{Introduction}
\label{sec-intro}

\subsection{Overview}
\label{sec-overview}

Let $U\subset \BB C$ be an open domain, let $\gamma \in (0,2)$, and let $h$ be the Gaussian free field (GFF) on $U$, or some minor variant thereof.
The \emph{$\gamma$-Liouville quantum gravity (LQG)} surface associated with $(U,h)$ is the random two-dimensional Riemannian manifold parametrized by $U$ with Riemannian metric tensor $e^{\gamma h} \,(dx^2+dy^2)$, where $dx^2+dy^2$ is the Euclidean metric tensor. LQG surface were first introduced by Polyakov~\cite{polyakov-qg1} as canonical models of random two-dimensional Riemannian manifolds. 
One sense in which these models are canonical is that LQG surfaces describe the scaling limits of discrete random surfaces, such as random planar maps. 
In particular, the special case when $\gamma=\sqrt{8/3}$ (sometimes called ``pure gravity") corresponds to uniform random planar maps.
Other values of $\gamma$ (sometimes called ``gravity coupled to matter") correspond to random planar maps weighted by the partition function of a statistical mechanics model on the map, such as the uniform spanning tree ($\gamma=\sqrt 2$) or the Ising model ($\gamma=\sqrt 3$). 
%We refer to~\cite{gwynne-ams-survey} for an introductory article about LQG aimed at a reader with no prior knowledge of the subject. 

The above Riemannian metric tensor for an LQG surface does not make literal sense since the Gaussian free field is a random distribution, or generalized function, not a true function, so $e^{\gamma h}$ is not well-defined. 
However, one can make rigorous sense of the Riemannian volume form and distance function on $U$ associated with an LQG surface via appropriate regularization procedures. 
Basically, one can consider a sequence of continuous functions $\{h_n\}_{n\in\BB N}$ which approximate the GFF as $n\rta\infty$, exponentiate a constant times $ h_n$, re-scale appropriately, and then send $n\rta\infty$. 
The volume form, a.k.a.\ the $\gamma$-LQG area measure, is a random measure on $U$. This measure is a special case of a more general family of random measures called \emph{Gaussian multiplicative chaos}~\cite{kahane,rhodes-vargas-review,berestycki-gmt-elementary,shamov-gmc}. 
A number of important facts about the $\gamma$-LQG area measure, such as the convergence of the circle-average approximation and a version of the KPZ formula (which is a weaker variant of the formula from~\cite{kpz-scaling}), were established in~\cite{shef-kpz}. 
 
The Riemannian distance function, a.k.a.\ the $\gamma$-LQG metric, is a random metric on $U$ which we will denote by $D_h$.
This metric will be our primary interest in this paper.
This $\gamma$-LQG metric for general $\gamma \in (0,2)$ was constructed in~\cite{dddf-lfpp,gm-uniqueness}. 
More precisely,~\cite{dddf-lfpp} established the tightness of the approximating metrics and~\cite{gm-uniqueness} showed that the limiting metric is unique and scale invariant in the appropriate sense, building on results from~\cite{local-metrics,lqg-metric-estimates,gm-confluence}.  
See also~\cite{ding-dunlap-lqg-fpp,df-lqg-metric,ding-dunlap-lgd} for earlier tightness results (preceding~\cite{dddf-lfpp}); and~\cite{gm-coord-change,gp-kpz} which establish the conformal coordinate change formula and the KPZ formula~\cite{kpz-scaling,shef-kpz}, respectively, for the LQG metric. 
For each $\gamma\in(0,2)$, the metric $D_h$ is uniquely characterized by a list of four simple axioms, which we state in Definition~\ref{def-lqg-metric} below.

There is also an earlier construction of the LQG metric in the special case when $\gamma=\sqrt{8/3}$ due to Miller and Sheffield~\cite{lqg-tbm1,lqg-tbm2,lqg-tbm3}. This construction uses a completely different regularization procedure from the one in~\cite{dddf-lfpp,gm-uniqueness} which only works for $\gamma=\sqrt{8/3}$.
However, the Miller-Sheffield construction gives additional information about the $\sqrt{8/3}$-LQG metric which is not apparent from the construction in~\cite{dddf-lfpp,gm-uniqueness}, such as certain Markov properties for LQG metric balls and the connection to the Brownian map~\cite{legall-uniqueness,miermont-brownian-map}. 

The $\gamma$-LQG metric $D_h$ induces the same topology on $U$ as the Euclidean metric, but it is fractal in the sense that the Hausdorff dimension $d_\gamma$ of the metric space $(U,D_h)$ is strictly larger than 2. 
It is known, e.g., from the Miller-Sheffield construction that $d_{\sqrt{8/3}} = 4$, but for other values of $\gamma \in (0,2)$ the value of $d_\gamma$ is unknown even at a physics level of rigor. See~\cite{dg-lqg-dim,gp-lfpp-bounds,ang-discrete-lfpp} for the best currently known rigorous upper and lower bounds for $d_\gamma$.

The best-known guess for the value of $d_\gamma$ is Watabiki's prediction~\cite{watabiki-lqg}, 
\eqb \label{eqn-watabiki}
d_\gamma^{\op{Wat}} = 1  + \frac{\gamma^2}{4} + \frac14\sqrt{(4+\gamma^2)^2 + 16\gamma^2} .
\eqe
This prediction is known to be wrong at least for small values of $\gamma$~\cite{ding-goswami-watabiki}, but it matches up reasonably well with numerical simulations; see, e.g.,~\cite{ambjorn-budd-lqg-geodesic}. A possible alternative guess, first put forward in~\cite{dg-lqg-dim}, is 
\eqb \label{eqn-quad}
d_\gamma^{\op{Quad}} := 2+\frac{\gamma^2}{2} + \frac{\gamma}{\sqrt 6} .
\eqe 
The formula~\eqref{eqn-quad} is consistent with all known bounds for $d_\gamma$. 
Moreover, recent numerical simulations by Barkley and Budd~\cite{bb-lqg-dim} fit much more closely with~\eqref{eqn-quad} than with~\eqref{eqn-watabiki}. 
However, there is currently no theoretical justification, even at a heuristic level, for~\eqref{eqn-quad}. 

A major motivation for computing $d_\gamma$ is that many quantities associated with $\gamma$-LQG surfaces and random planar maps can be expressed in terms of $d_\gamma$.
Such quantities include the growth exponents for metric balls~\cite{dg-lqg-dim} and random walk~\cite{gm-spec-dim,gh-displacement} on random planar maps, the optimal H\"older exponents between the LQG metric and the Euclidean metric~\cite{lqg-metric-estimates}, the KPZ formula for the metric~\cite{gp-kpz}, and the Hausdorff dimensions of various sets which we discuss just below. 

Just like other natural random fractal objects such as Brownian motion or SLE~\cite{schramm0}, the $\gamma$-LQG metric gives rise to a number of interesting random fractal sets in the plane. These include geodesics, boundaries of metric balls, boundaries of Voronoi cells, and certain special subsets of these sets.  
In this paper, we will give the first non-trivial calculation of the Euclidean and quantum Hausdorff dimensions of one of these sets, namely the boundary of an LQG metric ball (Theorem~\ref{thm-bdy-dim} just below).
We will also compute for each $\alpha\in\BB R$ the Euclidean and quantum Hausdorff dimensions of the intersection of an LQG metric ball boundary with the \emph{metric $\alpha$-thick points} of the field, which we define in~\eqref{eqn-metric-thick} (Theorem~\ref{thm-thick-dim}). 

The background knowledge needed to understand this paper is rather minimal. 
Our proofs use only the axiomatic definition of the LQG metric (Definition~\ref{def-lqg-metric}), some basic facts about the Gaussian free field (as reviewed in Appendix~\ref{sec-gff-prelim}), and a few estimates for the LQG metric from~\cite{lqg-metric-estimates,gp-kpz} which we review as needed.
%In particular, we do not use, even indirectly, anything about special types of LQG surfaces, the LQG area measure, or the relationships between LQG and Schramm-Loewner evolution and/or random planar maps. 

\subsubsection*{Acknowledgments} 
We thank an anonymous referee for many helpful comments on an earlier version of this article.
We thank Nina Holden, Jason Miller, Joshua Pfeffer, Scott Sheffield, and Xin Sun for helpful discussions.
This research was conducted in part while the author was supported by a Herchel Smith fellowship and in part while the author was supported by a Clay Research Fellowship. 
The author was also supported by a Trinity College junior research fellowship. 

\subsection{Hausdorff dimension of an LQG metric ball boundary}
\label{sec-bdy-dim}

Before stating our main theorems, we introduce some notation. Define the exponents
\eqb \label{eqn-xi-Q}
\xi = \xi_\gamma := \frac{\gamma}{d_\gamma} \quad \text{and} \quad Q = Q_\gamma := \frac{2}{\gamma} + \frac{\gamma}{2} .
\eqe
There exponents govern the scaling behavior of $D_h$ when we add a constant to $h$ and when we scale space, respectively; see Definition~\ref{def-lqg-metric}. 

Recall that for $\Delta > 0$, the $\Delta$-\emph{Hausdorff content} of a metric space $(X,D)$ is the number
\eqb
C^\Delta(X,D) := \inf\left\{\sum_{j=1}^\infty r_j^\Delta : \text{there is a cover of $X$ by $D$-balls of radii $\{r_j\}_{j\geq 0}$} \right\} 
\eqe
and the \emph{Hausdorff dimension} of $(X,D)$ is defined to be $\inf\{\Delta > 0: C^\Delta(X,D) = 0\}$. See, e.g.,~\cite[Chapter 4]{peres-bm} for more on Hausdorff dimension. 

For simplicity, we will mostly restrict attention to the case when $U=\BB C$ and $h$ is a whole-plane GFF with the additive constant chosen so that the circle average over the unit circle is zero (we review the definition and basic properties of the whole-plane GFF in Appendix~\ref{sec-gff-prelim}). 
Other cases can be deduced from this case via local absolute continuity.
For a set $X\subset \BB C$, we write $\dim_{\mcl H}^0 X$ for the Hausdorff dimension of the metric space $X$ with respect to the Euclidean metric, i.e., the Hausdorff dimension of the metric space $(X,|\cdot|)$.
We similarly define $\dim_{\mcl H}^\gamma$ to be the Hausdorff dimension of $X$ with respect to the $\gamma$-LQG metric $D_h$.
We refer to these quantities as the Euclidean and quantum dimensions of $X$, respectively. 

For $\BB s > 0$, let $\mcl B_{\BB s} = \mcl B_{\BB s}(0;D_h)$ be the $D_h$-ball of radius $\BB s$ centered at zero.  
We note that $\bdy\mcl B_{\BB s}$ is typically not connected since $\mcl B_{\BB s}$ has ``holes". 

For a real-valued random variable $X$, we define its essential supremum by
\eqb \label{eqn-esssup}
\esssup X = \sup\left\{x \in \BB R : \BB P\left[X \geq x \right] > 0 \right\} .
\eqe
The first main result of this paper is the following theorem. 

\begin{thm} \label{thm-bdy-dim}
For each $\BB s > 0$, we have (in the notation~\eqref{eqn-xi-Q}) 
\eqb
\esssup \dim_{\mcl H}^0 \bdy\mcl B_{\BB s} = 2 - \xi Q + \xi^2/2 \quad \text{and} \quad
\esssup \dim_{\mcl H}^\gamma \bdy \mcl B_{\BB s} = d_\gamma  -1 .
\eqe 
\end{thm}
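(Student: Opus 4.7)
My plan is to derive Theorem~\ref{thm-bdy-dim} from its thick-point refinement (Theorem~\ref{thm-thick-dim}) by optimizing over the thickness parameter. For $\alpha \in \BB R$, let $T_\alpha$ denote the set of metric $\alpha$-thick points of $h$: heuristically, points $z$ where LQG distances across a Euclidean ball of radius $\epsilon$ around $z$ behave like $\epsilon^{\xi(Q-\alpha)}$. I expect Theorem~\ref{thm-thick-dim} to yield, for $\alpha$ in an admissible range,
\begin{equation*}
\esssup \dim_{\mcl H}^0 (\bdy\mcl B_{\BB s} \cap T_\alpha) = 2 - \xi(Q-\alpha) - \alpha^2/2 =: f_0(\alpha), \qquad \esssup \dim_{\mcl H}^\gamma (\bdy\mcl B_{\BB s} \cap T_\alpha) = \frac{2 - \alpha^2/2}{\xi(Q-\alpha)} - 1 =: f_\gamma(\alpha).
\end{equation*}
The first formula adds the codimension $\xi(Q-\alpha)$ of $\bdy\mcl B_{\BB s}$ near an $\alpha$-thick point (forced by the local metric scaling) to the codimension $\alpha^2/2$ of $T_\alpha$ in $\BB C$ (the standard GFF thick-point fact). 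The second is obtained from the first by converting a Euclidean radius $\epsilon$ into an LQG radius $\delta = \epsilon^{\xi(Q-\alpha)}$.

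Once Theorem~\ref{thm-thick-dim} is granted in this form, the lower bound in Theorem~\ref{thm-bdy-dim} follows from the inclusion $\bdy\mcl B_{\BB s} \supseteq \bdy\mcl B_{\BB s} \cap T_\alpha$ by choosing an optimal $\alpha$. A short calculus exercise gives $f_0'(\alpha) = \xi - \alpha$, so $f_0$ is maximized at $\alpha = \xi$ with value $2 - \xi Q + \xi^2/2$. Differentiating $f_\gamma$ and clearing denominators produces the quadratic $\alpha^2/2 - Q\alpha + 2 = 0$, which (using $Q = 2/\gamma + \gamma/2$) has roots $\gamma$ and $4/\gamma$; only $\alpha = \gamma$ satisfies $\alpha < Q$. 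Plugging in and using $\xi(Q-\gamma) = (2-\gamma^2/2)/d_\gamma$ gives $f_\gamma(\gamma) = d_\gamma - 1$. This is where the ``thick-point exponent'' $\alpha = \gamma/d_\gamma$ (Euclidean) and $\alpha = \gamma$ (quantum) advertised in the abstract enter.

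For the matching upper bound I would cover $\bdy\mcl B_{\BB s}$ by the countable union $\bigcup_{\alpha \in S} T_\alpha^\epsilon$, where $S \subset [-2,2]$ is a finite $\epsilon$-net and $T_\alpha^\epsilon$ is an approximate $\alpha$-thick set (e.g.\ the $z$ for which $h_r(z)/\log(1/r)$ stays within $\epsilon$ of $\alpha$ at all nested scales $r$), together with a non-thick residue $R$. Applying Theorem~\ref{thm-thick-dim} to each $T_\alpha^\epsilon$ and using countable stability of Hausdorff dimension bounds the dimension of the union by $\max_{\alpha \in S} f(\alpha) + O(\epsilon)$, and letting $\epsilon \downarrow 0$ along a sequence yields $\sup_\alpha f(\alpha)$. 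The main technical obstacle is controlling the residue $R$: one must show that points on $\bdy \mcl B_{\BB s}$ whose approximate thickness diverges or oscillates at small scales contribute negligibly to the Euclidean and quantum dimensions. I would expect this to come from a multiscale argument combining standard Gaussian concentration for circle averages (ruling out thickness outside a bounded window) with the LQG/Euclidean modulus of continuity estimates of~\cite{lqg-metric-estimates,gp-kpz} (to pass between Euclidean and LQG scales).
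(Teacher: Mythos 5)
Your lower‐bound deduction is exactly the paper's: the final proof of the lower bound of Theorem~\ref{thm-bdy-dim} consists of precisely the two sentences ``take $\alpha=\xi$ in Theorem~\ref{thm-thick-dim}'' for the Euclidean statement and ``take $\alpha=\gamma$'' for the quantum statement, and your optimization (the critical points $f_0'(\xi)=0$ and $\alpha^2/2-Q\alpha+2=0$ with admissible root $\alpha=\gamma$, using $\xi(Q-\gamma)=(2-\gamma^2/2)/d_\gamma$) is the correct calculus behind those choices.

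For the upper bound your route differs from the paper's in organization, though both sit on the same two ingredients: the one-point probability estimate $\BB P[E_\alpha^\ep(z)]\leq \ep^{\xi(Q-\alpha)+\alpha^2/2+o(1)}$ (Proposition~\ref{prop-event-upper}) and the H\"older continuity of $D_h$ vs.\ Euclidean from~\cite{lqg-metric-estimates}, which confines the ``local thickness'' of every point to $[-2,2]$ with probability tending to $1$. You propose writing $\bdy\mcl B_{\BB s}$ as a finite union of approximate thick sets $T_\alpha^\ep$ (plus a residue $R$), invoking Theorem~\ref{thm-thick-dim} on each piece, and using countable stability. That would work, but note two frictions: (i) the stated Theorem~\ref{thm-thick-dim} applies to $\wh{\mcl T}_h^\alpha$ (exact limit $\alpha$), not to a fattened $T_\alpha^\ep$, so you would actually need to run the proof of Theorem~\ref{thm-thick-dim} (not its statement) on the approximate set $\wh{\mcl T}_h^\alpha(\delta)$ as in Step 1 of the paper's proof of the upper bound for Theorem~\ref{thm-thick-dim}; and (ii) you introduce a residue $R$ of points whose thickness oscillates across scales, which your fattened thick sets (defined by uniform control over all small $r$) do not cover. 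The paper sidesteps both issues by not decomposing $\bdy\mcl B_{\BB s}$ into thick-point classes at all: instead, it proves a single ``worst-case $\alpha$'' one-point bound (Proposition~\ref{prop-bdy-prob}) by discretizing $[-2,2]$ into $\alpha_0<\dots<\alpha_N$ and observing that on the H\"older event $\mcl H^\ep$, \emph{at each fixed scale $\ep$} every ball $B_\ep(z)$ is assigned some $\alpha_j$ for which $E_{\alpha_j}^\ep(z)$ occurs. This makes the covering argument a first-moment count of $\ep$-balls at a single scale, with no residue and no need for uniform-in-$r$ control. The choice $\alpha=\xi$ (resp.\ $\alpha=\gamma$, via the moment exponent $p=d_\gamma-1$ in~\eqref{eqn-bdy-moment}) then emerges as the maximizer of $\xi(Q-\alpha)+\alpha^2/2$ (resp.\ $\xi(Q-\alpha)(p+1)+\alpha^2/2$), matching your optimization. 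So your plan is correct in spirit, but ``assign a thickness to each point at each scale'' is cleaner than ``partition the boundary into uniformly-thick sets plus residue'' and is what the paper actually does.

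One small correction: for the upper bound you should define the approximate thick sets via the \emph{metric} quantity $\sup_{u,v\in B_r(z)}D_h(u,v)$ rather than the circle average $h_r(z)$ (as you write ``$h_r(z)/\log(1/r)$ stays within $\ep$ of $\alpha$''). The one-point estimate, the H\"older event, and the definition of $\wh{\mcl T}_h^\alpha$ are all stated in terms of the metric diameter, and (as the paper notes after~\eqref{eqn-metric-thick}) circle-average thickness and metric thickness are not the same because the ratio $e^{-\xi h_\ep(z)}\ep^{-\xi Q}\sup_{u,v\in B_\ep(z)}D_h(u,v)$ has heavy tails.
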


The reason why we only compute the essential suprema of $\dim_{\mcl H}^0 \mcl B_{\BB s}$ and $\dim_{\mcl H}^\gamma \mcl B_{\BB s}$ is as follows. To prove a lower bound for these dimensions, we use the usual argument (as in, e.g.,~\cite{hmp-thick-pts,beffara-dim,mww-nesting}) which involves constructing a so-called Frostman measure on $\mcl B_{\BB s}$ with positive probability; see Section~\ref{sec-outline} for details. This argument gives a lower bound for the essential supremum of the dimension. In most applications of this technique, constructing the Frostman measure is the main step in the proof and there is a simple zero-one law argument which says that the Hausdorff dimension must be a.s.\ equal to a deterministic constant. 
In our setting, the zero-one law step appears to be non-trivial. 
We are currently working on another paper with Joshua Pfeffer and Scott Sheffield which will show that the Euclidean and quantum Hausdorff dimensions of several different sets associated with the LQG metric, including the boundary of an LQG metric ball, are a.s.\ equal to their essential suprema. 

In the special case when $\gamma=\sqrt{8/3}$, we have $\xi=1/\sqrt 6$ and $Q=5/\sqrt 6$ and hence the $\esssup$ of the Euclidean (resp.\ quantum) dimension of $\bdy\mcl B_{\BB s}$ is $5/4$ (resp.\ 3). For other values of $\gamma$, we do not know the value of $d_\gamma$ hence we do not know these dimensions explicitly. However, we get upper and lower bounds for $\esssup \dim_{\mcl H}^0 \bdy\mcl B_{\BB s}$ and $\esssup \dim_{\mcl H}^\gamma \bdy \mcl B_{\BB s}$ by plugging in the known bounds for $d_\gamma$ from~\cite{dg-lqg-dim,gp-lfpp-bounds,ang-discrete-lfpp}; see Figure~\ref{fig-bdy-dim}. For example, we know that
\eqb
  \esssup \dim_{\mcl H}^0 \bdy\mcl B_{\BB s} \leq 1.2584,\quad\forall \gamma \in (0,2) \quad \text{and} \quad 
\eqe
\eqb
  1.2343 \leq \esssup\dim_{\mcl H}^0 \bdy\mcl B_{\BB s} \leq \frac54 \quad \text{for $\gamma=\sqrt 2$}. 
\eqe
We emphasize that the Euclidean and quantum dimensions in Theorem~\ref{thm-bdy-dim} are \emph{not} related by the KPZ formula from~\cite{gp-kpz} since $\bdy\mcl B_{\BB s}$ is not independent from $h$.

\begin{figure}[t!]
 \begin{center}
\includegraphics[scale=.55]{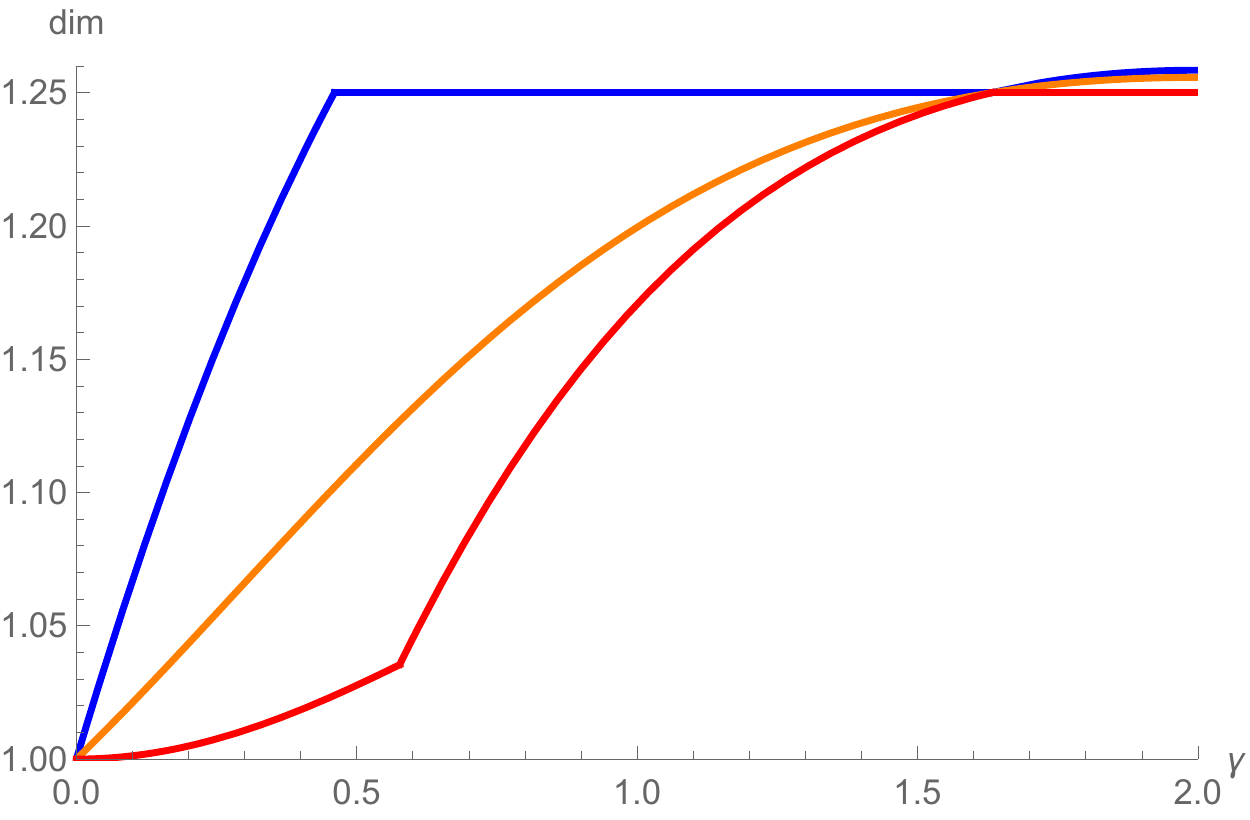} \hspace{15pt}  \includegraphics[scale=.55]{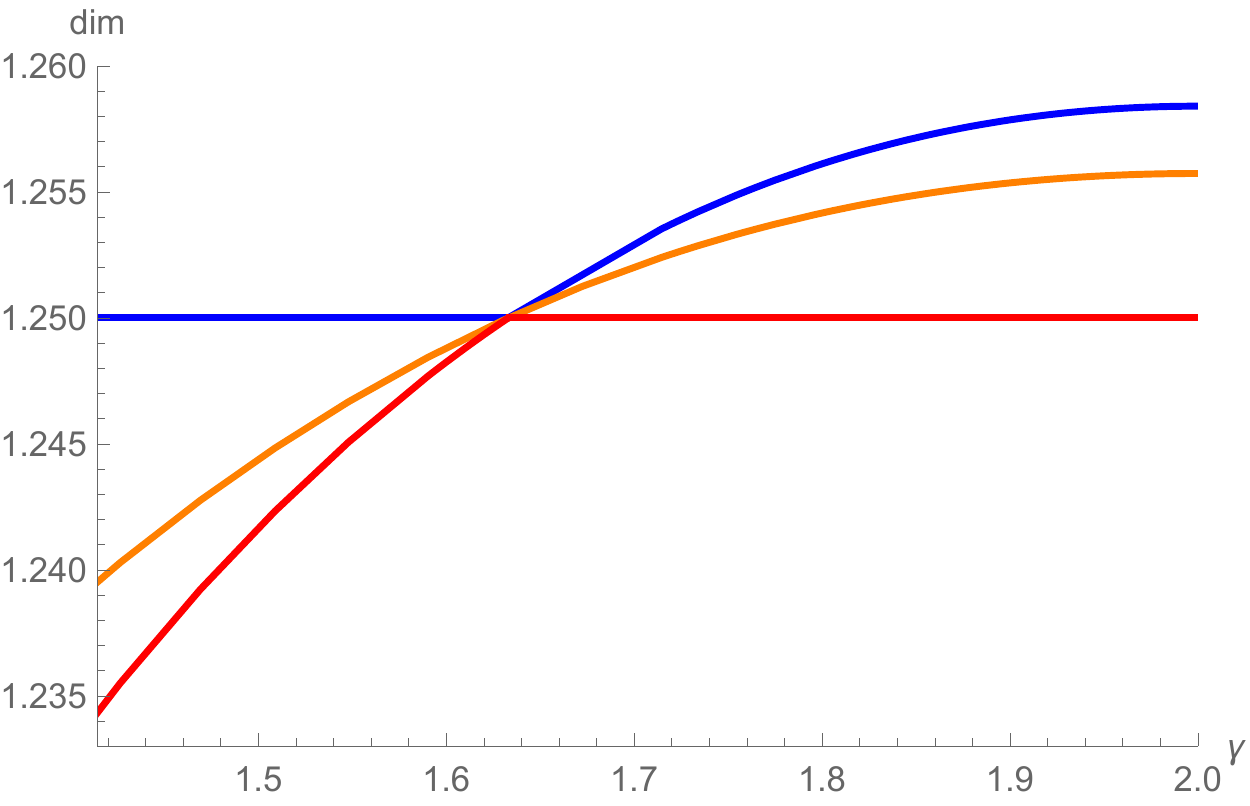} \\ 
\includegraphics[scale=.45]{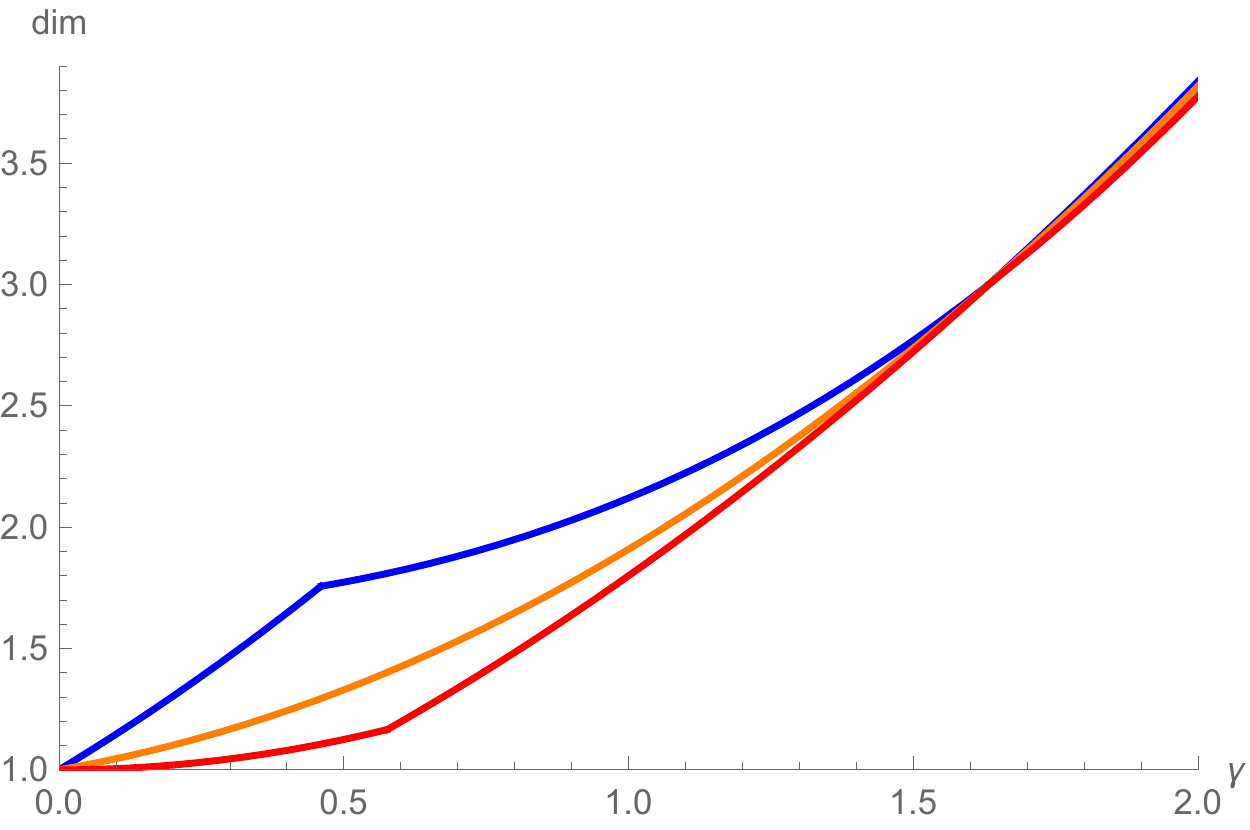} \hspace{15pt} \includegraphics[scale=.55]{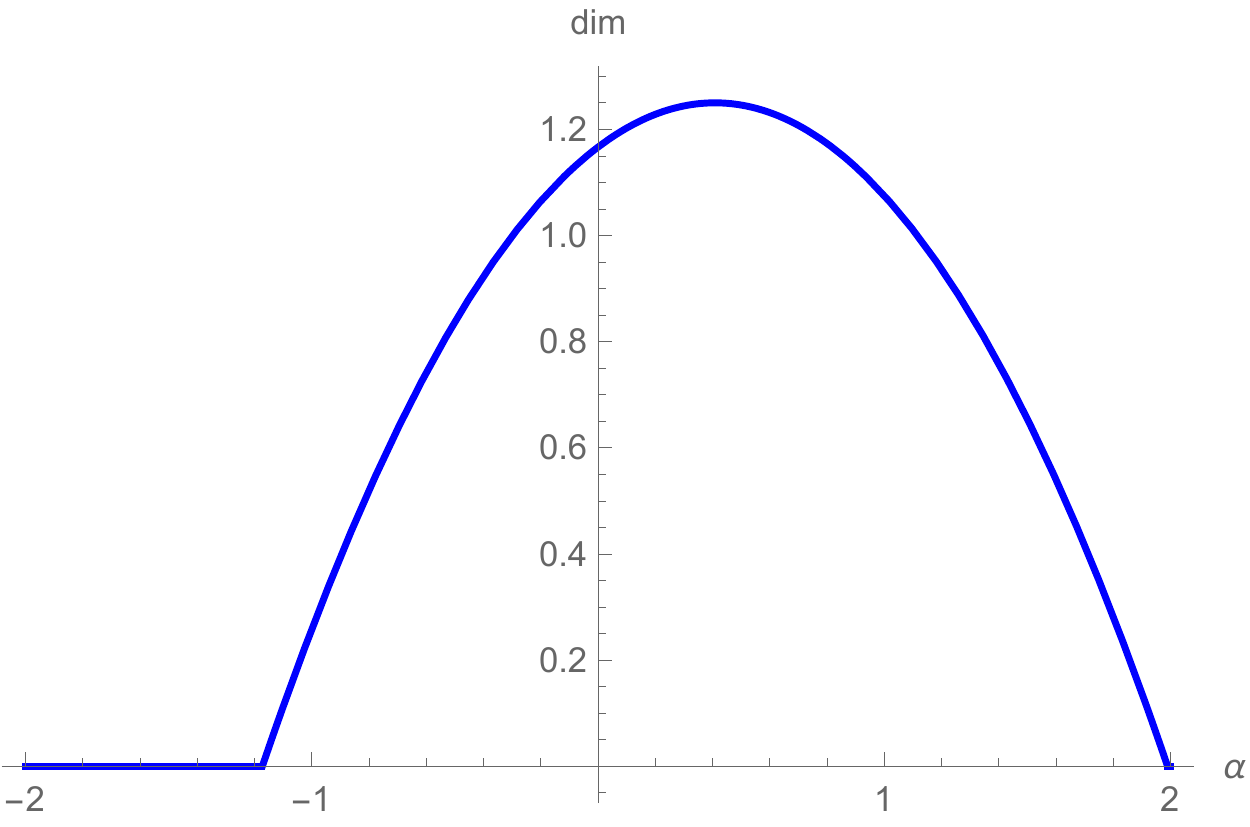}
\caption{ \textbf{Top Left.} Graph of known upper and lower bounds for the essential supremum of the Euclidean dimension of the boundary of an $\gamma$-LQG metric ball (red and blue) and the value of this essential supremum assuming the quadratic guess~\eqref{eqn-quad} (orange). 
The upper and lower bounds come from combining Theorem~\ref{thm-bdy-dim} with the bounds for $d_\gamma$ from~\cite{dg-lqg-dim,gp-lfpp-bounds,ang-discrete-lfpp}.
\textbf{Top Right.} Graph of the same bounds, but with $\gamma$ restricted to $[\sqrt 2,2]$. 
\textbf{Bottom Left.} The same setup as the top left, but for quantum dimension instead of Euclidean dimension.
\textbf{Bottom Right.} The formula for the metric $\alpha$-thick point dimension $\esssup \dim_{\mcl H}^0 (\bdy\mcl B_{\BB s} \cap \wh{\mcl T}_h^\alpha)$ from Theorem~\ref{thm-thick-dim} for $\gamma=\sqrt{8/3}$, as a function of $\alpha$. The maximum dimension occurs at $\alpha = \xi = 1/\sqrt 6$ and the dimension is non-zero for $\alpha \in \left(1/\sqrt 6 - \sqrt{5/2} , 1/\sqrt 6  +\sqrt{5/2}     \right)$. 
}\label{fig-bdy-dim}
\end{center}
\vspace{-1em}
\end{figure}

\subsection{Thick points on the boundary of an LQG metric ball}
\label{sec-thick-pts}

In the course of proving Theorem~\ref{thm-bdy-dim}, we also compute the dimension of the intersection of $\bdy\mcl B_{\BB s}$ with a variant of the set of $\alpha$-thick points of $h$ for each $\alpha \in \BB R$. 
Following~\cite{hmp-thick-pts}, for $\alpha\in\BB R$, we define the set of \emph{$\alpha$-thick points} of $h$ by 
\eqb \label{eqn-thick-def}
\mcl T_h^\alpha := \left\{z\in\BB C : \lim_{\ep\rta 0} \frac{h_\ep(z)}{\log\ep^{-1}} = \alpha \right\} ,
\eqe
where $h_\ep(z)$ is the average of $h$ over the circle of radius $\ep$ centered at $z$ (see Section~\ref{sec-gff-prelim} for more on the circle average process).

When working with the metric, it is natural to consider a variant of the definition~\eqref{eqn-thick-def} where thickness is defined in terms of $D_h$-distances rather than circle averages. For $\alpha  \in \BB R$, we define the set of \emph{metric $\alpha$-thick points} of $h$ by 
\eqb \label{eqn-metric-thick}
\wh{\mcl T}_h^\alpha := \left\{z\in\BB C : \lim_{\ep\rta 0} \frac{\log \sup_{u,v\in B_\ep(z)} D_h(u,v)}{\log \ep} = \xi (Q-\alpha) \right\} .
\eqe
It is easy to see from the scaling properties of the metric (Definition~\ref{def-lqg-metric}) that typically $ \sup_{u,v\in B_\ep(z)} D_h(u,v) \approx e^{\xi h_\ep(z)} \ep^{\xi Q}$.
However, the random variable $  e^{-\xi h_\ep(z)} \ep^{-\xi Q} \sup_{u,v\in B_\ep(z)} D_h(u,v)  $ has a heavy (power-law) upper tail; see~\cite{lqg-metric-estimates}. As a consequence of this, one does not have $\mcl T_h^\alpha = \wh{\mcl T}_h^\alpha$.
Nevertheless, we expect that the sets $\mcl T_h^\alpha$ and $\wh{\mcl T}_h^\alpha$ have similar properties. 
 
The following theorem will be established as part of the proof of Theorem~\ref{thm-bdy-dim}. 

\begin{thm}[Metric thick point dimension] \label{thm-thick-dim}
For each $\BB s >0$ and each $\alpha\in (\xi -\sqrt{4-2\xi Q +\xi^2} ,\xi + \sqrt{4-2\xi Q +\xi^2})$, 
\allb \label{eqn-thick-dim}
\esssup\dim_{\mcl H}^0\left( \bdy\mcl B_{\BB s} \cap \wh{\mcl T}_h^\alpha \right) = \esssup\dim_{\mcl H}^0\left( \bdy\mcl B_{\BB s} \cap \mcl T_h^\alpha  \cap \wh{\mcl T}_h^\alpha \right)  &= 2-\xi(Q-\alpha) - \alpha^2/2  
\quad \text{and}  \notag\\
 \esssup\dim_{\mcl H}^\gamma\left( \bdy\mcl B_{\BB s} \cap \wh{\mcl T}_h^\alpha \right) = \esssup\dim_{\mcl H}^\gamma\left( \bdy\mcl B_{\BB s} \cap \mcl T_h^\alpha  \cap \wh{\mcl T}_h^\alpha \right)   &=  \frac{2-\alpha^2/2}{\xi(Q-\alpha)} - 1    .
\alle
For each $\alpha \notin [\xi -\sqrt{4-2\xi Q +\xi^2} ,\xi + \sqrt{4-2\xi Q +\xi^2}]$, a.s.\ $\bdy\mcl B_{\BB s} \cap \wh{\mcl T}_h^\alpha = \emptyset$. 
\end{thm}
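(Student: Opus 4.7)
The plan is to prove Theorem~\ref{thm-thick-dim} by the standard two-moment method: upper bounds via a first-moment covering argument, and matching lower bounds via a Frostman measure built from a second-moment computation. The heart of the proof is the following one-point estimate. For $z \in \BB C$ at Euclidean distance of order $1$ from $\bdy \mcl B_{\BB s}$ and small $\ep > 0$, let $G_\ep^\alpha(z)$ denote the joint event that $h_\ep(z) \approx \alpha \log \ep^{-1}$, that $\sup_{u,v \in B_\ep(z)} D_h(u,v) \approx \ep^{\xi(Q-\alpha)}$, and that $B_\ep(z) \cap \bdy \mcl B_{\BB s} \neq \emptyset$. Then
\eqb \label{eqn-key-est}
  \BB P[G_\ep^\alpha(z)] = \ep^{\xi(Q-\alpha) + \alpha^2/2 + o(1)} .
\eqe
Heuristically the factor $\ep^{\alpha^2/2}$ is the Gaussian density of $h_\ep(z)$ at level $\alpha \log \ep^{-1}$ (using $\op{Var}(h_\ep(z)) = \log \ep^{-1} + O(1)$), the metric-diameter condition holds with probability of order $1$ given the circle-average condition, and the remaining $\ep^{\xi(Q-\alpha)}$ is the probability that $\bdy \mcl B_{\BB s}$, a codimension-$1$ set in the LQG metric, enters the $D_h$-ball of radius $\ep^{\xi(Q-\alpha)}$. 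To prove~\eqref{eqn-key-est} rigorously, I would perform a Cameron--Martin shift $h \rightsquigarrow h + \alpha \log_+(\ep/|\cdot - z|)$ to absorb the circle-average conditioning into a deterministic shift of the field, and then invoke the LQG metric estimates of~\cite{lqg-metric-estimates, gp-kpz} to control the boundary-hitting probability.

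Granting~\eqref{eqn-key-est}, the Euclidean upper bound follows from a union bound over an $\ep$-grid combined with Borel--Cantelli over dyadic $\ep = 2^{-n}$: the expected number of $\ep$-boxes on which $G_\ep^\alpha$ occurs is $\ep^{-2 + \xi(Q-\alpha) + \alpha^2/2 + o(1)}$, so $\esssup \dim_{\mcl H}^0 (\bdy \mcl B_{\BB s} \cap \wh{\mcl T}_h^\alpha) \leq 2 - \xi(Q-\alpha) - \alpha^2/2$. The very same cover, reinterpreted as a cover by $D_h$-balls of radius $\delta \approx \ep^{\xi(Q-\alpha)}$ (valid because each relevant point lies in $\wh{\mcl T}_h^\alpha$), yields the quantum upper bound $(2-\alpha^2/2)/\xi(Q-\alpha) - 1$. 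When $\alpha$ lies outside $[\xi - \sqrt{4 - 2\xi Q + \xi^2}, \xi + \sqrt{4 - 2\xi Q + \xi^2}]$, the exponent $2 - \xi(Q-\alpha) - \alpha^2/2$ is strictly negative, so the same first-moment argument forces $\bdy \mcl B_{\BB s} \cap \wh{\mcl T}_h^\alpha = \emptyset$ almost surely.

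For the matching lower bounds I would construct a Frostman measure on $\bdy \mcl B_{\BB s} \cap \mcl T_h^\alpha \cap \wh{\mcl T}_h^\alpha$, in the spirit of~\cite{hmp-thick-pts, beffara-dim, mww-nesting}. Writing $p(\ep) = \ep^{\xi(Q-\alpha)+\alpha^2/2}$, define $\mu_\ep(dz) := p(\ep)^{-1} \mathbbm{1}_{G_\ep^\alpha(z)} \, dz$, so that $\BB E[\mu_\ep(U)] \asymp |U|$ for bounded $U \subset \BB C$ by~\eqref{eqn-key-est}. The main technical step is the two-point bound
\eqb \label{eqn-two-point}
  \BB E\bigl[\mathbbm{1}_{G_\ep^\alpha(z)}\, \mathbbm{1}_{G_\ep^\alpha(w)}\bigr] \leq C\, p(\ep)^2\, |z-w|^{-(\xi(Q-\alpha) + \alpha^2/2) + o(1)} \quad \text{for } |z-w| \geq \ep .
\eqe
This should follow by combining (a) the Markov decomposition of $h$ at scale $|z-w|$, which yields the $|z-w|^{-\alpha^2/2}$ correction to the circle-average densities via the Green's function $-\log|z-w|$, with (b) a near-independence of the boundary-hitting events at $z$ and $w$ at scales below $|z-w|$, obtained from the LQG-metric scaling built into Definition~\ref{def-lqg-metric} together with local absolute continuity in the spirit of~\cite{local-metrics}. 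Paley--Zygmund applied to $\mu_\ep$, plus tightness on compact sets, then yields a weak subsequential limit $\mu$ with $\BB P[\mu(\BB C) > 0] > 0$, supported on the desired intersection, and~\eqref{eqn-two-point} translates directly into finiteness of the $\Delta$-energy of $\mu$ for every $\Delta < 2 - \xi(Q-\alpha) - \alpha^2/2$, giving the Euclidean lower bound by Frostman. Because $\mu$ is supported on $\wh{\mcl T}_h^\alpha$, Euclidean radius $r$ around a $\mu$-typical point corresponds to $D_h$-radius $r^{\xi(Q-\alpha) + o(1)}$, so the Euclidean energy bound upgrades to a $D_h$-energy bound at exponent $\Delta/\xi(Q-\alpha)$, yielding the matching quantum lower bound.

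The main obstacle is the two-point bound~\eqref{eqn-two-point} for the boundary-incidence part of $G_\ep^\alpha$. Unlike the Gaussian circle-average part, the event $\{B_\ep(z) \cap \bdy \mcl B_{\BB s} \neq \emptyset\}$ is a highly nonlocal functional of $h$, and the two boundary hits at $z$ and $w$ may belong to the same boundary component linked by a single geodesic arc of $D_h$. Decorrelating these incidences at scale $|z-w|$ without losing the sharp exponent will likely require the confluence-of-geodesics input from~\cite{gm-confluence}, together with Weyl-scaling and the locality of $D_h$ from Definition~\ref{def-lqg-metric}; this is where most of the technical labor will lie.
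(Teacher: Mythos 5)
Your framework (first/second moment, Frostman measure, $\alpha^2/2$ from the Gaussian cost of thickness and $\xi(Q-\alpha)$ from the codimension-1 heuristic) is the right one and matches the paper's overall structure. But there is a genuine gap at the step where you write that you would ``invoke the LQG metric estimates of \cite{lqg-metric-estimates, gp-kpz} to control the boundary-hitting probability.'' No off-the-shelf estimate of the form $\BB P[\,|D_h(0,z) - \BB s| \leq \delta\,] \lesssim \delta$ exists in those references, and proving such a density bound for $D_h(0,z)$ directly is not straightforward. The paper's key technical device, which your proposal is missing, is to introduce an auxiliary randomization: sample $X$ uniformly from an interval independently of $h$, add $X\phi$ to $h$ where $\phi$ is a fixed smooth bump supported on an annulus separating $0$ from $z$, and use Weyl scaling (Axiom~\ref{item-metric-f}) to show that $x \mapsto D_{h+x\phi}(0,z)$ is strictly increasing with a quantitatively controlled derivative (Lemma~\ref{lem-theta-deriv}). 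This gives a conditional density bound for $D_{h+X\phi}(0,z)$ given $h$, and one transfers back to $h$ via the explicit Radon--Nikodym derivative between the laws of $h$ and $h+X\phi$ (Lemma~\ref{lem-gff-abs-cont}). Without some device of this kind, the factor $\ep^{\xi(Q-\alpha)}$ in your one-point estimate~\eqref{eqn-key-est} is only a heuristic.

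You also correctly identify the two-point bound~\eqref{eqn-two-point} for the boundary-incidence event as the main obstacle, but the tool you propose — confluence of geodesics from~\cite{gm-confluence} — is not what the paper uses, and it is not obvious how to turn confluence into the sharp $|z-w|^{-\xi(Q-\alpha)}$ factor you need. The paper instead runs a two-variable version of the same bump-function trick: take $\wt h = h + X_z\phi_{z,1} + X_w\phi_{w,m+2}$ where $\phi_{w,m+2}$ is supported on a thin annulus around $w$ at the scale $|z-w|$, so that it does \emph{not} disconnect $z$ from $0$. A deterministic geometric lemma (Lemma~\ref{lem-geo-away}), driven by a built-in ``short loop around $w$'' condition in the multi-scale event, shows that geodesics from $0$ to $B_{\ep_n}(z)$ avoid the support of $\phi_{w,m+2}$, so $D_{\wt h}(0, B_{\ep_n}(z))$ is independent of $X_w$. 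This decouples the two long-range conditionings without invoking confluence, and it is considerably more elementary. I would encourage you to try to reproduce the one-point estimate with the bump-function density argument before attempting the two-point bound — the two-point argument is essentially a doubling of the one-point argument once you see the right way to place the bumps.
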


Note that $(\xi -\sqrt{4-2\xi Q +\xi^2} ,\xi + \sqrt{4-2\xi Q +\xi^2})$ is precisely the set of $\alpha\in\BB R$ for which the right sides of the formulas in~\eqref{eqn-thick-dim} are positive. 
See Figure~\ref{fig-bdy-dim}, bottom right for a graph of the Euclidean dimension formula from Theorem~\ref{thm-thick-dim}, as a function of $\alpha$, when $\gamma=\sqrt{8/3}$.

The formula for $\esssup\dim_{\mcl H}^0\left( \bdy\mcl B_{\BB s} \cap \mcl T_h^\alpha \cap \wh{\mcl T}_h^\alpha \right)$ in Theorem~\ref{thm-thick-dim} is maximized when $\alpha  = \xi$, in which case it coincides with the formula $2-\xi Q +\xi^2/2$ for $\dim_{\mcl H}^0 \bdy\mcl B_{\BB s}$ from Theorem~\ref{thm-bdy-dim}. 
Hence, the set of $\xi$-thick points in $\bdy\mcl B_{\BB s}$ has full Euclidean dimension.  
Similarly, the formula for $ \esssup\dim_{\mcl H}^\gamma\left( \bdy\mcl B_{\BB s} \cap \mcl T_h^\alpha  \cap \wh{\mcl T}_h^\alpha  \right)$ is maximized when $\alpha=\gamma$, so the set of $\gamma$-thick points in $\bdy\mcl B_{\BB s}$ has full quantum dimension.  

It is not hard to show that a.s.\ $\dim_{\mcl H}^\gamma(\wh{\mcl T}_h^\alpha )  = \frac{2-\alpha^2/2}{\xi(Q-\alpha)}$ (see~\cite[Theorem 1.5]{gp-kpz} for the analogous statement for $\mcl T_h^\alpha$), so Theorem~\ref{thm-thick-dim} says that 
\eqb
\esssup\dim_{\mcl H}^\gamma\left( \bdy\mcl B_{\BB s} \cap \wh{\mcl T}_h^\alpha \right)  = \dim_{\mcl H}^\gamma\wh{\mcl T}_h^\alpha - 1 .
\eqe
One has analogous statements with $ \mcl T_h^\alpha \cap \wh{\mcl T}_h^\alpha$ in place of $\wh{\mcl T}_h^\alpha$. 

We expect that Theorem~\ref{thm-thick-dim} is also true with $\mcl T_h^\alpha$ in place of $\wh{\mcl T}_h^\alpha$ or $\mcl T_h^\alpha \cap \wh{\mcl T}_h^\alpha$, but we do not prove an upper bound for $\dim_{\mcl H}(\bdy \mcl B_{\BB s} \cap \mcl T_h^\alpha)$ here.

\subsection{Definition of the LQG metric}
\label{sec-metric-def}

The $\gamma$-LQG metric can be constructed as the limit of an explicit approximation scheme (called \emph{Liouville first passage percolation}) and is uniquely characterized by a certain list of axioms. In this paper we will only need the axiomatic definition, which we state in this section. 
Before stating the axioms, we need some preliminary definitions. 

\begin{defn} \label{def-metric-stuff}
Let $(X,D)$ be a metric space.
\begin{itemize}
\item
For a curve $P : [a,b] \rta X$, the \emph{$D$-length} of $P$ is defined by 
\eqbn
\op{len}\left( P ; D  \right) := \sup_{T} \sum_{i=1}^{\# T} D(P(t_i) , P(t_{i-1})) 
\eqen
where the supremum is over all partitions $T : a= t_0 < \dots < t_{\# T} = b$ of $[a,b]$. Note that the $D$-length of a curve may be infinite.
\item
We say that $(X,D)$ is a \emph{length space} if for each $x,y\in X$ and each $\ep > 0$, there exists a curve of $D$-length at most $D(x,y) + \ep$ from $x$ to $y$. 
\item
For $Y\subset X$, the \emph{internal metric of $D$ on $Y$} is defined by
\eqb \label{eqn-internal-def}
D(x,y ; Y)  := \inf_{P \subset Y} \op{len}\left(P ; D \right) ,\quad \forall x,y\in Y 
\eqe 
where the infimum is over all paths $P$ in $Y$ from $x$ to $y$. 
Note that $D(\cdot,\cdot ; Y)$ is a metric on $Y$, except that it is allowed to take infinite values.  
\item
If $X$ is an open subset of $\BB C$, we say that $D$ is  a \emph{continuous metric} if it induces the Euclidean topology on $X$. 
We equip the set of continuous metrics on $X$ with the local uniform topology on $X\times X$ and the associated Borel $\sigma$-algebra.
\end{itemize}
\end{defn}

We are now ready to state the definition of the LQG metric. 

\begin{defn}[The LQG metric]
\label{def-lqg-metric}
For $U\subset \BB C$, let $\mcl D'(U)$ be the space of distributions (generalized functions) on $\BB C$, equipped with the usual weak topology.   
A \emph{$\gamma$-LQG metric} is a collection of measurable functions $h\mapsto D_h$, one for each open set $U\subset\BB C$, from $\mcl D'(U)$ to the space of continuous metrics on $U$ with the following properties. 
Let $U\subset \BB C$ and let $h$ be a \emph{GFF plus a continuous function} on $U$: i.e., $h$ is a random distribution on $U$ which can be coupled with a random continuous function $f$ in such a way that $h-f$ has the law of the (zero-boundary or whole-plane, as appropriate) GFF on $U$.  Then the associated metric $D_h$ satisfies the following axioms.
\begin{enumerate}[I.]
\item \textbf{Length space.} Almost surely, $(U,D_h)$ is a length space, i.e., the $D_h$-distance between any two points of $U$ is the infimum of the $D_h$-lengths of $D_h$-continuous paths (equivalently, Euclidean continuous paths) in $U$ between the two points. \label{item-metric-length}
\item \textbf{Locality.} Let $V \subset U$ be a deterministic open set. 
The $D_h$-internal metric $D_h(\cdot,\cdot ; V)$ is a.s.\ equal to $D_{h|_V}$, so in particular it is a.s.\ determined by $h|_V$.  \label{item-metric-local}
\item \textbf{Weyl scaling.} Let $\xi = \gamma/d_\gamma$ be as in~\eqref{eqn-xi-Q}. For a continuous function $f : U\rta \BB R$, define
\eqb \label{eqn-metric-f}
(e^{\xi f} \cdot D_h) (z,w) := \inf_{P : z\rta w} \int_0^{\op{len}(P ; D_h)} e^{\xi f(P(t))} \,dt , \quad \forall z,w\in U,
\eqe 
where the infimum is over all continuous paths from $z$ to $w$ in $U$ parametrized by $D_h$-length.
Then a.s.\ $ e^{\xi f} \cdot D_h = D_{h+f}$ for every continuous function $f: U\rta \BB R$. \label{item-metric-f}
\item \textbf{Conformal coordinate change.} Let $\wt U\subset \BB C$ and let $\phi : U \rta \wt U$ be a deterministic conformal map. Then, with $Q = 2/\gamma+\gamma/2$ as in~\eqref{eqn-xi-Q}, a.s.\ \label{item-metric-coord}
\eqb \label{eqn-metric-coord}
 D_h \left( z,w \right) = D_{h\circ\phi^{-1} + Q\log |(\phi^{-1})'|}\left(\phi(z) , \phi(w) \right)  ,\quad  \forall z,w \in U.
\eqe    
\end{enumerate}
\end{defn}

The following theorem was proven in~\cite{dddf-lfpp,local-metrics,lqg-metric-estimates,gm-confluence,gm-uniqueness,gm-coord-change}.

\begin{thm}[Existence and uniqueness of the LQG metric] \label{thm-lqg-metric} 
For each $\gamma \in (0,2)$, there exists a $\gamma$-LQG metric in the sense of Definition~\ref{def-lqg-metric}.
If $D$ and $\wt D$ are two such metrics, then there is a deterministic constant $C>0$ such that whenever $h$ is a GFF plus a continuous function, a.s.\ $\wt D_h = C D_h$.  
\end{thm}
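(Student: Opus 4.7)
The plan is to handle existence and uniqueness by entirely different routes, matching the way the proof is assembled across the cited papers. For existence, I would construct a candidate metric via Liouville first passage percolation (LFPP). Start with a smooth mollification $h_\ep^* = h * p_{\ep^2/2}$ of the GFF (using the heat kernel), and define
\[
D_h^\ep(z,w) = \inf_P \int_0^1 e^{\xi h_\ep^*(P(t))}\, |P'(t)|\,dt,
\]
where the infimum is over piecewise $C^1$ paths from $z$ to $w$. Pick a normalizing constant $\lambda_\ep$ (e.g.\ the median of $D_h^\ep(0,1)$ for a whole-plane GFF with circle-average normalization), and consider $\wt D_h^\ep := \lambda_\ep^{-1} D_h^\ep$. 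I would first prove tightness of $\{\wt D_h^\ep\}$ in the local uniform topology on $\BB C\times \BB C$, using the percolation/multi-scale-chaining argument of~\cite{dddf-lfpp} together with moment bounds for Gaussian multiplicative chaos. This yields subsequential limits $D_h$ that are continuous metrics on $\BB C$ inducing the Euclidean topology.

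Second, I would verify the four axioms of Definition~\ref{def-lqg-metric} for any such subsequential limit. Axiom~\ref{item-metric-length} (length space) passes to the limit because each $\wt D_h^\ep$ is a length metric and the local uniform limit of length metrics on a Euclidean domain is again a length metric. Axiom~\ref{item-metric-local} (locality) is built into the path-integral definition of $D_h^\ep$ and survives in the limit via monotone approximation. Axiom~\ref{item-metric-f} (Weyl scaling) follows from the exact identity $D^\ep_{h+f} = e^{\xi f_\ep^*}\cdot D_h^\ep$ for continuous $f$. Axiom~\ref{item-metric-coord} (conformal covariance) is the most delicate, since $\lambda_\ep$ is defined using the whole-plane normalization; here I would follow~\cite{gm-coord-change}, exploiting local absolute continuity of the GFF under conformal maps and the compensating role of $Q\log|(\phi^{-1})'|$.

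For uniqueness, suppose $D$ and $\wt D$ are two metrics satisfying the axioms. I want a deterministic $C>0$ with $\wt D_h = C D_h$ a.s.\ whenever $h$ is a GFF plus a continuous function. Using Weyl scaling (Axiom~\ref{item-metric-f}) and locality (Axiom~\ref{item-metric-local}), plus local absolute continuity between the various GFF variants considered in the statement, one reduces to the whole-plane case. Then I would compare $D$-metric balls and $\wt D$-metric balls at small scales: by Axiom~\ref{item-metric-f} the ratio of $\wt D_h$-length and $D_h$-length along any given path is unchanged if one adds a continuous function to $h$, and by Axiom~\ref{item-metric-coord} it transforms predictably under scaling. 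The ``local metric'' characterization from~\cite{local-metrics}, combined with a zero-one law using the translation/scaling invariance of the whole-plane GFF modulo additive constant, shows that the pointwise bi-Lipschitz ratio is a.s.\ a deterministic constant. The quantitative comparison of $D_h$- and $\wt D_h$-balls on all scales is supplied by the metric estimates of~\cite{lqg-metric-estimates}.

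The hard part is uniqueness, specifically upgrading the four axioms into pointwise equality of metrics up to a scalar. The conceptual obstacle is that Axioms~\ref{item-metric-f}--\ref{item-metric-coord} pin down how each metric responds to local modifications of $h$ and to conformal maps, but they do not a priori force two such metrics to agree. The key ingredient is the confluence of geodesics result of~\cite{gm-confluence}: it shows that $D_h$-geodesics from a fixed point merge into finitely many ``highways'' at small scales, so that the bi-Lipschitz ratio between $D_h$ and $\wt D_h$ can be transported from a single typical point to all of $\BB C$ without accumulating fluctuations. Combining confluence with the local-metric framework of~\cite{local-metrics} and the GFF's scale invariance is what converts the pointwise equality (in distribution) into a.s.\ equality up to a deterministic constant, completing the proof as in~\cite{gm-uniqueness}.
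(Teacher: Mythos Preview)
The paper does not give a self-contained proof of this theorem; it is stated as an established result assembled from \cite{dddf-lfpp,local-metrics,lqg-metric-estimates,gm-confluence,gm-uniqueness,gm-coord-change}, and the paragraph following the theorem statement simply records which reference supplies which piece. Your outline is a reasonable high-level summary of exactly that assembly, and in that sense it matches the paper's ``proof.''

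One point of logical ordering deserves correction. You present the verification of Axiom~\ref{item-metric-coord} (full conformal covariance) as part of the existence step, prior to uniqueness. In the actual development, uniqueness is first established in \cite{gm-uniqueness} only for $U=\BB C$ with $\phi$ a complex affine map (so Axiom~\ref{item-metric-coord} reduces to translation and scaling covariance, which does follow directly from the LFPP construction). It is precisely this restricted uniqueness theorem that then allows \cite{gm-coord-change} to upgrade to general conformal $\phi$ and general domains $U$: once the metric is known to be unique, one can check that the pulled-back metric $D_{h\circ\phi^{-1}+Q\log|(\phi^{-1})'|}(\phi(\cdot),\phi(\cdot))$ also satisfies the axioms and hence must coincide with $D_h$ up to a constant. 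So full conformal covariance is a \emph{consequence} of whole-plane uniqueness, not an input to it, and your sketch has these two steps in the wrong order. Your description of the uniqueness argument itself---bi-Lipschitz comparison from \cite{local-metrics,lqg-metric-estimates}, then confluence of geodesics from \cite{gm-confluence} to propagate the optimal bi-Lipschitz constant globally---does match the strategy of \cite{gm-uniqueness}.
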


More precisely,~\cite[Theorem 1.2]{gm-uniqueness}, building on the tightness result of~\cite{dddf-lfpp} as well as the papers~\cite{local-metrics,lqg-metric-estimates,gm-confluence} shows that for each $\gamma \in (0,2)$, there is a unique (up to a deterministic global multiplicative constant) measurable function $h\mapsto D_h$ from $\mcl D'(\BB C)$ to the space of continuous metrics on $\BB C$ which satisfies Definition~\ref{def-lqg-metric} for $U=\BB C$ (note that this means $\phi$ in Axiom~\ref{item-metric-coord} is required to be a complex affine map). As explained in~\cite[Remark 1.5]{gm-uniqueness}, this gives a way to define $D_h$ whenever $h$ is a GFF plus a continuous function on an open domain $U\subset\BB C$ in such a way that Axioms~\ref{item-metric-length} through~\ref{item-metric-f} hold.
It is shown in~\cite[Theorem 1.1]{gm-coord-change} that with the above definition, Axiom~\ref{item-metric-coord} holds. 
 
Because of Theorem~\ref{thm-lqg-metric}, we may refer to the metric satisfying Definition~\ref{def-lqg-metric} as \emph{the $\gamma$-LQG metric}. Technically, the metric is unique only up to a global deterministic multiplicative constant. We will always assume that this constant is fixed in some arbitrary way (e.g., by requiring that the median of $D_h(0,\bdy\BB D)$ is 1 when $h$ is a whole-plane GFF normalized so that its circle average over $\bdy\BB D$ is zero). The choice of multiplicative constant plays no role in our results or proofs.

\subsection{Outline}
\label{sec-outline0}

The rest of this paper is structured as follows.
In Section~\ref{sec-upper-bound} we prove the upper bounds for the dimensions in Theorems~\ref{thm-bdy-dim} and~\ref{thm-thick-dim}. 
This is done by upper-bounding the probability of a certain event. See the beginning of Section~\ref{sec-one-pt} for an explanation of the key ideas of the argument (which in particular explains where the formulas for the dimensions in Theorems~\ref{thm-bdy-dim} and~\ref{thm-thick-dim} come from).
Our arguments in fact give upper bounds for the Hausdorff dimensions of a general class of subsets of $\bdy\mcl B_{\BB s}$; see Theorem~\ref{thm-gen-upper} for a precise statement. 

Sections~\ref{sec-outline}, \ref{sec-short-range}, \ref{sec-long-range}, and~\ref{sec-onescale-prob} are devoted to the proofs of the lower bounds in Theorems~\ref{thm-bdy-dim} and~\ref{thm-thick-dim}. An outline of the proof of these lower bounds is given in Section~\ref{sec-outline}. 
%Section~\ref{sec-extensions} discusses some conjectures for Hausdorff dimensions of other sets associated with LQG (namely, intersections of multiple LQG metric ball boundaries and boundaries of LQG Voronoi cells) which we expect can be proven using similar techniques to the ones in this paper.

Appendix~\ref{sec-gff-prelim} contains a review of the definitions of the whole-plane and zero-boundary GFF and the properties of these objects which are used in this paper.
We encourage the reader to review this appendix before reading the rest of the paper if he or she is not already familiar with the GFF.

\subsection{Basic notation}
\label{sec-notation}

\noindent
We write $\BB N = \{1,2,3,\dots\}$ and $\BB N_0 = \BB N \cup \{0\}$. 
For $a < b$, we define the discrete interval $[a,b]_{\BB Z}:= [a,b]\cap\BB Z$. 
\medskip
 
\noindent
If $f  :(0,\infty) \rta \BB R$ and $g : (0,\infty) \rta (0,\infty)$, we say that $f(\ep) = O_\ep(g(\ep))$ (resp.\ $f(\ep) = o_\ep(g(\ep))$) as $\ep\rta 0$ if $f(\ep)/g(\ep)$ remains bounded (resp.\ tends to zero) as $\ep\rta 0$. 
We similarly define $O(\cdot)$ and $o(\cdot)$ errors as a parameter goes to infinity. 
\medskip

\noindent
If $f,g : (0,\infty) \rta [0,\infty)$, we say that $f(\ep) \preceq g(\ep)$ if there is a constant $C>0$ (independent from $\ep$ and possibly from other parameters of interest) such that $f(\ep) \leq  C g(\ep)$. We write $f(\ep) \asymp g(\ep)$ if $f(\ep) \preceq g(\ep)$ and $g(\ep) \preceq f(\ep)$. 
\medskip

\noindent
We often specify requirements on the dependencies on rates of convergence in $O(\cdot)$ and $o(\cdot)$ errors, implicit constants in $\preceq$, etc., in the statements of lemmas/propositions/theorems, in which case we implicitly require that errors, implicit constants, etc., in the proof satisfy the same dependencies. 
\medskip

\noindent
For $z\in\BB C$ and $r>0$, we write $B_r(z)$ for the Euclidean ball of radius $r$ centered at $z$. We also define the open annulus
\eqb \label{eqn-annulus-def}
\BB A_{r_1,r_2}(z) := B_{r_2}(z) \setminus \ol{B_{r_1}(z)} ,\quad\forall 0 < r_r < r_2 < \infty .
\eqe
\medskip 

\noindent
For a metric space $(X,D)$, $A\subset X$, and $r>0$, we write $\mcl B_r(A;D)$ for the open ball consisting of the points $x\in X$ with $D (x,A) < r$.  
If $A = \{y\}$ is a singleton, we write $\mcl B_r(\{y\};D) = \mcl B_r(y;D)$.

\section{Upper bounds}
\label{sec-upper-bound}

In this section we will prove the upper bounds for Hausdorff dimension in Theorems~\ref{thm-bdy-dim} and~\ref{thm-thick-dim}. 
In doing so, we will see where the formulas for the dimensions in these theorems come from. 
Throughout this section, $h$ denotes a whole-plane GFF normalized so that its circle average over $\bdy\BB D$ is zero. 

\subsection{The one-point upper bound}
\label{sec-one-pt}

The main input in the proof of Theorems~\ref{thm-bdy-dim} and~\ref{thm-thick-dim} is an upper bound for the probability of a certain event (Proposition~\ref{prop-event-upper}). 
Let us now define the event we consider. 

Fix a small $\zeta >0$ (which we will send to zero at the very end of the proof).
It will be convenient to work at positive distance from 0, so we also fix a bounded open set $V\subset \BB C$ with $0\notin \ol V$.   

For $\alpha \in \BB R$ and $z\in \BB C$, let
\allb \label{eqn-one-pt-event}
E_\alpha^\ep(z)   
&:= \left\{  D_h(0,z ) \in \left[\BB s -  \ep^{\xi(Q-\alpha) -   \zeta}  , \BB s +  \ep^{\xi(Q-\alpha) -  \zeta} \right] \right\} \notag \\
&\qquad \qquad \qquad \cap \left\{  \sup_{u,v\in B_\ep(z)} D_h(u,v )  \in \left[  \ep^{\xi(Q-\alpha) + \zeta} , \ep^{\xi(Q-\alpha) - \zeta} \right]   \right\} .
\alle 
The following trivial lemma allows us to connect the events $E_\alpha^\ep(z)$ to the set $\bdy\mcl B_{\BB s} $. 
 
\begin{lem} \label{lem-one-pt-contain}
If $z\in \BB C$ such that $B_\ep(z) \cap \mcl B_{\BB s} \not=\emptyset$ and $\sup_{u,v\in B_\ep(z)} D_h(u,v )  \in \left[  \ep^{\xi(Q-\alpha) + \zeta} , \ep^{\xi(Q-\alpha) - \zeta} \right]$, then $E_\alpha^\ep(z)$ occurs.
\end{lem}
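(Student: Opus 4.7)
\medskip

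\noindent
\textbf{Proof plan.} The lemma is essentially an unpacking of the definition of $E_\alpha^\ep(z)$ combined with a single application of the triangle inequality, so there is really no substantive obstacle. The main thing to verify is that the hypothesis gives a point $w$ close (Euclidean-wise) to $z$ with $D_h(0,w) = \BB s$. Reading the hypothesis $B_\ep(z) \cap \mcl B_{\BB s} \neq \emptyset$ in the natural way intended --- i.e.\ so that $B_\ep(z)$ reaches $\bdy \mcl B_{\BB s}$, as will be the case whenever $B_\ep(z)$ is one of the small balls in a Euclidean cover of $\bdy\mcl B_{\BB s}$ --- the continuity of $D_h$ together with the fact that $D_h(0,\cdot)$ takes a value $<\BB s$ on $B_\ep(z) \cap \mcl B_{\BB s}$ and a value $\geq \BB s$ somewhere nearby lets me pick $w \in \ol{B_\ep(z)}$ with $D_h(0,w) = \BB s$.

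Given such a $w$, the plan is just two lines. First I would bound the $D_h$-distance from $z$ to $w$ using the assumed diameter bound: since both $z$ and $w$ lie in $\ol{B_\ep(z)}$, continuity of $D_h$ gives $D_h(z,w) \leq \sup_{u,v \in B_\ep(z)} D_h(u,v) \leq \ep^{\xi(Q-\alpha)-\zeta}$. Second I would apply the triangle inequality in the form $|D_h(0,z) - \BB s| = |D_h(0,z) - D_h(0,w)| \leq D_h(z,w) \leq \ep^{\xi(Q-\alpha)-\zeta}$. This is exactly the first condition in the definition of $E_\alpha^\ep(z)$, and the second condition is verbatim the second hypothesis, so $E_\alpha^\ep(z)$ occurs.

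The real content of the lemma lies not in its proof but in how it will be used later: combined with the forthcoming probability upper bound on $E_\alpha^\ep(z)$ (Proposition~\ref{prop-event-upper}) and a first-moment / union-bound argument over a fine Euclidean cover of $\bdy\mcl B_{\BB s}$ by radius-$\ep$ balls, it will translate the event bound into the Hausdorff-dimension upper bounds in Theorems~\ref{thm-bdy-dim} and~\ref{thm-thick-dim}. The stratification by $\alpha$ --- which is built into the definition of $E_\alpha^\ep(z)$ via the width $\ep^{\xi(Q-\alpha) \pm \zeta}$ of the diameter window --- is what will allow the same argument to give the thick-point refinement.
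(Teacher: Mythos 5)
Your proof is correct and matches the paper's argument: pick $w$ near $z$ with $D_h(0,w)=\BB s$ and apply the triangle inequality via the assumed diameter bound. You also correctly spotted that the stated hypothesis $B_\ep(z)\cap\mcl B_{\BB s}\neq\emptyset$ is a typo for $B_\ep(z)\cap\bdy\mcl B_{\BB s}\neq\emptyset$ (the paper's own proof silently makes this correction in its first line, and the lemma's use in Proposition~\ref{prop-bdy-prob} confirms it); with the hypothesis literally as written, no intermediate-value argument can produce the needed $w$, since nothing forces $D_h(0,\cdot)\geq\BB s$ anywhere near $z$.
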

\begin{proof}
If $B_\ep(z) \cap \bdy\mcl B_{\BB s}  \not=\emptyset$, then there is a $w\in B_\ep(z)$ such that $D_h(0,w ) = \BB s$. By the triangle inequality, if also $\sup_{u,v\in B_\ep(z)} D_h(u,v ) \leq \ep^{\xi (Q-\alpha) -\zeta}$, then 
\alb
D_h(0,z ) 
&\in \left[D_h(0,w ) - \sup_{u,v\in B_\ep(z)} D_h(u,v ) ,  D_h(0,w ) +   \sup_{u,v\in B_\ep(z)} D_h(u,v  ) \right]  \notag\\
&\subset \left[ \BB s -  \ep^{\xi (Q-\alpha) -\zeta} , \BB s +  \ep^{\xi (Q-\alpha) -\zeta} \right] .
\ale
\end{proof}

The rest of this subsection is devoted to the proof of the following one-point estimate.

\begin{prop} \label{prop-event-upper}
For each $\alpha \in [-2  ,2 ]$, each $z\in V$, and each $\ep > 0$,  
\eqb \label{eqn-event-upper} 
\BB P\left[ E_\alpha^\ep(z) \right] \leq \ep^{   \xi(Q-\alpha) + \alpha^2/2  + o_\zeta(1) +  o_\ep(1)    }
\eqe
where the rate of the $o_\zeta(1)$ depends only on $\gamma$ and the rate of the $o_\ep(1)$ depends only on $ V, \alpha,\zeta,\gamma $ (not on the particular choice of $z$).
\end{prop}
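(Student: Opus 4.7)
The plan is to exploit that $E_\alpha^\ep(z)$ combines two approximately independent constraints—a local one on the $D_h$-diameter of $B_\ep(z)$ and a global one on $D_h(0,z)$—whose probabilities multiply. By the LQG scaling from Axioms~III and~IV, the internal $D_h$-diameter of $B_\ep(z)$ is heuristically $e^{\xi h_\ep(z)} \ep^{\xi Q}$ times an order-one factor, so the two-sided bound $\sup_{u,v\in B_\ep(z)} D_h(u,v) \in [\ep^{\xi(Q-\alpha)+\zeta},\ep^{\xi(Q-\alpha)-\zeta}]$ pins $h_\ep(z)$ into a window of size $o_\zeta(1)\log\ep^{-1}$ around $\alpha\log\ep^{-1}$. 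Since $0\notin\ol V$, $h_\ep(z)$ is Gaussian with variance $\log\ep^{-1}+O(1)$, so this alone contributes a Gaussian tail factor of $\ep^{\alpha^2/2+o_\zeta(1)+o_\ep(1)}$. The additional window condition on $D_h(0,z)$, of width $\ep^{\xi(Q-\alpha)-\zeta}$, then contributes a further factor of $\ep^{\xi(Q-\alpha)+o_\zeta(1)+o_\ep(1)}$ because $\ep^{\xi(Q-\alpha)}$ is precisely the natural scale on which local perturbations of $h$ near $z$ move $D_h(0,z)$.

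To make the local factor rigorous I would use the Markov decomposition $h|_{B_\ep(z)} = \mathring h_\ep^z + \mathfrak h_\ep^z$, with $\mathring h_\ep^z$ a zero-boundary GFF on $B_\ep(z)$ independent of $h|_{\BB C\setminus B_\ep(z)}$ and $\mathfrak h_\ep^z$ the harmonic extension (so $\mathfrak h_\ep^z(z)=h_\ep(z)$). Applying Axioms~III--IV via the map $w\mapsto(w-z)/\ep$, the internal metric equals $e^{\xi h_\ep(z)}\ep^{\xi Q}\cdot e^{\xi(\mathfrak h_\ep^z - h_\ep(z))}D_{\tilde h}$ pulled back from the unit disk, where $\tilde h$ is a zero-boundary GFF on $B_1$. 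On a high-probability good event $\mathfrak h_\ep^z - h_\ep(z)$ is bounded in $L^\infty(B_{\ep/2}(z))$ by $(\log\ep^{-1})^{1/2+o(1)}$ (a standard GFF estimate), and the unit-disk $D_{\tilde h}$-diameter is tight by~\cite{lqg-metric-estimates}. Replacing external sup by internal diameter on a good event (the lower bound is automatic since $D_h \leq D_h(\cdot,\cdot;B_\ep(z))$; the upper bound uses a separate estimate that rules out internal distances much larger than external sup, again from~\cite{lqg-metric-estimates}), the diameter constraint pins $h_\ep(z)$ as claimed, and a Gaussian tail bound on $h_\ep(z)\sim\mcl N(0,\log\ep^{-1}+O(1))$ yields the first factor.

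For the distance-window factor I would apply a Cameron--Martin shift. Let $\phi$ be a smooth bump supported in $B_{\ep/2}(z)$ with $\phi\equiv 1$ on $B_{\ep/4}(z)$ and $|\nabla\phi|\lesssim\ep^{-1}$. For $c$ in a short interval, the Radon--Nikodym derivative of the law of $h+c\phi$ with respect to the law of $h$ is bounded uniformly in $\ep$ (the Dirichlet norm of $c\phi$ is bounded). By Axiom~III, passing from $h$ to $h+c\phi$ multiplies $D_h$ inside $B_{\ep/4}(z)$ by $e^{\xi c}$; and on the good event above, any path from $0$ to $z$ must enter $B_{\ep/4}(z)$ and accumulate $D_h$-length of order $\ep^{\xi(Q-\alpha)}$ there. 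Hence $c\mapsto D_{h+c\phi}(0,z)$ is monotone with derivative of order $\ep^{\xi(Q-\alpha)}$, so varying $c$ over an interval of length $\ep^\zeta$ spreads this distance over a range of size $\gtrsim \ep^{\xi(Q-\alpha)+\zeta}$. Integrating against the bounded density yields a conditional probability bound $\ep^{\xi(Q-\alpha)+o_\zeta(1)+o_\ep(1)}$ on the distance window, which combined with the first factor gives the proposition.

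The main obstacle I foresee is justifying that any geodesic from $0$ to $z$ must in fact traverse essentially all of the internal diameter of $B_{\ep/4}(z)$ and that this property is stable under the Cameron--Martin shift by $O(\ep^\zeta)\cdot\phi$. This demands uniform geodesic regularity near $z$—of the type provided by the confluence and H\"older estimates in~\cite{gm-confluence,lqg-metric-estimates}—to rule out the possibility that a small local shift of the field reroutes the geodesic around $B_\ep(z)$ and thereby decouples $D_h(0,z)$ from the local scaling exploited in the local-factor step.
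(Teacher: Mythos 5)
Your short-range factor is in the right spirit (the paper does this via a moment bound from~\cite[Proposition 3.9]{lqg-metric-estimates} plus Chebyshev rather than pinning $h_\ep(z)$ directly, but both routes yield the $\ep^{\alpha^2/2}$ factor). The problem is in the long-range factor, and it is not the geodesic-regularity concern you flag at the end.

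Placing the bump $\phi$ in $B_{\ep/2}(z)$ makes the Cameron--Martin argument fail quantitatively. As you correctly observe, on the good event the derivative of $\theta(c)=D_{h+c\phi}(0,z)$ is of order $\ep^{\xi(Q-\alpha)}$: this is the natural scale of $D_h$ inside $B_\ep(z)$. But the window you must hit is $[\BB s-\ep^{\xi(Q-\alpha)-\zeta},\,\BB s+\ep^{\xi(Q-\alpha)-\zeta}]$, whose width is of \emph{comparable} order. With $\theta' \gtrsim \ep^{\xi(Q-\alpha)}$, the Lebesgue measure of the set of shifts $c$ with $\theta(c)$ in the window is $\lesssim \ep^{\xi(Q-\alpha)-\zeta}/\ep^{\xi(Q-\alpha)} = \ep^{-\zeta}$, which is \emph{larger} than one; sampling $c$ uniformly from any interval of bounded length (as the boundedness of the Radon--Nikodym derivative forces) gives a conditional probability bound of at most $O(\ep^{-\zeta})$, with no $\ep^{\xi(Q-\alpha)}$ factor at all. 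To manufacture the needed power you would have to let $c$ range over an interval of length $\gtrsim \ep^{-\zeta-\xi(Q-\alpha)}$, at which point $\BB E[M_X^p]$ blows up super-polynomially and Hölder no longer closes. Your last sentence---"integrating against the bounded density yields a conditional probability bound $\ep^{\xi(Q-\alpha)+o_\zeta(1)+o_\ep(1)}$"---does not follow from what precedes it.

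The paper's fix is to put the bump $\phi$ near $0$, not near $z$: it takes $\phi$ identically $1$ on a neighborhood $O$ of the origin and vanishing on $V$. Lemma~\ref{lem-theta-deriv} applied with the annulus $\ol{O\setminus B_r(0)}$ then gives $\theta'(x)\gtrsim D_h(0,\bdy O)$, which is order one. Now the window measure is $\preceq \ep^{\xi(Q-\alpha)-\zeta}/D_h(0,\bdy O)$ with $X\sim\op{Unif}[0,1]$, the $\ep^{-\zeta}$ is harmlessly absorbed into the $o_\zeta(1)$, and the $D_h(0,\bdy O)^{-1}$ factor is handled by its negative moments together with Hölder against the probability of the short-range event. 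This placement has a second essential benefit your construction lacks: since $\phi$ vanishes on $V\ni z$, the shift $h\mapsto h+X\phi$ does not perturb $\sup_{u,v\in B_\ep(z)}D_h(u,v)$ (by locality, Axiom~\ref{item-metric-local}), so the short-range and long-range events genuinely decouple under the shift; with your bump supported in $B_{\ep/2}(z)$, the shift changes both events simultaneously and the two factors do not cleanly multiply.

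Finally, the obstacle you anticipate (needing confluence/geodesic regularity to ensure paths from $0$ to $z$ traverse $B_{\ep/4}(z)$) is not a real issue: the annulus-crossing lower bound in the paper's Lemma~\ref{lem-theta-deriv} requires only that every near-geodesic cross $\mcl A$, which follows topologically from separation, with no control on where the geodesic goes; no confluence is invoked.
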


Throughout this section, all $o_\zeta(1)$ and $o_\ep(1)$ errors are required to satisfy the dependencies in Proposition~\ref{prop-event-upper}. 
Proposition~\ref{prop-event-upper} will be used directly to obtain the upper bound in Theorem~\ref{thm-thick-dim}.
To obtain the upper bound in Theorem~\ref{thm-bdy-dim}, we will look at the ``worst case" value of $\alpha$ in Proposition~\ref{prop-event-upper} (see Proposition~\ref{prop-bdy-prob}). 

We now explain the idea of the proof of Proposition~\ref{prop-event-upper}. 
The event $E_\alpha^\ep(z)$ of~\eqref{eqn-one-pt-event} is the intersection of a ``short-range event (the one involving $\sup_{u,v\in B_\ep(z)} D_h(u,v )$) and a ``long-range event" (the one involving $D_h(0,z )$).
The idea of the proof of Proposition~\ref{prop-event-upper} is that the short-range and long-range events are approximately independent from each other. 
We will first show, using basic moment estimates for $D_h$-diameters from~\cite{lqg-metric-estimates}, that the probability of the short-range event in~\eqref{eqn-one-pt-event} is bounded above by $\ep^{\alpha^2/2+o_\zeta(1) + o_\ep(1)}$ (Lemma~\ref{lem-diam-tail}).

We will then deal with the long-range event as follows. 
Let $\phi$ be a smooth, compactly supported bump function which is identically equal to 1 on a neighborhood of 0 and which is identically equal to 0 on $\ol V$.
Let $X$ be sampled uniformly from $[0,1]$, independently from $h$. 
Using Weyl scaling, one can show that the law of $D_{h+X\phi}(0,z)$ has a bounded density with respect to Lebesgue measure on $[0,\infty)$ (see Lemma~\ref{lem-theta-deriv}), from which we infer that the probability that
\eqbn
D_{h+X\phi}(0,z ) \in  \left[\BB s -  \ep^{\xi(Q-\alpha) -   \zeta}  , \BB s +  \ep^{\xi(Q-\alpha) -  \zeta} \right] 
\eqen
is at most $\ep^{\xi(Q-\alpha) +o_\zeta(1) + o_\ep(1)}$ (Lemma~\ref{lem-scaled-field-dist}). 
From this, we obtain a version of Proposition~\ref{prop-event-upper} with $h+X\phi$ in place of $h$ (Lemma~\ref{lem-event-upper'}). 
We then deduce Proposition~\ref{prop-event-upper} by bounding the Radon-Nikodym derivative between the laws of $h$ and $h+X\phi$ (which is described explicitly in Lemma~\ref{lem-gff-abs-cont}).

Let us now proceed with the details. 
The factor of $\ep^{\alpha^2/2 }$ on the right side of~\eqref{eqn-event-upper} comes from the following tail estimate for $D_h$-diameters.

\begin{lem} \label{lem-diam-tail}
For each $z \in V$, each $\ep > 0$, and each $\alpha \in [0,2]$, 
\eqb \label{eqn-diam-tail-pos}
\BB P\left[ \sup_{u,v\in B_\ep(z)} D_h(u,v  ) > \ep^{\xi (Q-\alpha)} \right] \leq \ep^{ \alpha^2/2 + o_\ep(1)    } .
\eqe
Moreover, for each $\alpha < 0$, 
\eqb \label{eqn-diam-tail-neg}
\BB P\left[ \sup_{u,v\in B_\ep(z)} D_h(u,v  ) <  \ep^{\xi (Q-\alpha)} \right] \leq \ep^{ \alpha^2/2 + o_\ep(1)    } .
\eqe  
\end{lem}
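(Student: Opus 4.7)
The plan is to deduce both estimates from a Markov inequality applied to an appropriately chosen moment of the diameter $A_\ep := \sup_{u,v\in B_\ep(z)} D_h(u,v)$, using the scaling/translation invariance of the whole-plane GFF together with the moment bounds for $D_h$-diameters from~\cite{lqg-metric-estimates}.

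First I would rescale. Let $\hat h(x) := h(\ep x + z) - h_\ep(z)$. Since the whole-plane GFF modulo additive constant is translation and scale invariant (Appendix~\ref{sec-gff-prelim}), and since $\hat h$ has zero average over $\bdy\BB D$, we have $\hat h \eqD h$. Combining Axioms~\ref{item-metric-f} and~\ref{item-metric-coord} applied to the affine map $\phi(u) = (u-z)/\ep$ yields the exact scaling identity
\eqbn
A_\ep = \ep^{\xi Q}\, e^{\xi h_\ep(z)} \sup_{x,y\in\ol{\BB D}} D_{\hat h}(x,y) ,
\eqen
so $\ep^{-\xi Q} e^{-\xi h_\ep(z)} A_\ep$ is distributed as the $D_h$-diameter of $\ol{\BB D}$. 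The moment estimates from~\cite{lqg-metric-estimates} give that for every $p$ in a range containing $[-2/\xi,\,2/\xi]$, this rescaled diameter has finite $p$th moment, uniformly in $z\in V$ and small $\ep$. Combining with the Gaussian computation $\BB E[e^{p\xi h_\ep(z)}] = \ep^{-p^2\xi^2/2 + O(1)}$ (valid since $h_\ep(z)$ has variance $\log \ep^{-1}+O(1)$ for $z\in V$) and handling the correlation between $h_\ep(z)$ and $\hat h$ by a small H\"older-type loss, I would obtain
\eqbn
\BB E[A_\ep^{p}] \leq \ep^{p\xi Q - p^2 \xi^2/2 + o_\ep(1)}
\eqen
for $|p| \leq 2/\xi$.

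Finally, I would optimize the moment exponent. For $\alpha \in [0,2]$, taking $p = \alpha/\xi \geq 0$ and applying Markov gives
\eqbn
\BB P[A_\ep > \ep^{\xi(Q-\alpha)}] \leq \ep^{-p\xi(Q-\alpha)} \BB E[A_\ep^p] \leq \ep^{p\xi\alpha - p^2\xi^2/2 + o_\ep(1)} = \ep^{\alpha^2/2 + o_\ep(1)} ,
\eqen
which is~\eqref{eqn-diam-tail-pos}. For $\alpha < 0$, the same choice $p = \alpha/\xi < 0$ and the reverse-monotonicity of $x \mapsto x^p$ turn the event $\{A_\ep < \ep^{\xi(Q-\alpha)}\}$ into $\{A_\ep^p > \ep^{p\xi(Q-\alpha)}\}$, after which identical algebra yields~\eqref{eqn-diam-tail-neg}.

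The main obstacle is Step~2, the moment bound. A clean statement of the form $\BB E[(\ep^{-\xi Q} e^{-\xi h_\ep(z)} A_\ep)^p] \leq C_p$ is cleanest to cite directly from~\cite{lqg-metric-estimates} if available, since it sidesteps the subtle fact that $h_\ep(z)$ is not independent from $\hat h$ for the whole-plane GFF. Otherwise, one has to separate the radial and lateral parts of the field (the circle-average process being a Brownian motion independent of the lateral part) and absorb the mild dependence into the $o_\ep(1)$ error; the constraint $\alpha \in [-2,2]$ reflects the range of $p$ for which the needed positive/negative moments are known to be finite.
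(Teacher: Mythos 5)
Your overall plan is the same as the paper's: rescale, get a sharp moment bound $\BB E[A_\ep^p] = \ep^{\xi Q p - \xi^2 p^2/2 + o_\ep(1)}$, then Markov with $p = \alpha/\xi$ (and negative $p$ for $\alpha<0$). But the step you flag as "the main obstacle" is in fact a genuine gap as written, and the fix requires a different decomposition than the Hölder patch you propose.

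The exact scaling identity $A_\ep = \ep^{\xi Q} e^{\xi h_\ep(z)}\, Y_\ep$ with $Y_\ep := \sup_{x,y\in\ol{\BB D}} D_{\hat h}(x,y)$ is correct, but $h_\ep(z)$ and $Y_\ep$ are \emph{not} independent, for two reasons: (i) $D_{\hat h}$-geodesics between points of $\ol{\BB D}$ need not stay in $\ol{\BB D}$, so $Y_\ep$ depends on the whole field, and (ii) even if you pass to the internal metric $D_{\hat h}(\cdot,\cdot;B_2(0))$, the field $\hat h|_{B_2(0)}$ involves the radial increments $h_r(z)-h_\ep(z)$ for $r\in(\ep,2\ep)$, which are correlated with $h_\ep(z)-h_1(z)$. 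Your Hölder idea, applied directly, gives $\BB E[e^{p\xi h_\ep(z)} Y_\ep^p] \leq \BB E[e^{p\xi q h_\ep(z)}]^{1/q}\BB E[Y_\ep^{p q'}]^{1/q'}$. Because $Y_\ep$ only has moments below $4 d_\gamma/\gamma^2$, $q'$ cannot go to $\infty$, so $q$ stays bounded away from $1$; but $\BB E[e^{p\xi q h_\ep(z)}]^{1/q} = \ep^{-p^2\xi^2 q/2 + o_\ep(1)}$, and the factor of $q>1$ ruins the exponent because $h_\ep(z)$ has variance $\log\ep^{-1}$, not $O(1)$. That is exactly the kind of loss that cannot be absorbed into $o_\ep(1)$.

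The paper's resolution, which your closing sentence gestures toward but which needs to be built into the main line of the argument, is to (a) pass to the \emph{internal} diameter of $B_\ep(z)$ inside $B_{2\ep}(z)$, which by locality is a function of $(h-h_{2\ep}(z))|_{B_{2\ep}(z)}$; (b) normalize by $h_{2\ep}(z)$ (not $h_\ep(z)$) so that the internal diameter factor $e^{-\xi h_{2\ep}(z)}\sup D_h(\cdot,\cdot;B_{2\ep}(z))$ is genuinely independent of the Brownian increment $h_{2\ep}(z)-h_1(z)$, which carries all the $\log\ep^{-1}$-variance; and (c) only then apply Hölder, solely to remove the remaining factor $e^{-\xi h_1(z)}$, which for $z\in V$ has variance $O(1)$, so the Hölder loss really is $o_\ep(1)$. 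Your proposal puts the Hölder burden on the large-variance Gaussian, which is the wrong split. For the negative-$\alpha$ case you also need to name the source of negative moments: the paper replaces $A_\ep$ by the lower bound $D_h(z,\bdy B_\ep(z))$, which is determined by $h|_{B_\ep(z)}$ and has all negative moments by \cite[Proposition 3.1]{lqg-metric-estimates}; your sketch leaves this unexplained.
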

\begin{proof} 
We will prove~\eqref{eqn-diam-tail-pos}, then comment on the modifications necessary to get~\eqref{eqn-diam-tail-neg} at the end of the proof.
By the moment bound for LQG diameters from~\cite[Proposition 3.9]{lqg-metric-estimates} (with $\BB r =\ep$, $\ol{B_1(0)}$ in place of $K$, $B_2(0)$ in place of $U$), for each $z\in V$, each $\ep > 0$, and each $p\in [0,4d_\gamma/\gamma^2)$, 
\eqb \label{eqn-use-diam-moment}
\BB E\left[ \left(\ep^{-\xi Q} e^{-\xi h_{2\ep}(z)} \sup_{u,v\in B_\ep(z)} D_h\left(u,v  ; B_{2\ep}(z) \right) \right)^p \right]  \preceq 1
\eqe
with the implicit constant depending only on $ V,p,\gamma$. 
 
The random variable $h_{2\ep}(z) - h_1(z)$ is independent from $(h-h_{2\ep}(z))|_{B_{2\ep}(z)}$.
By Axiom~\ref{item-metric-f}, $D_{h-h_{2\ep}(z)} = e^{-\xi h_{2\ep}(z)} D_h$. 
By this and the locality of the metric (Axiom~\ref{item-metric-local}), the internal metric $e^{-\xi h_{2\ep}(z)} D_h\left(\cdot,\cdot ; B_{2\ep}(z) \right)$ is independent from $h_{2\ep}(z) - h_1(z)$. 
Since $h_{2\ep}(z) - h_1(z)$ is centered Gaussian with variance $\log((2\ep)^{-1})$, we can therefore apply~\eqref{eqn-use-diam-moment} to obtain
\allb \label{eqn-diam-moment-scaled}
&\BB E\left[ \left( e^{-\xi h_1(z)} \sup_{u,v\in B_\ep(z)} D_h(u,v   ) \right)^p \right]   \notag \\
&\qquad \leq \BB E\left[ \left( e^{-\xi h_1(z)} \sup_{u,v\in B_\ep(z)} D_h(u,v ; B_{2\ep}(z) ) \right)^p \right]   \notag \\
&  \qquad = \BB E\left[ \left(e^{-\xi h_{2\ep}(z)} \sup_{u,v\in B_\ep(z)} D_h\left(u,v ; B_{2\ep}(z) \right) \right)^p \right] 
\BB E\left[ e^{p \xi (h_{2\ep}(z) - h_1(z))} \right]
= \ep^{\xi Q p - \xi^2 p^2/2 + o_\ep(1)} .
\alle

For $z\in V$, the random variable $h_1(z)$ is centered Gaussian with variance bounded above by a constant depending only on $V$. 
In particular, $\BB E[e^{q h_1(z)}]$ is bounded above by a constant depending only on $V,q$ for each $q \in \BB R$. 
By applying H\"older's inequality with exponents $1+\delta$ and $(1+\delta)/\delta$ and then sending $\delta \rta 0$ sufficiently slowly as $\ep\rta 0$, we therefore obtain from~\eqref{eqn-diam-moment-scaled} that
\eqb \label{eqn-diam-moment-unscaled} 
\BB E\left[ \left(   \sup_{u,v\in B_\ep(z)} D_h(u,v  ) \right)^p \right]  =  \ep^{\xi Q p - \xi^2 p^2/2 + o_\ep(1)} .
\eqe
By the Chebyshev inequality and~\eqref{eqn-diam-moment-unscaled}, for $\alpha \geq 0$ and $p\in [0,4d_\gamma/\gamma^2)$, 
\eqb \label{eqn-diam-moment-p-upper}
\BB P\left[   \sup_{u,v\in B_\ep(z)} D_h(u,v   ) > \ep^{\xi(Q-\alpha)} \right] 
%\leq \ep^{\xi Q p - \xi^2 p^2 / 2 - \xi p (Q-\alpha)  + o_\ep(1) } 
\leq \ep^{p \xi \alpha - \xi^2 p^2/2 + o_\ep(1)} .
\eqe  
The exponent $p\xi \alpha - \xi^2p^2/2$ on the right side of each of~\eqref{eqn-diam-moment-p-upper} is maximized for a fixed choice of $\alpha$ when $p = \alpha/\xi$, in which case it equals $\alpha^2/2$. Note that for $\alpha \in [0,2]$, we have $\alpha/\xi  = \alpha d_\gamma/\gamma \in [0,4d_\gamma/\gamma^2)$.
By setting $p=\alpha/\xi$, we now obtain~\eqref{eqn-diam-tail-pos} from~\eqref{eqn-diam-moment-p-upper}.

The proof of~\eqref{eqn-diam-tail-neg} is essentially identical, except we use the lower bound $D_h(0,\bdy B_\ep(z))$ which comes from~\cite[Proposition 3.1]{lqg-metric-estimates} instead of~\cite[Proposition 3.9]{lqg-metric-estimates}. Note that $D_h(0,\bdy B_\ep(z))$ is determined by $h|_{B_\ep(z)}$ and is at most $\sup_{u,v\in B_\ep(z)} D_h(u,v)$. 
\end{proof}

\begin{comment}

Solve[D[xi p alpha - xi^2 p^2/2, p] == 0, p]
FullSimplify[xi p alpha - xi^2 p^2/2 /. %]

\end{comment}

Lemma~\ref{lem-diam-tail} gives us an upper bound of $\ep^{\alpha^2/2 + o_\zeta(1) + o_\ep(1)}$ for the probability of the short-range event in the definition of $E_\alpha^\ep(z)$ from~\eqref{eqn-one-pt-event}. 
In order to separate this event from the long-range event $\left\{  \sup_{u,v\in B_\ep(z)} D_h(u,v )  \in \left[  \ep^{\xi(Q-\alpha) + \zeta} , \ep^{\xi(Q-\alpha) - \zeta} \right]   \right\}$ appearing in~\eqref{eqn-one-pt-event}, we will introduce an auxiliary field. 

Let $O$ be an open set containing 0 which lies at positive distance from $V$ and from $\bdy\BB D$.  
Let $\phi : \BB C \rta [0,1]$ be a smooth compactly supported bump function which is equal to 1 on $O$ and which vanishes on $V\cup \bdy \BB D$, chosen in a manner depending only on $O,V$ (the reason why we need $\phi$ to vanish on $\bdy\BB D$ is so that adding a multiple of $\phi$ to $h$ does not change the fact that the average of $h$ over $\bdy\BB D$ is zero). 
Let $X$ be a uniform $[0,1]$ random variable sampled independently from $h$. 
The following lemma plus an absolute continuity argument will lead to our desired upper bound for the long-range event in the definition of $E_\alpha^\ep(z)$. 

\begin{lem} \label{lem-scaled-field-dist}
For each $\delta > 0$, each $z \in V$, and each $\BB s > 0$, a.s.\ 
\eqb \label{eqn-scaled-field-dist}
\BB P\left[ D_{h+X\phi}(0,z  ) \in [\BB s -\delta ,\BB s+\delta] \,\big| \, h \right] \preceq \frac{\delta}{D_h(0,\bdy O)} 
\eqe
with a deterministic implicit constant depending only on $\gamma$.
\end{lem}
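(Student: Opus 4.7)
The plan is to condition on $h$ and study the deterministic function
\[
\theta(x) := D_{h+x\phi}(0,z), \qquad x \in [0,1].
\]
Because $X$ is uniform on $[0,1]$ and independent of $h$, the conditional probability on the left-hand side of \eqref{eqn-scaled-field-dist} is exactly the Lebesgue measure of $\{x \in [0,1] : \theta(x) \in [\BB s - \delta, \BB s + \delta]\}$. So it suffices to show that $\theta$ is strictly increasing with a quantitative lower bound on its increments, specifically that for $0 \le x \le x' \le 1$,
\[
\theta(x') - \theta(x) \;\ge\; \bigl(e^{\xi(x'-x)} - 1\bigr)\, D_h(0,\bdy O).
\]
Once this is in hand, using $e^{\xi(x'-x)} - 1 \ge \xi(x'-x)$ for $x'-x \ge 0$ shows the preimage under $\theta$ of any interval of length $2\delta$ has Lebesgue measure at most $2\delta / (\xi D_h(0,\bdy O))$, which gives the lemma (with implicit constant $2/\xi$, depending only on $\gamma$).

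The main step is therefore the monotonicity estimate, which I would derive directly from Weyl scaling (Axiom \ref{item-metric-f}). Fix a path $P$ from $0$ to $z$, parametrized by $D_{h+x\phi}$-length. By Axiom \ref{item-metric-f},
\[
\op{len}(P ; D_{h+x'\phi}) \;=\; \int_0^{\op{len}(P ; D_{h+x\phi})} e^{\xi(x'-x)\,\phi(P(s))} \,ds.
\]
Now $\phi \ge 0$ everywhere and $\phi \equiv 1$ on $O$. Since $P$ starts at $0 \in O$ and ends at $z \in V$ with $\phi = 0$ on $V$, the path $P$ must exit $O$, so the set of parameters $s$ for which $P(s) \in O$ has $D_{h+x\phi}$-measure at least $D_{h+x\phi}(0,\bdy O)$. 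On this set the integrand equals $e^{\xi(x'-x)}$, while on the complement the integrand is $\ge 1$. Hence
\[
\op{len}(P ; D_{h+x'\phi}) \;\ge\; \op{len}(P ; D_{h+x\phi}) + \bigl(e^{\xi(x'-x)}-1\bigr)\, D_{h+x\phi}(0,\bdy O).
\]
The additive correction does not depend on $P$, so rearranging and taking the infimum over paths $P$ from $0$ to $z$ on both sides yields
\[
\theta(x') \;\ge\; \theta(x) + \bigl(e^{\xi(x'-x)}-1\bigr)\, D_{h+x\phi}(0,\bdy O).
\]
Finally, $\phi \ge 0$ together with Axiom \ref{item-metric-f} gives $D_{h+x\phi} \ge D_h$ pointwise, hence $D_{h+x\phi}(0,\bdy O) \ge D_h(0,\bdy O)$, completing the claimed monotonicity estimate.

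The only real obstacle is conceptual rather than technical: one has to notice that the right comparison is between \emph{the same} path $P$ measured in two different metrics (so that Weyl scaling applies cleanly), and then exploit the fact that $P$ must spend $D_h$-length at least $D_h(0,\bdy O)$ on the region $\{\phi = 1\}$ before it can escape toward $z$. Everything else is routine: the crossing-of-$\bdy O$ argument, the monotonicity $D_{h+x\phi} \ge D_h$, and the passage from the conditional law of $\theta(X)$ to Lebesgue measure via Fubini and the independence of $X$ from $h$.
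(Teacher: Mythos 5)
Your proof is correct, and it is in the same family as the paper's but noticeably more direct. The paper proves Lemma~\ref{lem-scaled-field-dist} by first establishing the general Lemma~\ref{lem-theta-deriv}, which bounds $\theta'(x)$ from below in terms of a $D_h$-distance across an annulus $\mathcal A$ on which $\phi\equiv 1$. That lemma is set up for an annulus (with $K_1$ inside and $K_2$ outside) because it is reused later for $\phi_{z,m}$ in Lemma~\ref{lem-dist-deriv}; to apply it here the paper must take $\mathcal A=\overline{O\setminus B_r(0)}$ and send $r\to 0$. You bypass both the annulus and the derivative formulation: you bound the increment $\theta(x')-\theta(x)$ directly, and you get the effective region ``for free'' as the initial segment of any path $P$ from $0$ to $z$ before it first exits $O$, whose $D_{h+x\phi}$-length is automatically at least $D_{h+x\phi}(0,\bdy O)\ge D_h(0,\bdy O)$. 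The logical skeleton is the same (Weyl scaling forces any path to pick up $D_{h+x'\phi}$-length proportional to $e^{\xi(x'-x)}$ while inside $\{\phi=1\}$, plus monotonicity $D_{h+x\phi}\ge D_h$, plus Fubini over the uniform $X$), but yours is shorter and needs no $r\to 0$ limit. One small caveat, which the paper also glosses over in its proof of Lemma~\ref{lem-theta-deriv}: the identity $\op{len}(P;D_{h+x'\phi})=\int_0^{\op{len}(P;D_{h+x\phi})}e^{\xi(x'-x)\phi(P(s))}\,ds$ for a fixed path $P$ is not literally what Axiom~\ref{item-metric-f} asserts (the axiom characterizes distances, not lengths of individual paths). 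For the direction you need it is cleanest to argue by decomposition: split $[0,L]$ into the open set where $P\in O$ (a countable union of intervals on which lengths scale exactly by $e^{\xi(x'-x)}$) and its complement (on which $D_{h+x'\phi}$-length $\ge D_{h+x\phi}$-length), then sum. This is essentially how the paper handles the analogous step in Lemma~\ref{lem-theta-deriv}, and it only uses the two elementary consequences of Weyl scaling (exact scaling where $\phi\equiv 1$, monotonicity where $\phi\ge 0$) rather than the full integral identity. With that clarification your argument is complete; it trades the reusable generality of Lemma~\ref{lem-theta-deriv} for a shorter self-contained proof of the special case.
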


We will deduce Lemma~\ref{lem-scaled-field-dist} from the following more general statement, which will be re-used later.

\begin{figure}[t!]
 \begin{center}
\includegraphics[scale=.75]{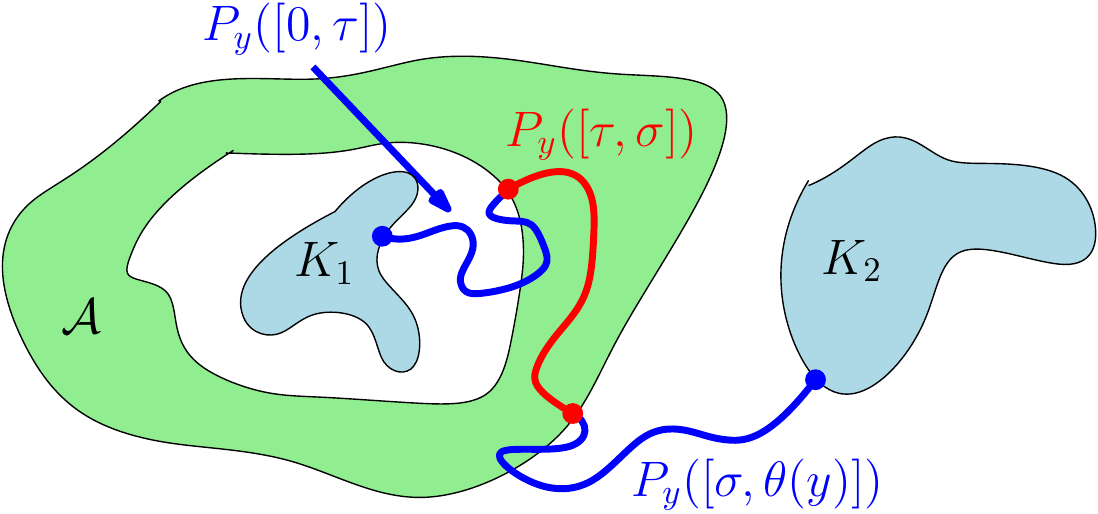}
\vspace{-0.01\textheight}
\caption{Illustration of the proof of Lemma~\ref{lem-theta-deriv}. To prove a lower bound for $(\theta(y) - \theta(x))/(y-x)$, we consider a $D_{h+y\phi }$-geodesic $P_y$ from $K_1$ to $K_2$. We then use Weyl scaling (Axiom~\ref{item-metric-f}) to upper-bound the $D_{h+x\phi }$-length of the segment $P_y|_{[\sigma,\tau]}$, which is contained in $\mcl A$.
}\label{fig-theta-deriv}
\end{center}
\vspace{-1em}
\end{figure}

\begin{lem} \label{lem-theta-deriv}
Let $K_1,K_2\subset \BB C$ be compact sets and let $\mcl A\subset\BB C$ be a region with the topology of a closed Euclidean annulus. Assume that $K_1$ (resp.\ $K_2$) is contained in the bounded (resp.\ unbounded) connected component of $\BB C\setminus \mcl A$. 
Let $\phi : \BB C\rta [0,1]$ be a continuous function which is identically equal to 1 on $\mcl A$. 
If we set $\theta(x) := D_{h+x\phi}(K_1,K_2)$ for each $x\geq 0$, then a.s.\ $\theta$ is strictly increasing and locally Lipschitz continuous and 
\eqb \label{eqn-theta-deriv}
\theta'(x) \geq \xi e^{\xi x} D_h\left( \bdy_{\op{in}} \mcl A , \bdy_{\op{out}} \mcl A \right) ,\quad \text{for Lebesgue-a.e.\ $x\geq 0$}
\eqe
where $\bdy_{\op{in}}\mcl A$ and $\bdy_{\op{out}}\mcl A$ denote the inner and outer boundaries of $\mcl A$.
\end{lem}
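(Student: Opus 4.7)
The plan is to reduce the problem to a one-line computation by using Weyl scaling (Axiom~\ref{item-metric-f}) to write $D_{h+x\phi}$-lengths as $D_h$-arclength integrals, and then to compare increments $\theta(y)-\theta(x)$ by testing with near-geodesics for the larger metric. The identity I will rely on is that, for any path $P$ from $K_1$ to $K_2$ parametrized by $D_h$-arclength,
\[
\op{len}(P;D_{h+x\phi}) = \int_0^{\op{len}(P;D_h)} e^{\xi x\phi(P(t))}\,dt.
\]
Since $\phi$ takes values in $[0,1]$, this quantity lies between $\op{len}(P;D_h)$ and $e^{\xi x}\op{len}(P;D_h)$, so finiteness of $D_{h+x\phi}$-length of $P$ is equivalent to finiteness of its $D_h$-length. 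In particular, any $\ep$-near-geodesic for $D_{h+y\phi}$, which exists by the length-space axiom (Axiom~\ref{item-metric-length}), is $D_h$-rectifiable and may be re-parametrized by $D_h$-arclength.

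The main step will be the increment inequality. For $y > x \geq 0$ and any $D_h$-rectifiable path $P$ from $K_1$ to $K_2$, the identity above gives
\[
\op{len}(P;D_{h+y\phi}) - \op{len}(P;D_{h+x\phi}) = \int_0^{\op{len}(P;D_h)} \bigl(e^{\xi y\phi(P(t))} - e^{\xi x\phi(P(t))}\bigr)\,dt \;\geq\; (e^{\xi y}-e^{\xi x})\,L_{\mcl A}(P),
\]
where $L_{\mcl A}(P)$ is the $D_h$-length of $P\cap \mcl A$, since the integrand is nonnegative ($\phi\geq 0$) and equals $e^{\xi y}-e^{\xi x}$ wherever $\phi=1$. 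A topological argument shows that $P$ contains a sub-arc inside $\mcl A$ with one endpoint on $\bdy_{\op{in}}\mcl A$ and one on $\bdy_{\op{out}}\mcl A$ (take the last time $P$ exits the bounded component of $\BB C\setminus\mcl A$, then the next time $P$ hits $\bdy_{\op{out}}\mcl A$), so $L_{\mcl A}(P) \geq D_h(\bdy_{\op{in}}\mcl A,\bdy_{\op{out}}\mcl A)$. Applying this with $P$ an $\ep$-near-geodesic for $D_{h+y\phi}$ and sending $\ep\downarrow 0$ yields
\[
\theta(y)-\theta(x) \;\geq\; (e^{\xi y}-e^{\xi x})\, D_h(\bdy_{\op{in}}\mcl A,\bdy_{\op{out}}\mcl A).
\]

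For the matching upper Lipschitz bound I would use the opposite test path: a near-geodesic for $D_{h+x\phi}$, combined with the pointwise estimate $e^{\xi y\phi}\leq e^{\xi(y-x)}e^{\xi x\phi}$ that holds because $\phi\leq 1$. This gives $\theta(y)\leq e^{\xi(y-x)}\theta(x)$, hence local Lipschitz continuity of $\theta$. Rademacher's theorem then yields differentiability almost everywhere, and dividing the main inequality by $y-x$ and sending $y\downarrow x$ at a differentiability point produces~\eqref{eqn-theta-deriv}. Strict monotonicity is an immediate consequence of the same main inequality together with the fact that $\bdy_{\op{in}}\mcl A$ and $\bdy_{\op{out}}\mcl A$ are disjoint compact sets and $D_h$ induces the Euclidean topology, so $D_h(\bdy_{\op{in}}\mcl A,\bdy_{\op{out}}\mcl A)>0$ almost surely. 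I do not anticipate a deep obstacle here; the only delicate point is the bookkeeping around parametrizing near-geodesics by $D_h$-arclength and checking that the subarc inside $\mcl A$ used in the lower bound indeed crosses from the inner to the outer boundary, both of which are handled in the opening paragraph of the plan.
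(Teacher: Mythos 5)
Your proof is correct and follows essentially the same route as the paper's: both compare $\theta(x)$ and $\theta(y)$ by testing with a (near-)optimal path for the larger metric $D_{h+y\phi}$ and observing that such a path must cross the annulus $\mcl A$, which yields the increment bound $\theta(y)-\theta(x)\geq(e^{\xi y}-e^{\xi x})\,D_h(\bdy_{\op{in}}\mcl A,\bdy_{\op{out}}\mcl A)$. The only cosmetic differences are that the paper uses an actual $D_{h+y\phi}$-geodesic (justified by a compactness argument) and bounds its $D_{h+x\phi}$-length by decomposing into the crossing segment $P_y|_{[\tau,\sigma]}\subset\mcl A$ and its complement, whereas you use $\ep$-near-geodesics supplied directly by the length-space axiom together with the integral form of Weyl scaling; both give the same bound on $\theta(y)-\theta(x)$, hence the same derivative inequality.
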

\begin{proof}
See Figure~\ref{fig-theta-deriv} for an illustration of the statement and proof. 
Since $\phi$ is non-negative it is clear from Weyl scaling (Axiom~\ref{item-metric-f}) that $\theta$ is a.s.\ non-decreasing and locally Lipschitz continuous.
In particular, $\theta$ is absolutely continuous. 
 
Let $y > x \geq 0$. To prove~\eqref{eqn-theta-deriv} we will prove a lower bound for $(\theta(y) - \theta(x))/(y-x)$ then send $y\rta x$. 
For this purpose, let $P_{y} : [0,  \theta(y) ] \rta \BB C$ be a path from $K_1$ to $K_2$ of minimal $D_{h+y \phi }$-length, parameterized by its $D_{h+y\phi}$-length (such a geodesic exists by a straightforward compactness argument).
We will upper-bound the $D_{h+x\phi }$-length of $P_y$.
 
Let $\sigma$ be the first time that $P_{y}$ hits $\bdy_{\op{out}} \mcl A$ and let $\tau$ be the last time before $\sigma$ at which $P_y$ hits $\bdy_{\op{in}} \mcl A$, so that $P_y|_{[\tau,\sigma]}\subset \mcl A$.
Since $\phi $ is identically equal to 1 on $\mcl A$, the internal metric of $D_{h+y\phi }$ on $\mcl A$ is precisely $e^{\xi y}$ times the corresponding internal metric of $D_h$.  
Since $P_y$ is parametrized by $D_{h+y\phi }$-length,
\eqb \label{eqn-across-annulus0}
\sigma-\tau 
\geq D_{h+y\phi}\left( \bdy_{\op{in}} \mcl A , \bdy_{\op{out}} \mcl A \right) 
= e^{\xi y} D_h\left( \bdy_{\op{in}} \mcl A , \bdy_{\op{out}} \mcl A \right)   .
\eqe 
 
Since $\phi $ is identically equal to 1 on $\mcl A$, the $D_{h+ x\phi }$-length of $P_{y}|_{[\tau,\sigma]}$ is exactly $e^{-\xi(y-x)} (\sigma-\tau)$. 
Since $y\geq x$ and $\phi  \geq 0$, the $D_{h+x\phi }$-length of the union of the complementary segments $P_{y}|_{[0,\tau] \cup [\sigma,\theta(y)]}$ is bounded above by its $D_{h+y\phi }$-length, which is exactly $  \theta(y)  - (\sigma-\tau)$. 
Therefore,
\allb
\theta(x) 
&= D_{h+x\phi }\left( K_1,K_2 \right) \notag\\
&\leq \left( \text{$D_{h+x\phi }$-length of $P_{y}$} \right)  \notag \\
&\leq  \theta(y)  - (\sigma-\tau)  + e^{-\xi(y-x)} (\sigma-\tau)   \notag \\
&= \theta(y)   - (1-e^{-\xi(y-x)})(\sigma-\tau)  \notag \\
&\leq  \theta(y)   -  (1-e^{-\xi(y-x)}) e^{\xi y}  D_h\left( \bdy_{\op{in}} \mcl A , \bdy_{\op{out}} \mcl A \right)  \quad \text{(by~\eqref{eqn-across-annulus0})} .
\alle
Re-arranging gives
\eqb
\frac{\theta(y) - \theta(x)}{y - x} \geq   \frac{1-e^{-\xi(y-x)}}{y-x} e^{\xi y }  D_h\left( \bdy_{\op{in}} \mcl A , \bdy_{\op{out}} \mcl A \right)   .
\eqe
Sending $y\rta x$ now gives~\eqref{eqn-dist-deriv}.
\end{proof}

\begin{proof}[Proof of Lemma~\ref{lem-scaled-field-dist}]
Let $\phi$ be as in the lemma statement and let $\theta(x) := D_{h+x\phi}(0,z)$. 
Let $r $ be a small positive number which is less than the Euclidean distance from 0 to $\bdy O$. By Lemma~\ref{lem-theta-deriv} applied with $K_1 = \{0\}$, $K_2 = \{z\}$, and $\mcl A = \ol{O\setminus B_r(0)}$, we get that $\theta$ is strictly increasing and absolutely continuous and $\theta'(x) \geq \xi e^{\xi x} D_h(\bdy B_r(0) , \bdy O)$ for each $x\geq 0$. Sending $r \rta 0 $ and restricting to $x\in [0,1]$ leads to $\theta'(x) \succeq D_h(0,\bdy O)$ for each $x\in [0,1]$. 
By integrating, we get $\theta(y) - \theta(x) \succeq (y-x) D_h(0,\bdy O)$ for each $x,y\in [0,1]$, which implies that $\theta^{-1}(b) - \theta^{-1}(a) \preceq (b-a) D_h(0,\bdy O)^{-1}$ for each $a,b \in [\theta(0) ,\theta(1)]$. 
Since $X$ is uniform on $[0,1]$ and is independent from $h$, we infer that
\allb
\BB P\left[ D_{h+X\phi}(0,z  ) \in [\BB s -\delta ,\BB s+\delta] \,\big| \, h \right] 
&= \left| \left\{ x\in [0,1] : \theta(x) \in [\BB s-\delta , \BB s + \delta ] \right\} \right| \notag \\
&= \theta^{-1}(\BB s +\delta) - \theta^{-1}(\BB s -\delta) \notag \\ 
&\preceq \frac{\delta}{D_h(0,\bdy O)} .
\alle
\end{proof}

We can now prove the analog of Proposition~\ref{prop-event-upper} with $h+X\phi$ in place of $h$.

\begin{lem} \label{lem-event-upper'}
Define the event $ E_\alpha^\ep(z ; h + X\phi)$ in exactly the same manner as in~\eqref{eqn-one-pt-event} above but with $h+X\phi$ in place of $h$.
For each $z\in V$, each $\ep > 0$, and each $\alpha \in [-2,2]$, 
\eqb \label{eqn-event-upper'} 
\BB P\left[   E_\alpha^\ep(z ; h + X\phi) \right] \leq \ep^{ \xi(Q-\alpha) + \alpha^2/2  + o_\zeta(1) +  o_\ep(1)    } .
\eqe 
\end{lem}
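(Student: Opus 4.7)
The plan is to exploit the approximate independence between the short-range (diameter) event and the long-range (distance from $0$) event in the definition of $E_\alpha^\ep(z; h+X\phi)$: Lemma~\ref{lem-diam-tail} will supply a factor of $\ep^{\alpha^2/2}$ for the former, Lemma~\ref{lem-scaled-field-dist} will supply a factor of $\ep^{\xi(Q-\alpha)}$ for the latter, and their product is exactly the target exponent.

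First I would reduce the short-range event to a $\sigma(h)$-measurable event. Since $\phi$ vanishes on $V$ and $z\in V$, for every sufficiently small $\ep$ (depending only on $z,V,O$) the ball $B_{2\ep}(z)$ is contained in $V$, so $\phi\equiv 0$ on $B_{2\ep}(z)$. Axioms~\ref{item-metric-local} and~\ref{item-metric-f} then give $D_{h+X\phi}(\cdot,\cdot;B_{2\ep}(z)) = D_h(\cdot,\cdot;B_{2\ep}(z))$ a.s. In particular, $\sup_{u,v\in B_\ep(z)} D_{h+X\phi}(u,v)$ is sandwiched between the $X$-independent quantities $\sup_{u,v\in B_\ep(z)} D_h(u,v)$ (below, using $\phi\geq 0$ plus Weyl scaling) and $\sup_{u,v\in B_\ep(z)} D_h(u,v;B_{2\ep}(z))$ (above). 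Hence the short-range event is contained in a $\sigma(h)$-measurable event $S'$. When $\alpha\geq 0$, I take $S' = \{\sup D_h(u,v;B_{2\ep}(z)) \geq \ep^{\xi(Q-\alpha)+\zeta}\}$ and apply~\eqref{eqn-diam-tail-pos} with $\alpha$ shifted by $-\zeta/\xi$; when $\alpha<0$, I take $S' = \{\sup D_h(u,v) \leq \ep^{\xi(Q-\alpha)-\zeta}\}$ and apply~\eqref{eqn-diam-tail-neg} with $\alpha$ shifted by $+\zeta/\xi$. Either way the shift contributes only an $o_\zeta(1)$ correction to the exponent, giving $\BB P[S'] \leq \ep^{\alpha^2/2 + o_\zeta(1) + o_\ep(1)}$.

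For the long-range event, set $\delta := \ep^{\xi(Q-\alpha)-\zeta}$. Lemma~\ref{lem-scaled-field-dist} gives
\[
\BB P\bigl[D_{h+X\phi}(0,z)\in[\BB s-\delta,\BB s+\delta] \,\big|\, h\bigr] \preceq \delta/D_h(0,\bdy O) \quad \text{a.s.}
\]
Since $S'$ is $\sigma(h)$-measurable and the short-range event is contained in $S'$, conditioning on $h$ yields
\[
\BB P\bigl[E_\alpha^\ep(z;h+X\phi)\bigr] \leq \BB E\bigl[\mathbf{1}_{S'}\,\BB P[\text{long-range event}\mid h]\bigr] \preceq \delta\,\BB E\bigl[\mathbf{1}_{S'}/D_h(0,\bdy O)\bigr].
\]
I would conclude by truncating on the event $F := \{D_h(0,\bdy O) \geq \ep^\zeta\}$: on $F$ the integrand is at most $\ep^{-\zeta}\mathbf{1}_{S'}$, which contributes $\ep^{\xi(Q-\alpha)-2\zeta+\alpha^2/2+o_\zeta(1)+o_\ep(1)} = \ep^{\xi(Q-\alpha)+\alpha^2/2+o_\zeta(1)+o_\ep(1)}$, exactly as desired.

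The main obstacle is controlling the complementary event $F^c$: one needs $\BB P[D_h(0,\bdy O)<\ep^\zeta]$ to decay faster than the target exponent. I expect this to follow either from a superpolynomial small-ball estimate of the form $\BB P[D_h(0,\bdy O)<t] \leq e^{-c(\log t^{-1})^2}$ (Gaussian-type concentration), or alternatively via Hölder's inequality applied to $\BB E[\mathbf{1}_{S'}/D_h(0,\bdy O)]$ using a finite negative moment $\BB E[D_h(0,\bdy O)^{-\beta}]<\infty$ for some $\beta>0$; either input should be accessible by combining the distance estimates in~\cite{lqg-metric-estimates} with Gaussian concentration for the circle-average process of $h$ near $\bdy O$.
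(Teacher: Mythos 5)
Your approach is essentially the paper's: you decompose $E_\alpha^\ep(z;h+X\phi)$ into the short-range diameter event and the long-range distance event, reduce the former to a $\sigma(h)$-measurable event using $\phi|_V\equiv 0$ together with $X\in[0,1]$ and Weyl scaling, bound its probability via Lemma~\ref{lem-diam-tail} with a $\zeta/\xi$-shift in $\alpha$, and then apply Lemma~\ref{lem-scaled-field-dist} conditionally on $h$ and integrate. The paper does exactly this, defining an auxiliary event $G_\alpha^\ep(z)\in\sigma(h)$ that contains the short-range event.

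The one place you leave a loose end --- handling $\BB E[\BB 1_{S'}/D_h(0,\bdy O)]$ --- is also where your stated hypothesis is slightly too weak. You suggest that a single finite negative moment $\BB E[D_h(0,\bdy O)^{-\beta}]<\infty$ for some fixed $\beta>0$ would suffice, but it does not: in the H\"older route, exponents $p\leq\beta$ and $1/p+1/q=1$ give $\BB E[\BB 1_{S'}/D_h(0,\bdy O)]\preceq \BB P[S']^{1/q}$ with $1/q = 1-1/p \leq 1-1/\beta$, so the short-range factor contributes $\ep^{\alpha^2 (1-1/\beta)/2}$ and the deficit $\alpha^2/(2\beta)$ does not vanish as $\zeta\to 0$. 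Likewise, in your truncation route, $\BB P[D_h(0,\bdy O)<\ep^\zeta]\leq C_\beta\,\ep^{\zeta\beta}$ is polynomial in $\ep$ with a fixed exponent $\zeta\beta$ that can be beaten by $\ep^{\xi(Q-\alpha)+\alpha^2/2}$ once $\zeta$ is small. In either case you need $\beta$ to be taken arbitrarily large. Fortunately this is available: the paper invokes~\cite[Proposition 3.1]{lqg-metric-estimates}, which says $D_h(0,\bdy O)$ has finite moments of \emph{all} negative orders, and sends $q\to 1$ (equivalently $p\to\infty$) at the end. With that input in hand, either of your two suggested closings goes through, and the paper's choice (direct H\"older, no truncation) is the cleaner of the two.
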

\begin{proof}
To lighten notation, we define the event
\alb
G_\alpha^\ep(z) 
&:= \left\{ \sup_{u,v\in B_\ep(z)} D_h(u,v  ) \in \left[ e^{-\xi} \ep^{\xi(Q-\alpha) + \zeta} , e^{ \xi} \ep^{\xi(Q-\alpha) - \zeta} \right] \right\} \notag\\
&\supset \left\{ \sup_{u,v\in B_\ep(z)}  D_{h+X\phi}(u,v  ) \in \left[  \ep^{\xi(Q-\alpha) + \zeta} , \ep^{\xi(Q-\alpha) - \zeta} \right] \right\}  .
\ale
Note that the inclusion holds by Weyl scaling (Axiom~\ref{item-metric-f}) and since $X\in [0,1]$. 

By Lemma~\ref{lem-diam-tail} (applied with $\alpha  \pm \zeta/\xi$ in place of $\alpha$), for each $z\in V$,
\eqb \label{eqn-use-diam-tail}
\BB P[G_\alpha^\ep(z)] \leq \ep^{ \alpha^2/2 + o_\zeta(1) + o_\ep(1)   } . 
\eqe
Since $G_\alpha^\ep(z) \in \sigma(h)$, Lemma~\ref{lem-scaled-field-dist} implies that  
\eqb \label{eqn-use-scaled-field-dist}
\BB P\left[ D_{h+X\phi}(0,z  ) \in \left[\BB s - \ep^{\xi(Q-\alpha)-\zeta} ,\BB s+  \ep^{\xi(Q-\alpha)-\zeta} \right] \,\big| \, h \right] \BB 1_{G_\alpha^\ep(z)} 
\preceq \frac{  \ep^{\xi(Q-\alpha)-\zeta}}{D_h(0,\bdy O )} \BB 1_{G_\alpha^\ep(z)}    ,
\eqe
with a deterministic implicit constant depending only on $\gamma$. 

Taking unconditional expectations of both sides of~\eqref{eqn-use-scaled-field-dist} and recalling that $ E_\alpha^\ep(z ; h + X\phi)$ is defined as in~\eqref{eqn-one-pt-event} but with $h+X\phi$ in place of $h$ shows that
\eqb \label{eqn-uncond-prob}
\BB P\left[   E_\alpha^\ep(z; h+X\phi) \right] \preceq  \ep^{\xi(Q-\alpha)-\zeta} \BB E\left[  D_h(0,\bdy O)^{-1} \BB 1_{G_\alpha^\ep(z)} \right]  . 
\eqe
By~\cite[Proposition 3.1]{lqg-metric-estimates}, $D_h(0, \bdy O)$ has finite moments of all negative orders. 
By H\"older's inequality, if $p,q> 1$ with $1/p  + 1/q = 1$ then  
\allb \label{eqn-uncond-holder}
\BB E\left[  D_h(0,\bdy O)^{-1} \BB 1_{G_\alpha^\ep(z)} \right]
&\preceq \BB E\left[  D_h(0,\bdy O)^{-p} \right]^{1/p} \BB P\left[ G_\alpha^\ep(z)  \right]^{1/q} \notag \\
&\leq \ep^{ \alpha^2/(2q) + o_\zeta(1) +  o_\ep(1)    } \quad \text{(by~\eqref{eqn-use-diam-tail})} . 
\alle 
Sending $q \rta 1$ and plugging~\eqref{eqn-uncond-holder} into~\eqref{eqn-uncond-prob} gives~\eqref{eqn-event-upper}.
\end{proof}

\begin{proof}[Proof of Proposition~\ref{prop-event-upper}]
By a standard Radon-Nikodym derivative calculation for the GFF (see Lemma~\ref{lem-gff-abs-cont}), the law of $h $ is absolutely continuous with respect to the conditional law of $h+X\phi$ given $X$, with Radon-Nikodym derivative
\eqb
M_X
= M_X(h  + X\phi) 
= \exp\left(   - X (h  + X\phi ,   \phi )_\nabla  +  \frac{X^2}{2} (\phi,\phi)_\nabla \right)  .
%= \exp\left( - X(h,\phi)_\nabla - \frac{X^2}{2} (\phi,\phi)_\nabla \right) 
\eqe 
Here we emphasize that $\phi$ vanishes on $\bdy\BB D$ so $h$ and $h+X\phi$ have the same choice of additive constant.
By H\"older's inequality and since $(h,\phi)_\nabla$ is Gaussian with variance $(\phi,\phi)_\nabla$, for any exponents $p,q>1$ with $1/p+1/q = 1$,  
\allb \label{eqn-rn-holder}
\BB P\left[  E_\alpha^\ep(z) \,|\, X \right]
&= \BB E\left[ M_X  \BB 1_{ E_\alpha^\ep(z ; h+X\phi )} \,|\, X \right] \notag \\ 
&\leq \BB E\left[ M_X^p \,|\, X \right]^{1/p} \BB P\left[   E_\alpha^\ep(z ; h+X\phi) \,|\, X \right]^{1/q} \notag \\
&= \exp\left( \left( \frac{p - 1}{2}   \right) X^2 (\phi,\phi)_\nabla  \right) \BB P\left[   E_\alpha^\ep(z ; h+X\phi) \,|\, X \right]^{1/q} \notag \\
&\preceq \BB P\left[  E_\alpha^\ep(z ; h+X\phi) \,|\, X \right]^{1/q}  , 
\alle
with the implicit constant depending only on $p ,\gamma$ (equivalently, only on $q,\gamma$), where in the last line we used that $X$ takes values in $[0,1]$. 
Taking unconditional expectations of both sides of~\eqref{eqn-rn-holder} and using that $x\mapsto x^{1/q}$ is concave (to bring the $1/q$ outside the outer expectation) gives
\eqb
\BB P\left[  E_\alpha^\ep(z) \right] \preceq  \BB P\left[  E_\alpha^\ep(z; h+X\phi) \right]^{1/q} .
\eqe
Since $q$ can be made arbitrarily close to 1, we see that~\eqref{eqn-event-upper} follows from  Lemma~\ref{lem-event-upper'}.
\end{proof}

\subsection{Proofs of Hausdorff dimension upper bounds}
\label{sec-dim-upper-proof}

In several places in what follows, we will truncate on the H\"older continuity event
\eqb \label{eqn-holder-event}
\mcl H^\ep := \left\{ D_h(u,v) \in \left[\ep^{\xi(Q+2)+\zeta}, \ep^{\xi(Q-2)-\zeta} \right] ,\: \forall u,v\in B_{2\ep}(V) \: \text{with $|u-v| \leq 2\ep$} \right\} .
\eqe
By~\cite[Theorem 1.7]{lqg-metric-estimates}, $\BB P\left[\mcl H^\ep\right] \rta 1$ as $\ep\rta 0$. 
In order to prove the upper bounds for $\dim_{\mcl H}^0 \bdy \mcl B_{\BB s}$ and $\dim_{\mcl H}^\gamma \bdy\mcl B_{\BB s}$ in Theorem~\ref{thm-bdy-dim}, we will need the following one-point estimate, which comes from taking the ``worst-case" value of $\alpha$ in Proposition~\ref{prop-event-upper}. 

\begin{prop} \label{prop-bdy-prob} 
For each $z\in V  $ and each $\ep > 0$, 
\eqb \label{eqn-bdy-prob} 
\BB P\left[ B_\ep(z) \cap \bdy \mcl B_{\BB s}  \not= \emptyset , \: \mcl H^\ep \right] \leq \ep^{\xi Q  - \xi^2/2 + o_\zeta(1) + o_\ep(1)} .
\eqe 
Furthermore, for each $p  \in [0,2d_\gamma/\gamma-1]$, 
\eqb \label{eqn-bdy-moment}
\BB E\left[ \left(\sup_{u,v\in B_\ep(z)} D_h(u,v) \right)^p \BB 1\left\{B_\ep(z) \cap \bdy \mcl B_{\BB s}  \not= \emptyset ,\: \mcl H^\ep \right\} \right] 
\leq  \ep^{(p+1) \xi Q - (p+1)^2\xi^2/2 + o_\zeta(1) + o_\ep(1)}. 
\eqe
\end{prop}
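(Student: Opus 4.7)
The plan is to discretize the parameter $\alpha$ in the event $E_\alpha^\ep(z)$ over a finite grid and combine Lemma~\ref{lem-one-pt-contain} with Proposition~\ref{prop-event-upper}, then optimize over $\alpha$. The point is that on $\mcl H^\ep$, the two-sided Hölder bound forces $\sup_{u,v\in B_\ep(z)} D_h(u,v) \in [\ep^{\xi(Q+2)+\zeta}, \ep^{\xi(Q-2)-\zeta}]$; writing this supremum as $\ep^{\xi(Q-\alpha')}$, we find that $\alpha'$ lies in an interval that is $[-2,2]$ up to an $O(\zeta)$ perturbation. This compact range is what makes a finite discretization work.

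First I would introduce a grid $\{\alpha_k\}_{k=0}^K$ in $[-2-O(\zeta), 2+O(\zeta)]$ with spacing of order $\zeta/\xi$, so that $K = O(\zeta^{-1})$ and such that for every realization of $h$ on $\{B_\ep(z) \cap \bdy\mcl B_{\BB s}\neq\emptyset\}\cap \mcl H^\ep$ there exists $k$ with $\sup_{u,v\in B_\ep(z)} D_h(u,v) \in [\ep^{\xi(Q-\alpha_k)+\zeta}, \ep^{\xi(Q-\alpha_k)-\zeta}]$. By Lemma~\ref{lem-one-pt-contain} applied with this $\alpha_k$, the event $E_{\alpha_k}^\ep(z)$ must occur, and hence
\eqbn
\{B_\ep(z) \cap \bdy\mcl B_{\BB s}\neq\emptyset\}\cap \mcl H^\ep \;\subset\; \bigcup_{k=0}^K E_{\alpha_k}^\ep(z) .
\eqen

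Next, for the probability bound I would apply Proposition~\ref{prop-event-upper} to each $\alpha_k$ (the $O(\zeta)$ excess over $[-2,2]$ is absorbed into the $o_\zeta(1)$ error) to get $\BB P[E_{\alpha_k}^\ep(z)] \leq \ep^{\xi(Q-\alpha_k)+\alpha_k^2/2 + o_\zeta(1)+o_\ep(1)}$, and then take a union bound. The exponent $g(\alpha) := \xi(Q-\alpha)+\alpha^2/2$ has derivative $-\xi+\alpha$ and so is minimized over $\BB R$ at $\alpha^* = \xi \in (0,1)\subset (-2,2)$, with minimum value $\xi Q - \xi^2/2$. Since the grid contains a point within distance $O(\zeta)$ of $\alpha^*$, the sum is dominated (absorbing the $(K+1) = O(\zeta^{-1})$ factor into $o_\zeta(1)$) by $\ep^{\xi Q - \xi^2/2 + o_\zeta(1) + o_\ep(1)}$, which gives the first assertion.

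For the moment bound I would repeat the argument with an extra weight. On $E_{\alpha_k}^\ep(z)$ the supremum is deterministically bounded by $\ep^{\xi(Q-\alpha_k)-\zeta}$, so $(\sup_{u,v\in B_\ep(z)} D_h(u,v))^p \leq \ep^{p\xi(Q-\alpha_k) - p\zeta}$; multiplying by $\BB P[E_{\alpha_k}^\ep(z)]$ and summing over $k$ produces an upper bound of the form $\sum_k \ep^{(p+1)\xi(Q-\alpha_k) + \alpha_k^2/2 + o_\zeta(1) + o_\ep(1)}$. The new exponent $g_p(\alpha) := (p+1)\xi(Q-\alpha) + \alpha^2/2$ is minimized at $\alpha^* = (p+1)\xi$ with value $(p+1)\xi Q - (p+1)^2\xi^2/2$. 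The hypothesis $p \leq 2d_\gamma/\gamma - 1 = 2/\xi - 1$ is exactly what ensures $\alpha^* = (p+1)\xi \leq 2$, so that the minimizer lies in the grid range and Proposition~\ref{prop-event-upper} is applicable there. This yields the second assertion.

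I do not expect a serious obstacle here: once the two ingredients (Lemma~\ref{lem-one-pt-contain} and Proposition~\ref{prop-event-upper}) are in hand, the argument is a one-scale discretization plus a quadratic optimization. The only delicate bookkeeping is checking that the $O(\zeta)$ slack in the range of $\alpha_k$ and in the defining interval for $E_{\alpha_k}^\ep(z)$ is harmlessly absorbed into $o_\zeta(1)$ error terms, and that the constraint $p \leq 2/\xi-1$ is precisely what keeps the optimizer $\alpha^* = (p+1)\xi$ inside the compact range of $\alpha$ carved out by the event $\mcl H^\ep$.
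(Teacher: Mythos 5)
Your proposal is correct and matches the paper's proof essentially verbatim: discretize $\alpha$ on a grid with spacing $\asymp\zeta/\xi$, use Lemma~\ref{lem-one-pt-contain} plus the two-sided bound on $\mcl H^\ep$ to cover the event by $\bigcup_k E_{\alpha_k}^\ep(z)$, apply Proposition~\ref{prop-event-upper} and a union bound, and minimize the convex quadratic $\xi(Q-\alpha)(p+1)+\alpha^2/2$ at $\alpha^*=(p+1)\xi$ (noting that $p\le 2d_\gamma/\gamma-1$ is exactly the condition $\alpha^*\le 2$). The only cosmetic improvement would be to keep the grid inside $[-2,2]$ rather than $[-2-O(\zeta),2+O(\zeta)]$, since the intervals $[\xi(Q-\alpha_k)-\zeta,\xi(Q-\alpha_k)+\zeta]$ for a fine grid of $[-2,2]$ already cover the range of exponents $[\xi(Q-2)-\zeta,\xi(Q+2)+\zeta]$ produced by $\mcl H^\ep$, and this makes Proposition~\ref{prop-event-upper} apply directly to every grid point with no extra handwaving.
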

\begin{proof}
The basic idea is to consider the ``worst case" value of $\alpha$ in Proposition~\ref{prop-event-upper}. 
Since there are uncountably many possibilities for $\alpha$, we first need to discretize the set of possibilities.

Fix a partition $-2 = \alpha_0 < \dots < \alpha_N = 2$ with $\sup_{j\in [1,N]_{\BB Z}} (\alpha_j - \alpha_{j-1}) \leq \zeta/\xi$. We can arrange that $N$ depends only on $\alpha,\zeta$. Then the intervals $[\xi(Q-\alpha_j)-\zeta , \xi(Q-\alpha_j)+\zeta]$ for $j \in [0,N]_{\BB Z}$ cover $[-2-\zeta,2+\zeta]$.
If the event $\mcl H^\ep$ of~\eqref{eqn-holder-event} occurs, then $\sup_{u,v\in B_\ep(z)} D_h(u,v ) \in [\ep^{\xi(Q+2)+\zeta} , \ep^{\xi(Q-2)-\zeta}]$ for each $z \in V$, so for each such $z$ we have $\sup_{u,v\in B_\ep(z)} D_h(u,v ) \in [\ep^{\xi(Q-\alpha_j)+\zeta}, \ep^{\xi(Q-\alpha_j)-\zeta}]$ for some $j\in [0,N]_{\BB Z}$. 
By combining this with Lemma~\ref{lem-one-pt-contain}, we see that if $\mcl H^\ep$ occurs and $B_\ep(z) \cap \bdy\mcl B_{\BB s}  \not=\emptyset$, then $E_{\alpha_j}^\ep(z)$ occurs for some $j\in [0,N]_{\BB Z}$. 

By Proposition~\ref{prop-event-upper}, for each $z\in V$,
\eqb \label{eqn-bdy-prob-sum}
\BB P\left[ B_\ep(z) \cap \bdy \mcl B_{\BB s}  \not= \emptyset , \: \mcl H^\ep \right]
\leq \sum_{j=0}^N \BB P\left[ E_{\alpha_j}^\ep(z) \right] 
\leq \sum_{j=0}^N \ep^{\xi(Q-\alpha_j) + \alpha_j^2/2 + o_\zeta(1) + o_\ep(1)}  .
\eqe
The maximum over all $\alpha \in [-2,2]$ of the quantity $\xi(Q-\alpha)  + \alpha^2/2$ is attained at $\alpha=\xi$, where it equals $\xi Q -\xi^2/2$. 
Hence the right side of~\eqref{eqn-bdy-prob-sum} is at most $(N+1) \ep^{\xi Q - \xi^2/2 + o_\zeta(1) + o_\ep(1)} = \ep^{\xi Q - \xi^2/2 + o_\zeta(1) + o_\ep(1)}$. 
This gives~\eqref{eqn-bdy-prob}.

To prove~\eqref{eqn-bdy-moment}, we make a similar computation using Proposition~\ref{prop-event-upper}: 
\allb \label{eqn-bdy-moment-sum}
\BB E\left[ \left(\sup_{u,v\in B_\ep(z)} D_h(u,v) \right)^p \BB 1\left\{B_\ep(z) \cap \bdy \mcl B_{\BB s}  \not= \emptyset ,\: \mcl H^\ep \right\} \right]
&\leq \sum_{j=0}^N \ep^{   \xi(Q-\alpha_j) p  +o_\zeta(1)} \BB P\left[ E_{\alpha_j}^\ep(z) \right] \notag\\
&\leq \sum_{j=0}^N \ep^{  \xi(Q-\alpha_j) (p+1) + \alpha_j^2/2 + o_\zeta(1) + o_\ep(1)} .
\alle
The maximum over all $\alpha \in [-2,2]$ of the quantity $ \xi(Q-\alpha) (p+1)  + \alpha^2/2$ is attained at $\alpha= (p+1)\xi$ (which is in $[-2,2]$ for $p\in [0,2d_\gamma/\gamma-1]$), where it equals $(p+1) \xi Q -  (p+1)^2 \xi^2/2$. 
Combining this with~\eqref{eqn-bdy-moment-sum} gives~\eqref{eqn-bdy-moment}. 
\end{proof}
 
\begin{comment}

Solve[D[(p + 1) xi (Q - alpha) + alpha^2/2, alpha] == 0, alpha]
FullSimplify[(p + 1) xi (Q - alpha) + alpha^2/2 /. %]

\end{comment}

\begin{proof}[Proof of Theorem~\ref{thm-bdy-dim}, upper bound] 
By letting $V$ increase to all of $\BB C\setminus \{0\}$ and using the countable stability of Hausdorff dimension, we see that it suffices to show that for any fixed choice of $V$ as in the beginning of Section~\ref{sec-one-pt}, a.s.\ 
\eqb \label{eqn-bdy-dim-show}
\dim_{\mcl H}^0(\bdy \mcl B_{\BB s}  \cap V) \leq 2 - \xi Q + \xi^2/2
\quad \text{and} \quad
\dim_{\mcl H}^\gamma(\bdy\mcl B_{\BB s}  \cap V ) \leq d_\gamma-1  .
\eqe
\medskip

\noindent\textit{Proof of~\eqref{eqn-bdy-dim-show} for Euclidean dimension}.
Choose a finite collection $\mcl Z^\ep$ of at most $O_\ep(\ep^{-2})$ points in $V$ such that the union of the balls $B_\ep(z)$ for $z\in\mcl Z^\ep$ covers $V$. 
By~\eqref{eqn-bdy-prob} of Proposition~\ref{prop-bdy-prob}, 
\eqb
\BB E\left[ \#\left\{ z\in\mcl Z^\ep : B_\ep(z) \cap \bdy \mcl B_{\BB s}  \not=\emptyset \right\} \BB 1_{\mcl H_\ep} \right] \leq \ep^{-2 + \xi Q - \xi^2/2 + o_\zeta(1) + o_\ep(1) } .
\eqe 
Since $\BB P[\mcl H^\ep] \rta 1$ as $\ep\rta 0$ and by Markov's inequality, it holds with probability tending to 1 as $\ep\rta 0$ that $\bdy B_{\BB s}  \cap V$ can be covered by at most $\ep^{-2 + \xi Q -\xi^2/2 + o_\zeta(1) + o_\ep(1)}$ Euclidean balls of radius $\ep$. 
In particular, a.s.\ there is a (random) sequence of $\mcl E$ of $\ep$-values tending to zero such that for each $\ep \in \mcl E$, $\bdy B_{\BB s}  \cap V$ can be covered by at most $\ep^{-2 + \xi Q -\xi^2/2 + o_\zeta(1) + o_\ep(1)}$ Euclidean balls of radius $\ep$. 
By the definition of Hausdorff dimension, it follows that a.s.\ $\dim_{\mcl H}^0(\bdy B_{\BB s}  \cap V ) \leq 2 - \xi Q + \xi^2/2 + o_\zeta(1)$. Sending $\zeta \rta 0$ now shows that a.s.\ $\dim_{\mcl H}^0(\bdy B_{\BB s}  \cap V ) \leq 2 - \xi Q + \xi^2/2$, as required.
\medskip

\noindent\textit{Proof of~\eqref{eqn-bdy-dim-show} for quantum dimension}.
By summing~\eqref{eqn-bdy-moment} of Proposition~\ref{prop-bdy-prob} over all $z\in\mcl Z^\ep$, we get that for each $p\in [0,2d_\gamma/\gamma-1] $, 
\eqb \label{eqn-quantum-bdy-dim-moment}
\BB E\left[\sum_{z\in\mcl Z^\ep} \left(\sup_{u,v\in B_\ep(z)} D_h(u,v) \right)^p \BB 1\left\{B_\ep(z) \cap \bdy \mcl B_{\BB s}   \not= \emptyset ,\: \mcl H^\ep \right\} \right] \leq \ep^{\xi Q (p+1) - (p+1)^2\xi^2/2 - 2 + o_\zeta(1) + o_\ep(1)} .
\eqe 
Recalling the definitions of $\xi$ and $Q$ from~\eqref{eqn-xi-Q}, we see that $\xi Q (p+1) - (p+1)^2 \xi^2/2  -2  = 0$ for $p = d_\gamma-1$. Furthermore, the quadratic function $p\mapsto \xi Q (p+1) - (p+1)^2 \xi^2/2  -2$ attains its maximum at 
\eqbn
p = Q/\xi - 1  = \left(\frac{2}{\gamma^2} + \frac12 \right) d_\gamma - 1 > d_\gamma-1 ,
\eqen
 where it equals $Q^2/2-2 > 0$. Since this function is quadratic, it follows that $\xi Q (p+1) - (p+1)^2 \xi^2/2  -2 > 0$ for each $p  \in (d_\gamma - 1 , Q/\xi -1)$. 
Hence, for such a choice of $p$ and a small enough choice of $\zeta > 0$ the right side of~\eqref{eqn-quantum-bdy-dim-moment} tends to zero as $\ep\rta 0$. 
Since $\BB P[\mcl H^\ep] \rta 1$ as $\ep\rta 0$ and by Markov's inequality, it holds with probability tending to 1 as $\ep\rta 0$ that $\bdy B_{\BB s} \cap V$ can be covered by a collection of Euclidean balls such that the sum of the $p$th powers of their $D_h$-diameters tends to zero as $\ep\rta 0$. 
Sending $\zeta \rta 0$ and $p\rta (d_\gamma-1)^+$ now shows that a.s.\ $\dim_{\mcl H}^\gamma(\bdy B_{\BB s}  \cap V ) \leq d_\gamma-1$, as required.
\end{proof}

\begin{comment}
Solve[xi Q (p + 1) - (p + 1)^2 xi^2/2 - 2 == 0, p]
FullSimplify[% /. {xi -> gamma/d, Q -> 2/gamma + gamma/2}, 
 Assumptions -> {0 < gamma < 2}]

\end{comment}

The following lemma will allow us to deduce the upper bound for quantum dimension in Theorem~\ref{thm-thick-dim} from the upper bound for Euclidean dimension. 

\begin{lem} \label{lem-thick-dim-compare}
Let $\alpha\in [-2,2]$ and define the set of metric $\alpha$-thick points $\wh{\mcl T}_h^\alpha$ as in~\eqref{eqn-metric-thick}. 
Almost surely, it holds simultaneously for every Borel set $X\subset \BB C$ that
\eqb \label{eqn-thick-dim-compare}
\dim_{\mcl H}^\gamma\left(X\cap \wh{\mcl T}_h^\alpha\right) \leq \frac{\dim_{\mcl H}^0\left(X\cap \wh{\mcl T}_h^\alpha\right) }{\xi(Q-\alpha)} . 
\eqe
\end{lem}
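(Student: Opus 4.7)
My plan is to exploit the defining property of $\wh{\mcl T}_h^\alpha$ to transfer Euclidean covers into $D_h$-covers with a controlled exponent change, and then take limits. The key observation is that the event underlying the decomposition of $\wh{\mcl T}_h^\alpha$ below depends on $h$ only, not on the Borel set $X$, which is exactly what is needed for the ``simultaneously for all $X$'' uniformity.

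First, since $\alpha \in [-2,2]$ and $Q > 2$, we have $\xi(Q-\alpha) > 0$. Fix $\eta \in (0, \xi(Q-\alpha))$. For each $k \in \BB N$, define the random set
\[
A_k^\eta := \left\{ z \in \wh{\mcl T}_h^\alpha \,:\, \sup_{u,v \in B_\ep(z)} D_h(u,v) \leq \ep^{\xi(Q-\alpha) - \eta} \text{ for all } \ep \in (0, 1/k] \right\}.
\]
From the definition~\eqref{eqn-metric-thick} of $\wh{\mcl T}_h^\alpha$, we have $\wh{\mcl T}_h^\alpha = \bigcup_{k \in \BB N} A_k^\eta$ (deterministically, given $h$). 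Intersecting over $\eta = 1/n$, $n \in \BB N$, produces an almost sure event on which this decomposition holds for every rational $\eta > 0$, and everything that follows will take place on this single event, uniformly over all Borel $X$.

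Next, fix a Borel set $X \subset \BB C$ and let $s > \dim_{\mcl H}^0(X \cap \wh{\mcl T}_h^\alpha)$. For each $k$, the Euclidean $s$-Hausdorff measure of $X \cap A_k^\eta$ vanishes, so for any $\delta > 0$ we can cover $X \cap A_k^\eta$ by Euclidean balls $\{B_{r_i}(z_i)\}_{i \in \BB N}$ with $z_i \in X \cap A_k^\eta$, $r_i \leq 1/(2k)$, and $\sum_i r_i^s < \delta$ (centering the balls at points of the set costs only a harmless factor of $2$ in the radii via a standard doubling argument). Because $z_i \in A_k^\eta$ and $2 r_i \leq 1/k$, the set $B_{r_i}(z_i) \subset B_{2 r_i}(z_i)$ has $D_h$-diameter at most $(2 r_i)^{\xi(Q-\alpha) - \eta}$. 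Hence $\{B_{r_i}(z_i)\}$ is a $D_h$-cover of $X \cap A_k^\eta$ with
\[
\sum_i \bigl((2 r_i)^{\xi(Q-\alpha)-\eta}\bigr)^{s / (\xi(Q-\alpha)-\eta)} = 2^s \sum_i r_i^s < 2^s \delta,
\]
and the $D_h$-diameter of each covering set tends to $0$ as $\delta \to 0$ uniformly in $i$ (since $r_i \leq 1/(2k)$). Letting $\delta \to 0$ gives $\dim_{\mcl H}^\gamma(X \cap A_k^\eta) \leq s / (\xi(Q-\alpha) - \eta)$.

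Taking the countable union over $k$ via countable stability of Hausdorff dimension yields $\dim_{\mcl H}^\gamma(X \cap \wh{\mcl T}_h^\alpha) \leq s / (\xi(Q-\alpha) - \eta)$; finally sending $s \downarrow \dim_{\mcl H}^0(X \cap \wh{\mcl T}_h^\alpha)$ and $\eta \downarrow 0$ through rationals delivers~\eqref{eqn-thick-dim-compare}. Since the construction of the $A_k^\eta$'s was carried out once on a single almost sure event and the remainder of the argument is purely deterministic in $X$, the bound holds simultaneously for all Borel $X \subset \BB C$. There is no real obstacle in this proof; the only point requiring mild care is this uniformity in $X$, which is arranged by isolating the $X$-independent decomposition of $\wh{\mcl T}_h^\alpha$ at the outset.
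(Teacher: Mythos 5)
Your proof is correct and follows essentially the same route as the paper's: you decompose $\wh{\mcl T}_h^\alpha$ into sets $A_k^\eta$ on which $\sup_{u,v\in B_r(z)} D_h(u,v) \leq r^{\xi(Q-\alpha)-\eta}$ holds uniformly for all small $r$, transport Euclidean covers into $D_h$-covers via that bound, and then take limits in $k$, $\eta$, and the Hausdorff-content level $s$. The paper's analogue of your $A_k^\eta$ is its $X^\alpha(\delta)$ (with $X$ intersected in from the start and a two-sided bound on the diameter, of which only the upper half is actually used), so the two arguments differ only in bookkeeping; your explicit remark that the decomposition is $X$-independent is a slightly cleaner way of justifying the ``simultaneously for all Borel $X$'' uniformity.
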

\begin{proof}
%By the countable stability of Hausdorff dimension, we can assume without loss of generality that $X$ is bounded.
Fix $\zeta > 0$ and for $\delta > 0$, let
\eqb \label{eqn-thick-uniform-def}
X^\alpha(\delta) := \left\{z\in X :  \sup_{u,v\in B_r(z)} D_h(u,v )  \in \left[ r^{\xi (Q-\alpha) +  \zeta } , r^{\xi (Q-\alpha) - \zeta } \right] ,\, \forall r \in (0,\delta] \right\} .
\eqe
Then $ X\cap \wh{\mcl T}_h^\alpha \subset \bigcup_{n\in\BB N} X^\alpha(1/n)$. By the countably stability of Hausdorff dimension and since $\zeta>0$ is arbitrary, it suffices to show that for each fixed $\zeta,\delta> 0$, a.s.\ 
\eqb \label{eqn-thick-dim-show}
\dim_{\mcl H}^\gamma X^\alpha(\delta) \leq \frac{1}{\xi(Q-\alpha)- \zeta} \dim_{\mcl H}^0\left(X\cap \wh{\mcl T}_h^\alpha\right) .
\eqe

To this end, let $\Delta  >  \dim_{\mcl H}^0\left(X\cap \wh{\mcl T}_h^\alpha\right)$ (which is at least the Euclidean dimension of $X^\alpha(\delta)$), let $\ep >0$, and let $\{B_j\}_{j\in \BB N}$ be a cover of $X^\alpha(\delta)$ by Euclidean balls of radii $r_j \in (0,\delta ]$ such that $\sum_{j\in\BB N} r_j^\Delta \leq \ep$.  
We can arrange that each $B_j$ is centered at a point of $X^\alpha(\delta)$.
By the definition~\eqref{eqn-thick-uniform-def} of $X^\alpha(\delta)$ and since each $r_j$ is at most $\delta$,  
\eqb
\sum_{j\in\BB N} \left( \sup_{u,v\in B_j} D_h(u,v) \right)^{\Delta/(\xi(Q-\alpha)-\zeta)} \leq \sum_{j\in\BB N } r_j^\Delta \leq \ep .
\eqe
Since $\ep > 0$ is arbitrary and $\Delta$ can be made arbitrarily close to $\dim_{\mcl H}^0\left(X\cap \wh{\mcl T}_h^\alpha\right)$, we obtain~\eqref{eqn-thick-dim-show}.
\end{proof}

\begin{proof}[Proof of Theorem~\ref{thm-thick-dim}, upper bound]
We will prove the upper bound for $\dim_{\mcl H}^0 (\bdy\mcl B_{\BB s} \cap \wh{\mcl T}_h^\alpha)$. This immediately implies the desired upper bound for $\dim_{\mcl H}^\gamma(\bdy\mcl B_{\BB s} \cap \wh{\mcl T}_h^\alpha)$ due to Lemma~\ref{lem-thick-dim-compare}.
\medskip
 
\noindent\textit{Step 1: reductions.}
As in the discussion immediately preceding~\eqref{eqn-bdy-dim-show}, it suffices to show that for $\alpha \in [\xi -\sqrt{4-2\xi Q +\xi^2} ,\xi + \sqrt{4-2\xi Q +\xi^2}]$ and $V$ as above, a.s.\ 
\eqb \label{eqn-metric-thick-show}
\dim_{\mcl H}^0\left(  \bdy \mcl B_{\BB s}  \cap  \wh{\mcl T}_h^\alpha \cap V \right) \leq 2 - \xi(Q-\alpha) - \alpha^2/2 ;
\eqe
and for $\alpha \notin  [\xi -\sqrt{4-2\xi Q +\xi^2} ,\xi + \sqrt{4-2\xi Q +\xi^2}]$, a.s.\ $ \bdy \mcl B_{\BB s}  \cap  \wh{\mcl T}_h^\alpha \cap V =\emptyset$. 

For $\delta > 0$, let
\eqb \label{eqn-thick-truncate}
\wh{\mcl T}_h^\alpha(\delta) 
:= \left\{z\in \BB C : \sup_{u,v\in B_r(z)} D_h(u,v  )  \in \left[ r^{\xi (Q-\alpha) +  \zeta/2} , r^{\xi (Q-\alpha) - \zeta/2} \right] ,\, \forall r \in (0,\delta]  \right\} .
\eqe
Then $\wh{\mcl T}_h^{\alpha } \subset \bigcup_{n\in\BB N} \wh{\mcl T}_h^{\alpha }(1/n)$. 
By the countable stability of Hausdorff dimension, to prove~\eqref{eqn-metric-thick-show} it therefore suffices to show that for each fixed $\delta >0$ and $\alpha \in [\xi -\sqrt{4-2\xi Q +\xi^2} ,\xi + \sqrt{4-2\xi Q +\xi^2}]$, a.s.\ 
\eqb \label{eqn-metric-thick-show'}
\dim_{\mcl H}^0\left(  \bdy \mcl B_{\BB s}  \cap \wh{\mcl T}_h^{\alpha }(\delta)  \cap V \right) \leq 2 - \xi(Q-\alpha) - \alpha^2/2 ;
\eqe
and for each $\alpha \notin [\xi -\sqrt{4-2\xi Q +\xi^2} ,\xi + \sqrt{4-2\xi Q +\xi^2}]$, a.s.\ $ \bdy \mcl B_{\BB s} \cap \wh{\mcl T}_h^{\alpha }(\delta)  \cap V = \emptyset$. 
\medskip

\noindent\textit{Step 2: relating to $E_\alpha^\ep(z)$.}
To make the connection between~\eqref{eqn-metric-thick-show'} and the one-point estimate from Proposition~\ref{prop-event-upper}, we will argue that if $\ep \in (0,\delta/4]$ is chosen to be sufficiently small and $z\in V$ such that $B_\ep(z) \cap  \bdy \mcl B_{\BB s} \cap \wh{\mcl T}_h^{\alpha }(\delta)   \not=\emptyset$, then $E_\alpha^{2\ep}(z)$ occurs.

To see this, let $w\in B_\ep(z) \cap \bdy \mcl B_{\BB s} \cap \wh{\mcl T}_h^{\alpha }(\delta) $. 
By the definition~\eqref{eqn-thick-truncate} of $\wh{\mcl T}_h^\alpha(\delta)$, 
\eqb \label{eqn-ball-pt-diam}
\sup_{u,v\in B_{4\ep}(w)} D_h(u,v ) \leq (4\ep)^{\xi(Q-\alpha)-\zeta/2 }  
\quad \text{and} \quad
\sup_{u,v\in B_\ep(w)} D_h(u,v ) \geq \ep^{\xi(Q-\alpha)+ \zeta/2} .
\eqe
Since $B_\ep(w) \subset B_{2\ep}(z) \subset B_{4\ep}(w)$, if $\ep$ is chosen to be sufficiently small, then~\eqref{eqn-ball-pt-diam} implies that
\eqb
\sup_{u,v\in B_{2\ep}(z)} D_h(u,v  ) \in \left[(2\ep)^{\xi(Q-\alpha)+\zeta} , (2\ep)^{\xi(Q-\alpha)-\zeta}  \right] 
\eqe
and then Lemma~\ref{lem-one-pt-contain} implies that $E_\alpha^{2\ep}(z)$ occurs, as required.
\medskip

\noindent\textit{Step 3: proof of the upper bound for Euclidean dimension.}
Choose a finite collection $\mcl Z^\ep$ of at most $O_\ep(\ep^{-2})$ points in $V$ such that the union of the balls $B_\ep(z)$ for $z\in\mcl Z^\ep$ covers $V$. 
By the preceding step and Proposition~\ref{prop-event-upper}, for $z\in\mcl Z^\ep$,
\eqbn
\BB P\left[ B_\ep(z) \cap  \bdy \mcl B_{\BB s}  \cap \wh{\mcl T}_h^{\alpha }(\delta)  \not=\emptyset \right] 
\leq \BB P\left[ E_\alpha^{2\ep}(z) \right] 
\leq \ep^{\xi(Q-\alpha) + \alpha^2/2+ o_\zeta(1) + o_\ep(1)} .
\eqen
Summing over all $z\in\mcl Z^\ep$ then shows that
\eqb
\BB E\left[ \#\left\{z\in\mcl Z^\ep :  B_\ep(z) \cap  \bdy \mcl B_{\BB s}  \cap \wh{\mcl T}_h^{\alpha }(\delta)  \not=\emptyset \right\} \right] \leq \ep^{-2  + \xi (Q-\alpha)  + \alpha^2/2   + o_\zeta(1) + o_\ep(1)} .
\eqe
By Markov's inequality, it holds with probability tending to 1 as $\ep\rta 0$ that $ \bdy \mcl B_{\BB s}  \cap \wh{\mcl T}_h^{\alpha }(\delta) \cap V$ can be covered by at most $\ep^{-2  + \xi (Q-\alpha)  + \alpha^2/2   + o_\zeta(1) + o_\ep(1)} $ Euclidean balls of radius $\ep$. 
If $\alpha \in [\xi -\sqrt{4-2\xi Q +\xi^2} ,\xi + \sqrt{4-2\xi Q +\xi^2}]$, then sending $\zeta \rta 0$ gives~\eqref{eqn-metric-thick-show'}.  

If $\alpha \notin [\xi -\sqrt{4-2\xi Q +\xi^2} ,\xi + \sqrt{4-2\xi Q +\xi^2}]$, then $-2  + \xi (Q-\alpha)  + \alpha^2/2  >0$. From this, we get that if $\zeta$ is chosen to be sufficiently small, then with probability tending to 1 as $\ep\rta 0$, there are no points $z\in\mcl Z^\ep$ such that $ B_\ep(z) \cap  \bdy \mcl B_{\BB s}  \cap \wh{\mcl T}_h^{\alpha }(\delta)  \not=\emptyset$. Since the balls $B_\ep(z)$ for $z\in\mcl Z^\ep$ cover $\mcl Z^\ep$, this implies that a.s.\ $\bdy \mcl B_{\BB s}  \cap \wh{\mcl T}_h^{\alpha }(\delta) \cap V = \emptyset$. 
\end{proof}

\subsection{Generalized upper bound}
\label{sec-gen-upper}

The argument of the preceding two subsections in fact yields upper bounds for the Hausdorff dimensions of a wide variety of distinguished subsets of $\bdy \mcl B_{\BB s}$. 
To be precise, we state just below a theorem which is proven in essentially the same manner as the upper bounds in Theorems~\ref{thm-bdy-dim} and~\ref{thm-thick-dim}.  
This theorem is not used in the rest of the present paper, so the reader who is only interested in seeing the proofs of the results already stated can safely skip this subsection. 
Several applications of the theorem of this subsection will appear in forthcoming joint work by the author, J.\ Pfeffer, and S.\ Sheffield.

\begin{thm} \label{thm-gen-upper}
Suppose that we are given events $\{F^\ep(z) : \ep > 0 , z\in\BB C\}$ and non-negative exponents $\{q_\alpha\}_{\alpha\in [-2,2]}$ with the following property. 
For any $\alpha\in [-2,2]$, any $\zeta \in (0,1)$, any bounded open set $V\subset\BB C$ with $\ol V\subset \BB C\setminus \{0\}$, the following is true.
\begin{enumerate}
\item For each $z\in V$,
\eqb \label{eqn-gen-upper-prob}
\BB P\left[ F^\ep(z) \cap \left\{  \sup_{u,v\in B_\ep(z)} D_h(u,v )  \in \left[  \ep^{\xi(Q-\alpha) + \zeta} , \ep^{\xi(Q-\alpha) - \zeta} \right]   \right\} \right] 
\leq \ep^{\alpha^2/2 + q_\alpha  + o_\zeta(1)+ o_\ep(1) }, 
\eqe
where the rate of the $o_\zeta(1)$ depends only on $\alpha, \gamma$ and the rate of the $o_\ep(1)$ depends only on $ V, \alpha,\zeta,\gamma $ (not on the particular choice of $z$).  
\item There exists an open set $U \subset \BB C$ which contains zero and lies at positive distance from $V$ such that for each small enough $\ep > 0$ (depending on $V$), each of the events $F^\ep(z)$ for $z\in V$ is a.s.\ determined by $h|_{\BB C\setminus U}$.  \label{item-gen-upper-local} 
\end{enumerate}
For $\BB s  >0$, let $\mcl Y_{\BB s}$ be the set of $z\in\bdy\mcl B_{\BB s}$ such that
\eqb
\bigcup_{\ep > 0} \bigcap_{r\in (0,\ep) \cap \BB Q} \bigcap_{w\in B_r(z) \cap \BB Q^2} F^{ r}(w) 
\eqe
occurs, i.e., for each small enough rational $r > 0$, the event $F^{ r}(w)$ occurs for every $w\in B_r(z)\cap \BB Q^2 $ (we consider rational values of $r$ and $w$ to avoid measurability issues).  
Then a.s.\ for each $\alpha \in \BB R$, 
\allb \label{eqn-gen-upper-alpha}
\dim_{\mcl H}^0 \left( \mcl Y_{\BB s} \cap \wh{\mcl T}_h^\alpha \right) &\leq \max\left\{0 , 2-\xi(Q-\alpha) - \alpha^2/2 - q_\alpha \right\} \quad \text{and} \notag\\
\dim_{\mcl H}^\gamma \left( \mcl Y_{\BB s} \cap \wh{\mcl T}_h^\alpha \right) &\leq \max\left\{0 , \frac{2 - \alpha^2/2 -q_\alpha}{ \xi(Q-\alpha) } -1 \right\}  .
\alle
Furthermore, a.s.\ 
\allb \label{eqn-gen-upper-full} 
\dim_{\mcl H}^0 \mcl Y_{\BB s} &\leq \max\left\{ 0 , \sup_{ \alpha\in [-2,2] } \left( 2-\xi(Q-\alpha) - \alpha^2/2 - q_\alpha \right)  \right\} \quad \text{and} \quad \notag\\
\dim_{\mcl H}^\gamma \mcl Y_{\BB s} &\leq \max\left\{ 0 ,  \sup_{ \alpha\in [-2,2]} \left(\frac{2 - \alpha^2/2 -q_\alpha}{ \xi(Q-\alpha) } -1 \right) \right\} .
\alle 
\end{thm}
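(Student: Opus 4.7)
The plan is to generalize the one-point estimate of Proposition~\ref{prop-event-upper} by inserting the indicator of $F^\ep(z)$, and then run the same covering arguments as in the proofs of Theorems~\ref{thm-bdy-dim} and~\ref{thm-thick-dim}. Fix $\alpha \in [-2,2]$, $\zeta > 0$, and a bounded open $V$ with $\ol V \subset \BB C \setminus \{0\}$; let $U$ be the set from hypothesis~\ref{item-gen-upper-local}. Because $F^\ep(z)$ is determined by $h|_{\BB C \setminus U}$, we may shrink $U$ to $U \setminus N_\eta(\bdy\BB D)$ for small $\eta > 0$ to arrange $\ol U \cap \bdy\BB D = \emptyset$. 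Then choose an open set $O$ with $0 \in O \subset \ol O \subset U$ and a smooth bump $\phi : \BB C \to [0,1]$ satisfying $\phi \equiv 1$ on $O$, $\op{supp}(\phi) \subset U$, and $\phi \equiv 0$ on $V \cup \bdy\BB D$.

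\textbf{Step 1 (generalized one-point estimate).} The first step is to show that for each $z \in V$,
\begin{equation} \label{eqn-proposal-one-pt}
\BB P\bigl[F^\ep(z) \cap E_\alpha^\ep(z)\bigr] \leq \ep^{\xi(Q-\alpha) + \alpha^2/2 + q_\alpha + o_\zeta(1) + o_\ep(1)} .
\end{equation}
The choice of $\phi$ ensures that $(h + X\phi)|_{\BB C \setminus U} = h|_{\BB C \setminus U}$, so by hypothesis~\ref{item-gen-upper-local} the event $F^\ep(z)$ is unchanged under the shift by $X\phi$. Weyl scaling together with $X\phi \in [0,1]$ gives $D_h \leq D_{h+X\phi} \leq e^{\xi} D_h$ globally, so the short-range diameter event from $E_\alpha^\ep(z;h+X\phi)$ is contained in an event $G_\alpha^\ep(z)$ of the form appearing in Lemma~\ref{lem-event-upper'}, but with an extra factor of $\BB 1_{F^\ep(z)}$ now available. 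Conditioning on $h$ and applying the density bound of Lemma~\ref{lem-scaled-field-dist} to the long-range factor yields
\[
\BB P\bigl[F^\ep(z) \cap E_\alpha^\ep(z; h + X\phi)\bigr] \preceq \ep^{\xi(Q-\alpha) - \zeta} \BB E\bigl[D_h(0, \bdy O)^{-1} \BB 1_{G_\alpha^\ep(z) \cap F^\ep(z)}\bigr] .
\]
H\"older's inequality with exponents $(p,q)$, the negative-moment bound for $D_h(0,\bdy O)$ from~\cite[Proposition 3.1]{lqg-metric-estimates}, and hypothesis~(\ref{eqn-gen-upper-prob}) applied to the probability factor (the $e^{\pm \xi}$ slack in $G_\alpha^\ep(z)$ is absorbed into $o_\zeta(1)$) give the target bound~(\ref{eqn-proposal-one-pt}) for $h+X\phi$ upon sending $q \to 1^+$; the Radon--Nikodym calculation from Proposition~\ref{prop-event-upper} then transfers it to $h$.

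\textbf{Step 2 (dimension bounds).} For~(\ref{eqn-gen-upper-alpha}), truncate both sets: use $\wh{\mcl T}_h^\alpha(\delta)$ from~(\ref{eqn-thick-truncate}) and define
\[
\mcl Y_{\BB s}(\delta) := \Bigl\{z \in \bdy \mcl B_{\BB s} : F^r(w) \text{ occurs for all rational } r \in (0,\delta] \text{ and all } w \in B_r(z) \cap \BB Q^2\Bigr\} ,
\]
so that $\mcl Y_{\BB s} = \bigcup_n \mcl Y_{\BB s}(1/n)$; by countable stability it suffices to bound the dimension for fixed $\delta$. Cover $V$ by $O(\ep^{-2})$ balls $B_\ep(z_0)$ centered at rational $z_0$, with $\ep < \delta/3$. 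If $B_\ep(z_0)$ meets $\mcl Y_{\BB s}(\delta) \cap \wh{\mcl T}_h^\alpha(\delta)$ at some $w$, then for a rational $r \in (|w - z_0|, 3\ep] \subset (0,\delta]$ we have $z_0 \in B_r(w) \cap \BB Q^2$, so $F^r(z_0)$ occurs; combined with the diameter control from $\wh{\mcl T}_h^\alpha(\delta)$ at scale $r$, Lemma~\ref{lem-one-pt-contain} forces $F^r(z_0) \cap E_\alpha^r(z_0)$. Applying~(\ref{eqn-proposal-one-pt}), summing over the $O(\ep^{-2})$ values of $z_0$ and the $O(1)$ relevant rational $r$, and invoking Markov gives the Euclidean bound of~(\ref{eqn-gen-upper-alpha}); the quantum bound follows from Lemma~\ref{lem-thick-dim-compare}. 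For~(\ref{eqn-gen-upper-full}) one discretizes $\alpha \in [-2,2]$ on a $\zeta/\xi$-mesh exactly as in the proof of Proposition~\ref{prop-bdy-prob}: on $\mcl H^\ep$ the diameter $\sup_{u,v \in B_\ep(z_0)} D_h(u,v)$ lies in the interval associated with some mesh point, and summing~(\ref{eqn-proposal-one-pt}) over the mesh produces both a one-point bound with exponent $\inf_{\alpha \in [-2,2]}\{\xi(Q-\alpha) + \alpha^2/2 + q_\alpha\}$ and its moment version with $(\sup D_h|_{B_\ep(z_0)})^p$ factored in; these feed into the Euclidean and quantum covering arguments of the proof of Theorem~\ref{thm-bdy-dim} to yield~(\ref{eqn-gen-upper-full}).

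\textbf{Main obstacle.} The delicate point is Step~1: the bump $\phi$ must simultaneously be supported inside $U$ (so that $F^\ep(z)$ is unaffected by the shift), be identically $1$ on a neighborhood of $0$ (so that Lemma~\ref{lem-theta-deriv}, and hence the density bound of Lemma~\ref{lem-scaled-field-dist}, applies across an annulus separating $0$ from $V$), and vanish on $V \cup \bdy\BB D$ (to preserve the circle-average normalization and to allow the use of hypothesis~(\ref{eqn-gen-upper-prob}) with $D_h$ rather than $D_{h+X\phi}$). Hypothesis~\ref{item-gen-upper-local} is calibrated precisely to permit this simultaneous choice. After Step~1 the remaining steps are essentially mechanical adaptations of the arguments already developed for Theorems~\ref{thm-bdy-dim} and~\ref{thm-thick-dim}; the only care needed is to truncate $\mcl Y_{\BB s}$ in $\delta$ so that the rational scale $r$ used in the covering step is guaranteed to lie in the range $(0,\delta]$ on which $F^r$ is available.
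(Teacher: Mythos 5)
Your proposal matches the paper's argument: Proposition~\ref{prop-event-gen-upper} is obtained by repeating the proof of Proposition~\ref{prop-event-upper} with the bump function $\phi$ supported in $U$ (so the shift by $X\phi$ leaves $F^\ep(z)$ unchanged) and with hypothesis~\eqref{eqn-gen-upper-prob} substituted for Lemma~\ref{lem-diam-tail} in the short-range step, after which the covering and $\alpha$-discretization arguments of Section~\ref{sec-dim-upper-proof} go through unchanged. The only small step to make explicit is the ball-nesting comparison (as in Step~2 of the proof of the upper bound in Theorem~\ref{thm-thick-dim}) transferring the diameter control of $\wh{\mcl T}_h^\alpha(\delta)$ from balls centered at $w$ to balls centered at the covering point $z_0$, but this is routine and you clearly intend it.
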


We now explain how to adapt the arguments of the preceding subsections to get Theorem~\ref{thm-gen-upper}. 
Fix $\zeta \in (0,1)$ and an open set $V$ with $\ol V\subset \BB C\setminus \{0\}$. 
We define the events $E_\alpha^\ep(z)$ for $\alpha\in [-2,2]$, $z \in \BB C$, and $\ep > 0$ as in~\eqref{eqn-one-pt-event}. 
The following proposition is the generalization of Proposition~\ref{prop-event-upper} to our setting.

\begin{prop} \label{prop-event-gen-upper}
For each $\alpha \in [-2  ,2 ]$, each $z\in V$, and each $\ep > 0$,  
\eqb 
\BB P\left[ E_\alpha^\ep(z) \cap F^\ep(z) \right] \leq \ep^{   \xi(Q-\alpha) + \alpha^2/2 + q_\alpha + o_\zeta(1) +  o_\ep(1)    }
\eqe 
where the rate of the $o_\zeta(1)$ depends only on $\alpha, \gamma$ and the rate of the $o_\ep(1)$ depends only on $ V, \alpha,\zeta,\gamma $ (not on the particular choice of $z$).
\end{prop}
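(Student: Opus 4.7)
The plan is to imitate the proof of Proposition~\ref{prop-event-upper} almost verbatim, carrying the event $F^\ep(z)$ through each step. The key structural observation that makes this possible is condition~\eqref{item-gen-upper-local}: since $F^\ep(z)$ is a function of $h|_{\BB C\setminus U}$, we can arrange the Cameron--Martin-type perturbation used in Section~\ref{sec-one-pt} so that it leaves $F^\ep(z)$ \emph{unchanged}, and then every manipulation applied to the long-range and short-range events in the original proof will pass through $F^\ep(z)$ without modification.

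Concretely, first choose an open set $O \subset U$ with $0 \in O$ lying at positive distance from $V \cup \bdy \BB D$ (possible by shrinking $U$ via intersection with a small ball around $0$ if needed, using that $U$ is open, contains $0$, and is separated from $V$). Take $\phi$ to be a smooth bump function supported in $O$ with $\phi \equiv 1$ near $0$ and $\phi \equiv 0$ on $V \cup \bdy\BB D$, and let $X \sim \op{Unif}([0,1])$ be independent of $h$. Because $\op{supp}(\phi) \subset O \subset U$, the distributions $h$ and $h + X\phi$ agree on $\BB C \setminus U$, so $F^\ep(z)$ may be viewed equivalently as an event about either field. In particular $F^\ep(z)$ is $\sigma(h)$-measurable and coincides with the analogous event $F^\ep(z; h+X\phi)$.

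Next, establish the analogue of Lemma~\ref{lem-event-upper'}, namely
\eqbn
\BB P\!\left[E_\alpha^\ep(z; h+X\phi) \cap F^\ep(z)\right] \leq \ep^{\xi(Q-\alpha) + \alpha^2/2 + q_\alpha + o_\zeta(1) + o_\ep(1)}.
\eqen
Define $G_\alpha^\ep(z) \in \sigma(h)$ exactly as in the proof of Lemma~\ref{lem-event-upper'}; by Weyl scaling and $X \in [0,1]$, the short-range part of $E_\alpha^\ep(z; h+X\phi)$ is contained in $G_\alpha^\ep(z)$. Conditioning on $h$ and invoking Lemma~\ref{lem-scaled-field-dist} (which uses no properties of $F^\ep$) bounds the long-range part by $\preceq \ep^{\xi(Q-\alpha)-\zeta}/D_h(0, \bdy O)$. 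Take unconditional expectations and apply H\"older's inequality with exponents $p, q > 1$; the factor $\BB E[D_h(0,\bdy O)^{-p}]^{1/p}$ is bounded by the negative moment estimate of~\cite{lqg-metric-estimates}, while hypothesis~\eqref{eqn-gen-upper-prob} applied with $2\zeta$ in place of $\zeta$ (valid for small $\ep$, since $G_\alpha^\ep(z)$ is then contained in the short-range event of~\eqref{eqn-gen-upper-prob} at parameter $2\zeta$) yields
\eqbn
\BB P\!\left[G_\alpha^\ep(z) \cap F^\ep(z)\right] \leq \ep^{\alpha^2/2 + q_\alpha + o_\zeta(1) + o_\ep(1)}.
\eqen
Sending $q \downarrow 1$ produces the displayed bound.

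Finally, transfer from $h + X\phi$ back to $h$ via the Radon--Nikodym derivative $M_X$ as in the proof of Proposition~\ref{prop-event-upper}. Because $F^\ep(z)$ is literally the same event for both fields, it passes through the change of measure unchanged: H\"older, together with the bound $\BB E[M_X^p \mid X] \preceq 1$ for $X \in [0,1]$, yields $\BB P[E_\alpha^\ep(z) \cap F^\ep(z)] \preceq \BB P[E_\alpha^\ep(z; h+X\phi) \cap F^\ep(z)]^{1/q}$, and sending $q \downarrow 1$ finishes the proof. The only genuinely new point beyond Proposition~\ref{prop-event-upper} is the need to choose the support of $\phi$ inside $U$; this is the step where condition~\eqref{item-gen-upper-local} is essential, and it is also the only place where any care is required.
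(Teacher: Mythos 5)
Your proof is correct and matches the paper's approach: the paper's own proof is a short remark observing that the argument for Proposition~\ref{prop-event-upper} carries over once the bump function $\phi$ is chosen with support in $U$ (so that, by condition~\ref{item-gen-upper-local}, $F^\ep(z)$ is invariant under adding $X\phi$ to $h$) and hypothesis~\eqref{eqn-gen-upper-prob} replaces Lemma~\ref{lem-diam-tail} in bounding the short-range event. You have supplied exactly those two modifications, together with the surrounding details (e.g.\ the containment of $G_\alpha^\ep(z)$ in the short-range event at parameter $2\zeta$ for small $\ep$) that the paper leaves implicit.
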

\begin{proof}
This follows from exactly the same argument used to prove Proposition~\ref{prop-event-upper}, except that~\eqref{eqn-gen-upper-prob} is used in place of Lemma~\ref{lem-diam-tail} to bound the probability of the ``short range" event. Note that we can arrange that the bump function $\phi$ introduced just above Lemma~\ref{lem-scaled-field-dist} is supported on the set $U$ from condition~\ref{item-gen-upper-local} in Theorem~\ref{thm-gen-upper}. Said condition then ensures that adding a multiple of $\phi$ to $h$ does not change the occurrence of $F^\ep(z)$, i.e., $F^\ep(z)$ occurs for $h$ if and only if it occurs with $h+X\phi$ in place of $h$. 
\end{proof}

With Proposition~\ref{prop-event-gen-upper} in hand, we can now follow exactly the same argument as in Section~\ref{sec-dim-upper-proof} to obtain the dimension upper bounds asserted in Theorem~\ref{thm-gen-upper}.

\section{Outline of the lower bound proof}
\label{sec-outline}

The rest of the paper is devoted to proving the lower bounds for Hausdorff dimension in Theorems~\ref{thm-bdy-dim} and~\ref{thm-thick-dim}.
Before proceeding with the proofs, in this section we give an outline of the argument. 

Our main task is to prove the lower bound
\eqb \label{eqn-thick-dim-outline} 
\esssup \dim_{\mcl H}^0\left( \bdy\mcl B_{\BB s} \cap \mcl T_h^\alpha \cap \wh{\mcl T}_h^\alpha \right) \geq 2-\xi(Q-\alpha) - \alpha^2/2
\eqe
 from Theorem~\ref{thm-thick-dim}. The desired lower bound for $\esssup \dim_{\mcl H}^\gamma(\bdy\mcl B_{\BB s} \cap \mcl T_h^\alpha \cap \wh{\mcl T}_h^\alpha) $ follows from~\eqref{eqn-thick-dim-outline} and a general result relating the Euclidean and quantum dimensions of subsets of the set of $\alpha$-thick points~\cite[Proposition 2.5]{gp-kpz}. 
The lower bound for the $\esssup$ of the Euclidean (resp.\ quantum) dimension of $ \mcl B_{\BB s}$ from Theorem~\ref{thm-bdy-dim} follows from the lower bound for the $\esssup$ of the Euclidean (resp.\ quantum) dimension of $ \mcl B_{\BB s} \cap\mcl T_h^\alpha \cap  \wh{\mcl T}_h^\alpha$ applied with $\alpha = \xi$ (resp.\ $\alpha=\gamma$). 
Hence, we just need to prove~\eqref{eqn-thick-dim}.
\medskip

\noindent\textbf{General strategy.} 
We will prove~\eqref{eqn-thick-dim-outline} using the standard strategy for establishing lower bounds for Hausdorff dimensions. 
That is, we will fix a sequence $\{\ep_n\}_{n\in\BB N_0}$ which converges to zero at a factorial rate and we will define events $\mcl G^n(z)$ for $n\in\BB N$ and $z\in \BB A_{1/4,1/3}(0)$ which satisfy (roughly speaking) the following properties.
\begin{enumerate}[A.]
\item \textbf{Estimates for circle averages and distances on $\mcl G^n(z)$.} If $\mcl G^n(z)$ occurs, then for each $r \in [\ep_n,\ep_0] $ we have $h_r(z) =  (\alpha + o_r(1)) \log r^{-1}$ and $\sup_{u,v\in \BB A_{\ep_n,r}(z)} D_h(u,v)  = r^{\xi(Q-\alpha) + o_r(1) }$. Furthermore, $D_h(0,B_{\ep_n}(z)) \in [\BB s , \BB s + \ep_n^{\xi(Q-\alpha) + o_n(1)} ]$ (see Lemma~\ref{lem-perfect-contain}). \label{item-G-perfect}
\item \textbf{One-point estimate.} For each $z\in \BB A_{1/4,1/3}(0)$ we have $\BB P\left[\mcl G^n(z) \right] =  \ep_n^{\xi(Q-\alpha)  + \alpha^2/2 + o_n(1)}$ (Proposition~\ref{prop-end-1pt}).  \label{item-G-1pt}
\item \textbf{Two-point estimate.} For each distinct $z,w\in  \BB A_{1/4,1/3}(0)$, we have $\BB P\left[ \mcl G^n(z) \cap \mcl G^n(w) \right] \leq |z-w|^{-\xi(Q-\alpha) - \alpha^2/2 - o_{|z-w|}(1)} \BB P\left[\mcl G^n(z) \right] \BB P\left[\mcl G^n(w) \right]$, where the rate of convergence of the $o_{|z-w|}(1)$ does not depend on $n$ or on the particular choices of $z$ and $w$ (Proposition~\ref{prop-end-2pt}). \label{item-G-2pt}
\end{enumerate}
This will allow us to show via standard arguments that for each $\Delta < 2-\xi(Q-\alpha) - \alpha^2/2$, it holds with positive probability that there is a so-called \emph{Frostman measure} $\nu$ on $\bdy\mcl B_{\BB s} \cap \mcl T_h^\alpha \cap \wh{\mcl T}_h^\alpha$, i.e., $\nu$ satisfies
\eqb
\nu\left(\bdy\mcl B_{\BB s} \cap \mcl T_h^\alpha \cap \wh{\mcl T}_h^\alpha  \right) > 0 \quad \text{and} \quad
\iint_{(\bdy\mcl B_{\BB s} \cap \mcl T_h^\alpha \cap \wh{\mcl T}_h^\alpha) \times (\bdy\mcl B_{\BB s} \cap \mcl T_h^\alpha \cap \wh{\mcl T}_h^\alpha)} |z-w|^{-\Delta} \,d\nu(z) \,d\nu(w) < \infty .
\eqe
By Frostman's lemma~\cite[Theorem 4.27]{peres-bm}, whenever such a measure $\nu$ exists we have $\dim_{\mcl H}^0 (\bdy\mcl B_{\BB s} \cap \mcl T_h^\alpha) \geq \Delta$.
We therefore obtain~\eqref{eqn-thick-dim-outline}.

The remainder of this paper is devoted to defining the events $\mcl G^n(z)$ and proving the above three properties. 
Like the events $E_\alpha^\ep(z)$ used in Section~\ref{sec-upper-bound}, the event $\mcl G^n(z)$ will be the intersection of a ``short-range" event $\mcl E^n(z)$ (which involves conditions for the GFF on a neighborhood of $z$) and a ``long-range" event $G_{z,n}$ (which controls $D_h(0,z)$).
The short-range event is dealt with using estimates for the GFF and the $\gamma$-LQG metric. The long-range event is dealt with by adding a random smooth function to the GFF and using that this affects the law of the GFF in an absolutely continuous way, as in Section~\ref{sec-upper-bound}. 
\medskip

\noindent\textbf{Short-range event.}
In \textbf{Section~\ref{sec-short-range}}, we focus exclusively on the short-range events $\mcl E^n(z)$. 
The definition of the events is given in Section~\ref{sec-short-def}. 
We will first define for each $j\in\BB N_0$ an event $E_{z,j}$ which depends on $h|_{\BB A_{\ep_j , \ep_{j-1}/3}(z)}$ and events $H_{z,j}^{\op{out}}$ and $H_{z,j}^{\op{in}}$ which depend on $h|_{\BB A_{\ep_j/3,\ep_j}(z)}$. 

The event $E_{z,j}$ consists of the condition that $h_{\ep_j}(z) - h_{\ep_{j-1}}(z) \approx \alpha\log(\ep_{j-1}/\ep_j)$ (which will ensure that $z$ is ``approximately $\alpha$-thick" on $\mcl E^n(z)$) plus a list of regularity conditions which have constant-order probability.
These regularity conditions have several different purposes, such as preventing $D_h(0,z)$ from being extremely large or small and controlling the conditional probability of $E_{z,j}$ given $h|_{\BB C\setminus B_{\ep_{j-1}}(z)}$ on the event $E_{z,j-1}$. 
The purpose of each regularity condition is explained in Remark~\ref{remark-E}. 

The purpose of the events $H_{z,j}^{\op{in}}$ and $H_{z,j}^{\op{out}}$ is to prevent pathological behavior on the intermediate annuli $\BB A_{\ep_j/3,\ep_j}(z) $. The events $E_{z,j}$ cannot include conditions which depend on the restrictions of $h$ to these annuli. Indeed, we need $E_{z,j}$ to depend on the restriction of $h$ to a compact subset of $B_{\ep_{j-1}}(z)$ in order to control the conditional probability of $E_{z,j}$ given $h|_{\BB C\setminus B_{\ep_{j-1}}(z)}$ on the event $E_{z,j-1}$. 

We set $\mcl E^n(z) := \bigcap_{j=0}^n E_{z,j} \cap \bigcap_{j=0}^{n-1}( H_{z,j}^{\op{in}}\cap H_{z,j}^{\op{out}} )$, so that $\mcl E^n(z)$ is the event that we have the desired behavior up to scale $n$. In Sections~\ref{sec-short-twopoint} and~\ref{sec-short-estimates}, will establish analogs of the above three properties for the events $\mcl E^n(z)$. 
\begin{enumerate}[A.]
\item \textbf{Estimates for circle average and distance on $\mcl E^n(z)$.} If $\mcl E^n(z)$ occurs, then for each $r \in [\ep_n,\ep_0]$ we have $h_r(z) = (\alpha + o_r(1)) \log r^{-1}$ and $\sup_{u,v\in \BB A_{\ep_n,r}(z)} D_h(u,v)  =  r^{\xi(Q-\alpha) + o_r(1)}$. Furthermore, $D_h(0,z)$ is of constant order (Lemmas~\ref{lem-E-thick} and~\ref{lem-E-dist}). \label{item-E-perfect}
\item \textbf{One-point estimate for $\mcl E^n(z)$.} For $z\in \BB A_{1/4,1/3}(0)$ we have $\BB P\left[\mcl E^n(z) \right] =  \ep_n^{  \alpha^2/2 + o_n(1)}$ (Lemma~\ref{lem-thick-1pt}).  \label{item-E-1pt}
\item \textbf{Two-point estimate for $\mcl E^n(z)$.} For distinct $z,w\in  \BB A_{1/4,1/3}(0)$, we have $\BB P\left[ \mcl E^n(z) \cap \mcl E^n(w) \right] \leq |z-w|^{-  \alpha^2/2 - o_{|z-w|}(1)} \BB P\left[\mcl E^n(z) \right] \BB P\left[\mcl E^n(w) \right]$, where the rate of convergence of the $o_{|z-w|}(1)$ does not depend on $n$ or on the particular choices of $z$ and $w$ (Lemma~\ref{lem-thick-2pt}). \label{item-E-2pt}
\end{enumerate}
These properties are established using the definition of $\mcl E^n(z)$ together with standard local absolute continuity properties of the GFF and estimates for the LQG metric from~\cite{lqg-metric-estimates}. 
The most involved step is showing that $\BB P[E_{z,j} ] \geq (\ep_j/\ep_{j-1})^{\alpha^2/2+o_j(1)}$ (Proposition~\ref{prop-E-prob}).
We require non-trivial estimates to show that some of the regularity conditions in the definition of $E_{z,j}$ occur with high probability (see the discussion just after Proposition~\ref{prop-E-prob} for details) so the proof of this proposition is deferred until \textbf{Section~\ref{sec-onescale-prob}} to avoid interrupting the main argument.
\medskip

\noindent\textbf{Long-range event.}
In \textbf{Section~\ref{sec-long-range}}, we first define $G_{z,n} := \left\{ D_h(0,B_{\ep_n}(z) ) \in [\BB s , \BB s  +\ep_n^\beta] \right\}$ for $\beta$ slightly smaller than $\xi(Q-\alpha)$. We then set $\mcl G^n(z) := \mcl E^n(z) \cap G_{z,n}$. 
We will extract the desired properties for $\mcl G^n(z)$ from the analogous properties of $\mcl E^n(z)$, as follows. 
Property~\ref{item-E-perfect} for $\mcl G^n(z)$ is immediate from the analogous property of $\mcl E^n(z)$ and the definition of $G_{z,n}$. 

The one-point estimate for $\mcl G^n(z)$ is proven in Section~\ref{sec-end-1pt}. We let $\phi_{z,1}$ be a smooth bump function supported on an appropriate sub-annulus of $\BB A_{\ep_1 , \ep_0/3}(z)$. 
We also let $R > 0$ be large and we sample $X$ from Lebesgue measure on $[0,R]$, independently from $h$. 
Since we know that $D_h(0,B_{\ep_n}(z))$ is of constant order on $\mcl E^n(z)$, if $R$ is chosen to be sufficiently large (independently of the choice of $z$ or $n$) then we can use Weyl scaling (Axiom~\ref{item-metric-f}) to deduce that 
\eqb \label{eqn-outline-1pt-cond}
\BB P\left[\text{$G_{z,n}$ occurs with $h+X\phi_{z,1}$ in place of $h$} \,\big|\, h \right] \BB 1_{\mcl E^n(z)} \asymp \ep_n^\beta \BB 1_{\mcl E^n(z)} . 
\eqe 

By taking unconditional expectations in~\eqref{eqn-outline-1pt-cond} and recalling that $\mcl G^n(z) = G_{z,n} \cap \mcl E^n(z)$, we obtain
\eqb \label{eqn-outline-1pt'}
\BB P\left[\text{$\mcl G^n(z)$ occurs with $h+X\phi_{z,1}$ in place of $h$} \right] \asymp \ep_n^\beta \BB P\left[\mcl E^n(z) \right] .
\eqe 
Actually, there is some subtlety here since $\mcl E^n(z)$ does not imply $\mcl E^n(z)$ with $h+X\phi_{z,1}$ in place of $h$. To get around this we need to vary some of the parameters in the definition of $\mcl E^n(z)$ by an $R$-dependent amount; see Section~\ref{sec-end-1pt} for details. 
The law of $h+X\phi_{z,1}$ is mutually absolutely continuous w.r.t.\ the law of $h$, and the regularity conditions in the definition of $\mcl E^n(z)$ allow us to bound the Radon-Nikodym derivative. From this, we see that~\eqref{eqn-outline-1pt'} together with our one-point estimate for $\mcl E^n(z)$ implies the desired one-point estimate for $\mcl G^n(z)$. 

The two-point estimate for $\mcl G^n(z)$ is proven in Section~\ref{sec-end-2pt}. For this, we follow a similar argument to the case of the one-point estimate. If $|z-w| \in [\ep_{m+1},\ep_m]$, we let $\phi_{z,1}$ be as above and we let $\phi_{w,m+2}$ be an appropriate smooth bump function on a sub-annulus of $\BB A_{\ep_{m+2} , \ep_{m+1}/3}(w)$. 
We then sample $X_z,X_w$ uniformly from $[0,1]$ independently from each other and from $h$ and set $\wt h := h + X_z \phi_{z,1} + X_w\phi_{w,m+2}$. 
The reason why we want $\phi_{w,m+2}$ to be supported on $\BB A_{\ep_{m+2} , \ep_{m+1}/3}(w)$ is that the support of $\phi_{w,m+2}$ does not disconnect $z$ from 0, so changing $X_w$ does not have much of an effect on $D_{\wt h}(0,z)$ (c.f.\ Lemma~\ref{lem-geo-away}). This prevents the joint conditional density of $D_{\wt h}(0,z)$ and $D_{\wt h}(0,w)$ given $h$ from concentrating in a subset of $\BB R^2$ with small Lebesgue measure. 
Via a similar argument to the one above, we then obtain a two-point estimate for $\mcl G^n(z)$ with $\wt h$ in place of $h$ using the two-point estimate for $\mcl E^n(z)$.
We then deduce the desired two-point estimate for $\mcl G^n(z)$ using absolute continuity.

\section{Short-range events}
\label{sec-short-range}

\subsection{Definition of short-range events}
\label{sec-short-def}

Let
\eqb \label{eqn-ep-choice}
\ep_0 := 1/3 \quad \text{and} \quad \ep_j := \frac{1}{100j!} ,\quad\forall j \in \BB N .
\eqe
The reason for a factorial choice of $\ep_j$ is to make it so that the gaps between successive scales, i.e., the ratios $\ep_j/\ep_{j-1}$, tend to $\infty$ as $j\rta\infty$ but still satisfy $\ep_j/\ep_{j-1} = \ep_j^{o_j(1)}$. The first property is important since it allows us to absorb factors of $c^j$, for $c>0$ and $j\in\BB N$, into multiplicative errors of the form $\ep_j^{o_j(1)}$ (see, e.g., the proof of Lemma~\ref{lem-thick-1pt}). The second property is important, e.g., since it allows us to show that the probabilities of our events change by at most a factor of $\ep_j^{o_j(1)}$ when we increase or decrease $j$ by a constant-order amount (see, e.g., the proof of Lemma~\ref{lem-thick-2pt}). 

Fix large constants $A , K , L > 0$ with $K , L \geq A$, which will be the parameters of our events. We will choose $A$ and $L$ in a manner depending only on $\alpha,\gamma$ in Proposition~\ref{prop-E-prob} and we will choose $K$ in a manner depending on $A$ in Lemma~\ref{lem-Elower-prob}. 
The parameter $K$ will replace $A$ in some of the conditions in the definition of the event at scale $j=1$; it does not appear in any of the definitions for scale $j\geq 2$. 

\begin{remark}
Many of the definitions of events in this and subsequent sections involve fixed numerical constants ($1/3,5/6,6$, etc.), especially when specifying the sizes of Euclidean annuli. The particular values of these constants are usually not important for our purposes, just their relative numerical order. 
\end{remark}

\subsubsection{Events at a single scale}
\label{sec-E-def}

%Recall that if $K\subset\BB C$ is closed, then $\sigma(h|_K) := \bigcap_{\ep > 0} \sigma(h|_{B_\ep(K)})$; see Appendix~\ref{sec-gff-prelim}. 
We want to define for each $z\in\BB C$ and $j\in\BB N$ an event $E_{z,j} \in \sigma\left( (h-h_{\ep_{j-1}}(z)) |_{\BB A_{\ep_j,\ep_{j-1}/3}(z)}\right)$ and events $H_{z,j}^{\op{out}} , H_{z,j}^{\op{in}} \in \sigma\left( (h-h_{\ep_{j-1}}(z)) |_{\BB A_{\ep_j/3 ,\ep_j}(z)} \right)$. 
Before defining our events, we first need a few preliminary definitions.
For $z\in\BB C$ and $j\in\BB N_0$, let $\frk h_{z,j}$ be the harmonic part of $(h-h_{\ep_j}(z))|_{B_{\ep_j}(z)}$.
 
Also let $\frk h_{z,j}^{\op{out}}$ be the harmonic function on $\BB A_{\ep_j/3,\ep_j}(z)$ whose boundary data on $\bdy B_{\ep_j }(z)$ coincides with that of $h - h_{\ep_j}(z)$ (equivalently, that of $\frk h_{z,j}$) and whose boundary data on $\bdy B_{\ep_j/3}(z)$ is identically equal to zero.
Symmetrically, let $\frk h_{z,j}^{\op{in}}$ be the harmonic function on $\BB A_{\ep_j/3,\ep_j}(z)$ whose boundary data on $\bdy B_{\ep_j/3}(z)$ coincides with that of $h- h_{\ep_j}(z)$ (equivalently, that of $\frk h_{z,j}$) and whose boundary data on $\bdy B_{\ep_j}(z)$ is identically equal to zero. 
Then $  \frk h_{z,j}^{\op{in}} + \frk h_{z,j}^{\op{out}}$ is the harmonic part of $(h-h_{\ep_j}(z))|_{\BB A_{\ep_j/3,\ep_j}(z)}$.
The purpose of the harmonic functions $  \frk h_{z,j}^{\op{in}} $ and $ \frk h_{z,j}^{\op{out}}$ will be discussed further in Section~\ref{sec-H-def}. 

Finally, fix, once and for all, a smooth bump function $\phi : \BB C\rta [0,1]$ with 
\eqb \label{eqn-phi-choice}
\phi|_{\BB A_{2,3}(0)} \equiv 1  \quad \text{and} \quad \phi|_{\BB C\setminus \BB A_{1,4}(0)} \equiv 0 .
\eqe

\begin{figure}[t!]
 \begin{center}
\includegraphics[scale=1]{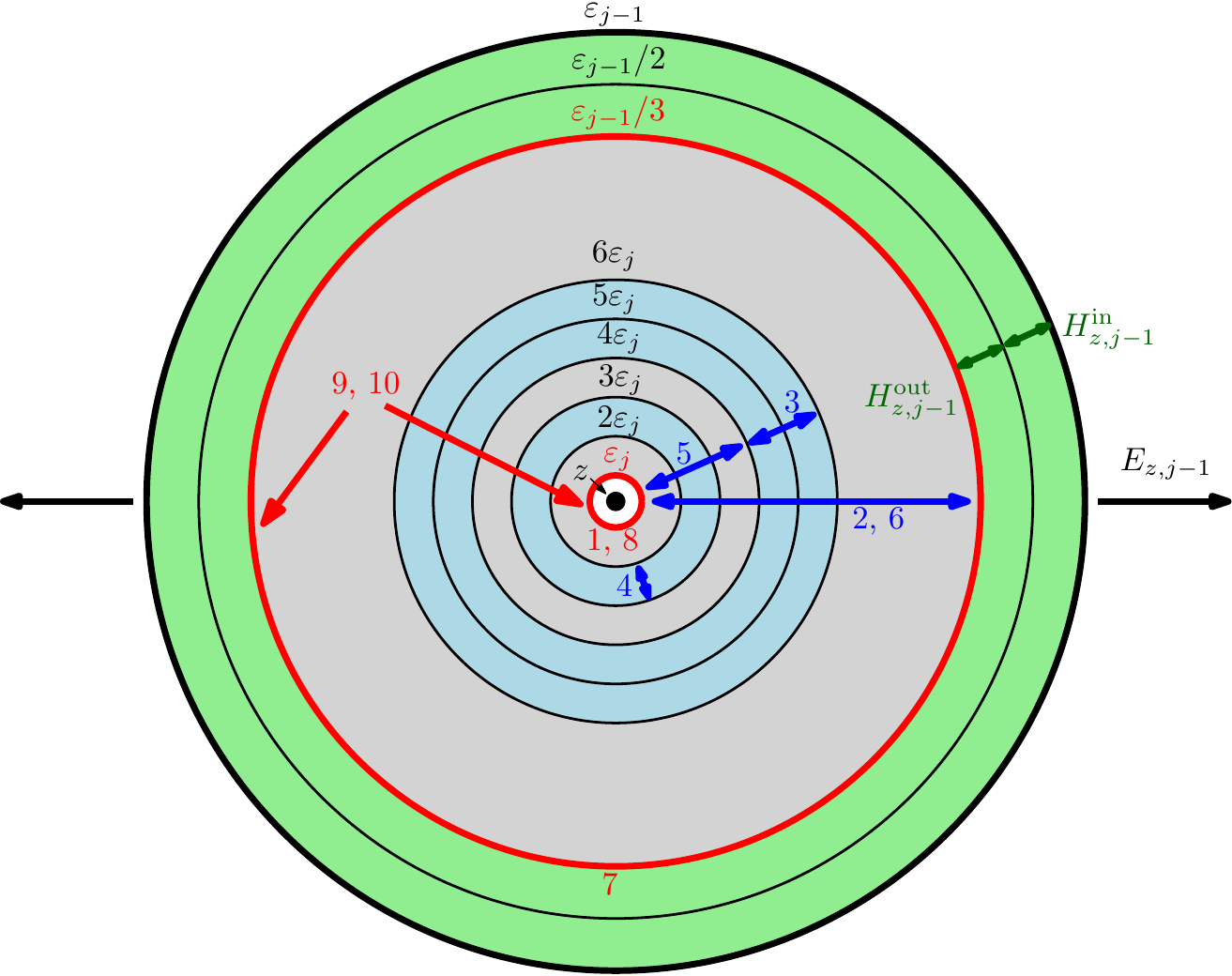}
\vspace{-0.01\textheight}
\caption{Schematic illustration of the event $E_{z,j}$. Ratios of radii of concentric circles are not shown to scale. The part of the field $h-h_{\ep_{j-1}}(z)$ which each condition in the definition of $E_{z,j}$ depends on is indicated with either a red circle (for conditions which only depend on the restriction of $h-h_{\ep_{j-1}}(z)$ to one or more circles) or a double-sided blue arrow (for conditions which depend on the restriction of $h-h_{\ep_{j-1}}(z)$ to an annulus). The same is done for the events $H_{z,j-1}^{\op{out}}$ and $H_{z,j-1}^{\op{in}}$, with green double-sided arrows. The event $E_{z,j}$ is determined by the gray and blue annulus $(h-h_{\ep_{j-1}}(z))|_{\BB A_{\ep_j,\ep_{j-1}/3}(z)}$. The event $E_{z,j-1}$ is determined by the restriction of $h$ to the region outside of the green annulus. Note that $\ep_{j-1} / \ep_j \rta \infty$ as $j\rta\infty$ so the aspect ratio of the outermost grey annulus is huge when $j$ is large. The aspect ratios of the other annuli are of constant order.
}\label{fig-onescale-event}
\end{center}
\vspace{-1em}
\end{figure} 
 
Let us now define the main event we will consider for a single scale. We comment on the purpose of each condition in Remark~\ref{remark-E} below.
For $z\in \BB C$ and $j\in\BB N$, let $E_{z,j} = E_{z,j}(A,K,L)$ be the event that the following is true.
\medskip

\noindent\textbf{$\alpha$-thickness for the circle average process:}

\begin{enumerate}
\item $| h_{\ep_j}(z) - h_{\ep_{j-1} }(z)  - \alpha  \log(\ep_{j-1}/\ep_j)| \leq A [\log(\ep_{j-1}/\ep_j)]^{3/4}$. \label{item-onescale-thick}
\item $\sup_{r\in [\ep_j,\ep_{j-1}/3]} |h_r(z) - h_{\ep_{j-1}}(z) - \alpha\log(\ep_{j-1}/r)| $ is bounded above by $ K [\log(\ep_{j-1}/\ep_j)]^{3/4}$ if $j=1$ or by $A[\log(\ep_{j-1}/\ep_j)]^{3/4}$ if $j\geq 2$. \label{item-onescale-sup}
\end{enumerate}

\noindent\textbf{Geometric conditions:}

\begin{enumerate}
\setcounter{enumi}{2}
\item There is a path in $\BB A_{5\ep_j,6\ep_j} (z)$ which disconnects the inner and outer boundaries of $\BB A_{5\ep_j , 6\ep_j} (z)$ and whose $D_h$-length is at most $\frac{1}{100} D_h\left( \bdy B_{4\ep_j}(z) , \bdy B_{5\ep_j}(z) \right)$. \label{item-onescale-around}
\item $ D_h\left( \bdy  B_{2\ep_j}(z)  , \bdy  B_{3\ep_j}(z) \right) \geq A^{-1} \ep_j^{\xi Q} e^{\xi h_{\ep_j}(z)}$.  \label{item-onescale-annulus} 
\item With $\phi$ as in~\eqref{eqn-phi-choice}, let $\phi_{z,j}(u) := \phi\left(\ep_j^{-1}(u - z)\right)$, so that $\phi_{z,j}$ is supported on $\BB A_{\ep_j , 4\ep_j}(z)  $ and $\phi_{z,j}|_{\BB A_{2\ep_j , 3\ep_j}(z)}\equiv 1$. Then the Dirichlet inner product satisfies $|(h,\phi_{z,j})_\nabla |  \leq K$ if $j=1$ or $|(h,\phi_{z,j})_\nabla |  \leq A$ if $j\geq 2$.  \label{item-onescale-dirichlet}
\item $\sup_{u,v\in \BB A_{ \ep_j,\ep_{j-1}/3}(z)} D_h\left(u,v ; \BB A_{ \ep_j , \ep_{j-1}/3}(z)\right)$ is bounded above by $K \ep_{j-1}^{\xi Q} e^{\xi h_{\ep_{j-1}}(z)}$ if $j=1$ or by $A\ep_{j-1}^{\xi Q} e^{\xi h_{\ep_{j-1}}(z)}$ if $j\geq 2$. \label{item-onescale-diam}
\end{enumerate}

\noindent\textbf{Conditions for controlling the Radon-Nikodym derivative given what happens at a previous scale:}

\begin{enumerate}
\setcounter{enumi}{6}
\item Let $\frk F_{z,j-1}$ the set of harmonic functions $\frk f$ on $B_{\ep_{j-1}}(z) $ which satisfy $\frk f(0) = 0$ and $\sup_{u\in B_{\ep_{j-1}/2}(z) } |\frk f(u)| \leq A$. Let $\rng h$ be a zero-boundary GFF on $B_{\ep_{j-1}}(z)$ . 
For each $\frk f\in \frk F_{z,j-1}$, the Radon-Nikodym derivative of the law of $(\rng h + \frk f)|_{B_{\ep_{j-1}/3}(z)}$ w.r.t.\ the law of $(h - h_{\ep_{j-1 }}(z))|_{B_{\ep_{j-1}/3}(z)}$ is bounded above by $L$ and below by $L^{-1}$.  \label{item-onescale-rn}
\item The harmonic part $\frk h_{z,j}$ of $(h-h_{\ep_j}(z))|_{B_{\ep_j}(z)}$ satisfies \label{item-onescale-harmonic}
\eqb
\sup_{u\in  B_{\ep_{j}/2}(z) } |\frk h_{z,j}(u)   | \leq A , \quad \text{i.e.,} \quad \frk h_{z,j}   \in \frk F_{z,j }. 
\eqe
\end{enumerate}

\noindent\textbf{Conditions for controlling diameters and circle averages in intermediate annuli:}

\begin{enumerate}
\setcounter{enumi}{8}
\item In the notation introduced above,   \label{item-onescale-out}
\eqb
\sup_{u\in \BB A_{\ep_{j }/3,\ep_{j }/2}(z)} |\frk h_{z,j }^{\op{out}}(z)| \leq A 
\quad \text{and} \quad 
\sup_{u\in \BB A_{\ep_{j-1}/2,\ep_{j-1} }(z)} |\frk h_{z,j-1}^{\op{in}}(z)| \leq A .
\eqe
\item With the events $H_{z,j}^{\op{in}}$ and $H_{z,j}^{\op{out}}$ defined in~\eqref{eqn-H-def} just below,    \label{item-onescale-cond}
\eqbn 
\BB P\left[H_{z,j-1}^{\op{out}} \cap H_{z,j }^{\op{in}} \,\big|\, (h-h_{\ep_{j-1}}(z) )|_{\BB A_{\ep_j,\ep_{j-1}/3}(z)} \right] \geq \frac34 .
\eqen
\end{enumerate}

\begin{remark} \label{remark-E}
In this remark we explain the purpose of each condition in the definition of $E_{z,j}$. 
\begin{enumerate}
\item Used in the proof of Lemma~\ref{lem-E-thick} to control the circle average of $h$ on $\mcl E^n(z)$. This lemma will eventually be used to make sure that the Frostman measure constructed in Section~\ref{sec-dim-lower} is supported on $\mcl T_h^\alpha$. 
\item Also used in Lemma~\ref{lem-E-thick} to deal with $h_r(z)$ for $r\notin\{\ep_j\}_{j\in\BB N_0}$. 
The reason why we separate this condition from condition~\ref{item-onescale-thick} and use $K$ instead of $A$ when $j=1$ is as follows.
In Section~\ref{sec-end-1pt} below, we want say that if $x > 0$ is large and $E_{z,1}$ occurs with $K = A$, then $E_{z,1}$ occurs with $h + x\phi_{z,1}$ in place of $h$ for a sufficiently large value of $K$ (depending on $x$). Note that adding a multiple of $\phi_{z,j}$ to $h$ does not affect $h_{\ep_j}(z) - h_{\ep_{j-1}}(z)$ since $\phi_{z,j}$ vanishes on $\bdy B_{\ep_j}(z) \cup \bdy B_{\ep_{j-1}}(z)$, which is why we do not need $K$ in condition~\ref{item-onescale-thick}. The reason why we want to consider $h+x\phi_{z,1}$ is for an absolute continuity argument similar to the ones in Section~\ref{sec-one-pt}, as discussed in Section~\ref{sec-outline}. 
\item Used in Lemma~\ref{lem-geo-away} to prevent $D_h$-geodesics between points outside of $B_{6\ep_j}(z)$ from entering $B_{4\ep_j}(z)$.
\item Used in Lemma~\ref{lem-E-dist} to lower-bound $D_h$-distances and in Lemma~\ref{lem-dist-interval} to lower-bound the effect on $D_h(0,z)$ when we add a multiple of $\phi_{z,j}$. 
\item Used in Lemmas~\ref{lem-dist-1pt-lower} and~\ref{lem-full-2pt} to control the Radon-Nikodym derivative between the laws of $h$ and $h+x\phi_{z,1}$. The purpose of $K$ is the same as for condition~\ref{item-onescale-sup}. 
\item Used in Lemma~\ref{lem-E-dist} to upper-bound $D_h$-distances, which will be important for the one-point lower bound for the long-range event in Section~\ref{sec-end-1pt} and for making sure that the Frostman measure is supported on $\wt{\mcl T}_h^\alpha$. The purpose of $K$ is the same as for condition~\ref{item-onescale-sup}. 
\item Used in Lemma~\ref{lem-multiscale-cond}, with $\frk f = \frk h_{z,j-1}$, to control the conditional probability of $E_{z,j}$ given $h|_{\BB C\setminus B_{\ep_{j-1}}(z)}$. This is the only appearance of the parameter $L$; the reason why we introduce $L$ is that the definition of $\frk F_{z,j-1}$ depends on $A$, so $L$ needs to be chosen in a manner depending on $A$. 
\item Used in Lemma~\ref{lem-multiscale-cond} to allow us to apply condition~\ref{item-onescale-rn} at the next scale. 
\item Used in Lemma~\ref{lem-E-dist} to compare the fields $h_{z,j}^{\op{out}}$ and $h_{z,j}^{\op{in}}$ defined in~\eqref{eqn-in-out-fields} to $h|_{\BB A_{\ep_j/3,\ep_j}}$. 
\item Used in Lemma~\ref{lem-multiscale-cond} to lower-bound the conditional probability given $E_{z,j}$ of events defined as intersections of the $E_{z,j}$'s, $H_{z,j}^{\op{in}}$'s, and $H_{z,j}^{\op{out}}$'s. 
\end{enumerate}
\end{remark}

\begin{lem} \label{lem-E-msrble}
The event $E_{z,j}$ is measurable w.r.t.\ the $\sigma$-algebra generated by $(h-h_{\ep_{j-1}}(z))|_{\BB A_{\ep_j,\ep_{j-1}/3}(z)}$. 
\end{lem}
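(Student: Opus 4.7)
The plan is to check measurability separately for each of the ten numbered conditions in the definition of $E_{z,j}$; the event $E_{z,j}$ is then measurable with respect to $\mcl F := \sigma\bigl((h - h_{\ep_{j-1}}(z))|_{\BB A_{\ep_j, \ep_{j-1}/3}(z)}\bigr)$ as the intersection of measurable sets. I will group the conditions into three types according to the tool required: those depending on the circle-average process or harmonic extensions (items (1), (2), (8), (9)); the metric and Dirichlet-pairing conditions (items (3), (4), (5), (6)); and the remaining ``soft'' conditions (7) and (10).

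For the first group, the basic identity is $h_r(z) - h_{\ep_{j-1}}(z) = (h - h_{\ep_{j-1}}(z))_r(z)$, so the quantity appearing in (1) and (2) is the circle average of the shifted field at radius $r$. Since the circle-average process is a.s.\ continuous in $r$, its value at each $r \in [\ep_j, \ep_{j-1}/3]$ (and the supremum over this interval) is determined by $(h - h_{\ep_{j-1}}(z))|_{\BB A_{\ep_j, \ep_{j-1}/3}(z)}$ via continuous limits of averages on circles strictly interior to the annulus. The harmonic extensions $\frk h_{z,j}$, $\frk h_{z,j}^{\op{out}}$, and $\frk h_{z,j-1}^{\op{in}}$ in (8) and (9) are determined by the Poisson kernel from their distributional boundary traces on $\partial B_{\ep_j}(z)$ and $\partial B_{\ep_{j-1}/3}(z)$, both of which lie on the boundary of $\BB A_{\ep_j, \ep_{j-1}/3}(z)$; these traces are in turn limits of circle averages of $h - h_{\ep_{j-1}}(z)$ at radii inside the annulus, after subtracting the deterministic-in-$\mcl F$ circle-average $h_{\ep_j}(z) - h_{\ep_{j-1}}(z)$.

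For the metric conditions (3), (4), (6), I apply Weyl scaling (Axiom~\ref{item-metric-f}) with $c := h_{\ep_{j-1}}(z)$ to write $D_h = e^{\xi c} D_{h-c}$, reducing each inequality to one involving only $D_{h-c}$-distances or $D_{h-c}$-diameters and circle averages of $h-c$; in (4) the prefactor $e^{\xi h_{\ep_j}(z)}$ absorbs the $e^{\xi c}$ and leaves $e^{\xi(h_{\ep_j}(z) - c)}$, which is a function of $(h-c)$ on the inner boundary circle. The relevant $D_{h-c}$-distances between concentric circles $\partial B_{r_1}(z)$ and $\partial B_{r_2}(z)$ with $\ep_j \leq r_1 < r_2 \leq \ep_{j-1}/3$ coincide with their internal counterparts on $\BB A_{\ep_j, \ep_{j-1}/3}(z)$ by a last-exit/first-entry path-surgery argument: any continuous path joining the two circles contains a subpath lying in $\overline{\BB A_{r_1, r_2}(z)}$, so its $D$-length is bounded below by the internal crossing distance, while the opposite inequality is automatic. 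By locality (Axiom~\ref{item-metric-local}) applied to $D_{h-c}$, these internal distances are $\mcl F$-measurable; the diameter in (6) is $\mcl F$-measurable by the same locality argument. The Dirichlet pairing in (5) satisfies $(h, \phi_{z,j})_\nabla = (h - c, \phi_{z,j})_\nabla$ because $(c, \phi_{z,j})_\nabla = 0$ for any constant $c$, and $(h - c, \phi_{z,j})_\nabla$ is a continuous linear functional of the shifted field on the support of $\phi_{z,j}$, which sits inside $\overline{\BB A_{\ep_j, \ep_{j-1}/3}(z)}$.

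Finally, condition (7) is a purely deterministic statement about laws of two Gaussian distributions on $B_{\ep_{j-1}/3}(z)$ and does not involve any outcome-dependent quantity, hence is measurable with respect to the trivial $\sigma$-algebra; and condition (10) is by construction a conditional probability given $\mcl F$ and is therefore automatically $\mcl F$-measurable. The only nontrivial step is the path-surgery reduction from global $D_h$-crossing distances to internal crossing distances on the annulus, which is what allows locality to apply in (3) and (4); everything else amounts to bookkeeping after invoking Axioms~\ref{item-metric-length}--\ref{item-metric-f} and the continuity of the circle-average process.
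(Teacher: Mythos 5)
The proposal is mostly fine for conditions (1)--(6) and (8)--(10) — your Weyl-scaling reduction plus the last-exit/first-entry surgery argument for the global crossing distances in (3) and (4), and the observation that (10) is by construction a conditional probability given $\mcl F$, all work and match the paper's ``by inspection, using locality'' remark. The genuine gap is your treatment of condition (7).

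You claim condition (7) is ``a purely deterministic statement about laws of two Gaussian distributions\dots and does not involve any outcome-dependent quantity.'' That misreads the condition. The Radon--Nikodym derivative in question is the density function \emph{evaluated at the actual realization} $(h - h_{\ep_{j-1}}(z))|_{B_{\ep_{j-1}/3}(z)}$; condition (7) is the random event that this random variable lies in $[L^{-1},L]$ simultaneously for all $\frk f \in \frk F_{z,j-1}$. A priori this random variable is a function of the field restricted to the whole ball $B_{\ep_{j-1}/3}(z)$, which strictly contains the hole $\ol{B_{\ep_j}(z)}$ and is therefore \emph{not} in $\mcl F$. So far from being the trivial case, condition (7) is precisely the one for which measurability with respect to $\mcl F$ is nonobvious and requires a separate argument. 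The paper supplies this via Lemma~\ref{lem-rn-msrble}, which shows (using the Markov property applied to a thin collar $\bdy^\ep U$ near $\bdy U$, the harmonicity of $\frk f$, and a limit as $\ep\rta 0$) that the realized Radon--Nikodym derivative is a.s.\ determined by the boundary data $(h-h_{\ep_{j-1}}(z))|_{\bdy B_{\ep_{j-1}/3}(z)}$ alone, which is in turn determined by the annulus restriction. Your proposal needs this ingredient, or an equivalent argument, and cannot dismiss (7) as trivial.
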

\begin{proof} 
By inspection, all of the conditions in the definition of $E_{z,j}$ are determined by $(h-h_{\ep_{j-1}}(z))|_{\BB A_{\ep_j,\ep_{j-1}/3}(z)}$ except possibly condition~\ref{item-onescale-rn}. For the conditions involving $D_h$-distances, we use the locality of the metric (Axiom~\ref{item-metric-f}). 
By Lemma~\ref{lem-rn-msrble} just below (applied with to the field $h(\ep_{j-1}\cdot + z) - h_{\ep_{j-1}}(z) \eqD h$ and the domain $U = B_{1/3}(0)$), the event of condition~\ref{item-onescale-rn} in the definition of $E_{z,j}$ is determined by $(h-h_{\ep_{j-1}}(z))|_{\bdy B_{\ep_{j-1}/3}(z)}$, which is determined by $(h-h_{\ep_{j-1}}(z))|_{\BB A_{\ep_j,\ep_{j-1}/3}(z)}$.
\end{proof}

The following lemma was used in the proof of Lemma~\ref{lem-E-msrble}.

\begin{lem} \label{lem-rn-msrble}
Let $U\subset\BB D$ be a Jordan domain and let $\frk f$ be a deterministic harmonic function on $\BB D$. 
Also let $\rng h$ be a zero-boundary GFF on $\BB D$ and let $h$ be a whole-plane GFF normalized so that $h_1(0) = 0$. 
The Radon-Nikodym derivative of the law of $(\rng h + \frk f)|_U$ w.r.t.\ the law of $h|_U$ is a.s.\ determined by $h|_{\bdy U}$. 
\end{lem}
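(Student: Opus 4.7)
The plan is to exploit the Markov decomposition of each field on $U$ and show that the Radon-Nikodym derivative depends only on the harmonic (boundary-data) part. First I would invoke the Markov property of the GFF to write
\[
h|_U = \rng h_U^{(1)} + \frk g_h \quad \text{and} \quad (\rng h + \frk f)|_U = \rng h_U^{(2)} + \frk g_{\rng h} + \frk f|_U,
\]
where $\rng h_U^{(1)}$ and $\rng h_U^{(2)}$ are zero-boundary GFFs on $U$ (with the same law), $\frk g_h$ is the harmonic extension to $U$ of $h|_{\bdy U}$, and $\frk g_{\rng h}$ is the harmonic extension to $U$ of $\rng h|_{\bdy U}$. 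The Markov property provides independence within each decomposition between the zero-boundary summand and the harmonic summand. Note that both $\frk g_h$ and $\frk g_{\rng h} + \frk f|_U$ are harmonic on $U$.

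Next I would introduce the measurable isomorphism $\varphi \mapsto (\varphi^\circ, \frk g_\varphi)$ sending a distribution on $U$ to its zero-boundary residual and the harmonic extension of its boundary values. Under this isomorphism the pushforwards of $\mu_1 := \text{Law}(h|_U)$ and $\mu_2 := \text{Law}((\rng h + \frk f)|_U)$ are product measures sharing a common first marginal (the zero-boundary GFF law on $U$), and with second marginals $\text{Law}(\frk g_h)$ and $\text{Law}(\frk g_{\rng h} + \frk f|_U)$ respectively. The Radon-Nikodym derivative therefore factors as $(d\mu_2/d\mu_1)(\varphi) = F(\frk g_\varphi)$ for some measurable function $F$ on the space of harmonic functions on $U$. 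Since $\frk g_\varphi$ is a deterministic function of $\varphi|_{\bdy U}$, evaluating at $\varphi = h|_U$ shows that the Radon-Nikodym derivative is determined by $h|_{\bdy U}$, as required.

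The main obstacle will be formulating the measurable isomorphism $\varphi \mapsto (\varphi^\circ, \frk g_\varphi)$ rigorously at the level of distributions, and verifying the claimed product-measure structure for both $\mu_1$ and $\mu_2$. For $\rng h$ this is the standard sub-domain Markov property of the zero-boundary GFF on $\BB D$ applied to $U$. For the whole-plane $h$ one first uses the Markov property relative to $\BB D$ to write $h|_{\BB D} = \rng h' + \Psi$ with $\rng h'$ a zero-boundary GFF on $\BB D$ independent of a random harmonic function $\Psi$ on $\BB D$, and then applies the sub-domain Markov property to $\rng h'$ on $U \subset \BB D$; the contribution $\Psi|_U$ is then absorbed into the harmonic part of $h|_U$. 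These facts are reviewed in Appendix~\ref{sec-gff-prelim}.
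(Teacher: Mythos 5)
Your high-level strategy---factor the Radon--Nikodym derivative through the Markov decomposition of each field into a zero-boundary part and a harmonic part, and observe that the zero-boundary factors cancel---is the right idea and is morally the same mechanism the paper uses. However, the step you flag as the ``main obstacle'' is exactly the content of the lemma and is not resolved by your sketch. Recall that the paper \emph{defines} $\sigma(h|_{\partial U}) := \bigcap_{\ep>0}\sigma(h|_{B_\ep(\partial U)})$, so showing that the harmonic part $\frk g_\varphi$ (and hence the Radon--Nikodym derivative) is ``determined by $h|_{\partial U}$'' means showing measurability with respect to this tail $\sigma$-algebra. The Markov property you invoke only says directly that $\frk g_h$ is determined by $h|_{\BB C\setminus U}$ (resp.\ by $\rng h|_{\BB D\setminus U}$), not by the boundary tail. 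Positing a measurable isomorphism $\varphi\mapsto(\varphi^\circ,\frk g_\varphi)$ does not by itself place $\frk g_\varphi$ in $\sigma(\varphi|_{\partial U})$, and at the distributional level there is no pointwise trace on $\partial U$ to appeal to.

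The paper's proof closes exactly this gap by working with strips. Set $\partial^\ep U := B_\ep(\partial U)\cap U$. Conditioning on $h|_{\partial^\ep U}$, the Markov property gives that the conditional law of $h|_U$ is a zero-boundary GFF on $U\setminus\partial^\ep U$ plus the harmonic extension of the data on $\partial^\ep U$; because $\frk f$ is harmonic, the same holds for $(\rng h+\frk f)|_U$. Hence the Radon--Nikodym derivative between the two laws on $U$ coincides with the derivative between the laws restricted to $\partial^\ep U$, which is manifestly $\sigma(h|_{\partial^\ep U})$-measurable. Sending $\ep\to 0$ then lands the derivative in $\bigcap_\ep\sigma(h|_{B_\ep(\partial U)}) = \sigma(h|_{\partial U})$. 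Replacing your isomorphism step with this $\ep$-strip conditioning and limit would complete the argument and make it essentially identical to the paper's.
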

\begin{proof} 
For $\ep > 0$, let $\bdy^\ep U := B_\ep(\bdy U) \cap U$.  
By the Markov property of the whole-plane GFF, the conditional law of $h|_U$ given $h|_{\bdy^\ep U}$ is that of a zero-boundary GFF on $U \setminus \bdy^\ep U$ plus the harmonic extension of the field values of $ h $ on $\bdy^\ep U$ to $U \setminus \bdy^\ep U$. 
By the Markov property of the zero-boundary GFF and since $\frk f$ is harmonic, the same is true with $\rng h + \frk f$ in place of $h  $.
Therefore, the Radon-Nikodym derivative of the law of $(\rng h + \frk f)|_{U}$ w.r.t.\ the law of $h|_U$ is the same as the Radon-Nikodym derivative of the law of $(\rng h + \frk f)|_{\bdy^\ep U}$ w.r.t.\ the law of $h|_{\bdy^\ep U}$.
This Radon-Nikodym derivative is a measurable function of $h|_{\bdy^\ep U}$. 
Sending $\ep \rta 0$, we get that the Radon-Nikodym derivative is a.s.\ determined by $h|_{\bdy U}$. 
\end{proof}

\begin{comment}

Apply lemma below with $X = h|_{\bdy^\ep U}$, $Y = h|_{U\setminus \bdy^\ep U}$, $X' = (\rng h + \frk f)|_{\bdy^\ep U}$, $Y' = (\rng h+\frk f)|_{U\setminus \bdy^\ep U}$.

\begin{lem}
Let $(X,Y)$ and $(X',Y')$ be pairs of random variables.
Suppose that the law of $X'$ is absolutely continuous w.r.t.\ the law of $X$, with Radon-Nikodym derivative $M(X)$; and that for each possible realization $x$ of $X$, the regular conditional law of $Y$ given $\{X=x\}$ coincides with the regular conditional law of $Y'$ given $\{X'=x'\}$. 
Then the joint law of $(X',Y')$ is absolutely continuous w.r.t.\ the law of $(X,Y)$, with Radon-Nikodym derivative $M(X)$.
\end{lem}
\begin{proof}
For each $x$, let $\nu_x$ be the regular conditional law of $Y$ given $\{X=x\}$, equivalently the regular conditional law of $Y'$ given $\{X'=x'\}$.
For any measurable sets $A$ and $B$,
\alb
\BB P\left[ (X',Y') \in A\times B \right] 
%&= \BB P\left[ Y' \in B \,|\, X' \in A \right] \BB P\left[ X' \in A \right] \\
&= \BB E\left[ \BB P\left[ Y' \in B \,|\, X' \right] \BB 1_{X'\in A} \right] \\
&= \BB E\left[ \BB 1_{X' \in A} \nu_{X'}(B) \right]  \\
&=  \BB E\left[ M(X) \BB 1_{X  \in A} \nu_{X }(B) \right]  \\
&=  \BB E\left[ M(X) \BB 1_{X  \in A} \BB P\left[ Y \in B \,|\, X \right]  \right]  \\
&=  \BB E\left[ M(X) \BB 1_{(X,Y) \in A\times B} \right] . 
\ale 
\end{proof}

\end{comment}

\subsubsection{Events between scales}
\label{sec-H-def}
 
We now define the events appearing in condition~\ref{item-onescale-cond} in the definition of $E_{z,j}$. 
Using the harmonic functions $\frk h_{z,j}^{\op{out}}$ and $\frk h_{z,j}^{\op{in}}$ introduced at the beginning of Section~\ref{sec-E-def}, we define
\eqb \label{eqn-in-out-fields}
 h_{z,j}^{\op{out}} := \left(h - h_{\ep_j}(z) - \frk h_{z,j}^{\op{out}} \right) |_{\BB A_{\ep_j/3 , \ep_j}(z)} 
 \quad \text{and} \quad
 h_{z,j}^{\op{in}} := \left( h - h_{\ep_j}(z) - \frk h_{z,j }^{\op{in}} \right) |_{\BB A_{\ep_j/3 , \ep_j }(z)} .
\eqe
Note that
\eqbn
\left(h - h_{\ep_j}(z)  \right) |_{\BB A_{\ep_j/3 , \ep_j}(z)}  = h_{z,j}^{\op{out}} +   \frk h_{z,j}^{\op{out}} = h_{z,j}^{\op{in}} +   \frk h_{z,j}^{\op{in}} .
\eqen
The reason why these two different decompositions of $\left(h - h_{\ep_j}(z)  \right) |_{\BB A_{\ep_j/3 , \ep_j}(z)} $ are useful is as follows. The harmonic function $\frk h_{z,j}^{\op{out}}$ is by definition determined by the ``outside" part of the field $h-h_{\ep_j}(z)$ (i.e., by $(h-h_{\ep_j}(z))|_{\bdy B_{\ep_j}(z)}$) and, as shown in Lemma~\ref{lem-in-out-ind} just below, the field $h_{z,j}^{\op{out}}$ is conditionally independent from the outside part of the field given the ``inside" part of the field (i.e., given $(h- h_{\ep_j}(z))|_{\bdy B_{\ep_j/3}(z)}$). 
The opposite is true for $\frk h_{z,j}^{\op{in}}$ and $h_{z,j}^{\op{in}}$. 
This makes it easy to estimate the conditional probabilities of events defined in terms of $h_{z,j}^{\op{out}} ,   \frk h_{z,j}^{\op{out}},  h_{z,j}^{\op{in}} ,   \frk h_{z,j}^{\op{in}}$ (e.g., the events $H_{z,j}^{\op{in}}$ and $H_{z,j}^{\op{out}}$ which we define just below) when we condition on events which depend  on $h|_{\BB A_{\ep_k,\ep_{k-1}/3}(z)}$ for $k\in \BB N$ (e.g., the events $E_{z,k}$); see also Remark~\ref{remark-H}. 
%More precisely, we want to use the decomposition $ h_{z,j}^{\op{out}} +   \frk h_{z,j}^{\op{out}}$ when we are conditioning on the ``outside" part $(h-h_{\ep_j}(z))|_{\bdy B_{\ep_j}(z)}$ of the field and the decomposition $h_{z,j}^{\op{in}} +   \frk h_{z,j}^{\op{in}}$ when we are conditioning on the ``inside" part $(h- h_{\ep_j}(z))|_{\bdy B_{\ep_j/3}(z)}$ of the field. 

\begin{lem} \label{lem-in-out-ind}
The field $h_{z,j}^{\op{out}}$ is conditionally independent from $(h- h_{\ep_j}(z))|_{\BB C\setminus \BB A_{\ep_j/3,\ep_j}(z)}$ given $(h- h_{\ep_j}(z))|_{\bdy B_{\ep_j/3}(z)}$.
Furthermore, $h_{z,j}^{\op{in}}$ is conditionally independent from $(h- h_{\ep_j}(z))|_{\BB C\setminus \BB A_{\ep_j/3, \ep_j}(z)}$ given $(h- h_{\ep_j}(z))|_{\bdy B_{\ep_j }(z)}$.
\end{lem}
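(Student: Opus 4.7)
The plan is to use the Markov property of the whole-plane GFF applied to the annulus $U := \BB A_{\ep_j/3,\ep_j}(z)$ (as reviewed in Appendix~\ref{sec-gff-prelim}). That property gives a decomposition $h|_U = \mathring h + \mathcal{H}_U h$, where $\mathring h$ is a zero-boundary GFF on $U$ that is independent of $h|_{\BB C\setminus U}$, and $\mathcal{H}_U h$ is the harmonic extension of $h$ from $\bdy U$ to $U$, which is a.s.\ determined by $h|_{\BB C\setminus U}$. Subtracting the constant $h_{\ep_j}(z)$ (which is $\sigma(h|_{\bdy B_{\ep_j}(z)})$-measurable, hence also determined by $h|_{\BB C\setminus U}$), this becomes
\[
(h-h_{\ep_j}(z))|_U = \mathring h + \frk h_{z,j}^{\op{out}} + \frk h_{z,j}^{\op{in}},
\]
since on the right-hand side the sum of the two harmonic pieces has the correct boundary values on $\bdy B_{\ep_j}(z)$ and $\bdy B_{\ep_j/3}(z)$, and harmonic extensions are unique.

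Substituting into the definitions in~\eqref{eqn-in-out-fields} gives the identities
\[
h_{z,j}^{\op{out}} = \mathring h + \frk h_{z,j}^{\op{in}} \quad \text{and} \quad h_{z,j}^{\op{in}} = \mathring h + \frk h_{z,j}^{\op{out}}.
\]
The first identity is the heart of the proof of the first assertion: $\frk h_{z,j}^{\op{in}}$ is by construction the harmonic function on $U$ with boundary data $(h-h_{\ep_j}(z))|_{\bdy B_{\ep_j/3}(z)}$ on the inner circle and $0$ on the outer circle, so it is measurable with respect to $\sigma((h-h_{\ep_j}(z))|_{\bdy B_{\ep_j/3}(z)})$. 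On the other hand, $\mathring h$ is independent of $h|_{\BB C\setminus U}$, and the constant $h_{\ep_j}(z)$ is measurable with respect to $\sigma(h|_{\BB C\setminus U})$, so $\mathring h$ is also independent of $(h-h_{\ep_j}(z))|_{\BB C\setminus U}$. Therefore, conditionally on $(h-h_{\ep_j}(z))|_{\BB C\setminus U}$, the law of $h_{z,j}^{\op{out}}$ is that of $\mathring h$ plus the known function $\frk h_{z,j}^{\op{in}}$; and this conditional law depends on the conditioning only through $\frk h_{z,j}^{\op{in}}$, i.e., only through $(h-h_{\ep_j}(z))|_{\bdy B_{\ep_j/3}(z)}$. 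This is exactly the asserted conditional independence. The second assertion follows by the symmetric argument, swapping the roles of the inner and outer boundary circles.

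I do not anticipate a serious obstacle: the only mild subtlety is making the Markov decomposition rigorous for the annular domain $U$ and the distributional field $h$, together with checking that the relevant boundary circle-restrictions $(h-h_{\ep_j}(z))|_{\bdy B_{\ep_j/3}(z)}$ and $(h-h_{\ep_j}(z))|_{\bdy B_{\ep_j}(z)}$ make sense as random distributions (or, equivalently, that $\frk h_{z,j}^{\op{in}}$ and $\frk h_{z,j}^{\op{out}}$ are well-defined). Both are standard and reviewed in Appendix~\ref{sec-gff-prelim}. Once these are in place, the proof is just the measurability bookkeeping outlined above.
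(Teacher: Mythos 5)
Your proposal is correct and follows the same approach as the paper's proof: you use the Markov property of the GFF on the annulus $U = \BB A_{\ep_j/3,\ep_j}(z)$ to decompose $(h-h_{\ep_j}(z))|_U = \mathring h + \frk h_{z,j}^{\op{out}} + \frk h_{z,j}^{\op{in}}$, observe that $h_{z,j}^{\op{out}} = \mathring h + \frk h_{z,j}^{\op{in}}$ with $\mathring h$ independent of the outside field and $\frk h_{z,j}^{\op{in}}$ determined by the inner boundary data, and conclude the conditional independence; the paper's argument is the same, merely stated more tersely.
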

\begin{proof}
By the definition of $\frk h_{z,j}^{\op{out}}$ and  $\frk h_{z,j}^{\op{in}}$, $h_{z,j}^{\op{out}} $ is the sum of the zero-boundary part of $(h- h_{\ep_j}(z))|_{\BB A_{\ep_j/3,\ep_j}(z)}$ (which is independent from $(h- h_{\ep_j}(z))|_{\BB C\setminus \BB A_{\ep_j/3,\ep_j}(z)}$) and the function $\frk h_{z,j}^{\op{in}}$ (which is determined by $(h- h_{\ep_j}(z))|_{  \bdy B_{\ep_j/3}(z)}$). This gives the desired statement for $h_{z,j}^{\op{out}}$. The statement for $h_{z,j}^{\op{in}}$ is obtained similarly.
\end{proof}

We define
\allb \label{eqn-H-def}
H_{z,j}^{\op{out}} &:= \left\{ \sup_{u,v\in \BB A_{\ep_j/3,\ep_j/2}(z)} D_{h_{z,j}^{\op{out}}}\left( u , v ; \BB A_{\ep_j/3 , \ep_j/2}(z) \right) \leq A \ep_j^{\xi Q}  \right\} \notag\\ 
&\qquad \qquad \cap \left\{ \sup_{r \in [\ep_j/3,\ep_j/2]} |(h_{z,j}^{\op{out}})_r(z) | \leq A \right\}\quad \text{and} \quad \notag \\
H_{z,j}^{\op{in}} &:= \left\{ \sup_{u,v\in \BB A_{\ep_j/2, \ep_j}(z)} D_{h_{z,j}^{\op{in}}}\left( u , v ; \BB A_{\ep_j/2 , \ep_j}(z) \right) \leq A \ep_j^{\xi Q}  \right\} \notag \\
&\qquad \qquad \cap \left\{ \sup_{r \in [\ep_j/2,\ep_j ]} |(h_{z,j}^{\op{in}})_r(z)| \leq A \right\}  ,
\alle
where $(h_{z,j}^{\op{out}})_r(z)$ and $(h_{z,j}^{\op{in}})_r(z)$ denote circle averages.

\begin{lem} \label{lem-H-msrble}
The events $H_{z,j}^{\op{out}}$ and $H_{z,j}^{\op{in}}$ are each a.s.\ determined by $h|_{\BB A_{\ep_j/3,\ep_j }(z)}$, viewed modulo additive constant. 
\end{lem}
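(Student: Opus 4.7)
The plan is to verify two facts: (i) the auxiliary fields $h_{z,j}^{\op{out}}$ and $h_{z,j}^{\op{in}}$ defined in~\eqref{eqn-in-out-fields} are a.s.\ determined by $h|_{\BB A_{\ep_j/3,\ep_j}(z)}$ viewed modulo additive constant; and (ii) each of the four conditions entering the definitions of $H_{z,j}^{\op{out}}$ and $H_{z,j}^{\op{in}}$ is a measurable function of the appropriate field $h_{z,j}^{\op{out}}$ or $h_{z,j}^{\op{in}}$ restricted to a subdomain of $\BB A_{\ep_j/3,\ep_j}(z)$. Combining (i) and (ii) will immediately yield the claim.

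For (i), the key observation is that adding a constant $c$ to $h$ shifts $h_{\ep_j}(z)$ by exactly $c$, so the centered field $(h-h_{\ep_j}(z))|_{\BB A_{\ep_j/3,\ep_j}(z)}$ is invariant under this operation. Moreover it is determined by $h|_{\BB A_{\ep_j/3,\ep_j}(z)}$ modulo additive constant: fixing any representative of the latter equivalence class, the circle average $h_r(z)$ for each $r\in(\ep_j/3,\ep_j)$ is a measurable function of the restriction, and $h_{\ep_j}(z)$ is recovered as the limit $\lim_{r\to\ep_j^-}h_r(z)$ by continuity of the circle-average process. Next, I would recover the sum $\frk h_{z,j}^{\op{out}}+\frk h_{z,j}^{\op{in}}$ as the harmonic part of $(h-h_{\ep_j}(z))|_{\BB A_{\ep_j/3,\ep_j}(z)}$ on the annulus, and split it into the two summands using the uniqueness of the decomposition of a harmonic function on an annulus into a piece vanishing on the inner boundary and a piece vanishing on the outer boundary. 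Subtracting these harmonic functions from $(h-h_{\ep_j}(z))|_{\BB A_{\ep_j/3,\ep_j}(z)}$ produces $h_{z,j}^{\op{out}}$ and $h_{z,j}^{\op{in}}$.

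For (ii), the circle-average conditions are immediate since the averages $(h_{z,j}^{\op{out}})_r(z)$ for $r\in[\ep_j/3,\ep_j/2]$ and $(h_{z,j}^{\op{in}})_r(z)$ for $r\in[\ep_j/2,\ep_j]$ involve only circles contained in $\ol{\BB A_{\ep_j/3,\ep_j}(z)}$. For the two diameter conditions, I would first note that $h_{z,j}^{\op{out}}$ (and likewise $h_{z,j}^{\op{in}}$) is a whole-plane GFF minus a constant minus a continuous (harmonic) function on $\BB A_{\ep_j/3,\ep_j}(z)$, hence qualifies as a GFF plus a continuous function on that open annulus, so the associated LQG metric $D_{h_{z,j}^{\op{out}}}$ is well-defined there. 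Applying the locality axiom (Axiom~\ref{item-metric-local}) with the deterministic open sub-annulus $V=\BB A_{\ep_j/3,\ep_j/2}(z)$ (resp.\ $V=\BB A_{\ep_j/2,\ep_j}(z)$) shows the relevant internal metric is a.s.\ determined by the restriction of the field to $V$, and hence so is its diameter. There is no real obstacle beyond careful bookkeeping; this lemma is foundational rather than technically deep.
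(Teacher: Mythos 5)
Your proposal is correct and follows essentially the same route as the paper: observe that $(h-h_{\ep_j}(z))|_{\BB A_{\ep_j/3,\ep_j}(z)}$ (and hence the decomposition into $\frk h_{z,j}^{\op{out}},\frk h_{z,j}^{\op{in}}$ and the fields $h_{z,j}^{\op{out}},h_{z,j}^{\op{in}}$) is determined by $h|_{\BB A_{\ep_j/3,\ep_j}(z)}$ modulo additive constant, then invoke locality (Axiom~\ref{item-metric-local}) for the diameter conditions and direct inspection for the circle-average conditions. You are slightly more explicit than the paper about how the additive-constant ambiguity cancels when recovering $(h-h_{\ep_j}(z))|_{\BB A}$, but this is the same argument in substance.
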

\begin{proof}
We will prove the statement for $H_{z,j}^{\op{out}}$; the statement for $H_{z,j}^{\op{in}}$ is proven in an identical manner.
It is obvious that the circle average condition in the definition of $H_{z,j}^{\op{out}}$ is determined by $h|_{\BB A_{\ep_j/3,\ep_j }(z)}$, viewed modulo additive constant, so we only need to consider the diameter condition. 
By the locality of the metric (Axiom~\ref{item-metric-f}), $D_{h_{z,j}^{\op{out}}}\left( u , v ; \BB A_{\ep_j/3 , \ep_j/2}(z) \right) $ is a.s.\ determined by $h_{z,j}^{\op{out}}|_{\BB A_{\ep_j/3 , \ep_j }(z)}$. 
As in the proof of Lemma~\ref{lem-in-out-ind}, the field $h_{z,j}^{\op{out}}|_{\BB A_{\ep_j/3 , \ep_j }(z)}$ is the sum of the zero-boundary part of $(h- h_{\ep_j}(z))|_{\BB A_{\ep_j/3,\ep_j}(z)}$ (which is determined by $h|_{\BB A_{\ep_j/3,\ep_j }(z)}$, viewed modulo additive constant) and the function $\frk h_{z,j}^{\op{in}}$ (which is determined by   $(h- h_{\ep_j}(z))|_{\BB A_{\ep_j/3,\ep_j}(z)}$, and hence by $h|_{\BB A_{\ep_j/3,\ep_j }(z)}$, viewed modulo additive constant). 
\end{proof}

\begin{remark} \label{remark-H}
The purpose of the events $H_{z,j}^{\op{in}}$ and $H_{z,j}^{\op{out}}$ is as follows. We need a positive spacing between the scales at which the events $E_{z,j}$ are defined, i.e., we need the event $E_{z,j}$ to depend only on $h|_{\BB A_{\ep_j,\ep_{j-1}/3}(z)}$ instead of on $h|_{\BB A_{\ep_j,\ep_{j-1}}(z)}$. The reason for this is that we can control the Radon-Nikodym derivative of the conditional law of $h|_{B_{\ep_{j-1}/3}(z)}$ given $h|_{\BB C\setminus B_{\ep_{j-1}}(z)}$ w.r.t.\ the marginal law of $h|_{B_{\ep_{j-1}/3}(z)}$, but we cannot do the same with $h|_{B_{\ep_{j-1} }(z)}$ in place of $h|_{B_{\ep_{j-1}/3}(z)}$ (c.f.\ condition~\ref{item-onescale-rn}). 
However, in Lemma~\ref{lem-E-thick} we want to control $h_r(z)$ for all $r\in [\ep_j,\ep_{j-1} ]$ on $E_{z,j}$ (so that we can eventually say that the perfect points are $\alpha$-thick). Moreover, in Lemma~\ref{lem-E-dist} we want to prove an upper bound for $D_h(0,B_{\ep_n}(z))$ by summing the $D_h$-diameters of the annuli $\BB A_{\ep_j,\ep_{j-1}}(z)$ for $j\in [1,n]_{\BB Z}$. So, we need some information about what happens in $\BB A_{\ep_{j-1}/3,\ep_{j-1}}(z)$, which is provided by the events $H_{z,j-1}^{\op{in}}$ and $H_{z,j-1}^{\op{out}}$.
\end{remark}

\subsubsection{Events for multiple scales}

In addition to $E_{z,j}$, we will also need an event at ``scale 0" which includes only a subset of the conditions in the definition of $E_{z,j}$ for $j\in\BB N$.
In particular, we let $E_{z,0}$ be the event that the following is true (note that the numbering is consistent with the definition of $E_{z,j}$). 
\begin{enumerate}
\item[1.] $|h_{\ep_0}(z)| \leq A$. \label{item-scale0-thick}
\item[7.] Let $\frk h_{z,0}$ be the harmonic part of $(h-h_{\ep_0}(z))|_{B_{\ep_0}(z)}$. Then\label{item-scale0-harmonic}
\eqb
\sup_{u\in  B_{\ep_0/2}(z) } |\frk h_{z,0}(u)  | \leq A , \quad \text{i.e.,} \quad \frk h_{z,0}(u) \in \frk F_{z,0 }. 
\eqe
Here we recall that $\frk F_{z,0}$ was defined in condition~\ref{item-onescale-rn} in the definition of $E_{z,1}$. 
\item[9.] $\sup_{u\in \BB A_{\ep_{0 }/3,\ep_{0}/2}(z)} |\frk h_{z,0 }^{\op{out}}(z)| \leq A $.   \label{item-scale0-out} 
\item[10.] In the notation~\eqref{eqn-H-def},  \label{item-scale0-cond}
\eqbn 
\BB P\left[  H_{z,0 }^{\op{in}} \,\big|\, h|_{\bdy B_{\ep_0}(z) } \right] \geq \frac34 .
\eqen
\end{enumerate}
By inspection, 
\eqb \label{eqn-scale0-msrble}
E_{z,0} \in \sigma\left( h|_{\bdy B_{\ep_0}(z)} \right) . 
\eqe

For $m,n\in\BB N $ with $m \leq n$, let
\eqb \label{eqn-Ecap-def}
\mcl E_m^n(z) := \bigcap_{j=m }^n \left(  E_{z,j} \cap   H_{z,j-1 }^{\op{out}} \cap H_{z,j-1 }^{\op{in}}  \right) 
\quad \text{and} \quad 
\mcl E^n(z) := E_{z,0} \cap \mcl E_1^n(z) .
\eqe 
If $m> n$, we define $\mcl E_n^m(z) $ to be the whole probability space (the empty intersection). 
We also set $\mcl E^0(z) := E_{z,0}$. 

By Lemmas~\ref{lem-E-msrble} and~\ref{lem-H-msrble}, 
\allb \label{eqn-Ecap-msrble}
\mcl E_m^n(z) &\in \sigma\left( (h-h_{\ep_{m-1}}(z))|_{\BB A_{\ep_n,\ep_{m-1} }(z)} \right)   ,\quad\forall m,n\in\BB N
\quad \text{and} \quad \notag\\
\mcl E^n(z) &\in \sigma\left(  h|_{\BB A_{\ep_n,\ep_0}(z)}  \right)  ,\quad\forall n\in\BB N  .
\alle

\subsubsection{One-point estimate at a single scale}

The main quantitative estimate we need for the events $E_{z,j}$ is the following proposition, whose proof is given in Section~\ref{sec-onescale-prob}.

\begin{prop} \label{prop-E-prob}
There exists $A , L > 0$ depending only on $\alpha,\gamma$ such that for each parameter $K\geq A$, each $z\in\BB C$, and each $j\in\BB N$, 
\eqb \label{eqn-E-prob}
\BB P\left[ E_{z,j}\cap H_{z,j-1}^{\op{out}} \cap H_{z,j-1}^{\op{in}} \right] \geq (\ep_j / \ep_{j-1})^{\alpha^2/2 + o_j(1)} 
\eqe
with the rate of the $o_j(1)$ depending only on $A,L,K,\alpha,\gamma$.  
Moreover, for each $z\in\BB D$ we have $\BB P[E_{z,0} ] \succeq 1$, with the implicit constant depending only on $A,L,K,\alpha,\gamma$.  
\end{prop}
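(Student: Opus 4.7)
The dominant factor $(\ep_j/\ep_{j-1})^{\alpha^2/2+o_j(1)}$ arises entirely from condition~\ref{item-onescale-thick}: the random variable $h_{\ep_j}(z) - h_{\ep_{j-1}}(z)$ is centered Gaussian with variance $\log(\ep_{j-1}/\ep_j)$, so an explicit Gaussian density computation gives
\eqbn
\BB P\left[ |h_{\ep_j}(z) - h_{\ep_{j-1}}(z) - \alpha\log(\ep_{j-1}/\ep_j)| \leq A[\log(\ep_{j-1}/\ep_j)]^{3/4} \right] = (\ep_j/\ep_{j-1})^{\alpha^2/2 + o_j(1)} .
\eqen
The remainder of the proof will show that, conditional on condition~\ref{item-onescale-thick}, the nine other conditions in the definition of $E_{z,j}$, together with $H_{z,j-1}^{\op{out}}\cap H_{z,j-1}^{\op{in}}$, each hold with probability bounded below by a constant $c = c(A,K,L,\alpha,\gamma) > 0$, which then combines with the Gaussian factor above to yield~\eqref{eqn-E-prob}.

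The cleanest way to implement this is a Cameron--Martin shift. I would choose a smooth deterministic function $f_{z,j}$ supported in $\BB A_{\ep_j,\ep_{j-1}/3}(z)$ that is equal to $\alpha\log(\ep_{j-1}/|\cdot-z|)$ on a slightly smaller sub-annulus, so that $(f_{z,j})_{\ep_j}(z) - (f_{z,j})_{\ep_{j-1}}(z) = \alpha\log(\ep_{j-1}/\ep_j) + O(1)$ and $(f_{z,j},f_{z,j})_\nabla = \alpha^2\log(\ep_{j-1}/\ep_j) + O(1)$. By Lemma~\ref{lem-gff-abs-cont}, the law of $h + f_{z,j}$ is mutually absolutely continuous with that of $h$, and under the shifted measure condition~\ref{item-onescale-thick} holds with probability close to $1$. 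Verifying all remaining conditions under the shifted measure, and then integrating against the Radon--Nikodym derivative (whose relevant contribution is exactly $\exp(-\frac12(f_{z,j},f_{z,j})_\nabla) = (\ep_j/\ep_{j-1})^{\alpha^2/2 + o_j(1)}$ after taking expectations of the Gaussian factor), gives the claimed bound.

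Most of the nine auxiliary conditions then reduce to standard inputs. Condition~\ref{item-onescale-sup} is a Brownian-oscillation bound on the circle-average process $r\mapsto h_r(z)$, which is a Brownian motion in $-\log r$; the fluctuations beyond $[\log(\ep_{j-1}/\ep_j)]^{3/4}$ are sub-Gaussian. Conditions~\ref{item-onescale-around}, \ref{item-onescale-annulus}, and~\ref{item-onescale-diam} describe positive-probability events for the LQG metric in an annulus, and are derived by combining the $D_h$-diameter/distance moment bounds from~\cite{lqg-metric-estimates} with Weyl scaling (Axiom~\ref{item-metric-f}), using that the bounded smooth shift $f_{z,j}$ only perturbs the relevant $D_h$-quantities by a bounded multiplicative factor. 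Condition~\ref{item-onescale-dirichlet} is a Gaussian tail bound on the linear functional $(h,\phi_{z,j})_\nabla$, whose variance $(\phi_{z,j},\phi_{z,j})_\nabla$ is bounded uniformly in $j$ by scale invariance of the Dirichlet inner product. Conditions~\ref{item-onescale-harmonic} and~\ref{item-onescale-out} are Gaussian sup-norm bounds on the harmonic extensions $\frk h_{z,j}$, $\frk h_{z,j}^{\op{in}}$, $\frk h_{z,j}^{\op{out}}$ on compactly contained subannuli, where the pointwise variance is bounded uniformly in $j$ by the Poisson kernel representation. For condition~\ref{item-onescale-rn}, the Radon--Nikodym derivative between $(\rng h + \frk f)|_{B_{\ep_{j-1}/3}(z)}$ and $(h-h_{\ep_{j-1}}(z))|_{B_{\ep_{j-1}/3}(z)}$ has an explicit form as an exponential of a Gaussian with variance controlled by $\sup_{B_{\ep_{j-1}/2}(z)}|\frk f|\leq A$ (via Lemma~\ref{lem-rn-msrble}), so $L^{-1}\leq \op{RN}\leq L$ holds with probability close to $1$ once $L = L(A)$ is taken large enough. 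Finally, condition~\ref{item-onescale-cond} is obtained by first showing (using the same kinds of estimates applied to $h_{z,j}^{\op{in}}$ and $h_{z,j-1}^{\op{out}}$) that the unconditional probability of $H_{z,j-1}^{\op{out}}\cap H_{z,j}^{\op{in}}$ is close to $1$, and then invoking Markov's inequality on the complement.

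The main obstacle is the order in which parameters must be chosen: we must fix $A$ (and then $K\geq A$) large enough that conditions~\ref{item-onescale-thick}--\ref{item-onescale-dirichlet} and~\ref{item-onescale-harmonic}--\ref{item-onescale-cond} each have probability close to $1$ under the shifted law; only then can $L=L(A)$ be chosen to force condition~\ref{item-onescale-rn} with high probability uniformly over $\frk f\in \frk F_{z,j-1}$. This ordering is the reason $L$ appears as a parameter separate from $A$ in the statement. The case $j=0$ is considerably easier: $E_{z,0}$ imposes only the four unconditional constraints listed after~\eqref{eqn-scale0-msrble} (there is no $\alpha$-thickness requirement, hence no exponential factor), and for $z\in\BB D$ the circle average $h_{\ep_0}(z)$ and the harmonic projections $\frk h_{z,0}$, $\frk h_{z,0}^{\op{out}}$ have variances bounded by an absolute constant, so $\BB P[E_{z,0}]\succeq 1$ follows by a union bound.
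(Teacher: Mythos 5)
Your proposal takes a genuinely different route than the paper. The paper decomposes $E_{z,j}\cap H_{z,j-1}^{\op{out}}\cap H_{z,j-1}^{\op{in}}$ into an intersection of events measurable with respect to mutually independent pieces of the field (the circle-average process on a sub-interval, the inner and outer parts of the field, and the mean-zero part; see Lemma~\ref{lem-gff-split}), lower-bounds each factor separately, and multiplies using independence. You instead propose a Cameron--Martin shift: tilt $h$ by a logarithmic bump $f_{z,j}$, show that under the shifted law every condition holds with high probability, and collect the exponential factor from the Radon--Nikodym derivative, which is controlled on the event by condition~\ref{item-onescale-thick}. Both strategies exploit the same Gaussian structure, and the shift route is conceptually clean; but it trades the independence bookkeeping for shift bookkeeping, and you have not done the latter carefully enough.

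There are two genuine gaps. First, your setup of $f_{z,j}$ is internally inconsistent: if $f_{z,j}$ is supported in $\BB A_{\ep_j,\ep_{j-1}/3}(z)$ then its circle averages at radii $\ep_j$ and $\ep_{j-1}$ are both zero, so $(f_{z,j})_{\ep_j}(z)-(f_{z,j})_{\ep_{j-1}}(z)=0$, not $\alpha\log(\ep_{j-1}/\ep_j)+O(1)$. To realize the shift you want, $f_{z,j}$ must be nonzero (constant) on $B_{\ep_j}(z)$ and vanish only outside $B_{\ep_{j-1}}(z)$; this then interacts non-trivially with essentially every condition in the definition of $E_{z,j}$, not just condition~\ref{item-onescale-thick}. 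Related to this, your statement that the shift ``only perturbs the relevant $D_h$-quantities by a bounded multiplicative factor'' is false: $\|f_{z,j}\|_\infty\asymp\alpha\log(\ep_{j-1}/\ep_j)\to\infty$, so by Weyl scaling the perturbation is by a factor of order $(\ep_{j-1}/\ep_j)^{\xi\alpha}$, which is unbounded. That factor happens to cancel against the $e^{\xi h_{\ep_j}(z)}$ normalizations in conditions~\ref{item-onescale-annulus} and~\ref{item-onescale-diam}, but only if one tracks the cancellation scale-by-scale; the paper's multi-scale decomposition of $\BB A_{\ep_j,\ep_{j-1}/3}(z)$ into $O(\log(\ep_{j-1}/\ep_j))$ annuli of bounded aspect ratio (Section~\ref{sec-E-prob}) does exactly this, and something equivalent is unavoidable.

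Second, and more fundamentally, condition~\ref{item-onescale-diam} and the events $H_{z,j}^{\op{in}},H_{z,j}^{\op{out}}$ involve the \emph{internal} $D_h$-diameter of an annulus, taken all the way up to its boundary, where in the case of condition~\ref{item-onescale-diam} the annulus has aspect ratio $\ep_{j-1}/\ep_j\to\infty$. This is not covered by the moment bounds of~\cite{lqg-metric-estimates}, which control either diameters of compact subsets of open sets or internal diameters of squares. The paper explicitly flags this and devotes Lemmas~\ref{lem-internal-diam} and~\ref{lem-meanzero-diam} (built from the square estimates via a Whitney-type covering of the annulus) to producing the needed tail bound; nothing in your outline supplies this. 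Finally, as a structural point, asserting that each remaining condition holds ``with probability bounded below by a constant $c>0$'' does not give a lower bound on the probability of their intersection. What is actually needed, and what the paper arranges, is that all conditions except condition~\ref{item-onescale-around} hold with probability $\geq 1-p_0/100$ for appropriate $A,L$, while condition~\ref{item-onescale-around} holds with probability $\geq p_0>0$ (Lemma~\ref{lem-around-pos}); then a union bound closes the argument without any further independence. You should make this explicit.
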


Throughout this section and the next we fix $A$ and $L$ as in Proposition~\ref{prop-E-prob}, so that $A$ and $L$ depend only on $\alpha,\gamma$. 
We also let $K\geq A$ to be chosen in Lemma~\ref{lem-Elower-prob} below, in a manner depending on $A$. Most implicit constants, errors, etc., in this section are allowed to depend on $K$.

Basically, the idea of the proof of Proposition~\ref{prop-E-prob} is that the probability of condition~\ref{item-onescale-thick} in the definition of $E_{z,j}$ is $(\ep_j / \ep_{j-1})^{\alpha^2/2 + o_j(1)} $ (by a basic Brownian motion estimate) and, conditional on this condition occurring, the rest of the conditions in the definition of $E_{z,j} \cap H_{z,j-1}^{\op{out}} \cap H_{z,j-1}^{\op{in}}$ occur with uniformly positive conditional probability if $A$ and $L$ are chosen to be sufficiently large. 
However, there is some subtlety involved since it is not immediately obvious that each of the conditions in the definition of $E_{z,j}$ occurs with positive probability for \emph{any} choices of $A$ and $L$. For example, condition~\ref{item-onescale-diam} involves internal diameters taken all the way up to the boundary of an annulus and condition~\ref{item-onescale-rn} involves a simultaneous bound for infinitely many Radon-Nikodym derivatives. 
As such, we will need to prove some estimates in order to establish Proposition~\ref{prop-E-prob}, which is why we defer the proof until Section~\ref{sec-onescale-prob}.

\subsection{One-point and two-point estimates for short-range events} 
\label{sec-short-twopoint}

The goal of this section is to prove the following one-point and two-point estimates for the events $\mcl E^n(z)$ of~\eqref{eqn-Ecap-def}.

\begin{lem}[One-point estimate for $\mcl E^n(z)$] \label{lem-thick-1pt}
For each $z\in B_{1/3}(0)$ and each $n\in\BB N$,
\eqb \label{eqn-thick-1pt}
\BB P\left[ \mcl E^n(z) \right] =  \ep_n^{\alpha^2/2 + o_n(1)}  ,
\eqe
where the rate of convergence of the $o_n(1)$ depends only on $K , \alpha,\gamma$. 
\end{lem}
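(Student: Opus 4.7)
The plan is to prove matching upper and lower bounds on $\BB P[\mcl E^n(z)]$.

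For the upper bound I would use only the thickness condition~\ref{item-onescale-thick} inside each $E_{z,j}$. Since $\mcl E^n(z)\subset\bigcap_{j=1}^n E_{z,j}$, the occurrence of $\mcl E^n(z)$ forces $|X_j-\alpha\log(\ep_{j-1}/\ep_j)|\le A[\log(\ep_{j-1}/\ep_j)]^{3/4}$ for $j=1,\dots,n$, where $X_j:=h_{\ep_j}(z)-h_{\ep_{j-1}}(z)$. Standard facts about the circle average process of the whole-plane GFF recorded in Appendix~\ref{sec-gff-prelim} imply that these increments are independent centered Gaussians with variances $\log(\ep_{j-1}/\ep_j)$, so a one-dimensional Gaussian density estimate gives
\[
\BB P\bigl[|X_j-\alpha\log(\ep_{j-1}/\ep_j)|\le A[\log(\ep_{j-1}/\ep_j)]^{3/4}\bigr]\le (\ep_j/\ep_{j-1})^{\alpha^2/2+o_j(1)},
\]
and taking the product over $j$ telescopes to $\ep_n^{\alpha^2/2+o_n(1)}$.

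For the lower bound I would proceed by induction on $n$ and chain Proposition~\ref{prop-E-prob} using the Markov property. The base case is $\BB P[E_{z,0}]\succeq 1$. For the inductive step, by~\eqref{eqn-Ecap-msrble} the event $\mcl E^{n-1}(z)$ is measurable with respect to $h|_{\BB C\setminus B_{\ep_{n-1}}(z)}$; conditioning on this $\sigma$-algebra, the domain Markov property decomposes $(h-h_{\ep_{n-1}}(z))|_{B_{\ep_{n-1}}(z)}$ as $\rng h+\frk h_{z,n-1}$ with $\rng h$ a zero-boundary GFF independent of the conditioning. On $\mcl E^{n-1}(z)$, condition~\ref{item-onescale-harmonic} puts $\frk h_{z,n-1}\in\frk F_{z,n-1}$, and condition~\ref{item-onescale-rn} built into $E_{z,n}$ bounds the Radon--Nikodym derivative of the conditional law of $(h-h_{\ep_{n-1}}(z))|_{B_{\ep_{n-1}/3}(z)}$ with respect to its unconditional law by $L^{\pm 1}$. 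This allows the transfer estimate
\[
\BB P\bigl[E_{z,n}\cap H_{z,n-1}^{\op{out}}\cap H_{z,n-1}^{\op{in}}\,\bigm|\,h|_{\BB C\setminus B_{\ep_{n-1}}(z)}\bigr]\succeq L^{-1}\,\BB P\bigl[E_{z,n}\cap H_{z,n-1}^{\op{out}}\cap H_{z,n-1}^{\op{in}}\bigr]
\]
on $\mcl E^{n-1}(z)$; applying Proposition~\ref{prop-E-prob} on the right and iterating gives $\BB P[\mcl E^n(z)]\ge L^{-n}\prod_{j=1}^n(\ep_j/\ep_{j-1})^{\alpha^2/2+o_j(1)}=\ep_n^{\alpha^2/2+o_n(1)}$, since $\log\ep_n^{-1}\asymp n\log n\gg n\log L$ absorbs $L^{-n}$ into the $o_n(1)$.

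The main subtlety in the lower bound is that condition~\ref{item-onescale-rn} only controls the Radon--Nikodym derivative on the inner ball $B_{\ep_{n-1}/3}(z)$, whereas $H_{z,n-1}^{\op{out}}$ and $H_{z,n-1}^{\op{in}}$ depend on the thin outer annulus $\BB A_{\ep_{n-1}/3,\ep_{n-1}}(z)$ outside this ball; handling them requires invoking the conditional probability lower bound of condition~\ref{item-onescale-cond} in the adjacent-scale events $E_{z,n}$ and $E_{z,n-1}$ (the latter already forced by $\mcl E^{n-1}(z)$). This bookkeeping between adjacent scales is precisely what the event definitions have been engineered to support.
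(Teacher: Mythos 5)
Your proposal is correct and mirrors the paper's proof: the upper bound uses only the Brownian-motion thickness condition exactly as you describe, and the lower bound chains Proposition~\ref{prop-E-prob} across scales via the Markov property, with condition~\ref{item-onescale-rn} supplying the Radon--Nikodym bound on the inner ball and condition~\ref{item-onescale-cond} (from both adjacent-scale events) handling the intermediate-annulus events $H_{z,j}^{\op{out}},H_{z,j}^{\op{in}}$ --- this is precisely the content that the paper packages as Lemmas~\ref{lem-multiscale-cond} and~\ref{lem-multiscale-split}. You also correctly observe that the factorial choice of $\ep_n$ gives $\log\ep_n^{-1}\asymp n\log n$, so the per-scale constant $c$ (or $L^{-1}$) is absorbed into the $\ep_n^{o_n(1)}$ error.
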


\begin{lem}[Two-point estimate for $\mcl E^n(z)$] \label{lem-thick-2pt}
Suppose $z,w\in B_{1/3}(0)$, $m\in\BB N$ such that $|z-w| \in [\ep_{m+1} , \ep_m]$, and $n\in\BB N$ with $n\geq m$. Then
\allb \label{eqn-thick-2pt}
\BB P\left[\mcl E^n(z) \cap \mcl E^n(w) \right] 
&\leq \BB P\left[\mcl E_{m+3}^n(z) \cap \mcl E_3^{m-1}(z) \cap \mcl E_{m+3}^n(w)  \cap \mcl E_3^{m-1}(w) \right] \notag \\
&\leq \ep_m^{-\alpha^2/2 + o_m(1)}  \BB P\left[ \mcl E^n(z) \right] \BB P\left[ \mcl E^n(w) \right]  
\alle
where the rate of convergence of the $o_{m}(1)$ depend only on $K, \alpha,\gamma$.
\end{lem}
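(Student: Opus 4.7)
The first inequality is immediate from monotonicity: dropping the conditions at scales $\{0,1,2,m,m+1,m+2\}$ from $\mcl E^n(z)\cap\mcl E^n(w)$ yields the stated containment. The point of dropping these six scales is to produce a clean separation --- since $|z-w|\in[\ep_{m+1},\ep_m]$ and $2\ep_{m+2}<\ep_{m+1}$, the balls $B_{\ep_{m+2}}(z)$ and $B_{\ep_{m+2}}(w)$ are disjoint, while the surviving coarse-scale events $\mcl E_3^{m-1}(z),\mcl E_3^{m-1}(w)$ depend on $h$ only at radii at least $\ep_{m-1}\gg\ep_{m+2}$.

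For the second inequality, the plan is to exploit this separation via Markov-property conditioning. I set $\mcl F:=\sigma(h|_{\BB C\setminus(B_{\ep_{m+2}}(z)\cup B_{\ep_{m+2}}(w))})$. By the measurability fact~\eqref{eqn-Ecap-msrble}, both $\mcl E_3^{m-1}(z)$ and $\mcl E_3^{m-1}(w)$ are $\mcl F$-measurable, while $\mcl E_{m+3}^n(z)$ and $\mcl E_{m+3}^n(w)$ are, conditional on $\mcl F$, measurable functions of $h|_{B_{\ep_{m+2}}(z)}$ and $h|_{B_{\ep_{m+2}}(w)}$ respectively. The domain Markov property of the whole-plane GFF then furnishes the crucial conditional independence
\[
\BB P\bigl[\mcl E_{m+3}^n(z)\cap\mcl E_{m+3}^n(w)\,\big|\,\mcl F\bigr] = \BB P\bigl[\mcl E_{m+3}^n(z)\,\big|\,\mcl F\bigr]\,\BB P\bigl[\mcl E_{m+3}^n(w)\,\big|\,\mcl F\bigr].
\]
Each factor on the right will then be compared to the corresponding unconditional probability via the Radon--Nikodym estimate of condition~\ref{item-onescale-rn} inside $E_{z,m+3}\subset\mcl E_{m+3}^n(z)$. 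The Markov decomposition writes $(h-h_{\ep_{m+2}}(z))|_{B_{\ep_{m+2}}(z)}=\rng h_z+\frk h_{z,m+2}$ with $\rng h_z$ an $\mcl F$-independent zero-boundary GFF on $B_{\ep_{m+2}}(z)$ and $\frk h_{z,m+2}$ the $\mcl F$-measurable harmonic part; condition~\ref{item-onescale-rn} applied with $\frk f=\frk h_{z,m+2}$ then yields $\BB P[\mcl E_{m+3}^n(z)\mid\mcl F]\leq L\cdot\BB P[\mcl E_{m+3}^n(z)]$, provided $\frk h_{z,m+2}\in\frk F_{z,m+2}$.

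Securing this regularity of $\frk h_{z,m+2}$ is the main obstacle: since condition~\ref{item-onescale-harmonic} at scale $m+2$ has been dropped, the bound $\sup_{B_{\ep_{m+2}/2}(z)}|\frk h_{z,m+2}|\leq A$ is not automatic. The plan is to control the failure event $\{\frk h_{z,m+2}\notin\frk F_{z,m+2}\}$ by a Gaussian tail estimate for the harmonic extension of $h|_{\bdy B_{\ep_{m+2}}(z)}-h_{\ep_{m+2}}(z)$, whose variance on $B_{\ep_{m+2}/2}(z)$ is a scale-invariant constant, so that for $A$ sufficiently large the contribution of the failure event is negligible (and analogously for $w$). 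A subsidiary measurability issue --- that $\mcl E_{m+3}^n(z)$ also contains the annulus events $H_{z,m+2}^{\op{out}}\cap H_{z,m+2}^{\op{in}}$, which live on $h|_{\BB A_{\ep_{m+2}/3,\ep_{m+2}}(z)}$ rather than $h|_{B_{\ep_{m+2}/3}(z)}$ --- is resolved by dropping these from the upper bound at the cost of a constant factor, using condition~\ref{item-onescale-cond} at neighboring scales to guarantee uniformly positive conditional probabilities. Integrating over $\mcl F$ via the conditional independence and bounding $\BB P[\mcl E_3^{m-1}(z)\cap\mcl E_3^{m-1}(w)]\leq\BB P[\mcl E_3^{m-1}(z)]$ then yields
\[
\BB P\bigl[\mcl E_{m+3}^n(z)\cap\mcl E_3^{m-1}(z)\cap\mcl E_{m+3}^n(w)\cap\mcl E_3^{m-1}(w)\bigr] \preceq \BB P[\mcl E_3^{m-1}(z)]\,\BB P[\mcl E_{m+3}^n(z)]\,\BB P[\mcl E_{m+3}^n(w)].
\]
Finally, Lemma~\ref{lem-thick-1pt} together with its analogue for the truncated scale ranges (obtained by the same iteration of Proposition~\ref{prop-E-prob}) gives $\BB P[\mcl E_3^{m-1}(z)]=\ep_m^{\alpha^2/2+o_m(1)}$ and $\BB P[\mcl E_{m+3}^n(z)]=(\ep_n/\ep_m)^{\alpha^2/2+o_m(1)}$; multiplying these with $\BB P[\mcl E^n(z)]=\ep_n^{\alpha^2/2+o_n(1)}$ collapses the estimate to $\ep_m^{-\alpha^2/2+o_m(1)}\BB P[\mcl E^n(z)]\BB P[\mcl E^n(w)]$.
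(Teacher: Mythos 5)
Your overall architecture is the right one and matches the paper's: drop scales $\{0,1,2,m,m+1,m+2\}$ to obtain the first inequality, then exploit the Markov property to get conditional independence of the fine-scale events at $z$ and $w$, compare conditional to unconditional probabilities via the Radon--Nikodym bound from condition~\ref{item-onescale-rn}, and collapse the resulting product via the one-point estimate. The first inequality and the conditional-independence step are fine.

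The genuine gap is exactly where you flagged it, and the fix you propose does not close it. You condition on $\mcl F=\sigma(h|_{\BB C\setminus(B_{\ep_{m+2}}(z)\cup B_{\ep_{m+2}}(w))})$ and want to apply condition~\ref{item-onescale-rn} at scale $m+3$ with $\frk f=\frk h_{z,m+2}$, which requires $\frk h_{z,m+2}\in\frk F_{z,m+2}$, i.e.\ condition~\ref{item-onescale-harmonic} at scale $m+2$ --- precisely one of the dropped scales. Your proposed resolution is to control $\{\frk h_{z,m+2}\notin\frk F_{z,m+2}\}$ by a Gaussian tail and take ``$A$ sufficiently large.'' But $A$ was already fixed once and for all in Proposition~\ref{prop-E-prob} (depending only on $\alpha,\gamma$), so you cannot enlarge it here; and even if you could, the failure event $\{\frk h_{z,m+2}\notin\frk F_{z,m+2}\}$ is far from independent of $\mcl E_{m+3}^n(z)\cap\mcl E_3^{m-1}(z)\cap\mcl E_{m+3}^n(w)\cap\mcl E_3^{m-1}(w)$, which itself has tiny probability. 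Making $\BB P[\{\frk h_{z,m+2}\notin\frk F_{z,m+2}\}]$ small in absolute terms does not make its \emph{intersection} with the target event negligible relative to the target event's probability, so the argument stalls.

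The paper's resolution, which you should adopt, is simply to move the conditioning one scale deeper rather than invoking any extra estimate: condition on $h|_{\BB C\setminus(B_{\ep_{m+3}}(z)\cup B_{\ep_{m+3}}(w))}$, so that $E_{z,m+3}$ and $E_{w,m+3}$ are themselves $\mcl F$-measurable (along with $\mcl E_3^{m-1}(z),\mcl E_3^{m-1}(w)$). Then on the truncating event $E_{z,m+3}$, condition~\ref{item-onescale-harmonic} at scale $m+3$ gives $\frk h_{z,m+3}\in\frk F_{z,m+3}$ for free, and condition~\ref{item-onescale-rn} at scale $m+4$ (inside $\mcl E_{m+4}^n(z)$) supplies the Radon--Nikodym bound. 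This is exactly the mechanism packaged in Lemma~\ref{lem-multiscale-cond}, which you should cite directly rather than re-derive, and which also handles the subsidiary $H_{z,m+2}^{\op{in}},H_{z,m+2}^{\op{out}}$ measurability issue by dropping them by inclusion for the upper-bound direction (no appeal to condition~\ref{item-onescale-cond} is needed for the upper bound --- that condition enters only in the lower-bound half of Lemma~\ref{lem-multiscale-cond}). The output is a bound in terms of $\BB P[\mcl E_{m+5}^n(z)]\BB P[\mcl E_{m+5}^n(w)]$ rather than $\BB P[\mcl E_{m+4}^n(\cdot)]$, which is harmless since the extra skipped scale is absorbed by $\ep_m^{o_m(1)}$. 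Your final collapsing step is also cleaner if you use Lemma~\ref{lem-multiscale-split} to relate $\BB P[\mcl E_3^{m-1}]$ and $\BB P[\mcl E_{m+5}^n]$ to ratios of $\BB P[\mcl E^k(\cdot)]$'s and then apply Lemma~\ref{lem-thick-1pt}, rather than re-running the one-point argument for truncated scale ranges.
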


In the setting of Lemma~\ref{lem-thick-2pt}, we note that by definition $\mcl E_3^{m-1}(z)$ is the whole probability space if $m\leq 3$ and $\mcl E_{m+3}^n(z)$ is the whole probability space if $n\leq m+2$. The estimate~\eqref{eqn-thick-2pt} is still true in these degenerate cases. 

We will in fact only use the second inequality in~\eqref{eqn-thick-2pt} of Lemma~\ref{lem-thick-2pt}.
The reason why we need an upper bound for $\BB P[\mcl E_{m+3}^n(z) \cap \mcl E_3^{m-1}(z) \cap \mcl E_{m+3}^n(w)  \cap \mcl E_3^{m-1}(w)]$ instead of just for $\BB P[\mcl E^n(z) \cap\mcl E^n(w)]$ is that we will need to skip some scales in Section~\ref{sec-end-2pt}; see in particular Lemma~\ref{lem-Eupper-2pt}.

The key tool in the proof of Lemmas~\ref{lem-thick-1pt} and~\ref{lem-thick-2pt} is the following approximate independence across scales result, which is a consequence of~\eqref{eqn-Ecap-msrble} (measurability for $\mcl E_m^n(z)$) and conditions~\ref{item-onescale-rn} and~\ref{item-onescale-harmonic} in the definition of $E_{z,j}$.

\begin{lem} \label{lem-multiscale-cond}
For each $n,m\in\BB N_0$ with $n\geq m+1$ and each $z\in B_{1/3}(0)$, 
\eqb \label{eqn-multiscale-cond-upper}
\BB P\left[ \mcl E_{m+1}^n(z) \,\big|\, h|_{\BB C\setminus B_{\ep_m}(z) }   \right] \BB 1_{E_{z,m}}
\preceq  \BB P\left[ \mcl E_{m+2}^n(z)   \right] \BB 1_{E_{z,m}}
\eqe
and 
\eqb \label{eqn-multiscale-cond-lower}
\BB P\left[ \mcl E_{m+1}^n(z) \,\big|\, h|_{\BB C\setminus B_{\ep_{m-1}}(z) }   \right]  \BB 1_{E_{z,m}}
\succeq  \BB P\left[ \mcl E_{m+1}^n(z)   \right]  \BB 1_{E_{z,m}} ,
\eqe
with the implicit constants depending only on $ K, \alpha,\gamma$. 
\end{lem}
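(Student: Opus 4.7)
Both inequalities rest on the Markov property of the whole-plane GFF combined with the uniform Radon-Nikodym derivative bounds that are built into condition~\ref{item-onescale-rn} in the definitions of the $E_{z,j}$'s. The upper bound and lower bound are proved in parallel, but with the spatial scale shifted by one.

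\textbf{Upper bound.} I will first use the trivial inclusion $\mcl E_{m+1}^n(z)\subseteq \mcl E_{m+2}^n(z)$ to replace the left-hand side of~\eqref{eqn-multiscale-cond-upper} by the conditional probability of $\mcl E_{m+2}^n(z)$. By~\eqref{eqn-Ecap-msrble}, $\mcl E_{m+2}^n(z)$ is determined by the restriction of $h$ to an annulus contained in $B_{\ep_m/3}(z)$, modulo additive constant. Conditional on $h|_{\BB C\setminus B_{\ep_m}(z)}$, the Markov property gives that $(h-h_{\ep_m}(z))|_{B_{\ep_m}(z)}$ has the law of $\rng h + \frk h_{z,m}$, with $\rng h$ a zero-boundary GFF on $B_{\ep_m}(z)$ independent of the conditioning and $\frk h_{z,m}$ determined by $h|_{\bdy B_{\ep_m}(z)}$. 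On $E_{z,m}$ condition~\ref{item-onescale-harmonic} forces $\frk h_{z,m}\in \frk F_{z,m}$, and then condition~\ref{item-onescale-rn} in the definition of $E_{z,m+1}$---which, for $L$ chosen large enough in terms of $A$, yields a uniform $L^\infty$ bound on the R-N derivative of the law of $(\rng h+\frk f)|_{B_{\ep_m/3}(z)}$ with respect to the law of $(h-h_{\ep_m}(z))|_{B_{\ep_m/3}(z)}$ valid for every $\frk f\in \frk F_{z,m}$---gives the conditional law of $(h-h_{\ep_m}(z))|_{B_{\ep_m/3}(z)}$ a density at most $L$ with respect to the unconditional law. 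Applying this bound to $\BB 1_{\mcl E_{m+2}^n(z)}$ yields~\eqref{eqn-multiscale-cond-upper} with implicit constant $L$.

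\textbf{Lower bound.} I will run the analogous argument but shifted out by one scale: condition on $h|_{\BB C\setminus B_{\ep_{m-1}}(z)}$ instead of $h|_{\BB C\setminus B_{\ep_m}(z)}$, and observe that $\mcl E_{m+1}^n(z)\in\sigma((h-h_{\ep_m}(z))|_{\BB A_{\ep_n,\ep_m}(z)})$ is determined (modulo additive constant) by the field inside $B_{\ep_m}(z)\subset B_{\ep_{m-1}/3}(z)$. By the Markov property, the conditional law of $(h-h_{\ep_{m-1}}(z))|_{B_{\ep_{m-1}/3}(z)}$ given the conditioning is that of $(\rng h+\frk h_{z,m-1})|_{B_{\ep_{m-1}/3}(z)}$, with $\rng h$ now a zero-boundary GFF on $B_{\ep_{m-1}}(z)$. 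Condition~\ref{item-onescale-rn} in the definition of $E_{z,m}$ provides a two-sided R-N bound on precisely this region, valid for any $\frk f\in \frk F_{z,m-1}$, and the lower bound $L^{-1}$ applied to $\BB 1_{\mcl E_{m+1}^n(z)}$ gives~\eqref{eqn-multiscale-cond-lower}.

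\textbf{Main obstacle.} The subtle step is the lower bound. Condition~\ref{item-onescale-harmonic} of $E_{z,m}$ guarantees $\frk h_{z,m}\in \frk F_{z,m}$, which is exactly what is needed for the upper bound; but for the lower bound we need the realized harmonic extension $\frk h_{z,m-1}$ to lie in $\frk F_{z,m-1}$, and this is not one of the conditions directly imposed by $E_{z,m}$. I expect to resolve this either by showing that membership of $\frk h_{z,m-1}$ in $\frk F_{z,m-1}$ is automatic on $E_{z,m}$ up to a mild adjustment of the constant $A$ (using the control on circle averages in conditions~\ref{item-onescale-thick}--\ref{item-onescale-sup} together with standard Gaussian sup-norm estimates for harmonic extensions), or by passing to the subevent $E_{z,m}\cap\{\frk h_{z,m-1}\in \frk F_{z,m-1}\}$, which has probability at least a constant multiple of $\BB P[E_{z,m}]$ by a Gaussian tail estimate applied to the boundary averages determining $\frk h_{z,m-1}$, and absorbing this factor into the implicit constant.
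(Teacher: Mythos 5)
Your upper bound argument captures the right idea, but there is a subtle but important flaw. Condition~\ref{item-onescale-rn} in the definition of $E_{z,m+1}$ is an \emph{event} (the observed realization of $(h-h_{\ep_m}(z))|_{B_{\ep_m/3}(z)}$ lies in the set where the Radon--Nikodym derivative is in $[L^{-1},L]$ for all $\frk f \in \frk F_{z,m}$), not a deterministic fact that can be forced by choosing $L$ large. The inequality ``$\rho \leq L$'' therefore only holds on the event $E_{z,m+1}$. Consequently you cannot apply the density bound directly to $\BB 1_{\mcl E_{m+2}^n(z)}$, because $\mcl E_{m+2}^n(z)\not\subset E_{z,m+1}$. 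The correct intermediate event is $\mcl E_{m+2}^n(z)\cap E_{z,m+1}$, which does sit inside $E_{z,m+1}$: then the inclusion $\mcl E_{m+1}^n(z)\subset \mcl E_{m+2}^n(z)\cap E_{z,m+1}\subset \mcl E_{m+2}^n(z)$ gives the claim. This is essentially the paper's argument.

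Your lower bound departs from the paper's route and has two genuine gaps. First, as you note yourself, working at scale $\ep_{m-1}$ requires $\frk h_{z,m-1}\in \frk F_{z,m-1}$, and this is \emph{not} forced by $E_{z,m}$. Neither of your proposed repairs works: $E_{z,m}$ only constrains $(h-h_{\ep_{m-1}}(z))|_{\BB A_{\ep_m,\ep_{m-1}/3}(z)}$, so it places no constraint at all on the non-radially-symmetric component of $h$ on $\bdy B_{\ep_{m-1}}(z)$, which is what determines $\sup_{B_{\ep_{m-1}/2}(z)}|\frk h_{z,m-1}|$; and passing to the subevent $E_{z,m}\cap\{\frk h_{z,m-1}\in\frk F_{z,m-1}\}$ proves an a.s.\ inequality truncated by a smaller indicator, which is strictly weaker than~\eqref{eqn-multiscale-cond-lower} (the display is a pointwise, not an averaged, inequality, so a probability comparison between the truncation events cannot be ``absorbed''). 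Second, and independently, you inherit the same issue flagged above: the lower bound $\rho\geq L^{-1}$ from condition~\ref{item-onescale-rn} of $E_{z,m}$ holds only where that condition holds, so the integral $\int \BB 1_{\mcl E_{m+1}^n(z)}\rho\,d\mu$ is only controlled after intersecting with that condition, and $\mcl E_{m+1}^n(z)$ does not contain it.

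More fundamentally, your lower bound misses the step that makes the paper's proof non-trivial. The paper conditions on $h|_{\BB C\setminus B_{\ep_m}(z)}$ for \emph{both} bounds (the $\ep_{m-1}$ in the lemma statement appears to be a typo; the proof establishes the $\ep_m$ version, and that is what is used in Lemma~\ref{lem-multiscale-split} and Lemma~\ref{lem-Elower-prob}). At that scale, the Radon--Nikodym argument controls only the conditional law on $B_{\ep_m/3}(z)$, hence only the probability of $\mcl E_{m+2}^n(z)\cap E_{z,m+1}$. But $\mcl E_{m+1}^n(z)=\mcl E_{m+2}^n(z)\cap E_{z,m+1}\cap H_{z,m}^{\op{out}}\cap H_{z,m}^{\op{in}}$, and the $H$-events live on $\BB A_{\ep_m/3,\ep_m}(z)$, \emph{outside} the RN-controlled region. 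The paper's proof handles them by invoking condition~\ref{item-onescale-cond} in $E_{z,m}$ and $E_{z,m+1}$ together with the conditional independence from Lemma~\ref{lem-in-out-ind}, obtaining a uniform lower bound $\tfrac14$ for the conditional probability of $H_{z,m}^{\op{out}}\cap H_{z,m}^{\op{in}}$, and then putting the pieces together in~\eqref{eqn-H-cap-compare}. Your proof contains nothing that plays this role. The one-scale-shift strategy was an attempt to fold the $H$-events into the RN-controlled region, but as explained, that creates the $\frk h_{z,m-1}$ problem, so it trades one difficulty for a worse one.
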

\begin{proof}
Let us first note that $h_{\ep_m}(z)$ is determined by each of $h|_{\BB C\setminus B_{\ep_m}(z)}$ and $(h-h_{\ep_m}(z))|_{\BB C\setminus B_{\ep_m}(z)}$ ($h_{\ep_m}(z)$ is equal to $-1$ times the average of the latter field over $\bdy \BB D$). Therefore,
\eqb
\sigma\left( h|_{\BB C\setminus B_{\ep_m}(z)}  \right) = \sigma\left( (h-h_{\ep_m}(z))|_{\BB C\setminus B_{\ep_m}(z)}  \right) .
\eqe
We will use this fact without comment throughout the rest of the proof. 

Throughout the rest of the proof, we condition on $h|_{\BB C\setminus B_{\ep_m}(z)}$ (equivalently, on $ (h-h_{\ep_m}(z))|_{\BB C\setminus B_{\ep_m}(z)}$) and we assume that $E_{z,m}$ occurs. 
By the Markov property of the GFF, the conditional law of $(h-h_{\ep_m}(z))|_{B_{\ep_m}(z)}$ is that of a zero-boundary GFF on $B_{\ep_m}(z)$ plus the harmonic function $\frk h_{z,m }  $. 
By condition~\ref{item-onescale-harmonic} in the definition of $E_{z,m}$, we have $\frk h_{z,m }    \in \frk F_{z,m }$, with $\frk F_{z,m}$ as in condition~\ref{item-onescale-rn} in the definition of $E_{z,m+1}$. 
By condition~\ref{item-onescale-rn} in the definition of $  E_{z,m+1}$, it therefore follows that on $E_{z,m+1}$, the Radon-Nikodym derivative of the conditional law of $(h-h_{\ep_m}(z))|_{B_{\ep_m/3}(z)}$ given $h|_{\BB C\setminus B_{\ep_{m-1}}(z)}$ w.r.t.\ the marginal law of $(h-h_{\ep_m}(z))|_{B_{\ep_m/3}(z)}$ is bounded above by $L$ and below by $L^{-1}$. 

By Lemma~\ref{lem-E-msrble} and~\eqref{eqn-Ecap-msrble}, we have $\mcl E_{m+2}^n(z) \cap E_{z,m+1} \in \sigma\left( (h-h_{\ep_m}(z))|_{B_{\ep_m/3}(z)} \right)$. 
By this and the conclusion of the preceding paragraph,  
\eqb \label{eqn-multiscale-cond0}
\BB P\left[ \mcl E_{m+2}^n(z) \cap E_{z,m+1}  \,|\,  h|_{\BB C\setminus B_{\ep_m}(z) }   \right] \BB 1_{E_{z,m}}
\asymp  \BB P\left[ \mcl E_{m+2}^n(z) \cap E_{z,m+1}   \right] \BB 1_{E_{z,m}} .
\eqe
Since $ \mcl E_{m+1}^n(z) \subset  \mcl E_{m+2}^n(z) \cap E_{z,m+1} \subset  \mcl E_{m+2}^n(z)  $, we obtain~\eqref{eqn-multiscale-cond-upper} from~\eqref{eqn-multiscale-cond0}.

Recall from~\eqref{eqn-Ecap-def} that $\mcl E_{m+1}^n(z)  = \mcl E_{m+2}^n(z) \cap E_{z,m+1} \cap  H_{z,m }^{\op{out}} \cap H_{z,m }^{\op{in}}$.
To obtain~\eqref{eqn-multiscale-cond-lower}, it therefore remains to deal with the events $H_{z,m }^{\op{out}}$ and $H_{z,m }^{\op{in}}$.  

We first recall that the event $H_{z,m }^{\op{out}}$ is determined by the field $h_{z,m }^{\op{out}}$, which is conditionally independent from $(h-h_{\ep_m}(z))|_{\BB C\setminus B_{\ep_{m }}(z)}  $ given $(h-h_{\ep_m}(z))|_{B_{\ep_m/3}(z)}$ (Lemma~\ref{lem-in-out-ind}). By condition~\ref{item-onescale-cond} in the definition of $E_{z,m+1}$,  
\eqbn
\BB P\left[H_{z,m }^{\op{out}}  \,\big|\, (h-h_{\ep_m}(z))|_{B_{\ep_m/3}(z)} \right] \BB 1_{ E_{z,m+1} }  \geq \frac34 \BB 1_{  E_{z,m+1} }  ,
\eqen
so by the above conditional independence, 
\eqb \label{eqn-use-Hcond0}
\BB P\left[H_{z,m }^{\op{out}}  \,\big|\, (h-h_{\ep_m}(z))|_{\BB C\setminus \BB A_{\ep_m/3,\ep_m}(z)} \right] \BB 1_{E_{z,m+1}} \geq \frac34 \BB 1_{E_{z,m+1}}  .
\eqe
Recall from Lemma~\ref{lem-E-msrble} and~\eqref{eqn-Ecap-msrble} that $\mcl E_{m+2}^n(z) \cap E_{z,m+1} \cap  E_{z,m} \in \sigma\left( (h-h_{\ep_m}(z))|_{\BB C\setminus \BB A_{\ep_m/3,\ep_m}(z)}  \right)$.
By~\eqref{eqn-use-Hcond0}, 
\eqb \label{eqn-use-Hcond-out}
\BB P\left[H_{z,m }^{\op{out}}  \,\big|\, (h-h_{\ep_m}(z))|_{\BB C\setminus \BB A_{\ep_m/3,\ep_m}(z)} \right] \BB 1_{\mcl E_{m+2}^n(z) \cap E_{z,m+1} \cap  E_{z,m}} \geq \frac34  \BB 1_{\mcl E_{m+2}^n(z) \cap E_{z,m+1} \cap  E_{z,m} }   .
\eqe 
Using condition~\ref{item-onescale-cond} in the definition of $E_{z,m}$, we similarly obtain
\eqb \label{eqn-use-Hcond-in}
\BB P\left[H_{z,m }^{\op{in}}  \,\big|\,  (h-h_{\ep_m}(z))|_{\BB C\setminus \BB A_{\ep_m/3,\ep_m}(z)} \right] \BB 1_{\mcl E_{m+2}^n(z) \cap E_{z,m+1} \cap  E_{z,m}} \geq \frac34 \BB 1_{\mcl E_{m+2}^n(z) \cap E_{z,m+1} \cap  E_{z,m}}  .
\eqe
By~\eqref{eqn-use-Hcond-out} and~\eqref{eqn-use-Hcond-in}, 
\eqb \label{eqn-H-cond-pos}
\BB P\left[H_{z,m }^{\op{out}} \cap H_{z,m }^{\op{in}}  \,\big|\,  (h-h_{\ep_m}(z))|_{\BB C\setminus \BB A_{\ep_m/3,\ep_m}(z)} \right]  \BB 1_{\mcl E_{m+2}^n(z) \cap E_{z,m+1} \cap  E_{z,m}} \geq \frac14  \BB 1_{\mcl E_{m+2}^n(z) \cap E_{z,m+1} \cap  E_{z,m}} .
\eqe

Since $\mcl E_{m+2}^n(z) \cap E_{z,m+1} \cap  E_{z,m} \in \sigma\left( (h-h_{\ep_m}(z))|_{\BB C\setminus \BB A_{\ep_m/3,\ep_m}(z)}  \right)$, we can take the conditional expectation of both sides of~\eqref{eqn-H-cond-pos} given $h|_{\BB C\setminus B_{\ep_m}(z)}$ to obtain
\eqb \label{eqn-H-cap-compare}
\BB P\left[\mcl E_{m+1}^n(z)  \,\big|\, h|_{\BB C\setminus B_{\ep_m}(z)} \right] \BB 1_{E_{z,m}} 
\geq  \frac14 \BB P\left[\mcl E_{m+2}^n(z) \cap E_{z,m+1}  \,\big|\, h|_{\BB C\setminus B_{\ep_m}(z)} \right] \BB 1_{E_{z,m}} .
\eqe
Here we recall that $\mcl E_{m+1}^n(z)   =  \mcl E_{m+2}^n(z) \cap E_{z,m+1} \cap H_{z,m }^{\op{out}} \cap H_{z,m}^{\op{in}}$ by definition.
By combining~\eqref{eqn-H-cap-compare} and~\eqref{eqn-multiscale-cond0} we obtain 
\eqb
\BB P\left[\mcl E_{m+1}^n(z)  \,\big|\, h|_{\BB C\setminus B_{\ep_m}(z)} \right] \BB 1_{E_{z,m}}
\succeq  \BB P\left[ \mcl E_{m+2}^n(z) \cap E_{z,m+1}   \right] \BB 1_{E_{z,m}}
\succeq  \BB P\left[ \mcl E_{m+1}^n(z)   \right] \BB 1_{E_{z,m}} ,
\eqe
which is~\eqref{eqn-multiscale-cond-lower}. 
\end{proof}

\begin{lem} \label{lem-multiscale-split}
For each $n,m\in\BB N_0$ with $n\geq m+1$ and each $z\in B_{1/3}(0)$, 
\eqb \label{eqn-multiscale-split-upper}
\BB P\left[ \mcl E^n(z) \right] \preceq \BB P\left[ \mcl E_{m+2}^n (z)  \right] \BB P\left[ \mcl E^m(z) \right]  
\eqe
and
\eqb \label{eqn-multiscale-split-lower}
\BB P\left[ \mcl E^n(z) \right] \succeq \BB P\left[ \mcl E_{m+1}^n (z)  \right] \BB P\left[ \mcl E^m(z) \right]  
\eqe
with the implicit constant depending only on $K, \alpha,\gamma$. 
\end{lem}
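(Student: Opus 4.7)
The identity $\mcl E^n(z)=\mcl E^m(z)\cap \mcl E_{m+1}^n(z)$ is immediate from~\eqref{eqn-Ecap-def}, and both inequalities will follow by conditioning on a suitable exterior $\sigma$-algebra and invoking Lemma~\ref{lem-multiscale-cond}. Set $\mcl F_k := \sigma(h|_{\BB C\setminus B_{\ep_k}(z)})$ for $k\geq 0$. For the upper bound~\eqref{eqn-multiscale-split-upper}, I would condition on $\mcl F_m$: by~\eqref{eqn-Ecap-msrble} we have $\mcl E^m(z)\in \mcl F_m$ (since $\BB A_{\ep_m,\ep_0}(z)\subset \BB C\setminus B_{\ep_m}(z)$), and by construction $\mcl E^m(z)\subset E_{z,m}$, so
\[
\BB P[\mcl E^n(z)]=\BB E\bigl[\BB 1_{\mcl E^m(z)}\BB P[\mcl E_{m+1}^n(z)\mid \mcl F_m]\bigr]\preceq \BB E\bigl[\BB 1_{\mcl E^m(z)}\BB P[\mcl E_{m+2}^n(z)]\bigr]=\BB P[\mcl E_{m+2}^n(z)]\,\BB P[\mcl E^m(z)],
\]
where the middle step applies~\eqref{eqn-multiscale-cond-upper} pointwise on $E_{z,m}$.

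For the lower bound~\eqref{eqn-multiscale-split-lower}, I would decompose $\mcl E^m(z)=\mcl E^{m-1}(z)\cap \wt B$ with $\wt B:=E_{z,m}\cap H_{z,m-1}^{\op{out}}\cap H_{z,m-1}^{\op{in}}$. Then $\mcl E^{m-1}(z)\in \mcl F_{m-1}$, while $\wt B\in \mcl F_m$: by Lemmas~\ref{lem-E-msrble} and~\ref{lem-H-msrble}, $\wt B$ is determined by $h|_{\BB A_{\ep_m,\ep_{m-1}}(z)}\subset \BB C\setminus B_{\ep_m}(z)$. Conditioning on $\mcl F_{m-1}$ and then invoking the tower property with the intermediate $\sigma$-algebra $\mcl F_m\supset \mcl F_{m-1}$ yields
\[
\BB P[\mcl E^n(z)]=\BB E\bigl[\BB 1_{\mcl E^{m-1}(z)}\BB E[\BB 1_{\wt B}\,\BB P[\mcl E_{m+1}^n(z)\mid \mcl F_m]\mid \mcl F_{m-1}]\bigr].
\]
The plan is then to use the $\mcl F_m$-level analogue of~\eqref{eqn-multiscale-cond-lower}, namely $\BB P[\mcl E_{m+1}^n(z)\mid \mcl F_m]\BB 1_{E_{z,m}}\succeq \BB P[\mcl E_{m+1}^n(z)]\BB 1_{E_{z,m}}$, which is what the proof of Lemma~\ref{lem-multiscale-cond} actually establishes: the Radon--Nikodym step applies condition~\ref{item-onescale-rn} in $E_{z,m+1}$ with $\frk f=\frk h_{z,m}$ (which lies in $\frk F_{z,m}$ by condition~\ref{item-onescale-harmonic} in $E_{z,m}$), and the constant lower bound on the conditional probability of $H_{z,m}^{\op{out}}\cap H_{z,m}^{\op{in}}$ arising from condition~\ref{item-onescale-cond} is $\mcl F_m$-conditional by Lemma~\ref{lem-in-out-ind}. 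Since $\wt B\subset E_{z,m}$, this gives $\BB 1_{\wt B}\BB P[\mcl E_{m+1}^n(z)\mid \mcl F_m]\succeq \BB 1_{\wt B}\BB P[\mcl E_{m+1}^n(z)]$; substituting above and using the identity $\mcl E^{m-1}(z)\cap \wt B=\mcl E^m(z)$ yields $\BB P[\mcl E^n(z)]\succeq \BB P[\mcl E_{m+1}^n(z)]\BB P[\mcl E^m(z)]$.

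The main technical point in the plan is recognizing that the $\mcl F_m$-level strengthening of~\eqref{eqn-multiscale-cond-lower} (rather than the coarser $\mcl F_{m-1}$-level statement as literally written in Lemma~\ref{lem-multiscale-cond}) is what permits the tower computation, and verifying from inspection of that proof that this stronger bound is in fact what is established. With that observation in hand the rest of the argument is a routine bookkeeping application of iterated conditional expectation; no analytic input beyond Lemma~\ref{lem-multiscale-cond} is needed.
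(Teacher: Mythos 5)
Your proof is correct, and it follows the same basic strategy as the paper: write $\mcl E^n(z)=\mcl E^m(z)\cap\mcl E_{m+1}^n(z)$, use the measurability $\mcl E^m(z)\in\sigma(h|_{\BB C\setminus B_{\ep_m}(z)})$ and the inclusion $\mcl E^m(z)\subset E_{z,m}$, condition, and invoke Lemma~\ref{lem-multiscale-cond}. Your key observation --- that the conditioning $\sigma$-algebra in~\eqref{eqn-multiscale-cond-lower} as printed is $\sigma(h|_{\BB C\setminus B_{\ep_{m-1}}(z)})$, but what the proof of Lemma~\ref{lem-multiscale-cond} actually establishes (and what every subsequent application uses, including this one and the one in Lemma~\ref{lem-Elower-prob}) is the corresponding bound with $\sigma(h|_{\BB C\setminus B_{\ep_m}(z)})$ --- is exactly right; the subscript $m-1$ there appears to be a typo, and the paper's one-line ``follows immediately'' only works under the $\ep_m$ reading. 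However, once you have the $\sigma(h|_{\BB C\setminus B_{\ep_m}(z)})$-level inequality, the detour through $\mcl F_{m-1}$, the decomposition $\mcl E^m(z)=\mcl E^{m-1}(z)\cap\wt B$, and the two-stage tower argument are all unnecessary: the lower bound then follows from a single conditioning on $\mcl F_m$, exactly as you did for the upper bound. The extra machinery isn't wrong, it just adds nothing.
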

\begin{proof}
Since $\mcl E^n(z) = \mcl E_{m+1}^n(z) \cap \mcl E^m(z)$, 
\eqb  \label{eqn-multiscale-split-ratio}
\frac{\BB P\left[ \mcl E^n(z) \right] }{ \BB P\left[\mcl E^m(z) \right]}  = \BB P\left[ \mcl E_{m+1}^n(z) \,|\,  \mcl E^m(z)  \right] .
\eqe  
By definition, we have $E_{z,m} \supset\mcl E^m(z)$ and by~\eqref{eqn-Ecap-msrble}, we have $\mcl E^m(z) \in \sigma\left( h|_{\BB C\setminus B_{\ep_{m}}(z) } \right)$. 
Therefore, the lemma follows immediately from Lemma~\ref{lem-multiscale-cond}. 
\end{proof}

\begin{proof}[Proof of Lemma~\ref{lem-thick-1pt}]
Recall that $t\mapsto h_{e^{-t}}(z) - h_1(z)$ is a standard linear Brownian motion. 
By the Gaussian tail bound, for each $z\in\BB C$ and $j\in\BB N$, the probability of condition~\ref{item-onescale-thick} in the definition of $E_{z,j}$ is at most $ (\ep_j / \ep_{j-1})^{\alpha^2/2  + o_j(1)} $. 
By the independent increments property of Brownian motion, the events described in condition~\ref{item-onescale-thick} for different values of $j \in \BB N$ ($z$ fixed) are independent, so their intersection has probability at most $\ep_n^{\alpha^2/2 + o_n(1)}$. This intersection contains $\mcl E^n(z)$, which gives the upper bound in~\eqref{eqn-thick-1pt}. 

We now prove the lower bound. 
By applying~\eqref{eqn-multiscale-split-lower} of Lemma~\ref{lem-multiscale-split} $n+1$ times, noting that $\mcl E_j^j(z) = E_{z,j} \cap H_{z,j-1}^{\op{out}} \cap H_{z,j-1}^{\op{in}}$ for $j\in\BB N$ and $\mcl E^0(z) = E_{z,0}$, we obtain
\eqb \label{eqn-onepoint-split}
\BB P\left[\mcl E^n(z) \right] \geq c^{n+1} \prod_{j=1}^n \BB P\left[ E_{z,j} \cap H_{z,j-1}^{\op{out}} \cap H_{z,j-1}^{\op{in}} \right] 
\times  \BB P\left[ E_{z,0}  \right] 
\eqe
where $c>0$ is a constant depending only on $ \alpha,\gamma$. 
By Proposition~\ref{prop-E-prob}, for each $\zeta > 0$ there exists $b_\zeta > 0$ such that for each $z\in\BB C$ and $j\in\BB N$, $\BB P[E_{z,j} \cap H_{z,j-1}^{\op{out}} \cap H_{z,j-1}^{\op{in}}] \geq b_\zeta (\ep_j/\ep_{j-1})^{\alpha^2/2 +\zeta}$ and furthermore $\BB P\left[ E_{z,0}   \right] \geq b_\zeta$. Plugging this into~\eqref{eqn-onepoint-split} gives
\eqbn
\BB P\left[\mcl E^n(z) \right] \geq (c b_\zeta)^{n+1} \ep_0^{-\alpha^2/2-\zeta} \ep_n^{\alpha^2/2 + \zeta} \geq \ep_n^{\alpha^2/2 + \zeta + o_n(1)} .
\eqen
Sending $\zeta \rta 0$ now gives the lower bound in~\eqref{eqn-thick-1pt}. 
\end{proof}

The following lemma is the main input in the proof of the two-point estimate, Lemma~\ref{lem-thick-2pt}.

\begin{figure}[t!]
 \begin{center}
\includegraphics[scale=1]{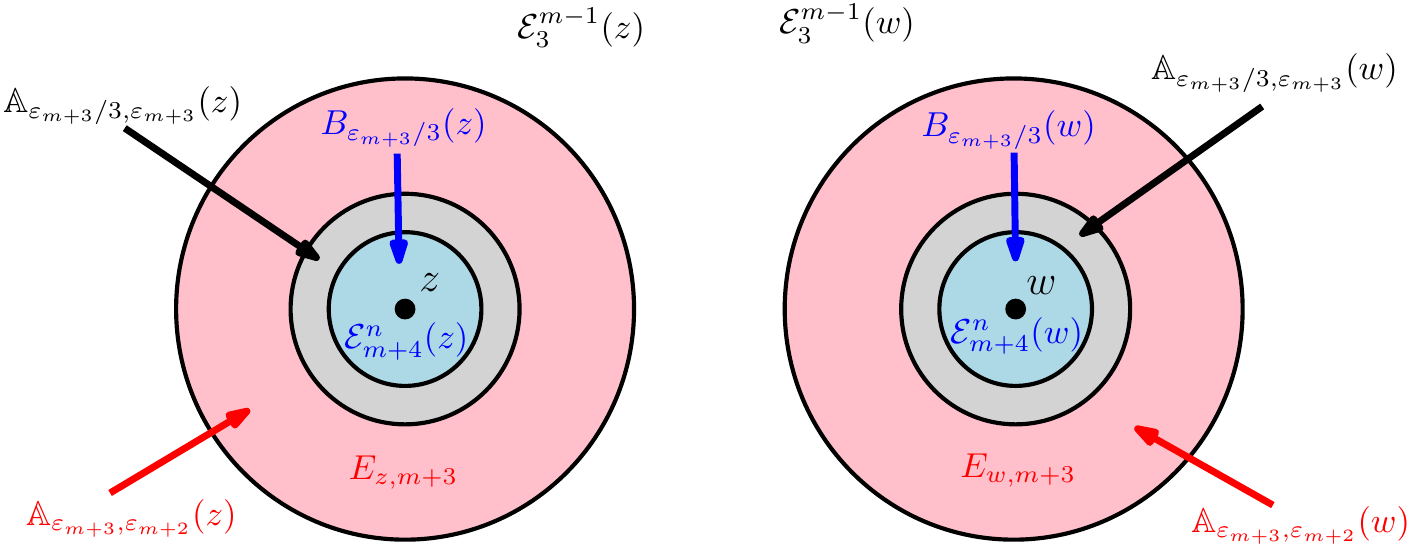}
\vspace{-0.01\textheight}
\caption{Illustration of the proof of Lemma~\ref{lem-twopt-event}. The symbol for each of the events involved in the proof is shown in the region such that the restriction of $h$ to that region determines the event. The events $\mcl E_3^{m-1}(z)$ and $\mcl E_3^{m-1}(w)$ are determined by the restrictions of $h$ to $\BB C\setminus B_{\ep_{m-1}}(z)$ and $\BB C\setminus B_{\ep_{m-1}(w)}$, respectively (not shown), which are subsets of $\BB C\setminus (B_{\ep_{m+2}}(z) \cup B_{\ep_{m+2}}(w))$. 
The key observations for the proof are that the events $\mcl E_{m+4}^n(z)$ and $\mcl E_{m+4}^n(w)$ are conditionally independent given $h|_{ \BB C\setminus (B_{\ep_{m+3}}(z) \cup B_{\ep_{m+3}}(w))}$; and if $E_{z,m+3} \cap E_{w,m+3}$ occurs, then the conditional probability of each of these events is comparable to its unconditional probability. 
}\label{fig-twopt-event}
\end{center}
\vspace{-1em}
\end{figure} 

\begin{lem} \label{lem-twopt-event}
Suppose $z,w\in B_{1/3}(0)$ and $m \in \BB N$ such that $|z-w| \in [  \ep_{m+1} , \ep_m]$. 
Then for $n \geq m$, 
\eqb \label{eqn-twopt-event}
\BB P\left[ \mcl E_{m+3}^n(z) \cap \mcl E_{m+3}^n(w) \,|\, \mcl E_3^{m-1}(z) \cap \mcl E_3^{m-1}(w) \right] 
\preceq \BB P\left[ \mcl E_{m+5}^n(z)  \right] \BB P\left[ \mcl E_{m+5}^n(w) \right] 
\eqe 
with the implicit constant depending only on $K,\alpha,\gamma$.
\end{lem}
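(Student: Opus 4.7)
The proof will be a two-point analogue of Lemma~\ref{lem-multiscale-cond}. The key geometric observation is that since $|z-w|\geq\ep_{m+1}$ and $\ep_{m+1}/\ep_{m+3} = (m+2)(m+3)\geq 6$ for all $m\geq 0$, the balls $B_{\ep_{m+3}}(z)$ and $B_{\ep_{m+3}}(w)$ are disjoint and moreover well-separated from the outer annuli used to define $E_{\cdot,m+3}$ and $H_{\cdot,m+2}^{\op{in/out}}$. This separation is what lets the domain Markov property of the GFF deliver conditional independence of what happens inside the two balls, so the joint estimate essentially decouples.

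Conditioning on $\mcl F := \sigma\left(h|_{\BB C \setminus (B_{\ep_{m+3}}(z) \cup B_{\ep_{m+3}}(w))}\right)$, I will first verify that $\mcl E_3^{m-1}(z) \cap \mcl E_3^{m-1}(w)$ together with $E_{z,m+3} \cap H_{z,m+2}^{\op{out}} \cap H_{z,m+2}^{\op{in}}$ and its $w$-counterpart are all $\mcl F$-measurable, since every underlying annulus has inner radius at least $\ep_{m+3}$ and, by $|z-w|\geq\ep_{m+1}$, no annulus around $z$ reaches $B_{\ep_{m+3}}(w)$ and vice versa. Next, the Markov property gives that, conditional on $\mcl F$, the field $(h - h_{\ep_{m+3}}(z))|_{B_{\ep_{m+3}}(z)}$ equals a zero-boundary GFF on $B_{\ep_{m+3}}(z)$ plus the $\mcl F$-measurable harmonic function $\frk h_{z,m+3}$, \emph{independently} of the analogous decomposition at $w$. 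On the event $E_{z,m+3}\cap E_{w,m+3}$, condition~\ref{item-onescale-harmonic} yields $\frk h_{z,m+3}\in\frk F_{z,m+3}$ and $\frk h_{w,m+3}\in\frk F_{w,m+3}$, and then condition~\ref{item-onescale-rn} (applied at scale $j=m+4$, with the uniform Radon-Nikodym bound guaranteed by the choice of $L$ in Proposition~\ref{prop-E-prob}) bounds the conditional law of $(h-h_{\ep_{m+3}}(\cdot))|_{B_{\ep_{m+3}/3}(\cdot)}$ given $\mcl F$ by $L$ times the unconditional marginal law, separately for $z$ and for $w$.

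To conclude, I will use the trivial inclusion $\mcl E_{m+3}^n(z) \subset E_{z,m+3} \cap E_{z,m+4} \cap \mcl E_{m+5}^n(z)$ (dropping the $H$-events at scales $m+2$ and $m+3$) together with its $w$-counterpart, noting that $E_{z,m+4}\cap\mcl E_{m+5}^n(z)$ is measurable with respect to $(h-h_{\ep_{m+3}}(z))|_{B_{\ep_{m+3}/3}(z)}$ (using $\mcl F$-measurability of $h_{\ep_{m+3}}(z)$ to absorb the centering). Combining the marginal Radon-Nikodym bound with conditional independence across the two disjoint balls then gives
\[
\BB P\left[E_{z,m+4}\cap\mcl E_{m+5}^n(z) \cap E_{w,m+4}\cap\mcl E_{m+5}^n(w) \,\big|\, \mcl F \right] \BB 1_{E_{z,m+3}\cap E_{w,m+3}} \preceq \BB P[\mcl E_{m+5}^n(z)]\,\BB P[\mcl E_{m+5}^n(w)].
\]
Taking the conditional expectation given $\mcl E_3^{m-1}(z)\cap\mcl E_3^{m-1}(w)$ and bounding the remaining indicator by $1$ will then yield~\eqref{eqn-twopt-event}. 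The main obstacle will be the careful measurability bookkeeping across scales, and in particular the choice of a \emph{two}-scale buffer from $m+3$ to $m+5$: the events $H_{z,m+3}^{\op{in/out}}$ live in the annulus $\BB A_{\ep_{m+3}/3,\ep_{m+3}}(z)$, which is neither $\mcl F$-measurable nor contained in $\sigma(h|_{B_{\ep_{m+3}/3}(z)})$, so they cannot be handled directly and must simply be dropped via the trivial inclusion.
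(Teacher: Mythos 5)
Your proposal is correct and follows the same approach as the paper: condition on $h|_{\BB C\setminus(B_{\ep_{m+3}}(z)\cup B_{\ep_{m+3}}(w))}$, use the GFF Markov property to get conditional independence across the two disjoint balls, use conditions~\ref{item-onescale-harmonic} of $E_{\cdot,m+3}$ and~\ref{item-onescale-rn} of $E_{\cdot,m+4}$ to bound the Radon-Nikodym derivative, and drop the $H$-events at scales $m+3$ via a trivial inclusion to land in $\sigma\bigl((h-h_{\ep_{m+3}}(z))|_{B_{\ep_{m+3}/3}(z)}\bigr)$. The only presentational difference is that you reconstitute the Radon-Nikodym step inline, whereas the paper packages it as an invocation of Lemma~\ref{lem-multiscale-cond} (with $m$ shifted to $m+3$); both routes are equivalent, and the paper itself uses exactly your inclusion $\mcl E_{m+4}^n(z)\subset E_{z,m+4}\cap\mcl E_{m+5}^n(z)$ inside the proof of that lemma.
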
 
\begin{proof}
See Figure~\ref{fig-twopt-event} for an illustration of the setup. If $n  < m  +5$ the right side of~\eqref{eqn-twopt-event} is equal to 1, so we can assume without loss of generality that $n\geq m+5$. 

Since $|z-w| \in [  \ep_{m+1} , \ep_m]$, we have $B_{ \ep_{m+2}}(z) \cap B_{ \ep_{m+2}}(w) = \emptyset$ and $B_{ \ep_{m+2}}(z) \cup B_{ \ep_{m+2}}(w) \subset B_{\ep_{m-1}}(z)$. 
By the measurability statements from Lemma~\ref{lem-E-msrble} and~\eqref{eqn-Ecap-msrble}, this implies that 
\allb \label{eqn-twopt-msrble}
&\mcl E_3^{m-1}(z) ,  \mcl E_3^{m-1}(w) , E_{z,m+3} , E_{w,m+3} \in \sigma\left( h|_{ \BB C\setminus \left( B_{ \ep_{m+3}}(z) \cup B_{ \ep_{m+3}}(w)\right)} \right) , \notag \\   
&\qquad \qquad \mcl E_{m+4}^n(z)   \in \sigma\left( h|_{B_{ \ep_{m+3}}(z)} \right) , 
\quad \text{and} \quad \mcl E_{m+4}^n(w)   \in \sigma\left( h|_{B_{ \ep_{m+3}}(w)} \right) . 
\alle 
By the Markov property of the GFF, the restrictions of $h$ to $B_{ \ep_{m+3}}(z)$ and $B_{ \ep_{m+3}}(w)$ are conditionally independent given $h|_{ \BB C\setminus \left( B_{ \ep_{m+3}}(z) \cup B_{ \ep_{m+3}}(w)\right)}$, so by~\eqref{eqn-twopt-msrble} the same is true for the events $\mcl E_{m+4}^n(z)$ and $\mcl E_{m+4}^n(w)$. 
Therefore, a.s.\ 
\allb \label{eqn-twopt-split}
&\BB P\left[\mcl E_{m+4}^n(z) \cap \mcl E_{m+4}^n(w) \,|\,   h|_{ \BB C\setminus \left( B_{ \ep_{m+3}}(z) \cup B_{ \ep_{m+3}}(w)\right)}   \right] \notag \\ 
&\qquad\qquad   =\BB P\left[ \mcl E_{m+4}^n(z)  \,|\,   h|_{ \BB C\setminus \left( B_{ \ep_{m+3}}(z) \cup B_{ \ep_{m+3}}(w)\right)}   \right]  
 \BB P\left[ \mcl E_{m+4}^n(w) \,|\,   h|_{ \BB C\setminus \left( B_{ \ep_{m+3}}(z) \cup B_{ \ep_{m+3}}(w)\right)} \right]  .
\alle

By the conditional independence of $h|_{B_{ \ep_{m+3}}(z)} $ and $h|_{B_{ \ep_{m+3}}(w)} $ given $h|_{ \BB C\setminus \left( B_{ \ep_{m+3}}(z) \cap B_{ \ep_{m+3}}(w)\right)}$, a.s.\
\eqb
 \BB P\left[ \mcl E_{m+4}^n(z)  \,|\,   h|_{ \BB C\setminus \left( B_{ \ep_{m+3}}(z) \cup B_{ \ep_{m+3}}(w)\right)}    \right]
 =  \BB P\left[ \mcl E_{m+4}^n(z)  \,|\,   h|_{ \BB C\setminus  B_{ \ep_{m+3}}(z) }   \right] .
\eqe
By Lemma~\ref{lem-multiscale-cond} (applied with $m+1$ in place of $m$), we now obtain that a.s.\ 
\eqb \label{eqn-twopt-cond}
 \BB P\left[ \mcl E_{m+4}^n(z)  \,|\,   h|_{ \BB C\setminus \left( B_{ \ep_{m+3}}(z) \cup B_{ \ep_{m+3}}(w)\right)}    \right] \BB 1_{E_{z,m+3}}
 \preceq \BB P\left[\mcl E_{m+5}^n(z) \right]  \BB 1_{E_{z,m+3}}
\eqe
with the implicit constant depending only on $\alpha,\gamma$. 
By plugging~\eqref{eqn-twopt-cond} and the analogous bound with $w$ in place of $z$ into~\eqref{eqn-twopt-split} and multiplying both sides by $\BB 1_{\mcl E_3^{m-1}(z) \cap \mcl E_3^{m-1}(w)}$, we get that a.s.\ 
\allb \label{eqn-twopt-split'}
&\BB P\left[\mcl E_{m+4}^n(z) \cap \mcl E_{m+4}^n(w) \,|\,   h|_{ \BB C\setminus \left( B_{ \ep_{m+3}}(z) \cup B_{ \ep_{m+3}}(w)\right)}   \right] \BB 1_{\mcl E_3^{m-1}(z) \cap  \mcl E_3^{m-1}(w) \cap  E_{z,m+3} \cap E_{w,m+3}} \notag \\ 
&\qquad\qquad  \preceq 
\BB P\left[ \mcl E_{m+5}^n(z)   \right]  \BB P\left[ \mcl E_{m+5}^n(w)  \right] \BB 1_{\mcl E_3^{m-1}(z) \cap  \mcl E_3^{m-1}(w) \cap  E_{z,m+3} \cap E_{w,m+3}}  .
\alle

By the first measurability statement in~\eqref{eqn-twopt-msrble}, we can take unconditional expectations of both sides of~\eqref{eqn-twopt-split'} to get
\eqb \label{eqn-twopt-event'}
\BB P\left[ \mcl E_{m+4}^n(z) \cap \mcl E_{m+4}^n(w)  \,|\, \mcl E_3^{m-1}(z) \cap  \mcl E_3^{m-1}(w) \cap  E_{z,m+3} \cap E_{w,m+3}\right] 
\preceq \BB P\left[ \mcl E_{m+5}^n(z)  \right] \BB P\left[ \mcl E_{m+5}^n(w) \right] .
\eqe
Since $\mcl E_{m+3}^n(z) \subset \mcl E_{m+4}^n(z) \cap E_{z,m+3} $ and similarly with $w$ in place of $z$, we immediately get~\eqref{eqn-twopt-event} from~\eqref{eqn-twopt-event'}.
\end{proof}

\begin{proof}[Proof of Lemma~\ref{lem-thick-2pt}]
If $n < m+5$ the bound~\eqref{eqn-thick-2pt} is an easy consequence of Lemma~\ref{lem-thick-1pt} since we allow a multiplicative $\ep_m^{o_m(1)}$ error.
Hence we can assume without loss of generality that $n\geq m+5$. 
By Lemma~\ref{lem-twopt-event},
\allb \label{eqn-thick-2pt-start}
&\BB P\left[\mcl E_{m+3}^n(z) \cap \mcl E_3^{m-1}(z) \cap \mcl E_{m+3}^n(w)  \cap \mcl E_3^{m-1}(w) \right]  \notag \\
&\qquad \preceq  \BB P\left[ \mcl E_{m+5}^n(z)  \right] \BB P\left[ \mcl E_{m+5}^n(w)   \right] \BB P\left[ \mcl E_3^{m-1}(z) \cap \mcl E_3^{m-1}(w)  \right] \notag\\
&\qquad \leq  \BB P\left[ \mcl E_{m+5}^n(z)  \right] \BB P\left[ \mcl E_{m+5}^n(w)   \right] \BB P\left[ \mcl E_3^{m-1}(z)   \right]  .
\alle 
Note that in the last line we simply dropped $\mcl E_3^{m-1}(w)$ from the intersection. 
By~\eqref{eqn-multiscale-split-lower} of Lemma~\ref{lem-multiscale-split} (applied with $(n,m)$ replaced by each of $(m-1,1)$ and $(n,m+4)$),
\eqb
\BB P\left[ \mcl E_3^{m-1}(z) \right] \preceq \frac{ \BB P\left[ \mcl E^{m-1}(z) \right] }{\BB P\left[\mcl E^2(z) \right]   }
\quad \text{and} \quad
\BB P\left[ \mcl E_{m+5}^n(z) \right] \preceq \frac{ \BB P\left[ \mcl E^n(z) \right] }{\BB P\left[\mcl E^{m+4}(z) \right]   } .
\eqe
The same is true with $w$ in place of $z$. 
Plugging these estimates into~\eqref{eqn-thick-2pt-start} gives
\eqb
\BB P\left[\mcl E_{m+3}^n(z) \cap \mcl E_3^{m-1}(z) \cap \mcl E_{m+3}^n(w)  \cap \mcl E_3^{m-1}(w) \right]
\preceq   \frac{ \BB P\left[\mcl E^n(z) \right] \BB P\left[\mcl E^n(w) \right] \BB P\left[ \mcl E^{m-1}(z) \right]  }{\BB P\left[\mcl E^{m+4}(z) \right] \BB P\left[\mcl E^{m+4}(w) \right] \BB P\left[\mcl E^2(z) \right]   } .
\eqe
By applying Lemma~\ref{lem-thick-1pt} to lower-bound the probabilities of each of $\mcl E^{m+4}(z)$, $\mcl E^{m+4}(w)$, and $\mcl E^2(z)$ and to upper-bound $\BB P[\mcl E^{m-1}(z)]$, we now obtain~\eqref{eqn-thick-2pt}. 
\end{proof}

\subsection{Deterministic estimates truncated on the short-range event}
\label{sec-short-estimates}

In this subsection, we prove some estimates for the behavior of circle averages and $D_h$-distances on $\mcl E^n(z)$ which will be important for checking that the Frostman measure which we construct in Section~\ref{sec-dim-lower} is supported on $\mcl B_{\BB s}\cap \mcl T_h^\alpha \cap \wh{\mcl T}_h^\alpha$ (Lemma~\ref{lem-perfect-contain}) as well as for some of the probabilistic estimates in the next section. 
The first estimate ensures that $z$ is approximately an $\alpha$-thick point on $\mcl E^n(z)$. 

\begin{lem} \label{lem-E-thick}
There are deterministic functions $\chi: \BB N \rta (0,\infty)$ and $\wt\chi_K : (0,\infty) \rta (0,\infty)$ with $\lim_{j\rta\infty} \chi(j) = \lim_{r\rta0}  \wt\chi_K(r) = 0$ such that $\chi$ depends only on $ \alpha,\gamma$ (not on $K$), $\wt\chi_K$ depends only on $K , \alpha,\gamma$, and the following is true. 
Suppose $z\in B_{1/3}(0)$ and $n\in\BB N$.  
On $\mcl E^n(z)$,  
\eqb \label{eqn-E-thick}
(\alpha - \chi(j) ) \log \ep_j^{-1}  \leq h_{\ep_j}(z)  \leq (\alpha + \chi(j) ) \log \ep_j^{-1} ,\quad\forall j \in [0,n]_{\BB Z}
\eqe
and
\eqb \label{eqn-E-sup}
 (\alpha -  \wt\chi_K(r) ) \log r^{-1}  \leq  h_r(z) \leq   (\alpha + \wt\chi_K(r) ) \log r^{-1}  ,\quad \forall r \in [\ep_n , \ep_0]  .
\eqe  
\end{lem}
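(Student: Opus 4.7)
The plan is to control $h_r(z)$ first at the discrete scales $r = \ep_j$ by telescoping, and then to extend to arbitrary $r \in [\ep_n,\ep_0]$ by splitting the intervals $[\ep_j,\ep_{j-1}]$ into the ``good'' subinterval $[\ep_j,\ep_{j-1}/3]$ (on which condition~\ref{item-onescale-sup} in the definition of $E_{z,j}$ gives what we want directly) and the ``gap'' subinterval $[\ep_{j-1}/3,\ep_{j-1}]$ (on which we must instead combine the events $H_{z,j-1}^{\op{out}}$, $H_{z,j-1}^{\op{in}}$ with condition~\ref{item-onescale-out}).

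For step one, on $E_{z,0}$ we have $|h_{\ep_0}(z)| \leq A$ by condition~1, and on each $E_{z,k}$ condition~\ref{item-onescale-thick} gives
$|h_{\ep_k}(z) - h_{\ep_{k-1}}(z) - \alpha\log(\ep_{k-1}/\ep_k)| \leq A[\log(\ep_{k-1}/\ep_k)]^{3/4}.$
Telescoping and using $\ep_{k-1}/\ep_k = k$ (from~\eqref{eqn-ep-choice}) gives
$|h_{\ep_j}(z) - \alpha\log(\ep_0/\ep_j)| \leq A + A\sum_{k=1}^{j}(\log k)^{3/4}.$
Since $\log\ep_j^{-1} = \log 100 + \sum_{k=1}^j \log k \asymp j\log j$ by Stirling, while the error is at most $Aj(\log j)^{3/4}$, the ratio tends to zero as $j\to\infty$. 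Setting $\chi(j)$ equal to this ratio (plus the $O(1/\log\ep_j^{-1})$ contribution from $\alpha\log\ep_0$) yields~\eqref{eqn-E-thick}; note that $\chi$ depends only on $A$, hence only on $\alpha,\gamma$.

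For step two, fix $r \in [\ep_j,\ep_{j-1}]$ with $j \in [1,n]_{\BB Z}$. If $r \in [\ep_j,\ep_{j-1}/3]$, condition~\ref{item-onescale-sup} in the definition of $E_{z,j}$ directly gives $|h_r(z) - h_{\ep_{j-1}}(z) - \alpha\log(\ep_{j-1}/r)| \leq K[\log j]^{3/4}$ (using $K$ instead of $A$ only when $j=1$). Combining with the step-one bound for $h_{\ep_{j-1}}(z)$ and noting $\log r^{-1} \geq \log\ep_{j-1}^{-1}$, the deviation $|h_r(z)-\alpha\log r^{-1}|$ divided by $\log r^{-1}$ tends to $0$ as $r \to 0$. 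If instead $r \in [\ep_{j-1}/3,\ep_{j-1}]$, we use the decompositions $h - h_{\ep_{j-1}}(z) = h_{z,j-1}^{\op{out}} + \frk h_{z,j-1}^{\op{out}} = h_{z,j-1}^{\op{in}} + \frk h_{z,j-1}^{\op{in}}$ on $\BB A_{\ep_{j-1}/3,\ep_{j-1}}(z)$. For $r \in [\ep_{j-1}/3,\ep_{j-1}/2]$, $H_{z,j-1}^{\op{out}}$ bounds $|(h_{z,j-1}^{\op{out}})_r(z)| \leq A$ and condition~\ref{item-onescale-out} in $E_{z,j-1}$ (or in $E_{z,0}$ when $j=1$) bounds $|\frk h_{z,j-1}^{\op{out}}| \leq A$ pointwise, hence bounds its circle averages; for $r \in [\ep_{j-1}/2,\ep_{j-1}]$, $H_{z,j-1}^{\op{in}}$ and condition~\ref{item-onescale-out} in $E_{z,j}$ give the symmetric bound. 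In both cases $|h_r(z) - h_{\ep_{j-1}}(z)| \leq 2A$, so combining with step one and $|\alpha|\log(\ep_{j-1}/r) \leq |\alpha|\log 3$ we again get an error that is $o(\log r^{-1})$.

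Defining $\wt\chi_K(r)$ as the supremum over $r' \in (0,r]$ of $|h_{r'}(z) - \alpha\log (r')^{-1}|/\log(r')^{-1}$ implied by the above uniform estimates gives~\eqref{eqn-E-sup}, with $\wt\chi_K$ depending on $K$ (from the $j=1$ use of condition~\ref{item-onescale-sup}) as well as $\alpha,\gamma$. The argument is essentially bookkeeping: the only mild subtlety is remembering to cover the ``gap'' intervals $[\ep_{j-1}/3,\ep_{j-1}]$ via the auxiliary events $H_{z,j-1}^{\op{out/in}}$ and the harmonic boundary controls in condition~\ref{item-onescale-out}, since the $E_{z,j}$ events are only measurable with respect to the restriction of $h$ to the annuli $\BB A_{\ep_j,\ep_{j-1}/3}(z)$.
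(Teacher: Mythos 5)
Your proof is correct and follows essentially the same route as the paper's: telescope condition~\ref{item-onescale-thick} (plus the $j=0$ bound $|h_{\ep_0}(z)|\leq A$) to get the discrete-scale estimate, then use condition~\ref{item-onescale-sup} on $[\ep_j,\ep_{j-1}/3]$ and the events $H_{z,j-1}^{\op{out}}$, $H_{z,j-1}^{\op{in}}$ together with condition~\ref{item-onescale-out} to bridge the gap annuli $[\ep_{j-1}/3,\ep_{j-1}]$, with the $K$-dependence of $\wt\chi_K$ entering only through the $j=1$ instance of condition~\ref{item-onescale-sup}. The only slip is cosmetic: $\ep_{k-1}/\ep_k = k$ fails for $k=1$ (where $\ep_0/\ep_1 = 100/3$), but this contributes only a bounded additive term to $\sum_k[\log(\ep_{k-1}/\ep_k)]^{3/4}$ and does not affect the asymptotics.
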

\begin{proof}%[Proof of Lemma~\ref{lem-E-thick}]
Throughout the proof we assume that $\mcl E^n(z)$ occurs. 
By condition~\ref{item-onescale-thick} in the definition of $E_{z,j}$, 
\eqb \label{eqn-E-thick-base}
 | h_{\ep_j}(z) - h_{\ep_{j-1}}(z)  -  \alpha \log(\ep_{j-1}/\ep_j)| \leq A[\log(\ep_{j-1}/\ep_j)]^{3/4}  ,\quad \forall j \in [1,n]_{\BB Z} .  
\eqe
Furthermore, the definition of $E_{z,0}$ implies that $|h_{\ep_0}(z)|\leq A$.
By summing these estimates, we obtain~\eqref{eqn-E-thick} with
\eqb 
\chi(j) = \frac{1}{\log(1/\ep_j)} \left( A + A \sum_{k=1}^j [\log(\ep_{k-1}/\ep_k)]^{3/4} \right) .
\eqe

We now establish~\eqref{eqn-E-sup}. By condition~\ref{item-onescale-sup} in the definition of $E_{z,j}$, 
\eqb \label{eqn-E-thick-sup}
|h_r(z) - h_{\ep_{j-1}}(z) - \alpha\log(\ep_{j-1}/r)| \leq K [\log(\ep_{j-1}/\ep_j)]^{3/4} ,\quad\forall r \in [\ep_j , \ep_{j-1}/3],\quad\forall j\in [1,n]_{\BB Z}. 
\eqe
For $j\in [0,n-1]_{\BB Z}$ and $r\in [\ep_j/3 , \ep_j ]$, the field $h_{z,j}^{\op{out}}$ of~\eqref{eqn-in-out-fields} satisfies $h - h_{\ep_j}(z) =  h_{z,j}^{\op{out}}  + \frk h_{z,j}$ and hence the circle average process of $h_{z,j}^{\op{out}}$ satisfies
\eqb
|h_r(z) - h_{\ep_j}(z)|   \leq |(h_{z,j}^{\op{out}})_r(z)|  + \sup_{u\in\bdy B_r(z)} |\frk h_{z,j}^{\op{out}}(u)| .
\eqe 
Therefore, condition~\ref{item-onescale-out} in the definition of $E_{z,j}$ together with the definition~\eqref{eqn-H-def} of $H_{z,j}^{\op{out}}$ yield 
\allb \label{eqn-E-thick-out} 
   \sup_{r \in [\ep_j/3,\ep_j /2]} |h_r(z) - h_{\ep_j }(z)| \leq 2 A    , \quad \forall j \in [0,n-1]_{\BB Z} .
\alle 
Similarly, using $h_{z,j}^{\op{in}}$ we get
\allb \label{eqn-E-thick-in} 
   \sup_{r \in [\ep_j/2,\ep_j ]} |h_r(z) - h_{\ep_j }(z)| \leq 2 A    , \quad \forall j \in [0,n-1]_{\BB Z} .
\alle  
Combining~\eqref{eqn-E-thick-sup}, \eqref{eqn-E-thick-out}, and~\eqref{eqn-E-thick-in} gives us an upper bound for $\sup_{r\in [\ep_j,\ep_{j-1}]} |h_r(z) - h_{\ep_{j-1}}(z)|$ for each $j\in [1,n]_{\BB Z}$. 
Combining this upper bound with~\eqref{eqn-E-thick-base} yields~\eqref{eqn-E-sup} for an appropriate choice of $\wt\chi_K$. 
Note that $\wt\chi_K$ has to depend on $K$ due to the $K$ in~\eqref{eqn-E-thick-sup}.  
\end{proof}

The second estimate of this subsection ensures that $z$ is an approximate \emph{metric} $\alpha$-thick point on $\mcl E^n(z)$ and also that $D_h(0,z)$ is of constant order on $\mcl E^n(z)$ (which will be important in Section~\ref{sec-end-1pt}). 

\begin{lem} \label{lem-E-dist}
Suppose $z\in \BB A_{1/4,1/3}(0)$ and $n,m\in\BB N_0$ with $n\geq m+1$. 
On $\mcl E^n(z)$, 
\eqb \label{eqn-E-dist-multiscale}
\sup_{u,v\in \BB A_{\ep_n,\ep_m}(z)} D_h(u,v) = \ep_m^{\xi(Q-\alpha) + o_m(1)},
\eqe
where the $o_m(1)$ is deterministic and its rate of convergence depends only on $K,\alpha,\gamma$. 
In particular, there is a deterministic constant $S_K = S_K(\alpha,\gamma) > 1$ depending only on $K  , \alpha,\gamma$ such that on $\mcl E^n(z)$,  
\eqb \label{eqn-E-dist}
D_h\left( 0 , B_{\ep_n}(z)    \right) \in \left[ S_K^{-1}  , S_K   \right].
\eqe
\end{lem}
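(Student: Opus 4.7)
The plan is to establish~\eqref{eqn-E-dist-multiscale} by decomposing $\BB A_{\ep_n,\ep_m}(z)$ into sub-annuli at each scale and then summing. For each $j\in[m+1,n]_{\BB Z}$, I use three pieces: the ``main'' piece $\BB A_{\ep_j,\ep_{j-1}/3}(z)$, controlled directly by condition~\ref{item-onescale-diam} of $E_{z,j}$; the ``outer intermediate'' piece $\BB A_{\ep_{j-1}/3,\ep_{j-1}/2}(z)$, controlled by $H_{z,j-1}^{\op{out}}$ together with condition~\ref{item-onescale-out} of $E_{z,j-1}$; and the ``inner intermediate'' piece $\BB A_{\ep_{j-1}/2,\ep_{j-1}}(z)$, controlled by $H_{z,j-1}^{\op{in}}$ together with condition~\ref{item-onescale-out} of $E_{z,j}$. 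These three pieces together cover $\BB A_{\ep_n,\ep_m}(z)$, and on $\mcl E^n(z)$ all of the relevant conditions hold.

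For the upper bound in~\eqref{eqn-E-dist-multiscale}, I will convert the internal $D_{h_{z,j-1}^{\op{out}}}$- and $D_{h_{z,j-1}^{\op{in}}}$-diameter bounds supplied by $H_{z,j-1}^{\op{out}}$ and $H_{z,j-1}^{\op{in}}$ into bounds on internal $D_h$-diameters via Weyl scaling (Axiom~\ref{item-metric-f}), using the decomposition~\eqref{eqn-in-out-fields} and absorbing the harmonic contributions into a factor $e^{\xi A}$ via condition~\ref{item-onescale-out}. Each of the three pieces at scale $j$ then admits an internal $D_h$-diameter bound of the form $C\ep_{j-1}^{\xi Q}e^{\xi h_{\ep_{j-1}}(z)}$, with $C$ depending only on $A,K,\alpha,\gamma$. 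Applying Lemma~\ref{lem-E-thick} to replace $e^{\xi h_{\ep_{j-1}}(z)}$ by $\ep_{j-1}^{-\xi\alpha + o_{j-1}(1)}$, each piece has internal $D_h$-diameter at most $\ep_{j-1}^{\xi(Q-\alpha)+o_{j-1}(1)}$. Summing over $j\in[m+1,n]_{\BB Z}$ and using that the factorial ratio $\ep_j/\ep_{j-1}\to 0$ makes the first term dominate (in the regime $\alpha<Q$, which covers the $\alpha$ we care about), I conclude $\sup_{u,v\in\BB A_{\ep_n,\ep_m}(z)} D_h(u,v) \leq \ep_m^{\xi(Q-\alpha)+o_m(1)}$.

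For the matching lower bound in~\eqref{eqn-E-dist-multiscale}, I pick a single scale $j^*\in[m+1,n]_{\BB Z}$ for which $\BB A_{2\ep_{j^*},3\ep_{j^*}}(z)\subset \BB A_{\ep_n,\ep_m}(z)$ and apply condition~\ref{item-onescale-annulus} of $E_{z,j^*}$ together with Lemma~\ref{lem-E-thick} to get $\sup_{u,v\in\BB A_{\ep_n,\ep_m}(z)} D_h(u,v)\geq A^{-1}\ep_{j^*}^{\xi Q} e^{\xi h_{\ep_{j^*}}(z)}=\ep_m^{\xi(Q-\alpha)+o_m(1)}$. To deduce~\eqref{eqn-E-dist}, note that since $|z|\in[1/4,1/3]=[1/4,\ep_0]$, the origin $0$ lies in $\ol{\BB A_{\ep_n,\ep_0}(z)}$, so the preceding upper bound applied with $m=0$ controls $D_h(0,B_{\ep_n}(z))$ by the internal $D_h$-diameter of $\BB A_{\ep_n,\ep_0}(z)$; for $m=0$ this collapses to a constant $S_K$ depending only on $K,\alpha,\gamma$. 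For the lower bound in~\eqref{eqn-E-dist}, $|z|\geq 1/4 > 3\ep_1$ forces every path from $0$ to $B_{\ep_n}(z)\subset B_{\ep_1}(z)\subset B_{2\ep_1}(z)$ to cross the annulus $\BB A_{2\ep_1,3\ep_1}(z)$, so condition~\ref{item-onescale-annulus} of $E_{z,1}$ combined with Lemma~\ref{lem-E-thick} at the fixed scale $r=\ep_1$ (which bounds $h_{\ep_1}(z)$ in a constant-order interval depending on $K,\alpha,\gamma$) yields $D_h(0,B_{\ep_n}(z))\geq A^{-1}\ep_1^{\xi Q}e^{\xi h_{\ep_1}(z)} \geq S_K^{-1}$ after enlarging $S_K$ if necessary.

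The main work is bookkeeping: applying Weyl scaling uniformly across the three sub-annulus types, and carefully tracking the $K$-dependent rate of convergence from Lemma~\ref{lem-E-thick} through the summation over scales so that the resulting $o_m(1)$ depends only on $K,\alpha,\gamma$ and not on $n$. The factorial spacing of $\{\ep_j\}$ is essential here, since it both guarantees that the sum over scales is controlled by its first term and lets us absorb multiplicative constants into the $\ep_m^{o_m(1)}$ factor.
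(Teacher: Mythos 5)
Your proposal is correct and follows essentially the same route as the paper's proof: decompose $\BB A_{\ep_n,\ep_m}(z)$ into main annuli $\BB A_{\ep_j,\ep_{j-1}/3}(z)$ controlled by condition~\ref{item-onescale-diam} and intermediate annuli $\BB A_{\ep_{j-1}/3,\ep_{j-1}/2}(z),\BB A_{\ep_{j-1}/2,\ep_{j-1}}(z)$ controlled by $H_{z,j-1}^{\op{out}},H_{z,j-1}^{\op{in}}$ plus condition~\ref{item-onescale-out}; use Weyl scaling and Lemma~\ref{lem-E-thick} to convert to $\ep_{j-1}^{\xi(Q-\alpha)+o(1)}$ per scale and sum; and for the lower bound, use condition~\ref{item-onescale-annulus} at scale $m+1$ (you should make your choice $j^*=m+1$ explicit, since that is what makes $\ep_{j^*}^{\xi(Q-\alpha)}=\ep_m^{\xi(Q-\alpha)+o_m(1)}$). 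The deduction of~\eqref{eqn-E-dist} from the $m=0$ case and the crossing argument through $\BB A_{2\ep_1,3\ep_1}(z)$ likewise matches the paper.
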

\begin{proof}
Throughout the proof we assume that $\mcl E^n(z)$ occurs and we require all implicit constants and the rate of convergence of all $o(1)$ errors to be deterministic and depend only on $K,\alpha,\gamma$. 
The argument is purely deterministic.
 
To prove the lower bound in~\eqref{eqn-E-dist-multiscale}, we observe that
\eqb \label{eqn-E-dist-lower}
D_h\left( B_{\ep_n}(z) , \bdy B_{\ep_m}(z)   \right) \geq D_h\left( \bdy B_{2\ep_{m+1}}(z) , \bdy B_{3\ep_{m+1}}(z) \right) .
\eqe 
By condition~\ref{item-onescale-annulus} in the definition of $E_{z,m}$, the right side of this inequality is bounded below by $A^{-1} \ep_{m+1}^{\xi Q} e^{\xi h_{\ep_{m+1}}(z)}$, which in turn is bounded below by $\ep_{m+1}^{\xi (Q-\alpha) + o_m(1)}$ by Lemma~\ref{lem-E-thick}. Since $\ep_{m+1} =\ep_m^{1+o_m(1)}$, this gives the lower bound in~\eqref{eqn-E-dist-multiscale}.

To prove the upper bound in~\eqref{eqn-E-dist-multiscale}, we use the triangle inequality and the fact that $D_h(u,v) \leq D_h(u,v;W)$ for any $W\subset\BB C$ and $u,v\in W$ to get
\allb \label{eqn-E-dist-decomp}
\sup_{u,v\in \BB A_{\ep_n,\ep_m}(z)} D_h(u,v)   
&\leq \sum_{j=m+1}^n \sup_{u,v\in \BB A_{\ep_j,\ep_{j-1}/3}(z)} D_h\left( u , v ; \BB A_{\ep_j ,\ep_{j-1}/3}(z) \right)  \notag \\
& \qquad \qquad + \sum_{j=m}^{n-1} \sup_{u,v\in \BB A_{ \ep_j/2 , \ep_j}(z)} D_h\left( u , v ; \BB A_{\ep_{j}/2 , \ep_{j}}(z) \right)  \notag\\
&\qquad\qquad+ \sum_{j=m}^{n-1} \sup_{u,v\in \BB A_{ \ep_j/3 , \ep_j/2}(z)} D_h\left( u , v ; \BB A_{\ep_{j}/3 , \ep_{j}/2}(z) \right)  .
\alle
By condition~\ref{item-onescale-diam} in the definition of $E_{z,j}$ and then Lemma~\ref{lem-E-thick}, the terms in the first sum in~\eqref{eqn-E-dist-decomp} satisfy
\eqb
 \sup_{u,v\in \BB A_{\ep_j,\ep_{j-1}/3}(z)} D_h\left( u , v ; \BB A_{\ep_j ,\ep_{j-1}/3}(z) \right) 
 \preceq \ep_{j-1}^{\xi Q} e^{\xi h_{\ep_{j-1}}(z)}
 \leq  \ep_{j-1}^{\xi (Q-\alpha - \chi(j-1))}    ,
\eqe
which $\chi(j-1) = o_j(1)$ as in Lemma~\ref{lem-E-thick}. 
By summing this over all $j\in [m+1,n]_{\BB Z}$, we see that the first sum in~\eqref{eqn-E-dist-decomp} is bounded above by $\ep_m^{\xi(Q-\alpha) + o_m(1)}$. 

To deal with the second sum in~\eqref{eqn-E-dist-decomp}, we recall that $(h-h_{\ep_j}(z))|_{\BB A_{\ep_j/3,\ep_j}(z)} = h_{z,j}^{\op{in}} + \frk h_{z,j}^{\op{in}}$.
By Weyl scaling (Axiom~\ref{item-metric-f}), 
\allb \label{eqn-H-weyl}
&\sup_{u,v\in \BB A_{ \ep_j/2 , \ep_j}(z)} D_{h-h_{\ep_j}(z)}\left( u , v ; \BB A_{\ep_{j}/2 , \ep_{j}}(z) \right)  \notag\\
&\qquad\leq \left(\sup_{u \in \BB A_{ \ep_j/2 , \ep_j}(z)} e^{\xi \frk h_{z,j}^{\op{in}}(u)}  \right) 
\left(  \sup_{u,v\in \BB A_{ \ep_j/2 , \ep_j}(z)} D_{h_{z,j}^{\op{out}}}\left( u , v ; \BB A_{\ep_{j}/2 , \ep_{j}}(z) \right)    \right) .
\alle
By condition~\ref{item-onescale-out} in the definition of $E_{z,j}$, for $j\in [m,n-1]_{\BB Z}$, the first factor on the right side of~\eqref{eqn-H-weyl} is bounded above by $e^{\xi A}$.   
By the definition~\ref{eqn-H-def} of $H_{z,j}^{\op{in}}$, the second factor is bounded above by $A \ep_j^{\xi Q}$. 
Since $D_{h-h_{\ep_j}(z)} = e^{-\xi h_{\ep_j}(z)} D_h$, we can apply~\eqref{eqn-H-weyl} and then Lemma~\ref{lem-E-thick} to get that 
\eqb
\sup_{u,v\in \BB A_{ \ep_j/2 , \ep_j}(z)} D_h\left( u , v ; \BB A_{\ep_{j}/2 , \ep_{j}}(z) \right) 
\leq A e^{\xi A} \ep_j^{\xi Q} e^{\xi h_{\ep_j}(z)}
\preceq \ep_j^{\xi (Q-\alpha  -\chi(j)) }  . 
\eqe
By summing this over all $j\in [m,n-1]_{\BB Z}$ we get that the second sum in~\eqref{eqn-E-dist-decomp} is bounded above by $\ep_m^{\xi(Q-\alpha) + o_m(1)}$. 
We similarly treat the third sum in~\eqref{eqn-E-dist-decomp}, using $h_{z,j}^{\op{out}}$ and $\frk h_{z,j}^{\op{out}}$ instead of $h_{z,j}^{\op{in}}$ and $\frk h_{z,j}^{\op{in}}$.
This gives the upper bound in~\eqref{eqn-E-dist-multiscale}. 

To deduce~\eqref{eqn-E-dist}, we note that since $z\in \BB A_{1/4,1/3}(0)$ and $\ep_0 = 1/3$, we have $0\in \BB A_{\ep_n,\ep_0}(z)$ and $0 \notin B_{3\ep_1}(z)$. 
We may therefore combine~\eqref{eqn-E-dist-multiscale} with the discussion just after~\eqref{eqn-E-dist-lower} (both with $m=0$) to get~\eqref{eqn-E-dist} for an appropriate choice of $K$. 
\end{proof}

\section{Long-range events}
\label{sec-long-range}

The events $\mcl E^n(z)$ of Section~\ref{sec-short-range} provide all of the regularity events we will need, but there is nothing in the definitions of these events that ensures that $z$ is close to $\bdy\mcl B_{\BB s}$ when $\mcl E^n(z)$ occurs. 
%In this section we look at the intersection of $\mcl E^n(z)$ with the event that $z$ is close to $\bdy\mcl B_{\BB s}$ in an appropriate sense.  We establish one-point and two-point estimates for this event, and use these estimates to prove the desired lower bound for $\dim_{\mcl H} \bdy\mcl B_{\BB s}$. 
To rectify this, fix $\beta \in \left(0 ,   \xi(Q-\alpha)\right) $ (which we will eventually take to be close to $\xi(Q-\alpha)$). For $z\in\BB C$ and $n\in\BB N$, we define
\eqb \label{eqn-onescale-dist}
G_{z,n}  :=   \left\{ D_h\left( 0 , B_{\ep_n}(z)  \right) \in \left[\BB s  , \BB s + \ep_n^\beta \right] \right\} .
\eqe  
With $\mcl E^n(z)$ as in~\eqref{eqn-Ecap-def}, we also define
\eqb \label{eqn-Gcap-def}
\mcl G^n(z) := \mcl E^n(z) \cap G_{z,n} .
\eqe
The goal of this section is to prove the lower bounds in Theorems~\ref{thm-bdy-dim} and~\ref{thm-thick-dim}.

Before stating our bounds, we recall that $\mcl E^n(z)$ depends on three parameters $A,L,K$, The parameters $A$ and $L$ were fixed in a manner depending only on $\alpha,\gamma$ in Proposition~\ref{prop-E-prob}. The parameter $K\geq A$ has not yet been chosen, and all of the estimates of Section~\ref{sec-short-range} are required to hold for any $K\geq A$ (although the constants involved are sometimes allowed to depend on $K$). In particular, Lemma~\ref{lem-E-dist} shows that for each $K\geq A$ there is an $S_K = S_K(\alpha,\gamma) >1$ such that on $\mcl E^n(z)$, we have $D_h\left( 0 , B_{\ep_n}(z)    \right) \in \left[ S_K^{-1}  , S_K   \right]$. This number $S_K$ in the special case when $K=A$ will play an important role in the results of this subsection. 
The main results of this section are the following one-point and two-point estimates for the events $\mcl G^n(z)$.

\begin{prop}[One-point estimate] \label{prop-end-1pt}
Let $S = S_A(\alpha,\gamma) >1$ be the constant from Lemma~\ref{lem-E-dist} for $K=A$ and suppose that $\BB s > S$.  
If the parameter $K \geq A$ from Section~\ref{sec-short-def} is chosen to be sufficiently large, in a manner depending only on $\BB s,\alpha,\gamma$, then
for each $z\in\BB A_{1/4,1/3}(0)$ and each $n\in\BB N$, 
\eqb \label{eqn-end1pt}
\BB P\left[\mcl G^n(z) \right] \geq \ep_n^{   \alpha^2/2   + \beta + o_n(1)}  
\eqe
with the rate of the $o_n(1)$ depending only on $\BB s , \alpha,\gamma$.
\end{prop}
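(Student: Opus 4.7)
The plan is to implement the strategy sketched in Section~\ref{sec-outline}. Let $S := S_A(\alpha,\gamma)$ be the constant from Lemma~\ref{lem-E-dist} applied with $K$ replaced by $A$, so that on the version of $\mcl E^n(z)$ defined using $K = A$ (call it $\mcl E_A^n(z)$ below), $D_h(0, B_{\ep_n}(z)) \in [S^{-1}, S]$. Since $\BB s > S$, I can choose $R = R(\BB s, A, \alpha, \gamma) > 0$ large enough for the argument that follows. Sample $X$ uniformly on $[0, R]$ independently of $h$; set $\phi_{z,1}(u) := \phi(\ep_1^{-1}(u-z))$ as in condition~5 of $E_{z,1}$ and write $\wt h := h + X \phi_{z,1}$.

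First, I would argue that $\BB P[G_{z,n}(\wt h) \mid h] \, \BB 1_{\mcl E_A^n(z)} \asymp \ep_n^\beta \, \BB 1_{\mcl E_A^n(z)}$. To this end, let $\theta(x) := D_{h+x\phi_{z,1}}(0, B_{\ep_n}(z))$ and apply Lemma~\ref{lem-theta-deriv} with $K_1 = \ol{B_{\ep_n}(z)}$, $K_2 = \{0\}$, and $\mcl A = \ol{\BB A_{2\ep_1, 3\ep_1}(z)}$ (on which $\phi_{z,1} \equiv 1$). This gives $\theta'(x) \geq \xi e^{\xi x} D_h(\bdy B_{2\ep_1}(z), \bdy B_{3\ep_1}(z))$, and by condition~4 of $E_{z,1}$ together with Lemma~\ref{lem-E-thick} the right-hand side is bounded below by a positive constant $c(A,\alpha,\gamma)$ on $\mcl E_A^n(z)$. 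An elementary Weyl-scaling comparison along a $D_{h+x\phi_{z,1}}$-geodesic from $0$ to $B_{\ep_n}(z)$ yields the reverse bound $\theta(y) - \theta(x) \leq (e^{\xi(y-x)}-1)\theta(x)$ for $y \geq x$, which combined with $\theta(0) \leq S$ bounds $\theta'$ from above on $[0, R]$ by a constant depending only on $S, R, \gamma$. Thus $\theta$ is bi-Lipschitz on $[0, R]$, and taking $R$ large enough that $\theta(R) > \BB s + 1$, the pre-image $\theta^{-1}([\BB s, \BB s + \ep_n^\beta])$ is a subinterval of $[0, R]$ of length comparable to $\ep_n^\beta$, from which the claimed bound on $\BB P[G_{z,n}(\wt h) \mid h] \BB 1_{\mcl E_A^n(z)}$ follows.

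Next, I would compare $\mcl E^n(z)$ for $h$ with $\mcl E^n(z)$ for $\wt h$. Because $\phi_{z,1}$ is supported in $\BB A_{\ep_1, 4\ep_1}(z) \subset B_{\ep_0/3}(z)$, adding $X\phi_{z,1}$ can affect $\mcl E^n(z)$ only through the scale-$0$ and scale-$1$ conditions; moreover $E_{z,0}$, condition~1 of $E_{z,1}$, and conditions~7--10 of $E_{z,1}$ are unaffected (circle averages over $\bdy B_{\ep_0}(z)$ and $\bdy B_{\ep_1}(z)$ are untouched, and the other conditions depend on $h$ either off $\op{supp}(\phi_{z,1})$ or only through harmonic-extension data unaffected by the bump), while condition~4 only becomes easier since $\phi_{z,1} \geq 0$. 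The only conditions genuinely perturbed are~2, 5, and~6 of $E_{z,1}$, and a direct computation bounds each shift by an $R$-dependent constant. Hence there is $K_0 = K_0(R, A, \alpha, \gamma)$ such that for every $K \geq K_0$, $\mcl E_A^n(z)$ for $h$ together with $X \in [0, R]$ imply $\mcl E^n(z)$ for $\wt h$ (with parameter $K$). Combining with the previous paragraph, taking unconditional expectations, and invoking Lemma~\ref{lem-thick-1pt} gives
\[
\BB P\bigl[\mcl E^n(\wt h) \cap G_{z,n}(\wt h)\bigr] \succeq \ep_n^\beta \, \BB P[\mcl E_A^n(z)] \geq \ep_n^{\alpha^2/2 + \beta + o_n(1)}.
\]

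Finally, I would transfer this bound from $\wt h$ back to $h$ by absolute continuity, exactly as in the proof of Proposition~\ref{prop-event-upper}. By Lemma~\ref{lem-gff-abs-cont}, the law of $h$ is absolutely continuous with respect to the conditional law of $\wt h$ given $X$, with Radon--Nikodym derivative $M_X(\wt h) = \exp\bigl(-X(\wt h, \phi_{z,1})_\nabla + \tfrac{X^2}{2}(\phi_{z,1}, \phi_{z,1})_\nabla\bigr)$. On the event $\mcl E^n(\wt h)$ (with parameter $K$), condition~5 of $E_{z,1}$ applied to $\wt h$ gives $|(\wt h, \phi_{z,1})_\nabla| \leq K$, so $M_X(\wt h)$ is bounded below by a positive constant $c' = c'(R, K, \alpha, \gamma)$. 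Therefore
\[
\BB P[\mcl G^n(z)] = \BB E\bigl[M_X(\wt h)\, \BB 1_{\mcl E^n(\wt h) \cap G_{z,n}(\wt h)}\bigr] \geq c' \cdot \ep_n^{\alpha^2/2 + \beta + o_n(1)},
\]
which yields~\eqref{eqn-end1pt}. The main technical obstacle lies in the third paragraph, namely in verifying condition by condition that adding the bounded bump $X\phi_{z,1}$ to $h$ preserves $\mcl E^n(z)$ once the scale-$1$ parameter $K$ is chosen large enough, uniformly in $n$.
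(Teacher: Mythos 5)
Your proposal is correct and takes essentially the same route as the paper, which establishes Proposition~\ref{prop-end-1pt} by combining Lemma~\ref{lem-dist-1pt-lower'} (your second paragraph), Lemma~\ref{lem-Elower-compare} (your third paragraph), and Lemma~\ref{lem-gff-abs-cont} together with the Dirichlet-inner-product bound from condition~\ref{item-onescale-dirichlet} (your fourth paragraph). One small simplification you make: the paper's intermediate Lemma~\ref{lem-dist-1pt-lower} proves the stronger bound $\BB P[\mcl G^n(z)\cap F_C] \succeq \ep_n^\beta\,\BB P[\mcl E^n(z)]$ (with the auxiliary event $F_C$, and with $\BB P[\mcl E^n(z)]$ rather than $\BB P[\ul{\mcl E}^n(z)]$ on the right), which requires the extra comparison step in Lemma~\ref{lem-Elower-prob}; you bypass this by invoking Lemma~\ref{lem-thick-1pt} directly with $K=A$, which is enough for Proposition~\ref{prop-end-1pt} alone (the $F_C$ refinement is only needed later for Proposition~\ref{prop-end-across}). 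Two cosmetic points: the claim $\BB P[G_{z,n}(\wt h)\mid h]\,\BB 1_{\mcl E_A^n(z)} \asymp \ep_n^\beta\,\BB 1_{\mcl E_A^n(z)}$ is an over-claim in the $\preceq$ direction, but you only use $\succeq$, so nothing breaks; and your opening sentence says scale-$0$ conditions could be affected, but as you immediately note they are not.
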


\begin{prop}[Two-point estimate] \label{prop-end-2pt}
Let $S$ be as above, let $\BB s > S$, and let $K = K(\BB s,\alpha,\gamma) > A$ be chosen as in Proposition~\ref{prop-end-1pt}. 
Suppose $z,w\in\BB A_{1/4,1/3}(0)$, let $m\geq 3$ be such that $|z-w| \in [\ep_{m+1},\ep_m]$, and let $n\geq m+2$. 
Then
\eqb \label{eqn-end-2pt}
\BB P\left[\mcl G^n(z) \cap \mcl G^n(w) \right] \leq \ep_m^{ - \alpha^2/2  -  \xi(Q-\alpha) + o_m(1)} \BB P\left[\mcl G^n(z) \right] \BB P\left[ \mcl G^n(w) \right] 
\eqe
with the rate of the $o_m(1)$ depending only on $\BB s , \alpha,\gamma$.
\end{prop}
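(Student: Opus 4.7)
The plan is to extend the one-point argument sketched for Proposition~\ref{prop-end-1pt} to two points by introducing two independent uniform random variables $X_z, X_w$ together with bump functions near each of $z$ and $w$ at different scales. From Lemma~\ref{lem-thick-2pt} we already have the ``short-range'' bound $\BB P[\mcl E^n(z) \cap \mcl E^n(w)] \leq \ep_m^{-\alpha^2/2 + o_m(1)} \BB P[\mcl E^n(z)] \BB P[\mcl E^n(w)]$. The remaining task is to show that, up to bounded Radon--Nikodym factors, the conditional probability of $G_{z,n} \cap G_{w,n}$ given $\mcl E^n(z) \cap \mcl E^n(w)$ is at most $C \ep_n^{2\beta}/\ep_m^{\xi(Q-\alpha)+o_m(1)}$. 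Combined with the uniform one-point relation $\BB P[\mcl G^n(z)] \succeq \ep_n^\beta \BB P[\mcl E^n(z)]$ that drops out of the proof of Proposition~\ref{prop-end-1pt}, this will give~\eqref{eqn-end-2pt}.

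For the setup, I would use the bump function $\phi_{z,1}$ already appearing in condition~\ref{item-onescale-dirichlet} of $E_{z,1}$ together with its rescaled translate $\phi_{w,m+2}(u) := \phi(\ep_{m+2}^{-1}(u-w))$, supported on $\BB A_{\ep_{m+2},4\ep_{m+2}}(w)$. Sample $X_z, X_w$ independent uniforms on $[0,1]$, independent of $h$, and set $\wt h := h + X_z \phi_{z,1} + X_w \phi_{w,m+2}$. Define
\eqb
\theta_z(x,y) := D_{h + x\phi_{z,1} + y\phi_{w,m+2}}(0, B_{\ep_n}(z)), \quad \theta_w(x,y) := D_{h + x\phi_{z,1} + y\phi_{w,m+2}}(0, B_{\ep_n}(w)) .
\eqe
The conditional probability that $G_{z,n} \cap G_{w,n}$ holds for $\wt h$ given $h$ is the Lebesgue measure on $[0,1]^2$ of $\{(x,y) : \theta_z(x,y), \theta_w(x,y) \in [\BB s, \BB s+\ep_n^\beta]\}$, so by a change of variables it is bounded by $\ep_n^{2\beta}$ divided by an almost-everywhere lower bound for $|\det J|$, where $J$ is the Jacobian of $(\theta_z,\theta_w)$.

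The heart of the argument is the lower bound $|\det J| \succeq \ep_m^{\xi(Q-\alpha)+o_m(1)}$ on $\mcl E^n(z)\cap\mcl E^n(w)$, which I would obtain by verifying three facts. First, applying Lemma~\ref{lem-theta-deriv} with $\mcl A = \ol{\BB A_{2\ep_1,3\ep_1}(z)}$ (on which $\phi_{z,1}\equiv 1$) together with condition~\ref{item-onescale-annulus} of $E_{z,1}$ gives $\partial_x \theta_z, \partial_x\theta_w \succeq 1$. Second, applying Lemma~\ref{lem-theta-deriv} with $\mcl A = \ol{\BB A_{2\ep_{m+2},3\ep_{m+2}}(w)}$, using condition~\ref{item-onescale-annulus} of $E_{w,m+2}$ and the $\alpha$-thickness bound from Lemma~\ref{lem-E-thick}, gives $\partial_y\theta_w \succeq \ep_{m+2}^{\xi(Q-\alpha)+o_m(1)} = \ep_m^{\xi(Q-\alpha)+o_m(1)}$. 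Third, and most importantly, $\partial_y\theta_z = 0$: I would invoke the geodesic-avoidance statement Lemma~\ref{lem-geo-away}, which is built on condition~\ref{item-onescale-around} of $E_{w,m+2}$ and which, since $0$ and $z$ both lie outside $B_{6\ep_{m+2}}(w)$ (using $|z-w| \geq \ep_{m+1} = (m+2)\ep_{m+2}$, with the handful of small-$m$ cases absorbed into the $o_m(1)$), forces the $\wt h$-geodesic from $0$ to $B_{\ep_n}(z)$ to stay outside $B_{4\ep_{m+2}}(w) \supset \op{supp}(\phi_{w,m+2})$. These three facts combine to yield $|\det J| = \partial_x\theta_z \cdot \partial_y\theta_w \succeq \ep_m^{\xi(Q-\alpha)+o_m(1)}$.

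Finally, I would transfer the resulting conditional bound from $\wt h$ back to $h$ via Cameron--Martin and H\"older, exactly as in the proof of Proposition~\ref{prop-event-upper}: the Radon--Nikodym derivative between the laws of $h$ and $\wt h$ given $(X_z,X_w)$ is controlled by $(h,\phi_{z,1})_\nabla$ and $(h,\phi_{w,m+2})_\nabla$, both bounded on $\mcl E^n(z)\cap\mcl E^n(w)$ by condition~\ref{item-onescale-dirichlet}, while $(\phi_{z,1},\phi_{w,m+2})_\nabla$ is deterministically bounded. Combining the resulting bound for $h$ with Lemma~\ref{lem-thick-2pt} and the one-point lower bound completes~\eqref{eqn-end-2pt}. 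The main obstacle is the geometric step $\partial_y\theta_z = 0$: without the clean geodesic separation supplied by Lemma~\ref{lem-geo-away}, the off-diagonal term $\partial_y\theta_z \cdot \partial_x\theta_w$ would be of the same order $\ep_m^{\xi(Q-\alpha)}$ as the diagonal term and the lower bound on $|\det J|$ would collapse. A secondary subtlety, inherited from the one-point setting, is that $\mcl E^n(z)$ must remain valid after replacing $h$ by $\wt h$; this forces one to work with the enlarged parameter $K$ at scale $j=1$ in conditions~\ref{item-onescale-sup},~\ref{item-onescale-dirichlet},~\ref{item-onescale-diam} of $E_{z,1}$, which is precisely why $K$ was distinguished from $A$ in Section~\ref{sec-short-def}.
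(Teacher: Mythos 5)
Your proposal correctly captures the central mechanism of the paper's proof: the field $\wt h = h + X_z\phi_{z,1} + X_w\phi_{w,m+2}$ with bump functions at two different scales, the geodesic-avoidance Lemma~\ref{lem-geo-away} forcing $D_{\wt h}(0,B_{\ep_n}(z))$ to be independent of $X_w$, the resulting factorization of the ``Jacobian'' (which the paper phrases as two sequential one-variable applications of Lemma~\ref{lem-dist-interval} rather than a determinant, but the content is the same), and the Cameron--Martin transfer at the end using condition~\ref{item-onescale-dirichlet}. The scale-dependent bounds $\partial_x\theta_z \succeq 1$ and $\partial_y\theta_w \succeq \ep_m^{\xi(Q-\alpha)+o_m(1)}$ are also correct and match what the paper extracts from conditions~\ref{item-onescale-annulus} and Lemma~\ref{lem-E-thick}.

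However, your handling of the mismatch between $\mcl E^n(z)$ and $\mcl E^n(z;\wt h)$ contains a genuine gap. You attribute the fix to the $K$-enlargement at scale $j=1$, but this mechanism is in the wrong direction for the present argument: $K$-enlargement establishes $\ul{\mcl E}^n(z) \subset \mcl E^n(z;\wt h)$, which is what the one-point \emph{lower} bound needs. For the two-point \emph{upper} bound, after the Radon--Nikodym transfer the indicator is of $\mcl E^n(z;\wt h)\cap\mcl E^n(w;\wt h)$, and what you need is an implication in the opposite direction: an event depending only on $h$, containing $\mcl E^n(z;\wt h)$, for which a two-point estimate of the desired quality is available. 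Moreover, the $K$-enlargement only touches scale $j=1$, whereas the bump $\phi_{w,m+2}$ sits in the annulus $\BB A_{\ep_{m+2},\ep_{m-1}}(z)$ near $z$, so it is the scales $j\approx m$ in the $z$-event (and $j=1$ and $j\approx m+2$ in the $w$-event) that get disturbed. The paper resolves this by introducing the enlarged events $\ol{\mcl E}^{n,m}(z)$ (allowing an arbitrary $[-1,1]$-valued perturbation supported on $\left(\BB C\setminus B_{\ep_2}(z)\right)\cup\BB A_{\ep_{m+2},\ep_{m-1}}(z)$), proving the containment $\ol{\mcl E}^{n,m}(z)\subset \mcl E_3^{m-1}(z)\cap\mcl E_{m+3}^n(z)$, and invoking the second (not the first) inequality of Lemma~\ref{lem-thick-2pt}, which bounds precisely that skipped-scale intersection. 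Without this scale-skipping device the argument does not close, because the conditional diameter and Dirichlet-inner-product bounds you need for both Lemma~\ref{lem-dist-2pt} and the Radon--Nikodym bound are posited for the $\wt h$-versions of the events while the short-range two-point estimate is stated for the $h$-versions.

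A smaller point: you quote ``$\BB P[\mcl E^n(z)\cap\mcl E^n(w)]\leq\ep_m^{-\alpha^2/2+o_m(1)}\BB P[\mcl E^n(z)]\BB P[\mcl E^n(w)]$'' as the short-range input, but the estimate actually used is the stronger bound for $\BB P[\mcl E_{m+3}^n(z)\cap\mcl E_3^{m-1}(z)\cap\mcl E_{m+3}^n(w)\cap\mcl E_3^{m-1}(w)]$ (the middle expression in~\eqref{eqn-thick-2pt}), precisely because scales near $m$ have been dropped. This is not a cosmetic distinction; it is what makes the $\ol{\mcl E}^{n,m}$ containment useful.
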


In Section~\ref{sec-dim-lower}, we will combine Propositions~\ref{prop-end-1pt} and~\ref{prop-end-2pt} via the usual Frostman measure argument in order to say that for $\BB s > S$, we have $\esssup \dim_{\mcl H}^0(\bdy\mcl B_{\BB s} \cap \mcl T_h^\alpha) \geq 2-\xi(Q-\alpha) - \alpha^2/2$. The following proposition will be used to transfer from the case when $\BB s  >S$ to the case when $\BB s \in (0,S]$.

\begin{prop}[Truncated one-point estimate] \label{prop-end-across}
Let $S$ be as above, let $\BB s > S$, and let $K = K(\BB s,\alpha,\gamma) > A$ be chosen as in Proposition~\ref{prop-end-1pt}. 
For each $C>0$, 
\eqb \label{eqn-end-across}
\BB P\left[\mcl G^n(z) \cap \left\{ D_{ h}\left(\bdy B_{5/6}(0) , \bdy B_{7/8}(0) \right) \geq C  \right\}  \right] \succeq \BB P\left[ \mcl G^n(z) \right] 
\eqe
with the implicit constant depending only on $C,\BB s , \alpha,\gamma$.
\end{prop}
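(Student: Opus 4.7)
The plan is to adapt the proof of Proposition~\ref{prop-end-1pt} by adding a fixed deterministic bump $c\psi$ supported in an annular neighborhood of $\mcl A := \BB A_{5/6,7/8}(0)$. Via Weyl scaling (Axiom~\ref{item-metric-f}) this bump inflates $D_h$-distances across $\mcl A$, while $\psi$ vanishes on $B_{\ep_0}(z)$ so that the local construction of $\mcl G^n(z)$ near $z$ is unaffected. Concretely, let $\psi\colon\BB C\to[0,\infty)$ be smooth with $\psi\equiv 1$ on $\mcl A$ and $\op{supp}\psi\subset\BB A_{4/5,9/10}(0)$; for $z\in\BB A_{1/4,1/3}(0)$ the support is disjoint from $B_{\ep_0}(z)\subset B_{2/3}(0)$, and $\psi$ vanishes on $\bdy\BB D$. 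Since the internal distance $D_h(\bdy B_{5/6}(0),\bdy B_{7/8}(0);\mcl A)$ has all negative moments by~\cite[Proposition~3.1]{lqg-metric-estimates}, fix $q_0=q_0(\gamma)>0$ such that $\BB P[D_h(\bdy B_{5/6}(0),\bdy B_{7/8}(0);\mcl A)\geq q_0]\geq 3/4$ and choose $c=c(C,\gamma)$ with $e^{\xi c}q_0\geq C$. Splitting any path from $\bdy B_{5/6}$ to $\bdy B_{7/8}$ into its sub-arc in $\ol{\mcl A}$ and the complementary arcs, and using Axiom~\ref{item-metric-f} together with $\psi\equiv 1$ on $\mcl A$ and $\psi\geq 0$ elsewhere, yields
\eqbn
D_{h+c\psi}(\bdy B_{5/6}(0),\bdy B_{7/8}(0))\geq e^{\xi c}D_h(\bdy B_{5/6}(0),\bdy B_{7/8}(0);\mcl A)\geq C
\eqen
on the good event.

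Next, rerun the one-point argument of Proposition~\ref{prop-end-1pt} for $\wt h := h+c\psi$ in place of $h$, treating $c\psi$ as a deterministic continuous function. Because $\phi_{z,1}$ and $\psi$ have disjoint supports, Lemma~\ref{lem-theta-deriv} still furnishes a lower bound for the $X$-derivative of $D_{\wt h+X\phi_{z,1}}(0,B_{\ep_n}(z))$; and $\mcl E^n(z)$ remains valid under $h\mapsto\wt h$ provided the parameter $K$ in the definition of $E_{z,1}$ is enlarged by a $c$-dependent amount so that conditions~\ref{item-onescale-sup}, \ref{item-onescale-dirichlet}, and~\ref{item-onescale-diam} absorb the bump. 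Combining this with the near-independence of the good event above (which is $\sigma(h|_{\mcl A})$-measurable) from the event $\mcl G^n(z)(\wt h)$ (which is essentially $\sigma(h|_{B_{4/5}(0)})$-measurable, once one further restricts to the positive-conditional-probability event that geodesics from $0$ to $B_{\ep_n}(z)$ remain inside $B_{4/5}(0)$, a consequence of the constant-order distance bounds of Lemma~\ref{lem-E-dist}) via the Markov property of the GFF, yields
\eqbn
\BB P\left[\mcl G^n(z)(\wt h)\cap\{D_{\wt h}(\mcl A)\geq C\}\right]\succeq\BB P\left[\mcl G^n(z)(h)\right].
\eqen

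Finally, transfer from $\wt h$ back to $h$ using the mutual absolute continuity of their laws (Lemma~\ref{lem-gff-abs-cont}). The main obstacle is avoiding a H\"older-type power loss in this last step, since the Radon--Nikodym derivative of $\wt h$ with respect to $h$ is log-normal and unbounded. The standard remedy, already used in the proofs of Lemma~\ref{lem-event-upper'} and Proposition~\ref{prop-event-upper}, is to randomize $c$ by replacing it with $Y\sim\mathrm{Unif}([c,c+1])$ independent of $h$ and integrating over $Y$; this absorbs the fluctuations of the Radon--Nikodym factor and produces a constant-factor lower bound of the desired form, with implicit constant depending only on $C,\BB s,\alpha,\gamma$.
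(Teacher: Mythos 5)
The paper's own proof is one line: combine Lemma~\ref{lem-dist-1pt-lower} (which already reads $\BB P[\mcl G^n(z)\cap F_C]\succeq\ep_n^\beta\BB P[\mcl E^n(z)]$ with constant depending on $C,\BB s,\alpha,\gamma$) with Lemma~\ref{lem-dist-1pt-upper} ($\BB P[\mcl G^n(z)]\preceq\ep_n^\beta\BB P[\mcl E^n(z)]$). The event $F_C$ was already threaded through Section~\ref{sec-end-1pt} precisely for this purpose: Lemma~\ref{lem-Elower-prob-across} establishes $\BB P[\ul{\mcl E}^1(z)\cap F_C]\succeq 1$ via the Markov property (with $\ul{\mcl E}^1(z)$ determined by $h|_{B_{2/3}(0)}$ and $F_C$ by $h|_{\BB A_{5/6,7/8}(0)}$), and the long-range event $G_{z,n}(\wt h)$ is handled by a \emph{conditional} probability bound given $h$ (Lemma~\ref{lem-dist-1pt-lower'}) that holds pointwise on $\ul{\mcl E}^n(z)$, so $F_C$ never needs to be compared against $G_{z,n}$. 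You are re-deriving this from scratch, and your route is both longer and has gaps.

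Two genuine gaps in what you wrote. First, the claim that $\mcl G^n(z;\wt h)$ is ``essentially $\sigma(h|_{B_{4/5}(0)})$-measurable'' after restricting to a geodesic-localization event, citing Lemma~\ref{lem-E-dist}. The event $G_{z,n}$ involves the global distance $D_{\wt h}(0,B_{\ep_n}(z))$, not an internal one; Lemma~\ref{lem-E-dist} gives $D_h(0,B_{\ep_n}(z))\in[S^{-1},S]$ on $\mcl E^n(z)$ but says nothing about where geodesics go, and in particular does not rule out excursions into $\op{supp}\psi$. Even if such a localization event had constant conditional probability, you would need to show this holds uniformly over all realizations of $\mcl G^n(z;\wt h)$ (or argue that it can be added to the event structure without spoiling the one- and two-point estimates), and that work is not done here. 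The paper avoids this entirely by never localizing $G_{z,n}$. Second, the Radon--Nikodym transfer: in the paper's Lemma~\ref{lem-dist-1pt-lower}, the RN derivative is bounded on $\mcl E^n(z;\wt h)$ \emph{because} condition~\ref{item-onescale-dirichlet} in the definition of $E_{z,1}$ explicitly caps $|(\wt h,\phi_{z,1})_\nabla|$; your $c\psi$ contributes a term $e^{-c(\wt h,\psi)_\nabla}$, and $(\wt h,\psi)_\nabla$ is a free Gaussian not controlled by any of your events, so the RN derivative is genuinely unbounded. Randomizing $c$ over $[c,c+1]$ as in Lemma~\ref{lem-event-upper'} controls a H\"older-type power loss for \emph{upper} bounds, but does not give a constant-factor \emph{lower} bound, which is what~\eqref{eqn-end-across} requires; to salvage this you would need to build a bound on $(\wt h,\psi)_\nabla$ into the event definitions, at which point you are essentially rebuilding the structure of Lemmas~\ref{lem-Elower-prob} and~\ref{lem-Elower-prob-across} by hand.
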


Proposition~\ref{prop-end-across} (applied with a large choice of $C$) will eventually allow us to show that for $\BB s > S$, it holds with positive probability that $\dim_{\mcl H}^0(\bdy\mcl B_{\BB s} \cap \mcl T_h^\alpha)$ is bounded below and also $\mcl B_{\BB s} \subset B_{7/8}(0)$. This, in turn, will allow us to transfer from the case when $\BB s >S$ to the case when $\BB s \in (0,S]$ using Weyl scaling and local absolute continuity argument. See Proposition~\ref{prop-dim-contain}. The reason why we want $\mcl B_{\BB s}$ to be at positive distance from $\bdy\BB D$ for this absolute continuity argument is that for $a >0$, the laws of the restrictions of $h$ and $h+a$ to any compact subset of $\BB D$ are mutually absolutely continuous, but this is not true for the restrictions to $\BB D$ since $h$ is normalized so that $h_1(0) =0$.

\begin{notation} \label{notn-other-h}
Throughout this section, we will frequently work with a distribution $\wt h$ whose law is absolutely continuous with respect to the law of $h$. Typically, $\wt h$ will be obtained by adding a random smooth function to $h$. For an event $E$ depending on $h$, we denote the analogous event with $\wt h$ in place of $h$ by a parenthetical $\wt h$. So, e.g., $\mcl G^n(z;\wt h)$ is the event $\mcl G^n(z)$ defined with $\wt h$ in place of $h$. 
\end{notation}

As discussed in Section~\ref{sec-outline}, the key idea in the proofs of Propositions~\ref{prop-end-1pt}, \ref{prop-end-2pt}, and~\ref{prop-end-across} is similar to the idea for treating the ``long-range" event in Proposition~\ref{prop-event-upper}. The event $\mcl G^n(z)$ is the intersection of the short-range event $\mcl E^n(z)$ and the long-range event $G_{z,n}$. 
We already have the necessary estimates for $\mcl E^n(z)$ from Section~\ref{sec-short-range}. 
To deal with $G_{z,n}$, we will consider a field $\wt h$ obtained by adding a random smooth function to $h$. 
We will prove bounds for the conditional probability of $G_{z,n}(\wt h)$ (using Notation~\ref{notn-other-h}) given $h$, truncated on the event $\mcl E^n(z)$. 
Using the absolute continuity of the laws of $h$ and $\wt h$, we will then deduce the desired estimates for $h$.

Let us now describe the proofs in more detail. Section~\ref{sec-end-1pt} is devoted to the proof of Proposition~\ref{prop-end-1pt}. 
Here, we will take $\wt h = h + X\phi_{z,1}$ where $\phi_{z,1}$ is as in condition~\ref{item-onescale-dirichlet} in the definition of $E_{z,1}$ and $X$ is sampled uniformly from Lebesgue measure on $[0,R]$, independently from $h$, for a large but fixed constant $R>0$. 
If $\BB s > S$ then Lemma~\ref{lem-E-dist} shows that $D_h(0,B_{\ep_n}(z) ) \leq \BB s$ on $\mcl E^n(z)$. Furthermore, if $R$ is chosen to be sufficiently large, depending on $\BB s$, then it is not hard to see using condition~\ref{item-onescale-annulus} in the definition of $E_{z,1}$ that $D_{h+R\phi_{z,1}}(0,B_{\ep_n}(z)) \geq \BB s$ on this event (see Lemma~\ref{lem-dist-deriv}). 
Since $X$ is sampled uniformly from $[0,R]$, we can use Weyl scaling to say that on $\mcl E^n(z)$, the conditional probability of $G_{z,n}(\wt h)$ given $\mcl E^n(z)$ is at least a constant times $\ep_n^\beta$ (Lemma~\ref{lem-dist-1pt-lower'}). Taking unconditional probabilities and applying the estimate for $\BB P[\mcl E^n(z)]$ from Lemma~\ref{lem-thick-1pt} will then give us a lower bound for $\BB P[\mcl G^n(z;\wt h)]$. We then transfer from $\wt h$ back to $h$ via absolute continuity. 

The main technical difficulty in the above argument is that adding $X\phi_{z,1}$ to $h$ affects the occurrence of some of the conditions in the definition of $E_{z,1}$, so we do not necessarily know that $\mcl E^n(z) \subset \mcl E^n(z;\wt h)$. This means that the above argument, applied naively, only gives a lower bound for $\BB P[G_{z,n}(\wt h)\cap \mcl E^n(z)]$ instead of for $\BB P[\mcl G^n(z;\wt h)]$. To get around this, we will start by working with the event $\ul{\mcl E}^n(z)$ defined in the same manner as $\mcl E^n(z)$ but with $K = A$. We will then make the parameter $K$ sufficiently large (depending on $R$) so that $\ul{\mcl E}^n(z) \subset \mcl E^n(z;\wt h)$. This is the reason why we include $K$ in the definition of our events.

Section~\ref{sec-end-2pt} is devoted to the proof of Proposition~\ref{prop-end-2pt}. Here, we work with the field $\wt h = X_z \phi_{z,1} + X_w \phi_{w,m+2}$ where $X_z,X_w$ are sampled uniformly from $[0,1]$ independently from each other and from $h$. We first prove an upper bound for $\BB P[G_{z,n}( \wt h) \cap G_{w,n}( \wt h) | h] \BB 1_{\mcl E^n(z) \cap \mcl E^n(w)}$ (Lemma~\ref{lem-dist-2pt}). We then take unconditional probabilities and apply the two-point estimate for $\mcl E^n(z)$ (Lemma~\ref{lem-thick-2pt}) to obtain an upper bound for $\BB P[\mcl G^n(z;\wt h) \cap\mcl G^n(w;\wt h)]$ and finally transfer back to $h$ via absolute continuity. 
As in Section~\ref{sec-end-1pt}, there are technical difficulties with comparing $\mcl E^n(z)$ and $\mcl E^n(z;\wt h)$, but they are more easily dealt with than in Section~\ref{sec-end-1pt} since we only want an upper bound for probabilities and we allow an $\ep_m^{o_m(1)}$ error. So, we can simply ``skip" the scales which are affected by adding $X_z\phi_{z,1} + X_w\phi_{w,m+2}$; see Lemma~\ref{lem-Eupper-2pt}. 
 
Proposition~\ref{prop-end-across} is obtained by combining estimates in Sections~\ref{sec-end-1pt} and~\ref{sec-end-2pt}. Roughly speaking, the reason why the proposition is true is that $D_{ h}\left(\bdy B_{5/6}(0) , \bdy B_{7/8}(0) \right)$ is determined $h|_{\BB A_{5/6,7/8}(0)}$ whereas $\mcl E^n(z)$ depends on the restriction of $h$ to $B_{1/3}(z) \subset B_{2/3}(0)$, so the events $\mcl E^n(z)$ and $\left\{ D_{ h}\left(\bdy B_{5/6}(0) , \bdy B_{7/8}(0) \right) \geq C  \right\}$ are approximately independent. See Lemma~\ref{lem-Elower-prob}.

\subsection{One-point estimate} 
\label{sec-end-1pt}

In this subsection we will prove Proposition~\ref{prop-end-1pt} and most of Proposition~\ref{prop-end-across}. 
Throughout, to lighten notation we let
\eqb \label{eqn-dist-1pt-across}
F_C := \left\{D_{ h}\left(\bdy B_{5/6}(0) , \bdy B_{7/8}(0) \right) \geq C \right\} ,\quad\forall C > 0
\eqe
be the event appearing in Proposition~\ref{prop-end-across}. 
Most of this subsection is devoted to the proof of the following lemma, which is the key input in the proofs of Propositions~\ref{prop-end-1pt} and~\ref{prop-end-across}.

\begin{lem} \label{lem-dist-1pt-lower}
Let $S = S_A(\alpha,\gamma)$ be as in Proposition~\ref{prop-end-1pt} and let $\BB s > S$. 
Also let $C>1$ and let $F_C$ be as in~\eqref{eqn-dist-1pt-across}.
For each $z\in \BB A_{1/4,1/3}(0)$, and each $n\in\BB N$, 
\eqb \label{eqn-dist-1pt-lower}
\BB P\left[\mcl G^n(z) \cap F_C \right] \succeq \ep_n^\beta \BB P\left[ \mcl E^n(z) \right]  
\eqe
with the implicit constant depending only on $C , \BB s , \alpha,\gamma$.
\end{lem}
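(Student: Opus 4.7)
The plan is to add a random smooth perturbation $X\phi_{z,1}$ to the field, exploit Weyl scaling to produce the desired distance window, then transfer back to $h$ by Cameron--Martin, in analogy with the long-range part of the proof of Proposition~\ref{prop-event-upper}. Let $\phi_{z,1}$ be the bump function from condition~\ref{item-onescale-dirichlet} in the definition of $E_{z,1}$, set $\wt h := h + X\phi_{z,1}$ with $X$ uniform on $[0,R]$ independent of $h$ for a large constant $R = R(\BB s, \alpha, \gamma)$ to be chosen, and let $\ul{\mcl E}^n(z)$ denote the event $\mcl E^n(z)$ defined with the parameter $K$ replaced by $A$. Two structural observations will be used throughout. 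First, $\phi_{z,1}$ is supported in $\BB A_{\ep_1, 4\ep_1}(z)$ and vanishes on $\bdy B_{\ep_0}(z) \cup \bdy B_{\ep_1}(z)$, so perturbing $h$ by $X\phi_{z,1}$ leaves invariant the circle averages $h_{\ep_0}(z), h_{\ep_1}(z)$ as well as the restrictions of $h$ to every annulus $\BB A_{\ep_j, \ep_{j-1}/3}(z)$ with $j \geq 2$. Second, the event $F_C$ is measurable with respect to $h|_{\BB A_{5/6, 7/8}(0)}$ by Axiom~\ref{item-metric-local}, and this annulus is disjoint from the support of $\phi_{z,1}$, so $F_C(\wt h) = F_C(h)$.

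The first technical step is to obtain two-sided control on $\theta(x) := D_{h + x\phi_{z,1}}(0, B_{\ep_n}(z))$ on the event $\ul{\mcl E}^n(z)$. Lemma~\ref{lem-theta-deriv} applied with $K_1 = \{0\}$, $K_2 = \ol{B_{\ep_n}(z)}$, and $\mcl A = \ol{\BB A_{2\ep_1, 3\ep_1}(z)}$ yields $\theta'(x) \geq \xi D_h(\bdy B_{2\ep_1}(z), \bdy B_{3\ep_1}(z))$ for a.e.\ $x \in [0,R]$; condition~\ref{item-onescale-annulus} at scale $j=1$, combined with Lemma~\ref{lem-E-thick} to control $h_{\ep_1}(z)$, bounds the right-hand side below by a positive constant depending only on $\alpha,\gamma$. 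A complementary Lipschitz bound $\theta'(x) \leq C = C(R, \alpha, \gamma)$ on $[0,R]$ follows from Weyl scaling applied to a $D_{h + x\phi_{z,1}}$-geodesic from $0$ to $B_{\ep_n}(z)$, together with condition~\ref{item-onescale-diam} at scale $j=1$. Since $\BB s > S$, Lemma~\ref{lem-E-dist} with $K = A$ gives $\theta(0) \leq S < \BB s$ on $\ul{\mcl E}^n(z)$, while for $R$ chosen large enough (depending on $\BB s$) the lower bound on $\theta'$ forces $\theta(R) > \BB s + 1$. Continuity of $\theta$ and the Lipschitz bound then show that $\{x \in [0,R] : \theta(x) \in [\BB s, \BB s + \ep_n^\beta]\}$ has Lebesgue measure at least $C^{-1} \ep_n^\beta$, so since $X$ is uniform on $[0,R]$ independent of $h$,
\eqbn
\BB P\left[G_{z,n}(\wt h) \,\big|\, h\right] \BB 1_{\ul{\mcl E}^n(z)} \succeq \ep_n^\beta \, \BB 1_{\ul{\mcl E}^n(z)} .
\eqen

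The second step is to verify $\ul{\mcl E}^n(z; h) \subset \mcl E^n(z; \wt h)$ for every $X \in [0,R]$, provided $K$ is chosen sufficiently large depending on $R$ (hence on $\BB s,\alpha,\gamma$). By the structural observations above, the only conditions in the definition of $\mcl E^n(z)$ which are affected by the perturbation are the three scale-$j=1$ conditions~\ref{item-onescale-sup}, \ref{item-onescale-dirichlet}, \ref{item-onescale-diam}, and each changes by at most an $R$-dependent constant; inflating $K$ absorbs these changes. Combining this inclusion with $F_C(\wt h) = F_C(h)$ and the display above, then taking unconditional expectations,
\eqbn
\BB P\left[\mcl G^n(z;\wt h) \cap F_C(\wt h)\right] \succeq \ep_n^\beta \, \BB P\left[\ul{\mcl E}^n(z) \cap F_C(h)\right] .
\eqen

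Finally, I would transfer from $\wt h$ back to $h$ by Cameron--Martin (Lemma~\ref{lem-gff-abs-cont}), as in the proof of Proposition~\ref{prop-event-upper}: on $\mcl E^n(z;\wt h)$, condition~\ref{item-onescale-dirichlet} bounds $|(\wt h, \phi_{z,1})_\nabla| \leq K$, so the Radon--Nikodym derivative of the law of $h$ with respect to the conditional law of $\wt h$ given $X \in [0,R]$ is bounded below by a positive constant depending on $R, K, \alpha, \gamma$. This yields $\BB P\left[\mcl G^n(z;h) \cap F_C(h)\right] \succeq \BB P\left[\mcl G^n(z;\wt h) \cap F_C(\wt h)\right]$. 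It then remains to show $\BB P\left[\ul{\mcl E}^n(z) \cap F_C(h)\right] \succeq \BB P\left[\mcl E^n(z)\right]$: the events $\ul{\mcl E}^n(z) \in \sigma(h|_{B_{1/3}(z)})$ and $F_C \in \sigma(h|_{\BB A_{5/6,7/8}(0)})$ are measurable with respect to restrictions of $h$ to disjoint regions, so the Markov property of the GFF yields $\BB P\left[\ul{\mcl E}^n(z) \cap F_C(h)\right] \succeq \BB P\left[\ul{\mcl E}^n(z)\right]$ up to a factor depending on $C$; and $\BB P\left[\ul{\mcl E}^n(z)\right] \succeq \BB P\left[\mcl E^n(z)\right]$ because passing from $K$ to $A$ at scale $j=1$ tightens only finitely many conditions of constant-order conditional probability. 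The main obstacle is the bookkeeping required to ensure that the Weyl-scaling lower bound on $\theta'$, the inclusion $\ul{\mcl E}^n(z;h) \subset \mcl E^n(z;\wt h)$, and the Cameron--Martin change of measure can be applied simultaneously; the introduction of the auxiliary parameter $K$ and the $A$/$K$ distinction in Section~\ref{sec-short-def} is precisely what makes this coexistence possible.
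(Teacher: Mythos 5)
Your proposal reproduces the paper's overall architecture for this lemma faithfully: the perturbation $\wt h = h + X\phi_{z,1}$ with $X$ uniform on $[0,R]$, the two-sided control on $\theta(x)=D_{h+x\phi_{z,1}}(0,B_{\ep_n}(z))$ via Lemma~\ref{lem-theta-deriv} and Lemma~\ref{lem-E-dist}, the $\ul{\mcl E}^n(z)$-vs.-$\mcl E^n(z)$ distinction and the role of the parameter $K$, and the Cameron--Martin transfer back to $h$ using condition~\ref{item-onescale-dirichlet} to bound the Radon--Nikodym derivative. The one substantive deviation is in your final reduction, and it contains a genuine gap.

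You assert that because $\ul{\mcl E}^n(z)\in\sigma(h|_{B_{1/3}(z)})$ and $F_C\in\sigma(h|_{\BB A_{5/6,7/8}(0)})$ live on disjoint regions, ``the Markov property of the GFF yields $\BB P[\ul{\mcl E}^n(z)\cap F_C]\succeq\BB P[\ul{\mcl E}^n(z)]$.'' This does not follow. The Markov property gives that the conditional law of $h|_{B_{1/3}(z)}$ given $h|_{\BB A_{5/6,7/8}(0)}$ is mutually absolutely continuous with respect to the marginal law, but the Radon--Nikodym derivative is a random function of the harmonic part and has no uniform pointwise lower bound. Since $\BB P[\ul{\mcl E}^n(z)]\rta 0$ as $n\rta\infty$, qualitative absolute continuity is not enough: the conditional probability of $\ul{\mcl E}^n(z)$ on $F_C$ could, a priori, decay faster in $n$ than the unconditional one. (Concretely, writing $\nu$ for the conditional law of $h|_{B_{1/3}(z)}$ given $F_C$ and $\mu$ for the marginal, one has $d\nu/d\mu = \BB P[F_C \mid h|_{B_{1/3}(z)}]/\BB P[F_C]$, which is bounded \emph{above} but not below --- the wrong direction for what you need.) The paper closes exactly this gap in Lemma~\ref{lem-Elower-prob}: first one establishes a constant-order lower bound on the scale-$1$ event $\BB P[\ul{\mcl E}^1(z)\cap F_C]\succeq 1$ (Lemma~\ref{lem-Elower-prob-across}, where both events have constant-order probability so qualitative absolute continuity does suffice), and then one uses the fact that $\ul{\mcl E}^1(z)\cap F_C\subset E_{z,1}$, on which condition~\ref{item-onescale-rn} provides a deterministic two-sided bound on the Radon--Nikodym derivative needed to apply Lemma~\ref{lem-multiscale-cond} and conclude $\BB P[\ul{\mcl E}^n(z)\cap F_C]\succeq\BB P[\mcl E_2^n(z)]\geq\BB P[\mcl E^n(z)]$. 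Without invoking that Radon--Nikodym control (which is precisely why condition~\ref{item-onescale-rn} was built into $E_{z,j}$), the final step of your argument is unsupported. A minor additional point: in your application of Lemma~\ref{lem-theta-deriv} you have $K_1$ and $K_2$ swapped (the lemma's hypotheses place $K_1$ in the bounded and $K_2$ in the unbounded component of $\BB C\setminus\mcl A$, so $K_1$ should be $\ol{B_{\ep_n}(z)}$ and $K_2=\{0\}$), though the symmetry of $D$ makes this cosmetic.
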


For $n\in\BB N$, let $\ul E_{z,1}$ and $\ul{\mcl E}^n(z)$ be the events $E_{z,1}$ and $\mcl E^n(z)$ of Section~\ref{sec-short-def} with $K = A$. 
As in the rest of the paper, we still write $E_{z,1}$ and $\mcl E^n(z)$ for the events of Section~\ref{sec-short-def} with a given choice of $K$. 
Since $K\geq A$ and $K$ appears only in the definition of $E_{z,1}$,  
\eqb \label{eqn-Elower-def}
\ul{\mcl E}^n(z) \subset \mcl E^n(z) \quad \text{and} \quad \ul{\mcl E}^n(z) = \ul{\mcl E}^1(z) \cap \mcl E_2^n(z) .
\eqe
The main reason for separating the events $\mcl E^n(z)$ and $\ul{\mcl E}^n(z)$ is the following lemma.

\begin{lem} \label{lem-Elower-compare}
Let $R > 0$. If we choose the parameter $K$ to be sufficiently large, in a manner depending only on $R,\alpha,\gamma$, then a.s.\ whenever $\ul{\mcl E}^n(z)$ occurs the event $\mcl E^n(z;h+x\phi_{z,1})$ occurs for each $x\in [0,R]$ (where here we use Notation~\ref{notn-other-h}). 
\end{lem}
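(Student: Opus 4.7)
The plan is to verify, condition by condition, that every requirement in the definition of $\mcl E^n(z;h+x\phi_{z,1})$ is implied by $\ul{\mcl E}^n(z)$ for all $x\in[0,R]$, provided $K$ is chosen sufficiently large in terms of $R,\alpha,\gamma$. The proof is essentially a bookkeeping exercise built on three features of $\phi_{z,1}$: it is supported in $\BB A_{\ep_1,4\ep_1}(z)$ and thus vanishes identically on $B_{\ep_1}(z)$ and outside $B_{4\ep_1}(z)$; its circle average over $\bdy B_{\ep_0}(z)$ and over $\bdy B_{\ep_j}(z)$ for every $j\geq 1$ equals zero; and since $\phi_{z,1}\geq 0$, Weyl scaling (Axiom~\ref{item-metric-f}) gives $D_h\leq D_{h+x\phi_{z,1}}\leq e^{\xi R}D_h$ uniformly on $\BB C$.

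First I would dispatch the easy scales. The event $E_{z,0}$ together with $H_{z,0}^{\op{out}}, H_{z,0}^{\op{in}}$ is measurable with respect to $h$ restricted to $\{u:|u-z|\geq \ep_0/3\}$, a region disjoint from the support of $\phi_{z,1}$, so these conditions are preserved trivially. For every $j\geq 2$, the events $E_{z,j}, H_{z,j-1}^{\op{out}}, H_{z,j-1}^{\op{in}}$ are measurable with respect to $(h-h_{\ep_{j-1}}(z))$ restricted to annuli contained in $B_{\ep_1}(z)$, where $\phi_{z,1}\equiv 0$, so they too are unchanged. The conditional probability in item~\ref{item-onescale-cond} is unaffected because conditioning on $(h+x\phi_{z,1})|_{\BB A_{\ep_j,\ep_{j-1}/3}(z)}$ for a deterministic $x$ generates the same $\sigma$-algebra as conditioning on $h|_{\BB A_{\ep_j,\ep_{j-1}/3}(z)}$.

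The substantive case is $E_{z,1}$. Items~\ref{item-onescale-thick}, \ref{item-onescale-around}, \ref{item-onescale-annulus}, and~\ref{item-onescale-rn}--\ref{item-onescale-cond} are preserved directly: the circle averages $h_{\ep_0}(z)$ and $h_{\ep_1}(z)$ are unchanged since $\phi_{z,1}$ vanishes on the relevant circles; the harmonic parts $\frk h_{z,0}, \frk h_{z,0}^{\op{out}}, \frk h_{z,1}$ appearing in items~\ref{item-onescale-rn}--\ref{item-onescale-out} are each determined by boundary data on circles lying outside the support of $\phi_{z,1}$; and the distance-type inequalities of items~\ref{item-onescale-around} and~\ref{item-onescale-annulus} go the correct direction under Weyl scaling because $D_{h+x\phi_{z,1}}\geq D_h$ pointwise, $h_{\ep_1}(z)$ is unchanged, and $\phi_{z,1}$ vanishes on $\BB A_{5\ep_1,6\ep_1}(z)$ while equalling $1$ on $\BB A_{2\ep_1,3\ep_1}(z)$. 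The three remaining items~\ref{item-onescale-sup}, \ref{item-onescale-dirichlet}, and~\ref{item-onescale-diam} pick up bounded perturbations under $h\mapsto h+x\phi_{z,1}$: the circle-average supremum in item~\ref{item-onescale-sup} shifts by at most $x\leq R$; the Dirichlet inner product in item~\ref{item-onescale-dirichlet} changes by exactly $x(\phi_{z,1},\phi_{z,1})_\nabla$; and the internal diameter in item~\ref{item-onescale-diam} inflates by at most a factor of $e^{\xi R}$. Since $K$ was introduced in precisely these three conditions to absorb such perturbations, taking $K$ larger than an explicit constant depending on $R,A,\gamma$ suffices. The only real subtlety is item~\ref{item-onescale-rn}, whose Radon--Nikodym derivative involves two measures that both shift under the perturbation; here one uses that $\phi_{z,1}$ vanishes on $\bdy B_{\ep_0/3}(z)$, so the boundary data determining the RN derivative (via Lemma~\ref{lem-rn-msrble}) is identical for $h$ and $h+x\phi_{z,1}$, combined with the harmonic-extension bound from item~\ref{item-onescale-harmonic}. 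The main obstacle is not intellectual but simply the care needed to trace through all ten conditions.
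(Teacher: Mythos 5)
Your proof is correct and follows essentially the same route as the paper: verify condition by condition that each requirement in $\mcl E^n(z;h+x\phi_{z,1})$ is implied by $\ul{\mcl E}^n(z)$, using that $\op{supp}\phi_{z,1}\subset\BB A_{\ep_1,4\ep_1}(z)$ is disjoint from the regions determining all conditions except items~\ref{item-onescale-sup}, \ref{item-onescale-around}, \ref{item-onescale-annulus}, \ref{item-onescale-dirichlet}, \ref{item-onescale-diam} of $E_{z,1}$; that items~\ref{item-onescale-around} and~\ref{item-onescale-annulus} only go the right direction under the one-sided Weyl bound $D_h\le D_{h+x\phi_{z,1}}$; and that the $K$-modified items~\ref{item-onescale-sup}, \ref{item-onescale-dirichlet}, \ref{item-onescale-diam} absorb the $R$-dependent perturbation. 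The only nit is your passing appeal to item~\ref{item-onescale-harmonic} when discussing item~\ref{item-onescale-rn}: via Lemma~\ref{lem-rn-msrble} the RN-derivative condition is determined solely by boundary data on $\bdy B_{\ep_0/3}(z)$, which is unchanged since $\phi_{z,1}$ vanishes there, so item~\ref{item-onescale-harmonic} plays no role in that step.
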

\begin{proof}
Recall that $\ul{\mcl E}^1(z) = \ul E_{z,1} \cap E_{z,0} \cap H_{z,0}^{\op{out}} \cap H_{z,0}^{\op{in}}  $. 
The function $\phi_{z,1}$ is supported on $\BB A_{\ep_1,4\ep_1}(z)$, so adding a multiple of $\phi_{z,1}$ to $h$ does not affect the occurrence of any of the conditions in the definition of $E_{z,1}$ except possibly conditions~\ref{item-onescale-sup}, \ref{item-onescale-annulus}, \ref{item-onescale-dirichlet}, and \ref{item-onescale-diam}, i.e., each of the other conditions in the definition of $E_{z,1}$ occurs if and only if it occurs with $h$ replaced by $h+x\phi_{z,1}$ for any $x\in\BB R$ (note that for condition~\ref{item-onescale-rn}, we use Lemma~\ref{lem-rn-msrble}). 
Similarly, the occurrence of the event $E_{z,0} \cap H_{z,0}^{\op{out}} \cap H_{z,0}^{\op{in}}  $ is unaffected by adding a multiple of $\phi_{z,1}$ to $h$.
 
We will now deal with the four remaining conditions in the definition of $E_{z,1}$. 
Since $\phi_{z,1}$ takes values in $[0,1]$, Axiom~\ref{item-metric-f} implies that a.s.\ 
\eqb
D_h(u,v) \leq D_{h+x\phi_{z,1}}(u,v) \leq e^{\xi R} D_h(u,v) ,\quad \forall x\in [0,R] .
\eqe
From this, we see that condition~\ref{item-onescale-annulus} in the definition of $\ul E_{z,1}$ implies condition~\ref{item-onescale-annulus} in the definition of $E_{z,1}(h+x\phi_{z,1})$ (since this condition only involves a lower bound for $D_h$-distances). 
Similarly, the diameter upper bound condition~\ref{item-onescale-diam} for $\ul E_{z,1}$ implies condition~\ref{item-onescale-diam} for $E_{z,1}(h+x\phi_{z,1})$ provided $K \geq A e^{\xi R}$. 

For $x\in [0,R]$, adding $x \phi_{z,1}$ to $h$ increases the circle average process of $h$ by at most $R$ and increases the absolute value of the Dirichlet inner product $(h,\phi_{z,1})_\nabla$ by at most $R (\phi_{z,1},\phi_{z,1})_\nabla = R (\phi,\phi)_\nabla$. 
From this, we see that conditions~\ref{item-onescale-sup} and~\ref{item-onescale-diam} in the definition of $\ul E_{z,1}$ imply the corresponding conditions for $E_{z,1}(h+x\phi_{z,1})$ provided $K\geq A + R(1\vee (\phi,\phi)_\nabla)$. 
\end{proof}

Lemma~\ref{lem-dist-1pt-lower} will eventually be deduced from the following estimate together with a local absolute continuity argument. 

\begin{lem} \label{lem-dist-1pt-lower'} 
For each $\BB s > S$, there exists $R = R(\BB s , \alpha,\gamma) > 0$ such that the following is true. 
Let $X$ be sampled uniformly from $[0 , R]$, independently from $h$. 
Also let $z\in \BB A_{1/4,1/3}(0)$ and $n\in\BB N$.
On $\ul{\mcl E}^n(z)$, a.s.\
\eqb \label{eqn-dist-1pt-lower'}
\BB P\left[G_{z,n}(h + X\phi_{z,1} ) \,|\, h \right] \succeq \ep_n^\beta
\eqe
where the implicit constant is deterministic and depends only on $\BB s ,  \alpha,\gamma$.
\end{lem}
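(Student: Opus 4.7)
The plan is to set $\theta(x) := D_{h+x\phi_{z,1}}(0,B_{\ep_n}(z))$ and to show that on $\ul{\mcl E}^n(z)$, the Lebesgue measure of the set $\{x \in [0,R] : \theta(x) \in [\BB s,\BB s+\ep_n^\beta]\}$ is bounded below by a constant times $\ep_n^\beta$. Since $X$ is uniform on $[0,R]$ and independent of $h$, dividing by $R$ then yields the conditional probability bound~\eqref{eqn-dist-1pt-lower'}. The bump function $\phi_{z,1}$ is identically $1$ on $\mcl A:=\overline{\BB A_{2\ep_1,3\ep_1}(z)}$, and since $z\in\BB A_{1/4,1/3}(0)$ while $\ep_1=1/200$, the point $0$ lies in the unbounded component of $\BB C\setminus\mcl A$ while $B_{\ep_n}(z)\subset B_{\ep_1}(z)$ lies in the bounded component. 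Applying Lemma~\ref{lem-theta-deriv} with $K_1=B_{\ep_n}(z)$, $K_2=\{0\}$, $\phi=\phi_{z,1}$ gives that $\theta$ is locally Lipschitz, strictly increasing, and satisfies $\theta'(x)\ge \xi e^{\xi x}D_h(\bdy B_{2\ep_1}(z),\bdy B_{3\ep_1}(z))$ for a.e.\ $x\ge 0$.

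I would then collect three ingredients valid on $\ul{\mcl E}^n(z)$. \textbf{(a)} Lemma~\ref{lem-E-dist} applied with $K=A$ gives $\theta(0)=D_h(0,B_{\ep_n}(z))\le S$, so the hypothesis $\BB s>S$ gives $\theta(0)<\BB s$. \textbf{(b)} Condition~\ref{item-onescale-annulus} in the definition of $\ul E_{z,1}$ gives $D_h(\bdy B_{2\ep_1}(z),\bdy B_{3\ep_1}(z))\ge A^{-1}\ep_1^{\xi Q}e^{\xi h_{\ep_1}(z)}$; combining condition~\ref{item-onescale-thick} of $\ul E_{z,1}$ with $|h_{\ep_0}(z)|\le A$ from $E_{z,0}$ bounds $|h_{\ep_1}(z)|$ by a constant depending only on $A$, hence only on $\alpha,\gamma$. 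This yields $\theta'(x)\ge c_1$ for a.e.\ $x\ge 0$, for a constant $c_1=c_1(\alpha,\gamma)>0$. \textbf{(c)} Weyl scaling (Axiom~\ref{item-metric-f}) applied with $f=(y-x)\phi_{z,1}$ and the bound $\phi_{z,1}\le 1$ give $\theta(y)\le e^{\xi(y-x)}\theta(x)$ for $0\le x<y$, so $\theta$ is Lipschitz on $[0,R]$ with a constant $M$ depending only on $\BB s,\gamma,R$.

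The next step will be to fix $R=R(\BB s,\alpha,\gamma)$ so large that $\theta(0)+c_1R\ge\BB s+1\ge\BB s+\ep_n^\beta$ for every $n\ge 1$ (possible since $\ep_n^\beta\le\ep_1^\beta\le 1$). Then (a) and (b) give $\theta(0)<\BB s<\BB s+\ep_n^\beta<\theta(R)$, so by continuity the preimage $\{x\in[0,R]:\theta(x)\in[\BB s,\BB s+\ep_n^\beta]\}$ is a nonempty subinterval, and the Lipschitz bound (c) forces its length to be at least $\ep_n^\beta/M$. This produces $\BB P[G_{z,n}(h+X\phi_{z,1})\mid h]\ge\ep_n^\beta/(MR)$ on $\ul{\mcl E}^n(z)$, with $M,R$ depending only on $\BB s,\alpha,\gamma$.

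The main obstacle will be bookkeeping: one must verify that the constants extracted in (a)--(c) are uniform in $z\in\BB A_{1/4,1/3}(0)$ and in $n\in\BB N$. Uniformity in $z$ follows because $\mcl A$ has Euclidean distance bounded below from both $\{0\}$ and $B_{\ep_n}(z)$ throughout the relevant range, and the bound from condition~\ref{item-onescale-annulus} is already uniform in $z$; uniformity in $n$ is automatic because only the fixed scale $\ep_1$ enters the lower bound on $\theta'$. It is essential here that we work with $\ul{\mcl E}^n(z)$ (i.e., $K=A$) rather than $\mcl E^n(z)$, so that no estimate depends on $K$; the transition to $\mcl E^n(z)$ for a sufficiently large $K$ is handled separately via Lemma~\ref{lem-Elower-compare}.
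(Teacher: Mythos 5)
Your proposal is correct and follows essentially the same route as the paper: it defines $\theta(x)=D_{h+x\phi_{z,1}}(0,B_{\ep_n}(z))$, uses Lemma~\ref{lem-theta-deriv} together with condition~\ref{item-onescale-annulus} to get a uniform lower bound on $\theta'$, uses Weyl scaling for the Lipschitz upper bound, invokes Lemma~\ref{lem-E-dist} with $K=A$ for $\theta(0)\le S<\BB s$, and chooses $R$ large enough to cover $\BB s+\ep_n^\beta$. The only cosmetic differences from the paper's argument are that you apply Lemma~\ref{lem-theta-deriv} directly rather than through Lemma~\ref{lem-dist-deriv}, drop the factor $e^{\xi x}\ge 1$ in the lower bound on $\theta'$ (which the paper retains and integrates), and bound $|h_{\ep_1}(z)|$ directly from conditions~\ref{item-onescale-thick} of $\ul E_{z,1}$ and $E_{z,0}$ rather than citing Lemma~\ref{lem-E-thick}.
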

 
It is crucial for our purposes that the constant $R$ from Lemma~\ref{lem-dist-1pt-lower'} does \emph{not} depend on $K$, since $K$ needs to depend on $R$ due to Lemma~\ref{lem-Elower-compare}. 
This is why we truncate on $\ul{\mcl E}^n(z)$ (which does not depend on $K$) instead of on $\mcl E^n(z)$ in the lemma statement.

For the proof of Lemma~\ref{lem-dist-1pt-lower'}, we need the following lower bound for how much we perturb distances when we add a multiple of $\phi_{z,m}$ to $h$, which is a consequence of Lemma~\ref{lem-theta-deriv}.

\begin{lem} \label{lem-dist-deriv}
Let $z\in \BB A_{1/4,1/3}(0)$ and let $n,m \in\BB N$ with $m\leq n$. 
Recalling the smooth bump function $\phi_{z,m}$ from condition~\ref{item-onescale-dirichlet} in the definition of $E_{z,m}$, we define
\eqb \label{eqn-dist-f-def}
\theta(x) = \theta_{z,m}^n(x) := D_{h + x\phi_{z,m}}\left(  0 , B_{\ep_n}(z)   \right),\quad\forall x \geq 0. 
\eqe 
Then a.s.\ $\theta$ is strictly increasing and locally Lipschitz continuous and on $\mcl E^m(z)$, a.s.\ 
\eqb \label{eqn-dist-deriv}
\theta'(x) \succeq e^{\xi x} \ep_m^{\xi Q} e^{\xi h_{ \ep_m}(z)}   ,\quad \text{for Lebesgue-a.e.\ $x\geq 0$}
\eqe 
with a deterministic implicit constant depending only on $\alpha,\gamma$ (not on $K$).  
\end{lem}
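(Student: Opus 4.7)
The plan is to apply Lemma~\ref{lem-theta-deriv} directly, with a specific geometric choice matching the support properties of $\phi_{z,m}$, and then invoke condition~\ref{item-onescale-annulus} in the definition of $E_{z,m}$ (which holds on $\mcl E^m(z)$) to turn the abstract lower bound into the explicit one claimed.

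More precisely, I would apply Lemma~\ref{lem-theta-deriv} with $K_1 = \ol{B_{\ep_n}(z)}$, $K_2 = \{0\}$, $\mcl A = \ol{\BB A_{2\ep_m, 3\ep_m}(z)}$, and $\phi = \phi_{z,m}$. The geometric hypotheses are easy to verify: since $z\in\BB A_{1/4,1/3}(0)$ and $m\geq 1$ gives $3\ep_m \leq 3\ep_1 = 3/100 < 1/4 \leq |z|$, the point $0$ lies in the unbounded component of $\BB C\setminus \mcl A$; since $n\geq m$ we have $\ep_n \leq \ep_m < 2\ep_m$, so $\ol{B_{\ep_n}(z)} \subset B_{2\ep_m}(z)$ lies in the bounded component; and $\phi_{z,m}$ is identically $1$ on $\mcl A$ by the definition in condition~\ref{item-onescale-dirichlet} of $E_{z,m}$. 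Since $\phi_{z,m}$ is non-negative, Lemma~\ref{lem-theta-deriv} then gives that $\theta$ is a.s.\ strictly increasing and locally Lipschitz continuous, and for Lebesgue-a.e.\ $x\geq 0$,
\[
\theta'(x) \geq \xi e^{\xi x} D_h\!\left(\bdy B_{2\ep_m}(z), \bdy B_{3\ep_m}(z)\right).
\]

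Next I would use the event $\mcl E^m(z)$. By definition $\mcl E^m(z) \subset E_{z,m}$, so on this event condition~\ref{item-onescale-annulus} yields
\[
D_h\!\left(\bdy B_{2\ep_m}(z), \bdy B_{3\ep_m}(z)\right) \geq A^{-1} \ep_m^{\xi Q} e^{\xi h_{\ep_m}(z)} .
\]
Combining the two displayed inequalities gives $\theta'(x) \geq \xi A^{-1} e^{\xi x} \ep_m^{\xi Q} e^{\xi h_{\ep_m}(z)}$ for a.e.\ $x\geq 0$. Since $A$ was fixed in Proposition~\ref{prop-E-prob} as a function only of $\alpha,\gamma$, the implicit constant $\xi A^{-1}$ depends only on $\alpha,\gamma$ (in particular, not on $K$), as required.

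There is no substantial obstacle here; this is essentially a routine repackaging of Lemma~\ref{lem-theta-deriv}. The only point to be careful about is the geometric verification that $0$ and $B_{\ep_n}(z)$ sit in the correct components of $\BB C\setminus\mcl A$, which is where the hypothesis $z\in\BB A_{1/4,1/3}(0)$ (rather than just $z\in B_{1/3}(0)$) is used to guarantee separation of $0$ from $B_{3\ep_m}(z)$ at every scale $m\in\BB N$.
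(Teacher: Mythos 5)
Your proof is correct and follows essentially the same route as the paper: apply Lemma~\ref{lem-theta-deriv} with $K_1 = \ol{B_{\ep_n}(z)}$, $K_2=\{0\}$, $\mcl A$ a sub-annulus of $\BB A_{2\ep_m,3\ep_m}(z)$, then invoke condition~\ref{item-onescale-annulus} of $E_{z,m}$ on $\mcl E^m(z)$. Your explicit verification of the topological hypotheses (that $0$ and $B_{\ep_n}(z)$ lie in the correct components of the complement, using $z\in\BB A_{1/4,1/3}(0)$) is a welcome addition that the paper leaves implicit.
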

\begin{proof}
By Lemma~\ref{lem-theta-deriv} applied with $K_1 = \ol{B_{\ep_n}(z)} $, $K_2 =\{0\}$, and $\mcl A = \BB A_{2\ep_m , 3\ep_m}(z)$, $\theta$ is strictly increasing and locally Lipschitz continuous and 
\eqb\label{eqn-dist-deriv0}
\theta'(x) \geq \xi e^{\xi x}  D_h\left(\bdy B_{2\ep_m}(z) , \bdy B_{3\ep_m}(z) \right)  ,\quad  \text{for Lebesgue-a.e.\ $x\geq 0$} .
\eqe
Condition~\ref{item-onescale-annulus} in the definition of $E_{z,m}$ shows that the right side of~\eqref{eqn-dist-deriv0} is at least $ A^{-1} \xi   e^{\xi x}  \ep_m^{\xi Q} e^{\xi h_{ \ep_m}(z)}$, which concludes the proof.
\end{proof}

\begin{proof}[Proof of Lemma~\ref{lem-dist-1pt-lower'}]
Throughout the proof we condition on $h$ and assume that $\ul{\mcl E}^n(z)$ occurs. 
As in Lemma~\ref{lem-dist-deriv} (with $m=1$), for $x\geq 0$ we define
$\theta(x) := D_{h + x\phi_{z,1}}\left(  0 , B_{\ep_n}(z)   \right)$. 
By Lemma~\ref{lem-E-dist} (applied with $K=A$),  
\eqb \label{eqn-use-E-dist}
\theta(0) = D_h(0,B_{\ep_n}(z) ) \in [S^{-1}    ,S   ] .
\eqe

Lemma~\ref{lem-dist-deriv} provides a lower bound for $\theta'(x)$. We will also need a corresponding upper bound, whose proof is much easier. 
Since $\phi_{z,1} \leq 1$, we have $\theta(y) \leq e^{\xi(y-x)} \theta(x) $ for each $x,y\geq 0$ with $x < y$.
This implies that that $\theta(y) - \theta(x) \leq   (e^{\xi(y-x)} - 1) \theta(x)$ and hence that $\theta'(x) \leq \xi \theta(x)  \leq \xi e^{\xi x} \theta(0)$ for Lebesgue-a.e.\ $x\geq 0$.  
By combining this with~\eqref{eqn-use-E-dist}, we get
\eqb  \label{eqn-dist-deriv-upper}
\theta'(x) \leq \xi  S  e^{\xi x}   ,\quad \text{for Lebesgue-a.e.\ $x\geq 0$} .
\eqe 
 
The bound~\eqref{eqn-dist-deriv-upper} together with the fundamental theorem of calculus implies that for any $R>0$ and any $\BB s \in [\theta(0) , \theta(R) - \ep_n^\beta]$, the Lebesgue measure of the set of $x\in [0,R]$ for which $\theta(x) \in [\BB s , \BB s + \ep_n^\beta ]$ is at least $ \xi^{-1} S^{-1} e^{-\xi R} \ep_n^\beta $. 
%\theta(y) - \theta(x) \leq \xi S e^{\xi R} (y-x)$ so if $\theta(x) = \BB s$ and $\theta(y) = \BB s + \ep_n^\beta$, then $y-x \leq \xi^{-1} S^{-1} e^{-\xi R} \ep_n^\beta$. 
Hence, if $X$ is uniformly distributed on $[0,R]$, then for any $\BB s \in [\theta(0) , \theta(R) - \ep_n^\beta]$,
\eqb
\BB P\left[G_{z,n}(h + X\phi_{z,1} ) \,|\, h \right]
= \BB P\left[ \theta(X) \in [\BB s , \BB s + \ep_n^\beta ] \,|\, h \right] 
\geq  \frac{ \ep_n^\beta}{ \xi S R e^{\xi R} } .
\eqe
 
To prove~\eqref{eqn-dist-1pt-lower'}, it remains to show that if $\BB s >S$, then for a large enough choice of $R  =R(\BB s , \alpha , \gamma)$, we have $\BB s \in [\theta(0) , \theta(R) - \ep_n^\beta]$. 
By integrating the bound~\eqref{eqn-dist-deriv} from Lemma~\ref{lem-dist-interval}, we obtain that for some $\alpha,\gamma$-dependent constant $c>0$,  
\eqb  \label{eqn-dist-1pt-int}
\theta(R) \geq  \theta(0) +  c \ep_1^{\xi Q} e^{\xi h_{ \ep_1}(z)} \int_0^R  e^{\xi x} \,dx  = \theta(0) + c \ep_1^{\xi Q} e^{\xi h_{ \ep_1}(z)} (e^{\xi R} - 1)   .
\eqe 
By Lemma~\ref{lem-E-thick}, $\ep_1^{\xi Q} e^{\xi h_{\ep_1}(z)}$ is bounded below by some deterministic $\alpha,\gamma$-dependent constant $c' > 0$.
Hence~\eqref{eqn-use-E-dist} and~\eqref{eqn-dist-1pt-int} together imply that $\theta(R) \geq S^{-1} + c c' (e^{\xi R}-1) $. 
Hence, by choosing $R$ sufficiently large, in a manner depending only on $\BB s ,\alpha,\gamma$, we can arrange that $\theta(R) \geq \BB s + 1$, as required.
\end{proof}

In the rest of this section we will fix $\BB s  > S$ and let $R = R(\BB s,\alpha,\gamma)$ be as in Lemma~\ref{lem-dist-1pt-lower'}. 
We also assume that $K = K(R,\alpha,\gamma ) > A$ is chosen as in Lemma~\ref{lem-Elower-compare} for this choice of $R$, so that $K$ depends only on $\BB s , \alpha,\gamma$.
The following lemma is needed to compare the probabilities of $\ul{\mcl E}^n(z)$ and $\mcl E^n(z)$. We also include the event $F_C$, with a view toward Proposition~\ref{prop-end-across}. 
 
\begin{lem} \label{lem-Elower-prob}
Let $C>0$ and recall the event $F_C$ from~\eqref{eqn-dist-1pt-across}. 
For each $z\in\BB A_{1/4,1/3}(0)$ and each $n\in\BB N$,
\eqb
\BB P\left[ \ul{\mcl E}^n(z) \cap F_C \right] \succeq \BB P\left[\mcl E^n(z) \right]
\eqe
with the implicit constant depending only on $C,\BB s, \alpha,\gamma$.
\end{lem}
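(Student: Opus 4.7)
The plan is to handle the two perturbations relative to $\mcl E^n(z)$ separately: the stricter parameter $K=A$ at scale 1 (replacing $E_{z,1}$ by $\ul E_{z,1}$) and the extra constraint $F_C$ on an annulus disjoint from the scales of $\ul{\mcl E}^n(z)$. Both perturbations only involve scale 1 or regions disjoint from the higher-scale behavior, so the multi-scale machinery of Section~\ref{sec-short-range} reduces the problem to a pure scale-1 estimate, which I then handle by an absolute-continuity argument.

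First I would factor out scales $\geq 2$. Writing $\ul{\mcl E}^n(z)=\ul{\mcl E}^1(z)\cap\mcl E_2^n(z)$ from~\eqref{eqn-Ecap-def}--\eqref{eqn-Elower-def}, both $\ul{\mcl E}^1(z)\in\sigma(h|_{\BB A_{\ep_1,\ep_0}(z)})$ and $F_C\in\sigma(h|_{\BB A_{5/6,7/8}(0)})$ are measurable with respect to $h|_{\BB C\setminus B_{\ep_1}(z)}$ (since $B_{\ep_1}(z)\subset B_{2/3}(0)$ is disjoint from $\BB A_{5/6,7/8}(0)$), and $\ul{\mcl E}^1(z)\subset E_{z,1}$. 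Applying the conditioning argument used to prove Lemma~\ref{lem-multiscale-split} from Lemma~\ref{lem-multiscale-cond} (with $m=1$ and $\ul{\mcl E}^1(z)\cap F_C$ playing the role of $\mcl E^m(z)$) yields
\[
\BB P\bigl[\ul{\mcl E}^n(z)\cap F_C\bigr] \succeq \BB P\bigl[\mcl E_2^n(z)\bigr]\,\BB P\bigl[\ul{\mcl E}^1(z)\cap F_C\bigr].
\]
Combined with the trivial $\BB P[\mcl E^n(z)]\leq \BB P[\mcl E_2^n(z)]$, it suffices to show $\BB P[\ul{\mcl E}^1(z)\cap F_C]\succeq 1$ uniformly in $z\in\BB A_{1/4,1/3}(0)$.

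For this scale-1 estimate, fix a smooth bump function $\psi:\BB C\to[0,1]$ with $\op{supp}(\psi)\subset\BB A_{5/6,7/8}(0)$ and $\psi\equiv 1$ on $\BB A_{0.85,0.87}(0)$. Since $\op{supp}(\psi)$ is disjoint from $B_{2/3}(0)\supset B_{1/3}(z)$, a term-by-term check of the conditions defining $\ul{\mcl E}^1(z)$ (using Lemma~\ref{lem-rn-msrble} for condition~\ref{item-onescale-rn} and disjointness of supports for the Dirichlet inner product in condition~\ref{item-onescale-dirichlet}) gives $\ul{\mcl E}^1(z;h+a\psi)=\ul{\mcl E}^1(z;h)$ for every $a\geq 0$. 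Meanwhile, by Weyl scaling any path from $\bdy B_{5/6}(0)$ to $\bdy B_{7/8}(0)$ must cross $\{\psi=1\}$, so
\[
D_{h+a\psi}\bigl(\bdy B_{5/6}(0),\bdy B_{7/8}(0)\bigr) \geq e^{\xi a}\,D_h\bigl(\bdy B_{0.85}(0),\bdy B_{0.87}(0)\bigr),
\]
whence $F_C(h+a\psi)\supset \mcl D_a:=\{D_h(\bdy B_{0.85}(0),\bdy B_{0.87}(0))\geq Ce^{-\xi a}\}$. From Proposition~\ref{prop-E-prob} (applied with $K=A$) together with $\BB P[E_{z,0}]\succeq 1$, I would deduce $\BB P[\ul{\mcl E}^1(z)]\geq c_1>0$ depending only on $\alpha,\gamma$; negative-moment bounds for $D_h$-distances~\cite[Proposition 3.1]{lqg-metric-estimates} let me fix $a=a(C,c_1,\alpha,\gamma)$ so that $\BB P[\mcl D_a^c]\leq c_1/4$. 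Applying Lemma~\ref{lem-gff-abs-cont} with Radon-Nikodym derivative $M=\exp(-a(h,\psi)_\nabla - \tfrac{a^2}{2}(\psi,\psi)_\nabla)$ and choosing $c_0>0$ so that $\BB P[M\geq c_0]\geq 1-c_1/4$, I would conclude
\[
\BB P\bigl[\ul{\mcl E}^1(z;h)\cap F_C(h)\bigr] = \BB E\bigl[M\cdot\BB 1_{\ul{\mcl E}^1(z;h+a\psi)\cap F_C(h+a\psi)}\bigr] \geq c_0\BB P\bigl[\ul{\mcl E}^1(z)\cap\mcl D_a\cap\{M\geq c_0\}\bigr] \geq \tfrac{c_0 c_1}{2}.
\]

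The main obstacle is executing the Step 1 reduction cleanly: Lemma~\ref{lem-multiscale-cond} is stated with the indicator $\BB 1_{E_{z,m}}$, so applying it with the indicator $\BB 1_{\ul{\mcl E}^1(z)\cap F_C}$ (which is $\sigma(h|_{\BB C\setminus B_{\ep_1}(z)})$-measurable but not $\sigma(h|_{\BB C\setminus B_{\ep_0}(z)})$-measurable) requires careful $\sigma$-algebra tracking and tower-property manipulations exactly as in the proof of Lemma~\ref{lem-multiscale-split}. The other delicate point is verifying that condition~\ref{item-onescale-harmonic} of $\ul E_{z,1}$ (stated with the unchanged $A$, not $K$) still controls $\frk h_{z,1}$ well enough for the R-N derivative condition from $E_{z,2}$ to take effect.
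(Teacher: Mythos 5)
Your proof is correct and follows the paper's overall structure. Step 1 is exactly the paper's reduction: apply Lemma~\ref{lem-multiscale-cond} with $m=1$, using that $\ul{\mcl E}^1(z)\cap F_C\subset E_{z,1}$ is $\sigma(h|_{\BB C\setminus B_{\ep_1}(z)})$-measurable, to peel off the scales $\geq 2$ and reduce to the scale-1 estimate $\BB P[\ul{\mcl E}^1(z)\cap F_C]\succeq 1$.

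For Step 2 your argument is a genuine, and in my view cleaner, variant of the paper's Lemma~\ref{lem-Elower-prob-across}. The paper proves the scale-1 estimate in three separate pieces: $\BB P[\ul{\mcl E}^1(z)]\succeq 1$; $\BB P[F_C]>0$ (by an unspecified bump-function argument); and then a qualitative appeal to mutual absolute continuity of the conditional law of $h|_{B_{2/3}(0)}$ given $h|_{\BB A_{5/6,7/8}(0)}$ with the marginal. As written, that last step leaves the uniformity of the constant in $z$ somewhat implicit. You instead fold all three ingredients into one explicit Radon--Nikodym computation: choose $\psi$ supported in $\BB A_{5/6,7/8}(0)$, observe $\ul{\mcl E}^1(z;h+a\psi)=\ul{\mcl E}^1(z;h)$ by disjointness of supports, observe $F_C(h+a\psi)\supset\mcl D_a$ by Weyl scaling on the middle annulus $\BB A_{0.85,0.87}(0)$, and finish with a union bound after tuning $a$ and $c_0$. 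This makes the uniformity in $z$ and the dependence of the constant on $(C,\alpha,\gamma)$ completely explicit, avoiding the paper's qualitative Markov-property step. Both routes are correct; yours trades a bit of bookkeeping for a fully quantitative statement. One minor citation note: to get $\BB P[\ul{\mcl E}^1(z)]\geq c_1$, you should invoke Lemma~\ref{lem-thick-1pt} with $K=A$ and $n=1$ (as the paper does) rather than Proposition~\ref{prop-E-prob} alone; the events $E_{z,0}$ and $E_{z,1}\cap H_{z,0}^{\op{out}}\cap H_{z,0}^{\op{in}}$ are not independent, so passing from the two single-event bounds to the bound on the intersection requires the conditioning argument encapsulated in Lemma~\ref{lem-multiscale-split}.
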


We first treat the special case when $n =1$. 

\begin{lem} \label{lem-Elower-prob-across}   
For each $C>0$ and each $z\in\BB A_{1/4,1/3}(0)$, 
\eqb
\BB P\left[ \ul{\mcl E}^1(z) \cap F_C \right] \succeq 1 
\eqe
with the implicit constant depending only on $C,  \alpha,\gamma$.
\end{lem}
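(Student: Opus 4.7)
My plan is to prove this by adding to $h$ a deterministic bump function $M\psi$ supported in an annular neighborhood of $\bdy\BB D$ that is disjoint from the region determining $\ul{\mcl E}^1(z)$, using Weyl scaling on the modified field to force $F_C$, and then transferring back to $h$ via Cameron--Martin. The necessary preliminary input is the bound $\BB P[\ul{\mcl E}^1(z)] \geq c_1 > 0$ with $c_1 = c_1(\alpha,\gamma)$; this follows by combining $\BB P[E_{z,0}] \succeq 1$ with the bound $\BB P[E_{z,1} \cap H_{z,0}^{\op{out}} \cap H_{z,0}^{\op{in}}] \succeq 1$ (from Proposition~\ref{prop-E-prob} with $j=1$ and $K=A$) via Lemma~\ref{lem-multiscale-cond}, exactly as in the proof of Lemma~\ref{lem-thick-1pt}. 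The second crucial observation is that $\ul{\mcl E}^1(z)$ is determined by $h|_{\ol{B_{2/3}(0)}}$: since $z \in \BB A_{1/4,1/3}(0)$ and $\ep_0 = 1/3$, every circle, annulus, or bump function appearing in the definition lies in $\ol{B_{2/3}(0)}$; the Radon--Nikodym condition is local by Lemma~\ref{lem-rn-msrble}; and for the metric conditions I will use the elementary crossing principle that any path connecting the two boundary circles of a Euclidean annulus contains a sub-path staying in the closed annulus and still connecting those boundaries, so $D_h$-distances and $D_h$-lengths within such annuli coincide with internal metric quantities, which are local by Axiom~\ref{item-metric-local}.

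Next I will fix a smooth bump function $\psi \colon \BB C \to [0,1]$ with $\psi \equiv 1$ on $\ol{\BB A_{5/6,7/8}(0)}$ and compact support in $\BB A_{3/4,15/16}(0)$; in particular $\psi$ vanishes on $\ol{B_{2/3}(0)}$ and on $\bdy\BB D$, so adding a multiple of $\psi$ to $h$ preserves both $h|_{\ol{B_{2/3}(0)}}$ and the normalization $h_1(0)=0$. For $M > 0$, set $\wt h := h + M\psi$, using Notation~\ref{notn-other-h}. By the measurability observation above, the events $\ul{\mcl E}^1(z;\wt h)$ and $\ul{\mcl E}^1(z)$ coincide. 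Applying the crossing principle together with Weyl scaling (Axiom~\ref{item-metric-f}) to paths in $\ol{\BB A_{5/6,7/8}(0)}$ (where $\psi \equiv 1$) gives
\[
D_{\wt h}(\bdy B_{5/6}(0), \bdy B_{7/8}(0)) \geq e^{\xi M} D_h(\bdy B_{5/6}(0), \bdy B_{7/8}(0)).
\]
Since $D_h$ induces the Euclidean topology on $\BB C$, we have $D_h(\bdy B_{5/6}(0), \bdy B_{7/8}(0)) > 0$ almost surely, so I can choose $M = M(C,\alpha,\gamma)$ large enough that $\BB P[D_h(\bdy B_{5/6}(0), \bdy B_{7/8}(0)) < Ce^{-\xi M}] < c_1/2$; on the complementary event $F_C(\wt h)$ holds. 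This yields the estimate $\BB P[\ul{\mcl E}^1(z;\wt h) \cap F_C(\wt h)] \geq c_1/2$.

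Finally I will transfer the bound from $\wt h$ back to $h$ using Cameron--Martin. By Lemma~\ref{lem-gff-abs-cont} (the standard Radon--Nikodym calculation for the GFF), the law of $\wt h$ is absolutely continuous with respect to that of $h$ with density $D(h) = \exp(M(h,\psi)_\nabla - M^2(\psi,\psi)_\nabla/2)$. Writing $E := \ul{\mcl E}^1(z) \cap F_C$ and using the preceding bound,
\[
c_1/2 \leq \BB P[\wt h \in E] = \BB E\left[ D(h) \, \BB 1_E(h) \right].
\]
By Cauchy--Schwarz and the computation $\BB E[D(h)^2] = \exp(M^2(\psi,\psi)_\nabla)$ (which follows since $(h,\psi)_\nabla$ is centered Gaussian with variance $(\psi,\psi)_\nabla$), I conclude
\[
\BB P[\ul{\mcl E}^1(z) \cap F_C] \geq \frac{c_1^2}{4 \exp(M^2(\psi,\psi)_\nabla)},
\]
a positive constant depending only on $C,\alpha,\gamma$, as required. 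The only real obstacle is verifying carefully that $\ul{\mcl E}^1(z) \in \sigma(h|_{\ol{B_{2/3}(0)}})$, which is routine but requires checking each condition in the definition individually; the metric conditions all reduce via the crossing principle to local internal-metric statements.
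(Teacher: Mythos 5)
Your proof is correct, and it takes a genuinely different route from the paper's for the final ``combine the two events'' step. Both proofs begin with the same ingredients: the lower bound $\BB P[\ul{\mcl E}^1(z)] \geq c_1(\alpha,\gamma) > 0$ (from Lemma~\ref{lem-thick-1pt} with $K=A$ at $n=1$) and the measurability fact that $\ul{\mcl E}^1(z)$ depends only on $h$ restricted to a region well inside $B_{2/3}(0)$ while $F_C$ depends only on $h|_{\BB A_{5/6,7/8}(0)}$. Where you diverge is in how these are combined. The paper argues ``softly'': it first shows $\BB P[F_C]>0$ by an absolute-continuity argument, and then invokes the Markov property of the whole-plane GFF to assert that the conditional law of $h|_{B_{2/3}(0)}$ given $h|_{\BB A_{5/6,7/8}(0)}$ is mutually absolutely continuous with respect to the marginal law, so the two events of positive probability must co-occur with positive probability. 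Your argument instead performs a single deterministic Cameron--Martin shift: you add $M\psi$ (with $\psi\equiv 1$ on $\ol{\BB A_{5/6,7/8}(0)}$, supported away from $\ol{B_{2/3}(0)}\cup\bdy\BB D$), use the crossing principle plus Weyl scaling to get $D_{\wt h}(\bdy B_{5/6}(0),\bdy B_{7/8}(0)) \geq e^{\xi M} D_h(\bdy B_{5/6}(0),\bdy B_{7/8}(0))$, choose $M$ so that $F_C(\wt h)$ holds except on an event of probability $< c_1/2$, and conclude $\BB P[\ul{\mcl E}^1(z;\wt h)\cap F_C(\wt h)]\geq c_1/2$ since $\ul{\mcl E}^1(z;\wt h)=\ul{\mcl E}^1(z)$; transferring back via Cauchy--Schwarz and the explicit second moment $\BB E[D(h)^2]=\exp(M^2(\psi,\psi)_\nabla)$ gives a closed-form lower bound. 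Your route is more elementary in that it relies only on Lemma~\ref{lem-gff-abs-cont} plus Cauchy--Schwarz, avoiding the delicate point (elided in the paper) of establishing mutual absolute continuity between the conditional and marginal laws of $h|_{B_{2/3}(0)}$; it also produces an explicit constant. The paper's route is shorter on the page but leans on the Markov property argument. One small but correct point of care in your write-up: you use $\ol{B_{2/3}(0)}$ (closed ball) in the measurability claim, which is slightly more careful than the paper's $B_{1/3}(z)$, since condition~1 of $E_{z,0}$ involves the circle average over $\bdy B_{\ep_0}(z)$, and $\sigma(h|_K)$ for closed $K$ is defined via open neighborhoods. Your crossing-principle observation that the annulus-crossing distance $D_h(\bdy B_{2\ep_j}(z),\bdy B_{3\ep_j}(z))$ equals its internal-metric counterpart (needed so that condition~\ref{item-onescale-annulus} and condition~\ref{item-onescale-around} of $E_{z,j}$ are genuinely local) is also correct and fills in a step the paper states tersely.
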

\begin{proof}
By~\eqref{eqn-Ecap-msrble}, the event $\ul{\mcl E}^1(z)$ is a.s.\ determined by the restriction of $h$ to $B_{1/3}(z) \subset B_{2/3}(0)$ and by Lemma~\ref{lem-thick-1pt} (applied with $K=A$) the probability of this event is bounded below by a positive constant depending only on $\alpha,\gamma$. 
By the locality of the metric (Axiom~\ref{item-metric-local}), the event $F_C$ is a.s.\ determined by $h|_{\BB A_{5/6,7/8}(0)}$. Furthermore, by adding a large smooth bump function supported in $\BB A_{5/6,7/8}(0)$ to $h$ and noting that this affects the law of $h$ in an absolutely continuous way, one easily sees that $\BB P[F_C] > 0$ for any choice of $C>0$. 

By the Markov property of the GFF, the conditional law of $h|_{B_{2/3}(0)}$ given $h|_{\BB A_{5/6,7/8}(0)}$ is a.s.\ mutually absolutely continuous with respect to the marginal law of $h|_{B_{2/3}(0)}$. Combining this with the preceding paragraph concludes the proof.
\end{proof}

\begin{proof}[Proof of Lemma~\ref{lem-Elower-prob}]
Due to Lemma~\ref{lem-E-msrble}, the event $\ul{\mcl E}^1(z)  \cap F_C$ is a.s.\ determined by $h|_{\BB C\setminus B_{\ep_1}(z)}$.
Furthermore, this event is contained in $ E_{z,1}$.   
Consequently, Lemma~\ref{lem-multiscale-cond} (applied with $m=1$) implies that a.s.\ 
\eqbn
\BB P\left[ \mcl E_2^n(z) \,|\, h|_{\BB C\setminus B_{\ep_1}(z) }   \right] \BB 1_{\ul{\mcl E}^1(z)  \cap F_C}
\succeq  \BB P\left[ \mcl E_2^n(z)   \right] \BB 1_{\ul{\mcl E}^1(z)  \cap F_C} .
\eqen
Taking unconditional expectations of both sides and using Lemma~\ref{lem-Elower-prob-across} now gives
\eqbn
\BB P\left[ \ul{\mcl E}^n(z) \cap F_C \right] \succeq \BB P\left[ \mcl E_2^n(z) \right]  \geq \BB P\left[\mcl E^n(z) \right] .
\eqen
\end{proof}

\begin{proof}[Proof of Lemma~\ref{lem-dist-1pt-lower}]
Let $R = R(\BB s,\alpha,\gamma)$ be as in Lemma~\ref{lem-dist-1pt-lower'} and let $X$ be sampled uniformly from $[0,R]$. 
Define the field 
\eqbn
\wt h := h + X \phi_{z,1} .
\eqen
By Lemma~\ref{lem-dist-1pt-lower'}, 
\eqb  \label{eqn-dist-1pt-cond}
\BB P\left[  G_{z,n}(\wt h ) \,|\, h \right] \BB 1_{\ul{\mcl E}^n(z ) \cap F_C }  \succeq \ep_n^\beta \BB 1_{\ul{\mcl E}^n(z ) \cap F_C }  .
\eqe 
Since the support of $\phi_{z,1}$ is disjoint from $\BB A_{5/6,7/8}(0)$, we have $D_h(\bdy B_{5/6}(0) , \bdy B_{7/8}(0) ) = D_{\wt h}(\bdy B_{5/6}(0) , \bdy B_{7/8}(0) )$ and hence $F_C = F_C(\wt h)$.
By Lemma~\ref{lem-Elower-compare}, if $\ul{\mcl E}^n(z )$ occurs then also $\mcl E^n(z;\wt h)$ occurs.
Therefore, we can take unconditional expectations of both sides of~\eqref{eqn-dist-1pt-cond} and use Lemma~\ref{lem-Elower-prob} to lower-bound the right side to get
\allb  \label{eqn-shifted-1pt}
\BB P\left[ \mcl G^n(z;\wt h) \cap F_C(\wt h)   \right] 
\geq \BB P\left[G_{z,n}(\wt h ) \cap \ul{\mcl E}^n(z)   \cap F_C    \right] 
\succeq \ep_n^\beta \BB P\left[\ul{\mcl E}^n(z ) \cap F_C  \right] 
\succeq \ep_n^\beta  \BB P\left[ \mcl E^n(z) \right] .
\alle
 
By Lemma~\ref{lem-gff-abs-cont}, if we condition on $X$, then the conditional law of $\wt h$ is mutually absolutely continuous with respect to the law of $h$. 
The Radon-Nikodym derivative of the latter law w.r.t.\ the law of the former law is
\eqb \label{eqn-1pt-rn}
M_X = M_X(\wt h)  = \exp\left( - X  (\wt h,  \phi_{z,1})_\nabla + \frac12 X^2 (\phi_{z,1} , \phi_{z,1})_\nabla \right) .
\eqe
By condition~\ref{item-onescale-dirichlet} in the definition of $E_{z,1}(\wt h)$, on $\mcl E^n(z;\wt h)$ we have $|(\wt h,\phi_{z,1})_\nabla| \leq K$. 
Since $(\phi_{z,1} , \phi_{z,1})_\nabla = (\phi,\phi)_\nabla $ and $X\in [0,R]$, it follows that on $\mcl E^n(z;\wt h)$, the Radon-Nikodym derivative~\eqref{eqn-2pt-rn} is bounded above and below by deterministic constants which depend only on $\BB s , \alpha,\gamma$. 

We may therefore compute 
\alb
\BB P\left[\mcl G^n(z  ) \cap F_C   \right]  
&= \BB P\left[\mcl G^n(z   ) \cap F_C  \,|\, X \right] \quad \text{($X, h$ are independent)} \notag\\ 
&= \BB E\left[  M_X \BB 1_{ \mcl G^n(z ;\wt h ) \cap F_C(\wt h) } \,|\, X \right]  \notag \\ 
&\succeq \BB P\left[   \mcl G^n(z ;\wt h )    \cap F_C(\wt h)  \,|\,X \right]  .
\ale
Taking unconditional expectations of both sides of this last inequality and using~\eqref{eqn-shifted-field-2pt} now gives
\eqbn
\BB P\left[   \mcl G^n(z  )   \cap F_C  \right]
\succeq   \ep_n^{ \beta} \BB P\left[ \mcl E^n(z  ) \right]  ,
\eqen 
as required.
\end{proof}

\begin{proof}[Proof of Proposition~\ref{prop-end-1pt}]
This is immediate from the lower bound for $\BB P[\mcl E^n(z)]$ from Lemma~\ref{lem-thick-1pt} combined with Lemma~\ref{lem-dist-1pt-lower}. 
\end{proof}

\subsection{Two-point estimate} 
\label{sec-end-2pt}

The goal of this subsection is to finish the proofs of Propositions~\ref{prop-end-2pt} and~\ref{prop-end-across}. 
Throughout, we fix $\BB s  > S$ and we assume that $K = K(\BB s , \alpha,\gamma)$ is chosen as in Proposition~\ref{prop-end-1pt}. 
Most of the subsection is devoted to the proof of the following lemma, which combines with Lemma~\ref{lem-dist-1pt-lower} to yield Proposition~\ref{prop-end-2pt}. 

\begin{lem} \label{lem-full-2pt}
Suppose $z,w\in \BB A_{1/4,1/3}(0)$, let $m\geq 3$ be such that $|z-w| \in [\ep_{m+1} , \ep_m]$, and let $n\geq m+2 $.
Then
\eqb \label{eqn-full-2pt}
\BB P\left[   \mcl G^n(z  ) \cap \mcl G^n(w )   \right]
\leq \ep_m^{ - \alpha^2/2 -\xi(Q-\alpha) - o_m(1)}  \ep_n^{2\beta} \BB P\left[ \mcl E^n(z ) \right] \BB P\left[ \mcl E^n(w ) \right] ,
\eqe 
with the rate of the $o_m(1)$ depending only on $\BB s , \alpha,\gamma$ (not on $n$ or the particular choices of $z$ and $w$). 
\end{lem}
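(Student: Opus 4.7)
The plan is to run the two-perturbation analog of the one-point argument from Section~\ref{sec-end-1pt}. Let $X_z, X_w$ be i.i.d.\ uniform on $[0,1]$, independent of $h$, and define $\wt h := h + X_z \phi_{z,1} + X_w \phi_{w,m+2}$, where $\phi_{w,m+2}$ is the scale-$(m+2)$ bump at $w$ from condition~\ref{item-onescale-dirichlet} in the definition of $E_{w,m+2}$. Because $m\geq 3$ and $|z-w| \in [\ep_{m+1},\ep_m]$, the support $\BB A_{\ep_{m+2},4\ep_{m+2}}(w)$ of $\phi_{w,m+2}$ is contained in $B_{2\ep_m}(z)$ but disjoint from $\BB A_{2\ep_1,3\ep_1}(z)$ and from $B_{6\ep_{m+2}}(w)\setminus B_{4\ep_{m+2}}(w)$; in particular, the two bumps have disjoint supports, the $w$-bump does not disconnect $0$ from $z$, and $\phi_{z,1}$ vanishes identically on $B_{6\ep_{m+2}}(w)$.

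The main step is a Jacobian bound for
\[
\Phi(x,y) := \bigl(D_{h+x\phi_{z,1}+y\phi_{w,m+2}}(0,B_{\ep_n}(z)),\; D_{h+x\phi_{z,1}+y\phi_{w,m+2}}(0,B_{\ep_n}(w))\bigr) .
\]
On a suitable truncation of $\mcl E^n(z)\cap\mcl E^n(w)$, Lemma~\ref{lem-theta-deriv} applied with $\mcl A = \BB A_{2\ep_1,3\ep_1}(z)$ gives $\partial_x\Phi_1 \succeq \ep_1^{\xi Q}e^{\xi h_{\ep_1}(z)} \succeq 1$ (using condition~\ref{item-onescale-annulus} of $E_{z,1}$ and Lemma~\ref{lem-E-thick} for $h_{\ep_1}(z)$), and applied with $\mcl A = \BB A_{2\ep_{m+2},3\ep_{m+2}}(w)$ gives $\partial_y\Phi_2 \succeq \ep_{m+2}^{\xi Q}e^{\xi h_{\ep_{m+2}}(w)} = \ep_m^{\xi(Q-\alpha)+o_m(1)}$. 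The crucial off-diagonal vanishing $\partial_y\Phi_1\equiv 0$ will follow from the geodesic-avoidance Lemma~\ref{lem-geo-away} at scale $m+2$ around $w$ applied to $h+x\phi_{z,1}$: since $\phi_{z,1}|_{B_{6\ep_{m+2}}(w)}\equiv 0$, Axiom~\ref{item-metric-local} transplants condition~\ref{item-onescale-around} from $E_{w,m+2}(h)$ to $E_{w,m+2}(h+x\phi_{z,1})$, whence every $D_{h+x\phi_{z,1}}$-geodesic from $0$ to $B_{\ep_n}(z)$ avoids $B_{4\ep_{m+2}}(w)\supset \op{supp}\,\phi_{w,m+2}$, so its length is independent of $y$ by monotonicity. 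Combining, $|J\Phi| = |\partial_x\Phi_1\cdot\partial_y\Phi_2| \succeq \ep_m^{\xi(Q-\alpha)+o_m(1)}$ pointwise in $(x,y)$, so the preimage $\Phi^{-1}([\BB s,\BB s+\ep_n^\beta]^2)$ has area at most $\ep_n^{2\beta}\ep_m^{-\xi(Q-\alpha)+o_m(1)}$. Since $(X_z,X_w)$ is uniform on $[0,1]^2$ and independent of $h$, this yields
\[
\BB P\bigl[G_{z,n}(\wt h)\cap G_{w,n}(\wt h)\bigm|h\bigr]\BB 1_{\mcl E^n(z)\cap\mcl E^n(w)} \preceq \ep_n^{2\beta}\ep_m^{-\xi(Q-\alpha)+o_m(1)}\BB 1_{\mcl E^n(z)\cap\mcl E^n(w)} .
\]

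Taking unconditional expectations and invoking the short-range two-point estimate Lemma~\ref{lem-thick-2pt} will give
\[
\BB P\bigl[G_{z,n}(\wt h)\cap G_{w,n}(\wt h)\cap\mcl E^n(z)\cap\mcl E^n(w)\bigr] \leq \ep_n^{2\beta}\ep_m^{-\alpha^2/2-\xi(Q-\alpha)+o_m(1)}\BB P[\mcl E^n(z)]\BB P[\mcl E^n(w)] .
\]
To convert this into a bound on $\BB P[\mcl G^n(z)\cap\mcl G^n(w)]$ (with the original $h$), apply the GFF Cameron--Martin formula (Lemma~\ref{lem-gff-abs-cont}): conditional on $(X_z,X_w)$, the Radon--Nikodym derivative of the law of $h$ w.r.t.\ $\wt h$ equals
\[
\exp\!\left(-X_z(\wt h,\phi_{z,1})_\nabla - X_w(\wt h,\phi_{w,m+2})_\nabla + \tfrac12 X_z^2(\phi,\phi)_\nabla + \tfrac12 X_w^2(\phi,\phi)_\nabla\right) ,
\]
with no cross term because $\phi_{z,1}$ and $\phi_{w,m+2}$ have disjoint supports. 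On $\mcl E^n(z;\wt h)\cap\mcl E^n(w;\wt h)$, condition~\ref{item-onescale-dirichlet} in $E_{z,1}(\wt h)$ and in $E_{w,m+2}(\wt h)$ bounds the two Dirichlet pairings by $K$ and by $A$ respectively, so this derivative is of order one and $h$, $\wt h$ may be freely swapped in the relevant probability.

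The main obstacle is arranging matters so that $\mcl E^n(z)\cap\mcl E^n(w)$ (in $h$) implies $\mcl E^n(z;\wt h)\cap\mcl E^n(w;\wt h)$, so that both the Radon--Nikodym bound is uniform and the Jacobian argument applies for every $(X_z,X_w)\in[0,1]^2$. Adding $X_z\phi_{z,1}$ perturbs only the conditions in $E_{z,1}$ and can be absorbed into the parameter $K$ exactly as in Lemma~\ref{lem-Elower-compare}; but adding $X_w\phi_{w,m+2}$ perturbs $E_{w,m+2}$ and there is no analogous inflation parameter at scale $m+2$. The clean resolution, following the ``skip scales'' strategy from Section~\ref{sec-outline}, is to replace $\mcl E^n(z)\cap\mcl E^n(w)$ throughout by the weaker truncation $\mcl E_{m+3}^n(z)\cap\mcl E_3^{m-1}(z)\cap\mcl E_{m+3}^n(w)\cap\mcl E_3^{m-1}(w)$ intersected with the handful of single-scale conditions actually used by the derivative bounds (condition~\ref{item-onescale-annulus} at scales $1$ and $m+2$, and condition~\ref{item-onescale-around} at scale $m+2$ around $w$). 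This weaker event is still controlled by Lemma~\ref{lem-thick-2pt} up to a constant-order factor (the retained single-scale conditions have positive probability given the rest by the approximate-independence machinery of Section~\ref{sec-short-twopoint}), yet it avoids imposing on $\wt h$ the very scale-$(m+2)$ conditions that the $w$-perturbation corrupts.
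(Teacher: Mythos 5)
Your proposal follows the same overall plan as the paper: perturb by $\wt h = h + X_z\phi_{z,1} + X_w\phi_{w,m+2}$, use the geodesic-avoidance lemma to decouple the two distances, bound the conditional probability of $G_{z,n}(\wt h)\cap G_{w,n}(\wt h)$ given $h$ on a truncation event, and transfer back to $h$ via the Radon--Nikodym derivative with no cross-term. Your ``Jacobian'' framing is a cosmetic recast of the paper's argument: the identity $\partial_y\Phi_1\equiv 0$ is exactly the paper's observation (via Lemma~\ref{lem-geo-away}) that $D_{\wt h}(0,B_{\ep_n}(z))$ is determined by $h$ and $X_z$ alone, and once the matrix is triangular the product of diagonal entries is just the sequential conditioning the paper does with Lemma~\ref{lem-dist-interval} applied twice.

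Where your write-up stops short is the precise form of the truncation event, and this is not a cosmetic matter. For the upper-bound RN transfer you need the chain $\mcl E^n(z;\wt h)\cap\mcl E^n(w;\wt h)\Rta T$ for an $h$-measurable event $T$ on which both (a) the derivative lower bounds hold and (b) the two-point short-range bound of Lemma~\ref{lem-thick-2pt} applies. (Your stated ``main obstacle'' --- wanting $\mcl E^n(z)\cap\mcl E^n(w)\Rta\mcl E^n(z;\wt h)\cap\mcl E^n(w;\wt h)$ --- is the implication you'd want for a \emph{lower} bound, not this one.) Your proposed fix of intersecting the skipped-scales event with ``the handful of single-scale conditions actually used'' does not quite close the loop: those conditions cannot be literal conditions on $h$ at scale $m+2$ around $w$, because $\mcl E^n(w;\wt h)$ only tells you that the scale-$(m+2)$ conditions hold for the perturbed field $\wt h$. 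The paper's device that resolves this is the event $\ol{\mcl E}^{n,m}(\cdot)$ --- that $\mcl E^n(\cdot)$ occurs for $h+f$ for \emph{some} $[-1,1]$-valued $f$ supported on the right annuli --- which is $h$-measurable, implied by $\mcl E^n(\cdot;\wt h)$, still implies the skipped-scales containment $\mcl E_3^{m-1}(\cdot)\cap\mcl E_{m+3}^n(\cdot)$ for Lemma~\ref{lem-thick-2pt}, and supports a slightly degraded (by a factor $e^{O(\xi)}$) version of condition~\ref{item-onescale-annulus} that suffices for Lemmas~\ref{lem-dist-deriv} and~\ref{lem-geo-away}. Without introducing this (or an equivalent) explicitly, the argument has a genuine gap; with it, your proof becomes the paper's.
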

 
As in Section~\ref{sec-end-1pt}, we need a variant of $\mcl E^n(z)$ to deal with the fact that adding a smooth function to $h$ can affect the occurrence of $\mcl E^n(z)$. 
For $m \in \BB N$, let $\ol{\mcl E}^{n,m}(z)$ be the event that there exists a continuous function $f : \BB C\rta [-1,1]$ which is supported on 
\eqb \label{eqn-Eupper-support}
\left( \BB C\setminus B_{\ep_2}(z)  \right) \cup \BB A_{\ep_{m+2} , \ep_{m-1}}(z)
\eqe
such that $\mcl E^n(z)$ occurs with $h+f$ in place of $h$. Obviously, $\ol{\mcl E}^{n,m}(z) \supset \mcl E^n(z)$ (take $f = 0$). 
We have the following two-point estimate for $\ol{\mcl E}^{n,m}(z)$. 

\begin{lem} \label{lem-Eupper-2pt}
Let $z,w\in \BB A_{1/4,1/3}(0)$, let $m\in\BB N$ such that $|z-w| \in [\ep_{m+1} , \ep_m]$, and let $n\in\BB N$ with $n\geq m $. Then
\eqb \label{eqn-Eupper-2pt}
\BB P\left[\ol{\mcl E}^{n,m}(z) \cap \ol{\mcl E}^{n,m}(w) \right] \leq \ep_m^{-\alpha^2/2 + o_m(1)}  \BB P\left[ \mcl E^n(z) \right] \BB P\left[ \mcl E^n(w) \right] 
\eqe
 with the rate of the $o_m(1)$ depending only on $\alpha,\gamma$. 
\end{lem}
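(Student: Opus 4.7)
The plan is to reduce Lemma~\ref{lem-Eupper-2pt} to the two-point estimate already established in Lemma~\ref{lem-thick-2pt}, by proving the deterministic containment
\[
\ol{\mcl E}^{n,m}(z) \subset \mcl E_3^{m-1}(z) \cap \mcl E_{m+3}^n(z),
\]
together with the analogous statement for $w$. Granting this containment, the lemma follows immediately by intersecting with the corresponding event in $w$ and applying the second inequality in~\eqref{eqn-thick-2pt} of Lemma~\ref{lem-thick-2pt}.

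To prove the containment, I would start from a continuous witness $f : \BB C \rta [-1,1]$ whose support lies in $(\BB C \setminus B_{\ep_2}(z)) \cup \BB A_{\ep_{m+2},\ep_{m-1}}(z)$ and with the property that $\mcl E^n(z; h+f)$ occurs. The complement of the support, on which $f$ is identically zero, contains $\ol{B_{\ep_{m+2}}(z)}$ and $\{u : \ep_{m-1} \leq |u-z| < \ep_2\}$; by continuity $f$ also vanishes on the closure of the latter. In particular, $f_{\ep_j}(z) = 0$ for every $j \in [3,m-1]_{\BB Z} \cup [m+2,\infty)_{\BB Z}$, and more generally $f$ vanishes on every circle $\bdy B_r(z)$ with $r \in \{\ep_2,\ep_{m-1},\ep_{m+2}\}$. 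Combining these vanishings with the measurability statements of Lemmas~\ref{lem-E-msrble} and~\ref{lem-H-msrble}, I would verify scale by scale that $E_{z,j}(h+f) = E_{z,j}(h)$ and $H_{z,j-1}^{\op{out}}(h+f) \cap H_{z,j-1}^{\op{in}}(h+f) = H_{z,j-1}^{\op{out}}(h) \cap H_{z,j-1}^{\op{in}}(h)$ for $j$ in the relevant ranges. For $j \in [3,m-1]_{\BB Z}$, the defining annulus $\BB A_{\ep_j,\ep_{j-1}/3}(z)$ is contained in $\{\ep_{m-1} \leq |u-z| < \ep_2\}$ and the subtracted circle $\bdy B_{\ep_{j-1}}(z)$ sits inside its closure, so the shifted field $h-h_{\ep_{j-1}}(z)$ is unchanged by adding $f$; a parallel observation handles $H_{z,j-1}$ at scales $j-1 \in [2,m-2]_{\BB Z}$. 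For $j \in [m+3,n]_{\BB Z}$, every region involved is contained in $\ol{B_{\ep_{m+2}}(z)}$, on which $f \equiv 0$. Thus $\mcl E^n(z; h+f)$ occurring forces $\mcl E_3^{m-1}(z) \cap \mcl E_{m+3}^n(z)$ to occur for $h$ itself.

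The main obstacle is the careful bookkeeping in the scale-by-scale check, because the event $E_{z,j}$ involves both a field restriction to $\BB A_{\ep_j,\ep_{j-1}/3}(z)$ and a circle-average subtraction at the outer circle $\bdy B_{\ep_{j-1}}(z)$, which lies \emph{outside} that annulus. One must therefore verify separately that $f$ vanishes on each of the boundary circles $\bdy B_{\ep_2}(z)$, $\bdy B_{\ep_{m-1}}(z)$, and $\bdy B_{\ep_{m+2}}(z)$ that appear at the transitions from the ``frozen'' scales $[3,m-1]_{\BB Z}$ and $[m+3,n]_{\BB Z}$ to the ``free'' scales $j \in \{m,m+1,m+2\}$. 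The support prescription in the definition of $\ol{\mcl E}^{n,m}(z)$ is chosen precisely so that, upon taking closures, $f$ vanishes on these three circles; this is the point of the gap between $\ep_{m+2}$ and $\ep_{m-1}$ rather than between $\ep_{m+1}$ and $\ep_m$. Finally, degenerate cases in which $m \leq 3$ (so $\mcl E_3^{m-1}(z)$ is the whole probability space) or $n < m+3$ (so $\mcl E_{m+3}^n(z)$ is the whole space) give~\eqref{eqn-Eupper-2pt} directly from Lemma~\ref{lem-thick-1pt}, since in these ranges $\ep_m$ is bounded away from $0$ and the exponent $-\alpha^2/2 + o_m(1)$ gives a bounded multiplicative factor.
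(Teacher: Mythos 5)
Your proof is correct and takes essentially the same approach as the paper: establish the containment $\ol{\mcl E}^{n,m}(z) \subset \mcl E_3^{m-1}(z) \cap \mcl E_{m+3}^n(z)$ and then cite the second inequality of Lemma~\ref{lem-thick-2pt}. The paper's proof is terser—it simply invokes the measurability statement~\eqref{eqn-Ecap-msrble} together with the support prescription~\eqref{eqn-Eupper-support} to conclude the containment, while you fill in the scale-by-scale details (verifying that $f$ vanishes on all the relevant annuli and boundary circles via the continuity/closure argument), but the substance and key lemma are identical.
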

\begin{proof}
Recall that from~\eqref{eqn-Ecap-msrble} that $\mcl E_{n_1}^{n_2}(z) \in \sigma\left( h|_{\BB A_{\ep_{n_1},\ep_{n_2-1}}(z)} \right)$ for each $n_1,n_2\in\BB N$.  
Since the function $f$ in the definition of $\ol{\mcl E}^{n,m}(z)$ is supported on the set~\eqref{eqn-Eupper-support}, it follows that 
\eqb \label{eqn-Eupper-contain}
\ol{\mcl E}^{n,m}(z) \subset \mcl E_3^{m-1}(z) \cap \mcl E_{m+3}^n(z) .
\eqe
The same is true with $w$ in place of $z$. Hence~\eqref{eqn-Eupper-2pt} is an immediate consequence of the second inequality in Lemma~\ref{lem-thick-2pt}.  
\end{proof}

The following lemma will be combined with an absolute continuity argument to obtain Lemma~\ref{lem-full-2pt}. 

\begin{lem}  \label{lem-dist-2pt}
Let $z,w\in \BB A_{1/4,1/3}(0)$ and $m\in\BB N$ such that $|z-w| \in [\ep_{m+1} ,\ep_{m}]$.
Let $X_z$ and $X_w$ be sampled from Lebesgue measure on $[0,1]$, independently from each other and from $h$, and let 
\eqbn
\wt h := h + X_z \phi_{z,1} + X_w \phi_{w,m+2} .
\eqen
If $n\geq m+2$, then on $\ol{\mcl E}^{n,m}(z) \cap \ol{\mcl E}^{n,m}(w)$, 
\eqb \label{eqn-dist-2pt}
\BB P\left[ G_{z,n}(\wt h) \cap G_{w,n}(\wt h) \, \big| \, h \right] 
\preceq   \ep_m^{-\xi(Q-\alpha) - o_m(1)} \ep_n^{2\beta} 
\eqe
with the implicit constant and the rate of the $o_m(1)$ depending only on $ \alpha,\gamma$ (not on $n$ or the particular choices of $z$ and $w$).  
\end{lem}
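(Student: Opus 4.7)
My plan is to condition on $h$ throughout and, on the event $\ol{\mcl E}^{n,m}(z) \cap \ol{\mcl E}^{n,m}(w)$, bound the two-dimensional Lebesgue measure of
\[
S := \left\{(x,y) \in [0,1]^2 : \Theta_z(x,y), \Theta_w(x,y) \in [\BB s, \BB s + \ep_n^\beta]\right\},
\]
where $\Theta_z(x,y) := D_{h + x\phi_{z,1} + y\phi_{w,m+2}}(0, B_{\ep_n}(z))$ and analogously $\Theta_w$. Since $X_z, X_w$ are independent uniforms on $[0,1]$, the probability in question equals $|S|$, so the task is geometric: show $|S| \preceq \ep_m^{-\xi(Q-\alpha) - o_m(1)} \ep_n^{2\beta}$.

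First I would establish lower bounds on the ``diagonal'' partial derivatives via Lemma~\ref{lem-theta-deriv}. Applying the lemma with $K_1 = \{0\}$, $K_2 = \ol{B_{\ep_n}(z)}$, $\mcl A = \BB A_{2\ep_1, 3\ep_1}(z)$ (on which $\phi_{z,1} \equiv 1$ by condition~\ref{item-onescale-dirichlet}), and observing that for $m \ge 3$ the support of $\phi_{w,m+2}$ sits inside $B_{\ep_1}(z)$ and is thus disjoint from $\mcl A$, the relevant crossing distance equals $D_h(\bdy B_{2\ep_1}(z), \bdy B_{3\ep_1}(z))$. Using a witness $f$ of $\ol{\mcl E}^{n,m}(z)$ together with condition~\ref{item-onescale-annulus} at $j=1$, Lemma~\ref{lem-E-thick}, and Weyl scaling to pass back to $h$, this is bounded below by an $\alpha,\gamma$-constant, giving $\partial_x \Theta_z \succeq 1$. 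An analogous application with $\mcl A = \BB A_{2\ep_{m+2}, 3\ep_{m+2}}(w)$ and a witness of $\ol{\mcl E}^{n,m}(w)$ yields $\partial_y \Theta_w \succeq \ep_{m+2}^{\xi Q} e^{\xi h_{\ep_{m+2}}(w)} \succeq \ep_m^{\xi(Q-\alpha) + o_m(1)}$.

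Next I would control the ``off-diagonal'' behavior. Weyl scaling $\Theta_w(x',y) \le e^{\xi(x'-x)}\Theta_w(x,y)$ combined with the upper bound $\Theta_w \preceq 1$ from Lemma~\ref{lem-E-dist} yields the pointwise bound $|\partial_x \Theta_w| \preceq 1$. For the $y$-direction variation of $\Theta_z$, note that the support of $y\phi_{w,m+2}$ is inside $B_{4\ep_{m+2}}(w)$; condition~\ref{item-onescale-around} at $j = m+2$ (applied to the witness of $\ol{\mcl E}^{n,m}(w)$ and carried over to $h$ via Weyl with an $e^{O(1)}$ loss) furnishes a path in $\BB A_{5\ep_{m+2}, 6\ep_{m+2}}(w)$ of $D_h$-length strictly smaller than $D_h(\bdy B_{4\ep_{m+2}}(w), \bdy B_{5\ep_{m+2}}(w))$. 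The standard rerouting argument then lets one modify any geodesic from $0$ to $B_{\ep_n}(z)$ in the $h + x\phi_{z,1}$-metric to avoid $B_{4\ep_{m+2}}(w)$; since the modified path avoids $\operatorname{supp}(\phi_{w,m+2})$, its $\wt h$-length equals its $(h+x\phi_{z,1})$-length. The rerouting cost is bounded, using condition~\ref{item-onescale-diam} and Lemma~\ref{lem-E-thick}, by a constant times $\ep_m^{\xi(Q-\alpha)+o_m(1)}$. Since $\Theta_z$ is non-decreasing in $y$ by Weyl scaling, this produces the $L^\infty$ bound
\[
0 \le \Theta_z(x, y) - \Theta_z(x, 0) \preceq \ep_m^{\xi(Q-\alpha)+o_m(1)} \qquad \text{uniformly in } (x,y) \in [0,1]^2.
\]

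Finally, I would bound $|S|$ by changing variables from $(x, y)$ to $(u, y) := (\Theta_z(x,y), y)$. Since $\partial_x\Theta_z \succeq 1$, the substitution has Jacobian $\succeq 1$, so with $\hat\Theta_w(u, y) := \Theta_w(x(u,y), y)$,
\[
|S| \preceq \int_{[\BB s, \BB s+\ep_n^\beta]} \big|\{y \in [0,1] : \hat\Theta_w(u,y) \in [\BB s, \BB s+\ep_n^\beta]\}\big|\,du.
\]
Computing $\partial_y \hat\Theta_w = \partial_y\Theta_w - \partial_x\Theta_w \cdot \partial_y\Theta_z / \partial_x\Theta_z$ and exploiting that the curve $y \mapsto x(u,y)$ is monotone decreasing (both $\partial_x\Theta_z, \partial_y\Theta_z \ge 0$) with total $y$-variation confined to a set of size $\preceq \ep_m^{\xi(Q-\alpha)+o_m(1)}$ (from the $L^\infty$ bound above and $\partial_x\Theta_z \succeq 1$), one obtains the integral bound $\int_{y_0}^{y_1} \partial_y\Theta_z(x(u,y),y)\,dy \preceq \ep_m^{\xi(Q-\alpha)+o_m(1)}$ uniformly in $y_0 < y_1$. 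Combining this with the primary lower bound on $\partial_y\Theta_w$ yields the $y$-slice estimate $\big|\{y : \hat\Theta_w(u,y) \in [\BB s, \BB s+\ep_n^\beta]\}\big| \preceq \ep_n^\beta \ep_m^{-\xi(Q-\alpha)-o_m(1)}$, and the claim follows. The main technical obstacle I anticipate is precisely managing this slice estimate: the cross-term $\partial_x\Theta_w \cdot \partial_y\Theta_z / \partial_x\Theta_z$ can match $\partial_y\Theta_w$ in pointwise order, so the lower bound on $\partial_y\hat\Theta_w$ is only effective after integration, which must be handled carefully, perhaps by isolating the small (constant measure) set on which $\partial_y\Theta_z$ is atypically large using Chebyshev and the $L^1$ bound above, and using the $o_m(1)$ slack together with the fact that $n \ge m+2$ to absorb any constant-order contributions.
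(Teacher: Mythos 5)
Your overall strategy---condition on $h$ and bound the Lebesgue measure of the set $S \subset [0,1]^2$ on which both long-range events occur---is a reasonable reformulation, and your diagonal bounds $\partial_x\Theta_z \succeq 1$ and $\partial_y\Theta_w \succeq \ep_m^{\xi(Q-\alpha)+o_m(1)}$ via Lemma~\ref{lem-theta-deriv} are correct. However, there is a genuine gap in how you treat the cross-dependence, and the obstacle you flag at the end is in fact fatal as the argument stands.

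The problem is that your $L^\infty$ bound $0 \le \Theta_z(x,y) - \Theta_z(x,0) \preceq \ep_m^{\xi(Q-\alpha)+o_m(1)}$ is of exactly the same order as the lower bound you have on $\partial_y\Theta_w$. Consequently, after the change of variables to $(u,y)$, the cross-term $\partial_x\Theta_w \cdot \partial_y\Theta_z/\partial_x\Theta_z$ integrated in $y$ over any subinterval is $\preceq \ep_m^{\xi(Q-\alpha)+o_m(1)}$, which is the \emph{same} order as $\int \partial_y\Theta_w$ over an $O(1)$ interval. There is no smallness to exploit: the cross-term can in principle cancel the main term completely on a set of $y$ of constant measure, and in that regime your slice estimate degenerates to $|\{y : \hat\Theta_w(u,y)\in I\}| \preceq 1$, giving $|S|\preceq \ep_n^\beta$, which is much larger than the target $\ep_m^{-\xi(Q-\alpha)-o_m(1)}\ep_n^{2\beta}$ whenever $n$ is substantially larger than $m$ (the typical regime). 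Chebyshev-based truncation on $\partial_y\Theta_z$ does not rescue this: making the ``bad set'' of size $1/M$ small forces the threshold $M\ep_m^{\xi(Q-\alpha)+o_m(1)}$ on the cross-term to be large, and vice versa, so there is no choice of $M$ that wins.

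The missing fact---and you are one step away from it---is that $\Theta_z$ does not depend on $y$ at all. Condition~\ref{item-onescale-around} at scale $m+2$ around $w$, which you already invoke, is engineered so that the disconnecting path in $\BB A_{5\ep_{m+2},6\ep_{m+2}}(w)$ has $D_{h+g}$-length \emph{strictly smaller} than $D_{h+g}(\bdy B_{4\ep_{m+2}}(w),\bdy B_{5\ep_{m+2}}(w))$ uniformly over continuous $g:\BB C\to[-1,1]$. The rerouting argument then shows not merely that you can perturb a geodesic at small cost, but that any path entering $B_{4\ep_{m+2}}(w)$ is \emph{strictly suboptimal} (this is the content of Lemma~\ref{lem-geo-away} in the paper). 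Hence no near-minimal path from $0$ to $\ol{B_{\ep_n}(z)}$ in the metric $D_{h+x\phi_{z,1}+y\phi_{w,m+2}}$ enters $B_{4\ep_{m+2}}(w)\supset\operatorname{supp}\phi_{w,m+2}$, so
\[
\Theta_z(x,y) = D_{\wt h}(0,B_{\ep_n}(z)) = D_{h+x\phi_{z,1}}(0,B_{\ep_n}(z)) = \Theta_z(x,0)
\]
exactly. Once you have this identity, the entire change-of-variables machinery is unnecessary: $G_{z,n}(\wt h)$ is determined by $(h,X_z)$, so Lemma~\ref{lem-dist-interval} (scale $1$) gives $\BB P[G_{z,n}(\wt h)\mid h]\preceq\ep_n^\beta$, and then the same lemma applied to $w$ at scale $m+2$ conditionally on $(h,X_z)$ gives $\BB P[G_{w,n}(\wt h)\mid h,X_z]\preceq\ep_m^{-\xi(Q-\alpha)-o_m(1)}\ep_n^\beta$; multiplying yields~\eqref{eqn-dist-2pt}. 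This is the paper's route. I would also caution that, even if the $L^\infty$ bound were replaced by exactness, your Jacobian manipulation would need $\partial_x\Theta_z$ bounded above as well as below, which holds here via Weyl scaling as in Lemma~\ref{lem-dist-1pt-lower'}, but is not stated in your sketch.
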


To prove Lemma~\ref{lem-dist-2pt}, we first consider the simpler situation where we add just one random smooth function to $h$. 

\begin{lem} \label{lem-dist-interval}
Let $z \in \BB A_{1/4,1/3}(0)$ and $  m,n\in\BB N$ with $m\leq n$. 
Let $X$ be sampled from Lebesgue measure on $[0,1]$, independently from $h$.  
%Recall the smooth bump function $\phi_{z,m}$ supported on $\BB A_{\ep_m , 4\ep_m}(z)$ from condition~\ref{item-onescale-dirichlet} in the definition of $E_{z,m}$. 
If $\ol{\mcl E}^{m,m}(z)$ occurs, %(which in particular is the case if $\mcl E^m(z)$ occurs), 
then
\eqb \label{eqn-dist-interval} 
\BB P\left[ G_{z,n}\left(h + X \phi_{z,m} \right)   \, \big| \, h \right] 
\leq \ep_m^{-\xi(Q-\alpha) - o_m(1)}   \ep_n^\beta  
\eqe
with the rate of convergence of the $o_m(1)$ depending only on $ \BB s , \alpha,\gamma$. 
\end{lem}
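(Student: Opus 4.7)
The plan is to express the event $G_{z,n}(h + X\phi_{z,m})$ as $\{\theta(X) \in [\BB s, \BB s+\ep_n^\beta]\}$, where
\[
\theta(x) := D_{h + x\phi_{z,m}}\left(0, B_{\ep_n}(z)\right),
\]
and to produce a polynomial-in-$\ep_m$ lower bound for $\theta'$ on $[0,1]$ on the event $\ol{\mcl E}^{m,m}(z)$. Since $\theta$ is monotone and $X$ is uniform on $[0,1]$ independent of $h$, the conditional probability in~\eqref{eqn-dist-interval} equals the Lebesgue measure of $\{x \in [0,1] : \theta(x) \in [\BB s, \BB s+\ep_n^\beta]\}$, which is at most $\ep_n^\beta / \inf_{[0,1]} \theta'$.

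The first step is to apply Lemma~\ref{lem-theta-deriv} with $K_1 = \ol{B_{\ep_n}(z)}$, $K_2 = \{0\}$, $\mcl A = \BB A_{2\ep_m, 3\ep_m}(z)$, and bump function $\phi_{z,m}$. The hypotheses hold because $\phi_{z,m} \equiv 1$ on $\mcl A$, and because $z \in \BB A_{1/4,1/3}(0)$ together with $m \geq 1$ ensure that $\ol{B_{\ep_n}(z)} \subset B_{2\ep_m}(z)$ lies in the bounded component of $\BB C \setminus \mcl A$ while $0 \in \BB C \setminus \ol{B_{3\ep_m}(z)}$ lies in the unbounded one. This yields
\[
\theta'(x) \geq \xi e^{\xi x}\, D_h\left(\bdy B_{2\ep_m}(z), \bdy B_{3\ep_m}(z)\right) \quad \text{for Lebesgue-a.e.\ } x \geq 0.
\]

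The main step is to lower-bound $D_h(\bdy B_{2\ep_m}(z), \bdy B_{3\ep_m}(z))$ on $\ol{\mcl E}^{m,m}(z)$. By definition of this event, there exists a continuous $f : \BB C \to [-1,1]$ supported in $(\BB C \setminus B_{\ep_2}(z)) \cup \BB A_{\ep_{m+2}, \ep_{m-1}}(z)$ such that $\mcl E^m(z; h+f)$ occurs. I would apply condition~\ref{item-onescale-annulus} of the definition of $E_{z,m}$ to $h+f$ (rather than to $h$), together with the circle-average bound $(h+f)_{\ep_m}(z) \geq (\alpha - o_m(1)) \log \ep_m^{-1}$ from Lemma~\ref{lem-E-thick} applied to $h+f$, to obtain
\[
D_{h+f}\left(\bdy B_{2\ep_m}(z), \bdy B_{3\ep_m}(z)\right) \geq A^{-1} \ep_m^{\xi Q} e^{\xi (h+f)_{\ep_m}(z)} \geq \ep_m^{\xi(Q-\alpha) + o_m(1)}.
\]
Since $|f| \leq 1$, Weyl scaling (Axiom~\ref{item-metric-f}) gives $D_h \geq e^{-\xi} D_{h+f}$, so the same lower bound (with a different $o_m(1)$) holds with $D_h$ in place of $D_{h+f}$.

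Combining these estimates yields $\theta'(x) \succeq \ep_m^{\xi(Q-\alpha) + o_m(1)}$ for Lebesgue-a.e.\ $x \in [0,1]$, whence the Lebesgue measure above is at most $\ep_m^{-\xi(Q-\alpha) - o_m(1)} \ep_n^\beta$, giving~\eqref{eqn-dist-interval}. The only (mild) obstacle is that $\ol{\mcl E}^{m,m}(z)$ supplies $\mcl E^m(z)$ only for the perturbed field $h+f$ rather than for $h$ itself; because $|f| \leq 1$, Weyl scaling absorbs this discrepancy into an $O(1)$ multiplicative factor, which in turn is absorbed into the $o_m(1)$ error in the exponent.
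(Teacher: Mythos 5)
Your proof is correct and follows essentially the same route as the paper's: both set $\theta(x) = D_{h+x\phi_{z,m}}(0,B_{\ep_n}(z))$, lower-bound $\theta'$ via the annulus-crossing distance $D_h(\bdy B_{2\ep_m}(z),\bdy B_{3\ep_m}(z))$ (the paper packages this as Lemma~\ref{lem-dist-deriv}, you apply Lemma~\ref{lem-theta-deriv} directly; they are the same computation), transfer condition~\ref{item-onescale-annulus} from the perturbed field $h+f$ to $h$ using $|f|\leq 1$ and Weyl scaling, and then replace $e^{\xi h_{\ep_m}(z)}$ by $\ep_m^{-\xi\alpha+o_m(1)}$ via Lemma~\ref{lem-E-thick}. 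No gaps.
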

\begin{proof}
Throughout the proof we condition on $h$ and assume that $\ol{\mcl E}^{m,m}(z)$ occurs.

For $x \geq 0$, define $\theta(x) =  D_{h + x\phi_{z,m}}\left(  0 , B_{\ep_n}(z)   \right)$ as in Lemma~\ref{lem-dist-deriv}. 
Recall that by the definition of $\ol{\mcl E}^{m,m}(z)$, the event $E_{z,m}(h+f)$ occurs for some function $f : \BB C\rta [-1,1]$. Since $D_{h+f} \leq e^{ \xi} D_h$, condition~\ref{item-onescale-annulus} in the definition of $E_{z,m}$ implies that on $\ol{\mcl E}^{m,m}(z)$, we have $ D_h\left( \bdy  B_{2\ep_j}(z)  , \bdy  B_{3\ep_j}(z) \right) \geq A^{-1} e^{-\xi} \ep_j^{\xi Q} e^{\xi h_{\ep_j}(z)}$. 
Using this bound in the proof of Lemma~\ref{lem-dist-deriv}, we see that the conclusion of Lemma~\ref{lem-dist-deriv} holds with $\ol{\mcl E}^{m,m}(z)$ in place of $\mcl E^m(z)$, i.e., on $\ol{\mcl E}^{m,m}(z)$,
\eqb \label{eqn-dist-deriv'}
\theta'(x) \succeq \ep_m^{\xi Q} e^{\xi h_{ \ep_m}(z)}   ,\quad \text{for Lebesgue-a.e.\ $x\geq 0$}
\eqe 
with the implicit constant depending only on $\BB s  , \alpha,\gamma$. Note that we have dropped a factor of $e^{\xi x}$, which is at least 1. 
 
The bound~\eqref{eqn-dist-deriv'} together with the fundamental theorem of calculus implies that the Lebesgue measure of the set of $x\in [0 , 1]$ for which $\theta(x) \in [\BB s , \BB s + \ep_n^\beta ]$ is at most a $\BB s , \alpha,\gamma$-dependent constant times $ \ep_n^\beta / ( \ep_m^{\xi Q} e^{\xi h_{ \ep_m}(z)})$. 
Since $X$ is sampled uniformly from Lebesgue measure on $[0 , 1]$, independently from $h$, this shows that
\eqb
\BB P\left[ G_{z,n}\left(h + X \phi_{z,m} \right)   \, \big| \, h \right] 
= \BB P\left[ \theta(X) \in [\BB s , \BB s  +\ep_n^\beta ] \,\big|\, h \right]  
\preceq \frac{ \ep_n^\beta  }{ \ep_m^{\xi Q} e^{\xi h_{ \ep_m}(z)} } . 
\eqe
By Lemma~\ref{lem-E-thick}, on $\mcl E^m(z)$ we have $e^{\xi h_{ \ep_m}(z)}  = \ep_m^{-\xi \alpha + o_m(1)}$. On $\ol{\mcl E}^{m,m}(z)$, the event $\mcl E^m(z ; h+f)$ occurs for some smooth function $f$ taking values in $[-1,1]$, so also $e^{\xi h_{ \ep_m}(z)}  = \ep_m^{-\xi \alpha + o_m(1)}$ on $\ol{\mcl E}^{m,m}(z)$. This gives~\eqref{eqn-dist-interval}. 
\end{proof}

The following lemma will tell us that in the setting of Lemma~\ref{lem-dist-2pt}, $D_{\wt h}(0,B_{\ep_n}(z))$ does not depend on the value of $X_w$. 
This will allow us to reduce Lemma~\ref{lem-dist-2pt} to Lemma~\ref{lem-dist-interval}.

\begin{figure}[t!]
 \begin{center}
\includegraphics[scale=.75]{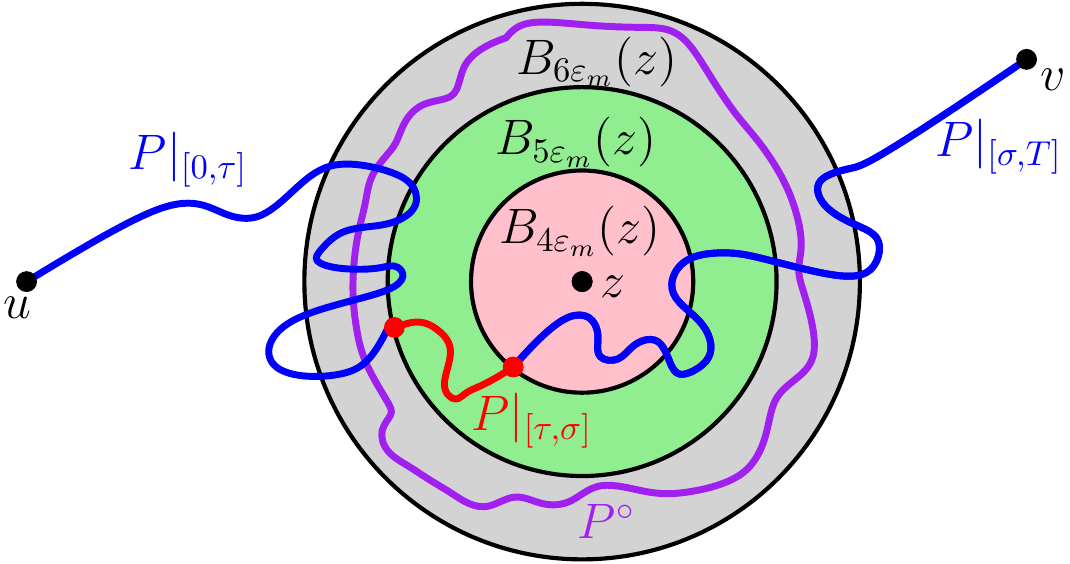}
\vspace{-0.01\textheight}
\caption{Illustration of the proof of Lemma~\ref{lem-geo-away}. The purple path $P^\circ$ is provided by condition~\ref{item-onescale-around} in the definition of $E_{z,m}$. Its $D_{h+g}$-length is much shorter than the $D_{h+g}$-distance between the inner and outer boundaries of the green annulus, and hence much shorter than $\sigma-\tau$. If $P$ enters $B_{4\ep_m}(z)$, then we get a path from $u$ to $v$ which is shorter than $P$ by concatenating a segment of $P$ with a segment of $P^\circ$, so $P$ cannot have near-minimal $D_{h+g}$-length among paths from $u$ to $v$. 
}\label{fig-geo-away}
\end{center}
\vspace{-1em}
\end{figure}

\begin{lem} \label{lem-geo-away}
Let $z\in \BB A_{1/4,1/3}(0)$ and $m\in\BB N$. 
On $\ol{\mcl E}^{m,m}(z)$, there a.s.\ exists a random $\delta > 0$ such that the following is true.
Let $u,v\in\BB C\setminus B_{6\ep_{m}}(z)$ and let $g : \BB C\rta [-1,1]$ be a continuous function.
Then no path of $D_{h+g}$-length smaller than $D_{h+g}(u,v ) + \delta$ can enter $B_{4\ep_{m}}(z)$.
\end{lem}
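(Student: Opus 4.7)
The plan is to use condition~\ref{item-onescale-around} in the definition of $E_{z,m}$ to produce a short loop $P^\circ$ in $\BB A_{5\ep_m,6\ep_m}(z)$ that can serve as a shortcut around $B_{4\ep_m}(z)$, then to argue that any path entering $B_{4\ep_m}(z)$ can be strictly shortened by rerouting through $P^\circ$. First, since $\ol{\mcl E}^{m,m}(z)$ occurs, we fix a continuous $f : \BB C \rta [-1,1]$ (supported on the set~\eqref{eqn-Eupper-support}) for which $\mcl E^m(z; h+f)$ holds; condition~\ref{item-onescale-around} applied to $h+f$ yields a path $P^\circ \subset \BB A_{5\ep_m,6\ep_m}(z)$ disconnecting the inner and outer boundaries of this annulus with $D_{h+f}$-length at most $\tfrac{1}{100} D_{h+f}(\bdy B_{4\ep_m}(z),\bdy B_{5\ep_m}(z))$.

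The next step is to transfer this bound from $D_{h+f}$ to $D_{h+g}$ using Weyl scaling (Axiom~\ref{item-metric-f}) with shift $g-f \in [-2,2]$. This gives
\eqbn
\op{len}(P^\circ;D_{h+g}) \leq \frac{e^{4\xi}}{100}\, D_{h+g}\!\left(\bdy B_{4\ep_m}(z),\bdy B_{5\ep_m}(z)\right) .
\eqen
Writing $c := e^{4\xi}/100$, we note that $\xi = \gamma/d_\gamma \leq \gamma/2 < 1$ for all $\gamma \in (0,2)$, so $c < 1$ (in particular $c < 2$).

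Now fix $u,v \in \BB C \setminus B_{6\ep_m}(z)$ and suppose $P : [0,T] \rta \BB C$ is a $D_{h+g}$-length-parametrized path from $u$ to $v$ that enters $B_{4\ep_m}(z)$. Since $P^\circ$ separates $B_{5\ep_m}(z)$ from $\BB C \setminus B_{6\ep_m}(z)$, a topological argument forces $P$ to meet the image of $P^\circ$ at least twice: once before entering $B_{4\ep_m}(z)$ and once after. Let $\tau$ and $\sigma$ be the first and last such hitting times. Then $P|_{[\tau,\sigma]}$ crosses $\BB A_{4\ep_m,5\ep_m}(z)$ at least twice (once going in, once coming out), so $\sigma - \tau \geq 2 D_{h+g}(\bdy B_{4\ep_m}(z),\bdy B_{5\ep_m}(z))$. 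Replacing $P|_{[\tau,\sigma]}$ with a sub-arc of $P^\circ$ from $P(\tau)$ to $P(\sigma)$ yields a new path from $u$ to $v$ of $D_{h+g}$-length at most
\eqbn
T - (\sigma - \tau) + \op{len}(P^\circ ; D_{h+g}) \leq T - (2-c)\, D_{h+g}\!\left(\bdy B_{4\ep_m}(z),\bdy B_{5\ep_m}(z)\right) .
\eqen
Since $g \geq -1$, Weyl scaling gives $D_{h+g} \geq e^{-\xi} D_h$, so the quantity
\eqbn
\delta := (2-c)\, e^{-\xi}\, D_h\!\left(\bdy B_{4\ep_m}(z),\bdy B_{5\ep_m}(z)\right)
\eqen
is positive a.s., independent of $g$, and yields $D_{h+g}(u,v) \leq T - \delta$. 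Contrapositively, no path from $u$ to $v$ of $D_{h+g}$-length less than $D_{h+g}(u,v) + \delta$ can enter $B_{4\ep_m}(z)$.

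The only real obstacle is the topological claim that any $P$ entering $B_{4\ep_m}(z)$ must cross $P^\circ$ at least twice; this is where the disconnection property of $P^\circ$ is essential, and care is needed because $P^\circ$ is merely a continuous path (though its image, together with the annulus geometry, separates the plane appropriately). All remaining steps are routine applications of Weyl scaling together with the triangle inequality for annulus crossings, so the main work is just to organize the shortcut/replacement argument cleanly.
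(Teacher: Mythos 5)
Your proof is correct and takes essentially the same route as the paper's: invoke condition~\ref{item-onescale-around} for $h+f$, transfer the bound to $D_{h+g}$ via Weyl scaling (getting the factor $e^{4\xi}/100$), and reroute any path entering $B_{4\ep_m}(z)$ through the shortcut loop $P^\circ$ using the fact that $P^\circ$ disconnects the annulus $\BB A_{5\ep_m,6\ep_m}(z)$. The only cosmetic difference is your choice of $\tau,\sigma$ as the first and last hits of $P^\circ$, which counts two crossings of $\BB A_{4\ep_m,5\ep_m}(z)$ and yields a gain of $(2-c)\,D_{h+g}(\bdy B_{4\ep_m}(z),\bdy B_{5\ep_m}(z))$ rather than the paper's $(1-c)$; both are fine since $c=e^{4\xi}/100<1$.
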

\begin{proof} 
See Figure~\ref{fig-geo-away} for an illustration of the proof.
On $\ol{\mcl E}^{m,m}(z)$, the event $E_{z,m}(h+f)$ occurs for some function $f : \BB C\rta [-1,1]$. Since $e^{-2\xi} D_{h+f} \leq D_{h+g} \leq e^{ 2\xi} D_{h+f}$ (by Axiom~\ref{item-metric-f}), condition~\ref{item-onescale-annulus} in the definition of $E_{z,m}$ implies that 
\begin{enumerate}[($*$)] \label{item-use-onescale-annulus}
\item There is a path $P^\circ$ in $\BB A_{5\ep_m,6\ep_m} (z)$ which disconnects the inner and outer boundaries of $\BB A_{5\ep_m , 6\ep_m} (z)$ and whose $D_{h+g}$-length is at most $\frac{e^{4\xi}}{100}  D_{h+g}\left( \bdy B_{4\ep_m}(z) , \bdy B_{5\ep_m}(z) \right)$. 
\end{enumerate}
Note that $\xi < 1/2$ so $e^{4\xi}/100 < 1$. 

Now let $P : [0,T] \rta \BB C$ be a path from $u$ to $v$ which enters $B_{4\ep_m}(z)$, parameterized by $D_{h+f}$-length. 
Let $\sigma$ be the first time that $P$ enters $B_{4\ep_m}(z)$ and let $\tau$ be the last time before $\sigma$ at which $P$ enters $B_{5\ep_m}(z)$. 
Then $P([\tau,\sigma])\subset \ol{\BB A_{4\ep_m,5\ep_m}(z)}$ and 
\eqb \label{eqn-crossing-compare}
\sigma-\tau \geq   D_{h+g}\left( \bdy B_{4\ep_m}(z) , \bdy B_{5\ep_m}(z) \right) .
\eqe 
Since $u,v\notin  B_{6\ep_m}(z)$, the path $P$ must cross between the inner and outer boundaries of $\BB A_{5\ep_m , 6\ep_m} (z)$ before time $\tau$ and after time $\sigma$. 
Consequently, there are times $t \in [0,\tau]$ and $s \in [\sigma,T]$ for which $P(t)$ and $P(s)$ lie in the range of the path $P^\circ$ from~\eqref{item-use-onescale-annulus}.
By concatenating $P|_{[0,t]}$, a segment of $P^\circ$, and $P|_{[s,T]}$, we get a path from $u$ to $v$ with $D_{h+g}$-length at most
\alb
&t + (T-s) + \frac{e^{4\xi}}{100}   D_{h+g}\left( \bdy B_{4\ep_m}(z) , \bdy B_{5\ep_m}(z) \right) \notag \\
&\qquad \leq t + (T-s) + \sigma-\tau  - \left( 1 - \frac{e^{4\xi}}{100} \right)   D_{h+g}\left( \bdy B_{4\ep_m}(z) , \bdy B_{5\ep_m}(z) \right)  \notag\\
&\qquad \leq T - \left( 1 - \frac{e^{4\xi} }{100} \right)   D_{h+g}\left( \bdy B_{4\ep_m}(z) , \bdy B_{5\ep_m}(z) \right)
\ale
where in the first inequality we use~\eqref{eqn-crossing-compare}.
Since $D_{h+g} \geq e^{- \xi} D_h$, we therefore have
\eqbn
T \geq D_{h+g}(u,v) + \delta \quad \text{for} \quad \delta = \left( 1 - \frac{ e^{4\xi} }{100} \right)  e^{-\xi } D_h\left( \bdy B_{4\ep_m}(z) , \bdy B_{5\ep_m}(z) \right) .
\eqen
\end{proof}

\begin{proof}[Proof of Lemma~\ref{lem-dist-2pt}]
Throughout the proof, we condition on $h$ and we assume that $\ol{\mcl E}^{n,m}(z) \cap \ol{\mcl E}^{n,m}(w)$ occurs.

We first argue using Lemma~\ref{lem-geo-away} that $D_{\wt h}\left( 0 , B_{\ep_n}(z)  \right)$ is determined by $h$ and $X_z$ (it does not depend on $X_w$). 
Indeed, since $n\geq m+2$ and $|z-w| \geq \ep_{m+1}$, we have $ \ol{B_{\ep_n}(z) } \cap \ol{B_{6\ep_{m+2}}(w)} = \emptyset$.
Since $w\in\BB A_{1/4,1/3}(0)$, also $0\notin B_{6\ep_{m+2}}(w)$. 
Since $X_z \phi_{z,1}  + X_w \phi_{w,m+2}$ and $X_z\phi_{z,1}$ each take values in $[-1,1]$, 
Lemma~\ref{lem-geo-away} applied with $m+2$ in place of $m$ and $w$ in place of $z$ implies that the following is true.
No $D_{\wt h}$-geodesic from 0 to a point $u\in \ol{B_{\ep_n}(z)}$ can enter $B_{4\ep_{m+2}}(w)$ and the same is true with $h + X_z \phi_{z,1}$ in place of $\wt h$. 
Since $\phi_{w,m+2}$ is supported on $B_{4\ep_{m+2}}(w)$, this implies that 
\allb \label{eqn-remove-one-var}
D_{\wt h}\left(0 , B_{\ep_n}(z)  \right) 
= D_{\wt h}\left(0,B_{\ep_n}(z) ; \BB C \setminus \ol{B_{4\ep_{m+2}}(w)} \right) 
&= D_{h + X_z \phi_{z,1}}\left(0,B_{\ep_n}(z ) ; \BB C\setminus \ol{B_{4\ep_{m+2}}(w)} \right) \notag\\ 
&=  D_{h + X_z \phi_{z,1}}\left(0 , B_{\ep_n}(z)  \right)  . 
\alle
 
By Lemma~\ref{lem-dist-interval} (applied with $m=1$) and~\eqref{eqn-remove-one-var}, on $\ol{\mcl E}^{n,m}(z)  $, a.s.\ 
\eqb \label{eqn-dist-2pt-1}
\BB P\left[   G_{z,n}\left( \wt h \right) \, \big| \, h \right] 
= \BB P\left[   G_{z,n}\left( h + X_{z,1} \phi_{z,1} \right) \, \big| \, h \right] 
\preceq \ep_1^{-\xi(Q-\alpha) - o (1)}   \ep_n^\beta
\preceq \ep_n^\beta , 
\eqe
where in the last line we used that $\ep_1$ is a universal constant. 
On the other hand, exactly the same argument as in the proof of Lemma~\ref{lem-dist-interval} shows that on $\ol{\mcl E}^{n,m+2}(w)$, a.s.\ 
\eqb \label{eqn-dist-2pt-2}
\BB P\left[  G_{w,n}\left(\wt h \right)  \, \big| \, h , X_z \right] 
\preceq  \ep_m^{-\xi(Q-\alpha) - o_m(1)} \ep_n^{ \beta}  .
\eqe
Since $G_{z,n}(\wt h)$ is determined by $h$ and $X_z$ (due to~\eqref{eqn-remove-one-var}) we can combine~\eqref{eqn-dist-2pt-1} and~\eqref{eqn-dist-2pt-2} to get~\eqref{eqn-dist-2pt}.
\end{proof}

We now conclude the proof of Lemma~\ref{lem-full-2pt} by comparing the laws of  $h$ and $\wt h$.

\begin{proof}[Proof of Lemma~\ref{lem-full-2pt}] 
As in Lemma~\ref{lem-dist-2pt}, let $X_z$ and $X_w$ be sampled from Lebesgue measure on $ [0,1]$, independently from each other and from $h$, and let $\wt h := h + X_z \phi_{z,1} + X_w \phi_{w,m+2}$.  

By Lemma~\ref{lem-Eupper-2pt}, 
\eqb \label{eqn-use-Eupper-2pt}
\BB P\left[ \ol{\mcl E}^{n,m}(z ) \cap \ol{\mcl E}^{n,m}(w ) \right] 
\leq \ep_m^{-\alpha^2/2 + o_m(1)}  \BB P\left[ \mcl E^n(z) \right] \BB P\left[ \mcl E^n(w) \right]   .
\eqe
By Lemma~\ref{lem-dist-2pt},
\eqb \label{eqn-use-dist-2pt}
\BB P\left[ G_{z,n}(\wt h) \cap G_{w,n}(\wt h) \,|\, \ol{\mcl E}^{m,n}(z ) \cap \ol{\mcl E}^{m,n}(w )  \right]
\leq \ep_m^{-\xi(Q-\alpha) - o_m(1)} \ep_n^{2\beta}  .
\eqe
By combining~\eqref{eqn-use-Eupper-2pt} and~\eqref{eqn-use-dist-2pt}, we obtain
\eqb \label{eqn-add-dist-event}
\BB P\left[ G_{z,n}(\wt h) \cap \ol{\mcl E}^{n,m}(z ) \cap G_{w,n}(\wt h) \cap  \ol{\mcl E}^{n,m}(w )\right] 
\leq \ep_m^{-\alpha^2/2 -\xi(Q-\alpha) - o_m(1)}  \ep_n^{2\beta} \BB P\left[ \mcl E^n(z ) \right] \BB P\left[ \mcl E^n(w ) \right]  .
\eqe
 
For each possible realization of $X_z$ and $X_w$, the function $h - \wt h =- X_z \phi_{z,1} - X_w \phi_{w,m+2}$ takes values in $[-1,1]$ and is supported on $\BB A_{\ep_1,4\ep_1}(z) \cup \BB A_{\ep_{m+2},4\ep_{m+2}}(w)$.  
Since $|z-w| \in [\ep_{m+1},\ep_m]$ and $m\geq 3$, this support is contained in
\eqbn
\left[ \left( \BB C\setminus B_{\ep_2}(z)  \right) \cup \BB A_{\ep_{m+2} , \ep_{m-1}}(z) \right] 
\cap \left[ \left( \BB C\setminus B_{\ep_2}(w)  \right) \cup \BB A_{\ep_{m+2} , \ep_{m-1}}(w) \right] . 
\eqen
Consequently, the definitions of $\ol{\mcl E}^{n,m}(z)$ and $\ol{\mcl E}^{n,m}(w)$ imply that if $\mcl E^n(z;\wt h)  \cap \mcl E^n(w;\wt h)$ occurs, then also $\ol{\mcl E}^{n,m}(z) \cap \ol{\mcl E}^{n,m}(w)$ occurs.
Recalling the definition~\eqref{eqn-Gcap-def} of $\mcl G^n(z)$, we therefore get from~\eqref{eqn-add-dist-event} that
\eqb \label{eqn-shifted-field-2pt}
\BB P\left[\mcl G^n(z;\wt h) \cap \mcl G^n(w;\wt h) \right] 
\leq \ep_m^{-\alpha^2/2 -\xi(Q-\alpha) - o_m(1)}  \ep_n^{2\beta} \BB P\left[ \mcl E^n(z ) \right] \BB P\left[ \mcl E^n(w ) \right]  .
\eqe
 
Recall that the supports of $\phi_{z,1}$ and $\phi_{w,m+2}$ are disjoint, so in particular $(\phi_{z,1} ,\phi_{w,m+2})_\nabla = 0$. 
By Lemma~\ref{lem-gff-abs-cont}, if we condition on $X_z$ and $X_w$, then the conditional law of $\wt h$ is mutually absolutely continuous with respect to the law of $h$. 
The Radon-Nikodym derivative of the latter law w.r.t.\ the law of the former law is
\eqb \label{eqn-2pt-rn}
M  = \exp\left(  - X_z (\wt h,  \phi_{z,1})_\nabla - X_w (\wt h,  \phi_{w , m+2})_\nabla 
+\frac12 X_z^2 (\phi_{z,1},\phi_{z,1})_\nabla + \frac12 X_w^2 (\phi_{w,m+2},\phi_{w,m+2})_\nabla \right) .
\eqe
%Compute RN derivative $\wt M$ of $\wt h$ w.r.t.\ $ h$ , then $M(\wt h) = \wt M(\wt h)^{-1}$. 
By condition~\ref{item-onescale-dirichlet} in the definitions of each of $E_{z,1}(\wt h)$ and $E_{w,m+2}(\wt h)$, on $\mcl E^n(z;\wt h) \cap \mcl E^n(w;\wt h)$ we have
\eqbn
(\wt h,  \phi_{z,1})_\nabla \leq K \quad \text{and} \quad (\wt h,\phi_{w,m+2})_\nabla \leq K .
\eqen
Since $(\phi_{z,1} , \phi_{z,1})_\nabla =  (\phi_{w,m+2},\phi_{w,m+2})_\nabla  = (\phi,\phi)_\nabla$ is a universal constant and $X_z,X_w\in[0,1]$, we get that whenever 
$\mcl E^n(z;\wt h) \cap \mcl E^n(w;\wt h)$ occurs, the Radon-Nikodym derivative~\eqref{eqn-2pt-rn} is bounded above and below by deterministic constants which depend only on $\BB s , \alpha,\gamma$. 

We may therefore compute 
\alb
\BB P\left[\mcl G^n(z  ) \cap \mcl G^n(w  ) \right]  
&= \BB P\left[\mcl G^n(z    ) \cap \mcl G^n(w  ) \,|\, X_z,X_w \right] \quad \text{($X_z,X_w, h$ are independent)} \notag\\ 
&= \BB E\left[  M \BB 1_{ \mcl G^n(z ;\wt h ) \cap \mcl G^n(w ;\wt h) } \,|\, X_z, X_w \right]  \notag \\ 
&\preceq \BB P\left[   \mcl G^n(z ;\wt h ) \cap \mcl G^n(w; \wt h )   \,|\, X_z, X_w \right]  .
\ale
Taking unconditional expectations of both sides of this last inequality and using~\eqref{eqn-shifted-field-2pt} now gives~\eqref{eqn-full-2pt}.
\end{proof}

\begin{proof}[Proof of Proposition~\ref{prop-end-2pt}]
%If $n < m+2$, the proposition is an easy consequence of the one-point estimates from Lemma~\ref{lem-thick-1pt}, Proposition~\ref{prop-end-1pt}, and Lemma~\ref{lem-dist-1pt-upper} (since $\beta < \xi(Q-\alpha)$ and we allow a multiplicative $\ep_m^{o_m(1)}$ error.
By Lemma~\ref{lem-dist-1pt-lower}, we have $ \ep_n^{2\beta} \BB P\left[ \mcl E^n(z ) \right] \BB P\left[ \mcl E^n(w ) \right] \preceq \BB P\left[\mcl G^n(z) \right] \BB P\left[ \mcl G^n(w) \right]$. We now apply this to upper-bound the right side of the estimate of Lemma~\ref{lem-full-2pt} to obtain~\eqref{eqn-end-2pt}.  
\end{proof}

For the proof of Proposition~\ref{prop-end-across}, we need an upper bound for $\BB P[\mcl G^n(z)] $ in terms of $\BB P[\mcl E^n(z)]$ which complements the lower bound of Lemma~\ref{lem-dist-1pt-lower}.  This upper bound comes from the same argument as in the proof of Lemma~\ref{lem-full-2pt}.

\begin{lem} \label{lem-dist-1pt-upper}
We have
\eqb \label{eqn-dist-1pt-upper}
\BB P\left[\mcl G^n(z) \right] \preceq \ep_n^\beta \BB P\left[ \mcl E^n(z) \right] ,
\eqe
with the implicit constant depending only on $\BB s , \alpha,\gamma$.
\end{lem}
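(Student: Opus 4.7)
The plan is to mimic the strategy that produced Lemma~\ref{lem-full-2pt} (and is foreshadowed by the sketch of Lemma~\ref{lem-dist-1pt-lower}), but in a one-point setting and for an upper rather than lower bound. Namely, I will sample $X$ uniformly from $[0,1]$ independently of $h$, define the perturbed field
\[
\wt h := h + X\phi_{z,1},
\]
prove the desired bound for $\BB P\left[\mcl G^n(z;\wt h)\right]$ by combining Lemma~\ref{lem-dist-interval} with a one-point analog of Lemma~\ref{lem-Eupper-2pt}, and then transfer the bound to $\BB P\left[\mcl G^n(z)\right]$ via a Radon--Nikodym comparison between the laws of $\wt h$ and $h$.

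The first step is to control $\BB P\left[G_{z,n}(\wt h)\mid h\right]$ on a suitable ``robust'' version of $\mcl E^n(z)$. Because Lemma~\ref{lem-dist-interval} with $m=1$ gives, on $\ol{\mcl E}^{1,1}(z)$, the bound $\BB P\left[G_{z,n}(\wt h)\mid h\right]\preceq \ep_n^\beta$ (the prefactor $\ep_1^{-\xi(Q-\alpha)-o(1)}$ is a universal constant), and since $\mcl E^n(z;h+f)\subset \mcl E^1(z;h+f)$, we also have this bound on the larger event $\ol{\mcl E}^{n,1}(z)$. On the other hand, the function $f=X\phi_{z,1}$ takes values in $[-1,1]$ and is supported on $\BB A_{\ep_1,4\ep_1}(z)\subset\BB A_{\ep_3,\ep_0}(z)$, which is contained in the admissible support set $(\BB C\setminus B_{\ep_2}(z))\cup\BB A_{\ep_3,\ep_0}(z)$ from the definition of $\ol{\mcl E}^{n,1}(z)$. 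Consequently $\mcl E^n(z;\wt h)\subset \ol{\mcl E}^{n,1}(z;h)$, and hence
\[
\BB P\left[\mcl G^n(z;\wt h)\right] \leq \BB P\left[G_{z,n}(\wt h)\cap \ol{\mcl E}^{n,1}(z)\right] \preceq \ep_n^\beta\,\BB P\left[\ol{\mcl E}^{n,1}(z)\right].
\]

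The second step is the one-point analog of Lemma~\ref{lem-Eupper-2pt}. Admissible $f$'s vanish on $\ol{B_{\ep_3}(z)}$, so $\ol{\mcl E}^{n,1}(z)\subset \mcl E_4^n(z)$ by the measurability statement~\eqref{eqn-Ecap-msrble}. Applying~\eqref{eqn-multiscale-split-lower} of Lemma~\ref{lem-multiscale-split} with $m=3$ yields $\BB P\left[\mcl E^n(z)\right]\succeq \BB P\left[\mcl E_4^n(z)\right]\BB P\left[\mcl E^3(z)\right]$, and since $\BB P\left[\mcl E^3(z)\right]$ is bounded below by a positive $K,\alpha,\gamma$-dependent constant (Lemma~\ref{lem-thick-1pt}), we obtain $\BB P\left[\ol{\mcl E}^{n,1}(z)\right]\preceq \BB P\left[\mcl E^n(z)\right]$, hence $\BB P\left[\mcl G^n(z;\wt h)\right]\preceq \ep_n^\beta \BB P\left[\mcl E^n(z)\right]$.

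The third and final step is to transfer from $\wt h$ back to $h$. By Lemma~\ref{lem-gff-abs-cont}, the law of $h$ conditional on $X$ is absolutely continuous with respect to the law of $\wt h$ conditional on $X$, with Radon--Nikodym derivative
\[
M = \exp\left(-X(\wt h,\phi_{z,1})_\nabla + \tfrac{1}{2}X^2(\phi_{z,1},\phi_{z,1})_\nabla\right).
\]
Condition~\ref{item-onescale-dirichlet} in the definition of $E_{z,1}(\wt h)$ forces $|(\wt h,\phi_{z,1})_\nabla|\leq K$ on $\mcl E^n(z;\wt h)\supset \mcl G^n(z;\wt h)$, and $X\in[0,1]$, so $M$ is bounded above on $\mcl G^n(z;\wt h)$ by a constant depending only on $K,\alpha,\gamma$ (equivalently only on $\BB s,\alpha,\gamma$). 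Taking conditional expectations given $X$ and then unconditional expectations yields $\BB P\left[\mcl G^n(z)\right]\preceq \BB P\left[\mcl G^n(z;\wt h)\right]\preceq \ep_n^\beta \BB P\left[\mcl E^n(z)\right]$, which is~\eqref{eqn-dist-1pt-upper}. There is no genuine obstacle here; the only point requiring care is verifying the containment $\mcl E^n(z;\wt h)\subset \ol{\mcl E}^{n,1}(z;h)$ and matching supports with the admissible set in the definition of $\ol{\mcl E}^{n,m}(z)$, but these are direct checks against the numerical values of the $\ep_j$.
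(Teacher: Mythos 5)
Your argument is correct and essentially identical to the paper's: perturb by $X\phi_{z,1}$ with $X$ uniform on $[0,1]$, apply Lemma~\ref{lem-dist-interval} with $m=1$ on $\ol{\mcl E}^{n,1}(z)\supset\mcl E^n(z;\wt h)$, bound $\BB P[\ol{\mcl E}^{n,1}(z)]$ via Lemma~\ref{lem-multiscale-split} and Lemma~\ref{lem-thick-1pt}, and transfer back to $h$ by a Radon--Nikodym comparison using condition~\ref{item-onescale-dirichlet}. (Your containment $\ol{\mcl E}^{n,1}(z)\subset\mcl E_4^n(z)$ is the one that actually follows from the definition, consistent with~\eqref{eqn-Eupper-contain} at $m=1$; the paper's proof writes $\mcl E_3^n(z)$ at this step, which need not hold since admissible $f$'s may be nonzero on $\BB A_{\ep_3,\ep_2}(z)$, but either inclusion yields $\BB P[\ol{\mcl E}^{n,1}(z)]\preceq\BB P[\mcl E^n(z)]$.)
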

\begin{proof}
Let $X$ be sampled uniformly from $[0,1]$ and let $\wt h := h + X\phi_{z,1}$. 
By Lemma~\ref{lem-dist-interval} applied with $m =1$, 
\eqb \label{eqn-dist-1pt-upper-cond}
\BB P\left[G_{z,n}(\wt h) \,|\, h \right] \BB 1_{\ol{\mcl E}^{n,1}(z)} \preceq \ep_n^\beta  \BB 1_{\ol{\mcl E}^{n,1}(z)}  .
\eqe
If $\mcl E^n(z;\wt h)$ occurs, then $\ol{\mcl E}^{n,1}(z)$ occurs. Hence taking unconditional expectations of both sides of~\eqref{eqn-dist-1pt-upper-cond} shows that
\eqb \label{eqn-dist-1pt-upper0}
\BB P\left[ \mcl G^n(z;\wt h) \right] \preceq \ep_n^\beta \BB P\left[ \ol{\mcl E}^{n,1}(z) \right] \leq \ep_n^\beta \BB P\left[ \mcl E_3^n(z) \right] .
\eqe
By Lemma~\ref{lem-multiscale-split}, we have $\BB P\left[ \mcl E_3^n(z) \right] \preceq \BB P\left[\mcl E^n(z)\right]$. 
By a Radon-Nikodym derivative calculation as in the proof of Lemma~\ref{lem-full-2pt}, we also have $\BB P\left[\mcl G^n(z)\right] \preceq \BB P\left[\mcl G^n(z;\wt h)\right]$. 
Plugging these bounds into~\eqref{eqn-dist-1pt-upper0} yields~\eqref{eqn-dist-1pt-upper}. 
\end{proof}

\begin{proof}[Proof of Proposition~\ref{prop-end-across}]
Combine Lemmas~\ref{lem-dist-1pt-lower} and~\ref{lem-dist-1pt-upper}.
\end{proof}

\subsection{Lower bounds for Hausdorff dimension}
\label{sec-dim-lower}

Fix a point $z_0 \in \BB A_{1/3,1/4}(0)$ which lies at Euclidean distance at least $1/100$ from $\bdy \BB A_{1/3,1/4}(0)$. 
For $n\geq 3$, we define
\eqb \label{eqn-Z_n-def}
\mcl Z_n := (\ep_{n-3} \BB Z^2) \cap B_{\ep_4}(z_0) .
\eqe
Also let $\mcl Z_n'$ be the set of $z\in\mcl Z_n$ for which $\mcl G^n(z)$ occurs. 
We define the set of \emph{perfect points}
\eqb \label{eqn-perfect-def}
\mcl P :=  \bigcap_{N \geq 1}  \ol{\bigcup_{n\geq N} \bigcup_{z\in\mcl Z_n'} B_{\ep_{n-3} }(z) } .
\eqe
Equivalently, $\mcl P$ is the set of $u\in \ol{B_{\ep_4}(z_0)}$ for which there is a sequence $n_j\rta\infty$ and points $z_j \in\mcl Z_{n_j}'$ with $z_j \rta u$.
%Assume such a sequence $z_j$ exists. Given $\delta >0$, we can choose for each $N\in\BB N$ a $j \in \BB N$ for which $n_j \geq N$ and $|u-z_j| \leq \delta$. Hence $u$ lies at distance at most $\delta$ from the set $\bigcup_{n\geq N} \bigcup_{z\in\mcl Z_n'} B_{\ep_n}(z) $. Since $\delta$ is arbitrary, $u$ belongs to the closure of this set. This is true for every $N$, so $u\in\mcl P$. Conversely,  assume that $u \in \mcl P$. Then for each $ j \in \BB N$, there exists $w_j \in   \bigcup_{n\geq j} \bigcup_{z\in\mcl Z_n'} B_{\ep_n}(z)   $ with $|u-w_j| \leq 1/j$. There exists $n_j \geq j$ and $z_j \in \mcl Z_{n_j}'$ with $w_j \in B_{\ep_{n_j}}(z_j)$. Hence $|u - z_j| \leq 1/j + \ep_{n_j}$, which goes to zero as $j\rta\infty$. 

\begin{remark} \label{remark-Z_n-choice}
The reason for the somewhat strange choice of $\mcl Z_n$ in~\eqref{eqn-Z_n-def} is that the statement of the two-point estimate Proposition~\ref{prop-end-2pt} requires $n\geq m+2$ and $m\geq 3$. 
Indeed, we use $\ep_{n-3}$ instead of $\ep_n$ in~\eqref{eqn-Z_n-def} so that $|z-w| \geq \ep_{n-3}/2 \geq \ep_{n-2}$ for each distinct $z,w\in \mcl Z_n$, which implies that $|z-w| \in [\ep_{m+1} , \ep_m]$ for some $m \leq n-2$. 
We use $B_{\ep_4}(z_0)$ instead of $\BB A_{1/3,1/4}(0)$ in~\eqref{eqn-Z_n-def} so that $|z-w| \leq \ep_3$ for each $z,w\in \mcl Z_n$, which implies that if $z\not=w$ then $|z-w| \in [\ep_{m+1},\ep_m]$ for some $m\geq 3$. 
Consequently, for any distinct $z,w\in \mcl Z_n$ we can apply Proposition~\ref{prop-end-2pt} to $z$ and $w$ to get that if $m$ is chosen so that $|z-w| \in [\ep_{m+1},\ep_m]$, then 
\allb
\BB P\left[ \mcl G^n(z) \cap \mcl G^n(w) \right] 
&\leq \ep_m^{-\alpha^2/2 - \xi(Q-\alpha) + o_m(1)} \BB P\left[\mcl G^n(z) \right] \BB P\left[\mcl G^n(w) \right] \notag\\
&= |z-w|^{-\alpha^2/2 - \xi(Q-\alpha) + o_{|z-w|}(1)}  \BB P\left[\mcl G^n(z) \right] \BB P\left[\mcl G^n(w) \right] 
\alle
where the $o_{|z-w|}(1)$ tends to zero as $|z-w| \rta 0$ at a rate depending only on $\BB s , \alpha,\gamma$ (not on $n$ or the particular choice of $z,w\in\mcl Z_n$).  
\end{remark}

Let us now check that the set of perfect points is contained in the set whose dimension we seek to lower-bound. 

\begin{lem}  \label{lem-perfect-contain}
We have
\eqb \label{eqn-perfect-contain}
\mcl P \subset \bdy\mcl B_{\BB s} \cap \mcl T_h^\alpha \cap \wh{\mcl T}_h^\alpha  .
\eqe
\end{lem}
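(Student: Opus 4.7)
The plan is to unpack the three inclusions one at a time, using the equivalent characterization of $\mcl P$ as the set of $u \in \ol{B_{\ep_4}(z_0)}$ admitting a sequence $n_j \to \infty$ and $z_j \in \mcl Z_{n_j}'$ with $z_j \to u$. On each event $\mcl G^{n_j}(z_j) = \mcl E^{n_j}(z_j) \cap G_{z_j, n_j}$, Lemmas~\ref{lem-E-thick} and~\ref{lem-E-dist} provide deterministic control of $h_r(z_j)$ and of $D_h$-diameters of annuli centered at $z_j$ at every scale $r \in [\ep_{n_j}, \ep_0]$, while the event $G_{z_j, n_j}$ provides $D_h(0, B_{\ep_{n_j}}(z_j)) \in [\BB s, \BB s + \ep_{n_j}^\beta]$.

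First I would prove $u \in \bdy\mcl B_{\BB s}$. Since $D_h$ induces the Euclidean topology, the map $v \mapsto D_h(0, v)$ is continuous, so $D_h(0, z_j) \to D_h(0, u)$; and the $D_h$-diameter of $B_{\ep_{n_j}}(z_j)$ tends to $0$ by uniform Euclidean-to-$D_h$ continuity on the compact set $\ol{B_{\ep_4}(z_0)}$. A triangle-inequality comparison of $D_h(0, z_j)$ with $D_h(0, B_{\ep_{n_j}}(z_j))$ then pins $D_h(0, u) = \BB s$.

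Next I would prove $u \in \mcl T_h^\alpha$ using the a.s.\ continuity of the circle-average process $z \mapsto h_r(z)$ for fixed $r > 0$ (a standard whole-plane GFF fact). For each $r \in (0, \ep_0]$, once $j$ is large enough that $\ep_{n_j} \leq r$, Lemma~\ref{lem-E-thick} gives $|h_r(z_j) - \alpha \log r^{-1}| \leq \wt\chi_K(r) \log r^{-1}$. Letting $j \to \infty$ with $r$ fixed and using $z_j \to u$ transfers this bound to $h_r(u)$; sending $r \to 0$ and using $\wt\chi_K(r) \to 0$ then yields $h_r(u)/\log r^{-1} \to \alpha$.

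The final and most delicate step is $u \in \wh{\mcl T}_h^\alpha$. For $\ep > 0$ small I would choose $k$ with $\ep_{k+1} \leq \ep \leq \ep_k$ and then $j$ so large that $n_j \geq k+4$ and $|u - z_j|$ is much smaller than $\ep_{k+2}$; this yields a sandwich $B_{\ep_{k+2}}(z_j) \subset B_\ep(u) \subset B_{\ep_{k-1}}(z_j)$. The lower bound $\sup_{u',v' \in B_\ep(u)} D_h(u', v') \geq \ep_{k+2}^{\xi(Q-\alpha) + o_{k+2}(1)}$ follows immediately from Lemma~\ref{lem-E-dist} applied to the annulus $\BB A_{\ep_{n_j}, \ep_{k+2}}(z_j) \subset B_\ep(u)$. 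The upper bound is the main obstacle: Lemma~\ref{lem-E-dist} only controls the $D_h$-diameter of $\BB A_{\ep_{n_j}, \ep_{k-1}}(z_j)$, not of the full disk $B_{\ep_{k-1}}(z_j)$, so I would separately bound $\op{diam}_{D_h}(\ol{B_{\ep_{n_j}}(z_j)})$ via uniform Euclidean-to-$D_h$ continuity on the compact set $\ol{B_{\ep_4}(z_0)}$. With $k$ held fixed, this inner-disk diameter is $o_j(1)$ and is eventually negligible compared to $\ep_{k-1}^{\xi(Q-\alpha) + o_{k-1}(1)}$. The factorial spacing in~\eqref{eqn-ep-choice} gives $\log \ep_{k \pm 1}^{-1}/\log \ep^{-1} \to 1$ as $\ep \to 0$, so combining both bounds produces $\log \sup_{u',v' \in B_\ep(u)} D_h(u', v')/\log \ep \to \xi(Q - \alpha)$, as required.
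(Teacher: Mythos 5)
Your proposal is correct and follows the paper's own three-part structure closely (prove $\mcl P \subset \bdy\mcl B_{\BB s}$ via the triangle inequality, then $\mcl P \subset \mcl T_h^\alpha$, then $\mcl P \subset \wh{\mcl T}_h^\alpha$ via the annulus sandwich plus an inner-disk diameter bound from uniform continuity of $D_h$). The one substantive deviation is in the $\mcl T_h^\alpha$ step: the paper invokes a \emph{quantitative} H\"older modulus of continuity for the circle-average process (from~\cite[Proposition~2.1]{hmp-thick-pts}, transferred to the whole-plane GFF by absolute continuity) so that, after choosing $z$ with $|w-z|\le\delta^{100}$, the error $|h_r(w)-h_r(z)|$ is $o_r(\log r^{-1})$ uniformly over $r\in[\delta,1]$. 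You instead fix $r$, pass to the limit $z_j\to u$ using only qualitative a.s.\ joint continuity of $(z,r)\mapsto h_r(z)$, and then let $r\to0$. Both arguments are correct; yours is a bit leaner since it avoids importing the quantitative estimate. In the $\wh{\mcl T}_h^\alpha$ step you fix $k$ (from $\ep$) and then take $j$ large to make both $|u-z_j|$ and the inner-disk diameter small, whereas the paper first fixes $N$ so that $\sup_{u,v\in B_{\ep_N}(z)}D_h(u,v)$ is uniformly small and then picks $n\ge N+1$ — this is merely a reordering of the same choices, so the two proofs agree in substance there.
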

\begin{proof}
Let $w\in \mcl P$ and let $n_j\rta\infty$ and $z_j \in\mcl Z_{n_j}'$ with $z_j \rta w$.
For each $j\in\BB N$, the event $\mcl G^{n_j}(z)$ occurs, so in particular $D_h( 0 , B_{\ep_{n_j}}( z_{n_j} )  ) \in [\BB s , \BB s + \ep_{n_j}^\beta ]$. 
As $j\rta\infty$, the $D_h$-diameter of $B_{\ep_{n_j}}(z_{n_j})$ converges to zero, so $D_h(0,z_j ) \rta \BB s$ and hence $D_h(0,w ) = \BB s$. 
Therefore, a.s.\ $\mcl P \subset \bdy \mcl B_{\BB s}$. 

We next show that a.s.\ $\mcl P \subset \mcl T_h^\alpha$. 
By~\cite[Proposition 2.1]{hmp-thick-pts} and local absolute continuity (to transfer from a zero-boundary GFF to a whole-plane GFF), after possibly replacing the circle average process by a continuous modification, we can arrange that the following is true.
For each $\zeta \in (0,1)$ there a.s.\ exists a random $C > 0$ such that for each $z,z'\in\BB D$ and each $r,r' \in (0,1]$ such that $1/2 \leq r/r' \leq 2$, 
\eqb \label{eqn-circle-avg-cont}
|h_r(z) - h_{r'}(z')| \leq C r^{-1/2} |(z,r) - (z',r')|^{(1-\zeta)/2}   .
\eqe 
 
Now let $w\in\mcl P$ and $\delta \in (0,1)$. By the definition of $\mcl P$, we can find $n\in\BB N$ with $\ep_n < \delta$ and a $z\in \mcl Z_n$ such that $|w - z| \leq \delta^{100}$. 
By~\eqref{eqn-circle-avg-cont}, for each $r\in [\delta , 1]$, 
\eqb \label{eqn-circle-avg-compare}
|h_r(w) - h_r(z )| \leq C r^{-1/2} |w-z|^{(1-\zeta)/2} \leq C r^{-1/2  + (1-\zeta)/200}  = o_r(\log r^{-1}) .
\eqe
By Lemma~\ref{lem-E-thick}, for $z\in\mcl Z_n'$ we have $h_r(z) = (\alpha  + o_r(1)) \log r^{-1}$ for each $r \in [\ep_n ,\ep_0 ]$. 
By combining this with~\eqref{eqn-circle-avg-compare}, we obtain
\eqb
h_r(w) = (\alpha  + o_r(1)) \log r^{-1} ,\quad\forall r \in [\delta ,1] .
\eqe
Sending $\delta \rta 0$ shows that $w\in \mcl T_h^\alpha$. Hence a.s.\ $\mcl P\subset\mcl T_h^\alpha$. 

Finally, we show that a.s.\ $\mcl P \subset \wh{\mcl T}_h^\alpha$. 
To this end, let $w\in\mcl P$ and let $r \in (0,\ep_1]$.
Choose $m\in\BB N$ such that $r\in [\ep_{m+1},\ep_m]$ and let $N \geq m + 3$ be large enough that 
\eqb \label{eqn-perfect-contain-diam}
\sup_{u,v\in B_{\ep_N}(z)} D_h(u,v) \leq  \ep_m^{ \xi(Q-\alpha)} , \quad \forall z\in \BB A_{1/4,1/3}(0) .
\eqe
(such an $N$ exists since $D_h$ induces the Euclidean topology). 

By the definition of $\mcl P$, we can find $n \geq N+1$ and $z\in\mcl Z_n'$ such that $D_h(z,w) \leq \ep_N/2$. 
Since $r\in [\ep_{m+1},\ep_m]$ and $\ep_N <  \ep_{m+2}/100$, we have $B_{\ep_{m+2}}(z) \subset  B_r(w) \subset B_{\ep_{m-1}}(z)$. 
Since $\mcl E^N(z)$ occurs, we can apply Lemma~\ref{lem-E-dist} to get that
\eqb
\sup_{u,v\in B_r(w)} D_h(u,v) 
\geq \sup_{u,v\in \BB A_{\ep_N , \ep_{m+2}}(z)} D_h(u,v) 
\geq \ep_m^{\xi(Q-\alpha) + o_m(1)}
= r^{\xi(Q-\alpha) + o_r(1)} 
\eqe
and
\allb
\sup_{u,v\in B_r(w)} D_h(u,v)
&\leq \sup_{u,v\in B_{\ep_{m-1}}(z)} D_h(u,v) \notag\\
&\leq \sup_{u,v\in \BB A_{\ep_N , \ep_{m-1}}(z)} D_h(u,v)  + \sup_{u,v\in B_{\ep_N  }(z)} D_h(u,v) \notag \\
&\leq \ep_m^{\xi(Q-\alpha) + o_m(1)} + \ep_m^{\xi(Q-\alpha)} \quad \text{(by Lemma~\ref{lem-E-dist} and~\eqref{eqn-perfect-contain-diam})} \notag \\
&= r^{\xi (Q-\alpha) + o_r(1)}  .
\alle 
Therefore, $w\in \wh{\mcl T}_h^\alpha$ and hence $\mcl P\subset\wh{\mcl T}_h^\alpha$. 
\end{proof}

\begin{prop} \label{prop-perfect-dim}
Assume that $\alpha \in (\xi -\sqrt{4-2\xi Q +\xi^2} ,\xi + \sqrt{4-2\xi Q +\xi^2})$. 
For each $\Delta < 2-\alpha^2/2 - \xi(Q-\alpha)$ and each $C>0$, 
\eqb
\BB P\left[ \dim_{\mcl H} \mcl P \geq \Delta ,\, D_h\left(\bdy B_{5/6}(0) , \bdy B_{7/8}(0) \right)   \geq C \right] >  0. 
\eqe
\end{prop}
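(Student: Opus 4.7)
\medskip

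The plan is to execute the classical second-moment / Frostman measure strategy, using Propositions~\ref{prop-end-1pt}, \ref{prop-end-2pt}, and~\ref{prop-end-across} as the one-point, two-point, and one-point-with-truncation inputs, respectively. Specifically, for each $n \geq 3$ I would define a random measure supported on the union of the Euclidean balls around the perfect points at scale $n$,
\[
\nu_n(du) := \sum_{z \in \mcl Z_n} \frac{\BB 1_{\mcl G^n(z)}}{\BB P[\mcl G^n(z)]} \, \frac{\BB 1_{B_{\ep_{n-3}}(z)}(u)}{\pi \ep_{n-3}^2} \, dA(u),
\]
where $dA$ is Lebesgue measure on $\BB C$. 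By design $\nu_n$ has expected total mass of order $|B_{\ep_4}(z_0)|$ (since $|\mcl Z_n| \asymp \ep_4^2 \ep_{n-3}^{-2}$), and if $\nu$ is a subsequential weak limit then $\nu$ is a.s.\ supported on $\mcl P$ by the definition~\eqref{eqn-perfect-def} and the fact that the $B_{\ep_{n-3}}(z)$ for $z\in\mcl Z_n'$ form (up to the $\ep_{n-3}$-neighborhood) the approximating sets in the definition of $\mcl P$.

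The first-moment step uses Proposition~\ref{prop-end-across} to produce the event $F_C$ for free: writing $\nu_n^C(du)$ for the restriction of $\nu_n$ to the event $F_C$ (replacing $\BB 1_{\mcl G^n(z)}$ by $\BB 1_{\mcl G^n(z) \cap F_C}$), we have $\BB E[\nu_n^C(\BB C)] \succeq \ep_4^2$, with implicit constants depending only on $C, \BB s, \alpha, \gamma$. For the second-moment step, I would bound
\[
\BB E[\nu_n(\BB C)^2] \preceq \sum_{z, w \in \mcl Z_n} \frac{\BB P[\mcl G^n(z) \cap \mcl G^n(w)]}{\BB P[\mcl G^n(z)] \BB P[\mcl G^n(w)]} \, \ep_{n-3}^2
\]
and, using the two-point estimate~\eqref{eqn-end-2pt} (c.f.\ Remark~\ref{remark-Z_n-choice}) together with the trivial bound $\BB P[\mcl G^n(z)\cap \mcl G^n(w)] \leq \BB P[\mcl G^n(z)]$ on the diagonal, recognize the sum as a Riemann approximation of the Lebesgue integral $\iint_{B_{\ep_4}(z_0)^2} |u-v|^{-\alpha^2/2 - \xi(Q-\alpha)} \, du \, dv$, which is finite exactly under our hypothesis $\alpha \in (\xi - \sqrt{4 - 2\xi Q + \xi^2}, \xi + \sqrt{4 - 2\xi Q + \xi^2})$ (since this is the range that makes the exponent strictly less than $2$). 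The same argument, inserting a factor $|z-w|^{-\Delta}$, yields
\[
\BB E\left[ \iint |u-v|^{-\Delta} \, d\nu_n(u) \, d\nu_n(v) \right] \preceq \iint_{B_{\ep_4}(z_0)^2} |u-v|^{-\Delta - \alpha^2/2 - \xi(Q-\alpha) + o(1)} \, du \, dv,
\]
which is finite for any $\Delta < 2 - \alpha^2/2 - \xi(Q-\alpha)$ (taking $|z-w|$ small enough that the $o(1)$ error is negligible; the contribution from pairs with $|z-w|$ of constant order is trivially $O(1)$).

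To conclude, I would combine the three bounds via Paley--Zygmund: the Cauchy--Schwarz inequality applied to $\BB E[\nu_n^C(\BB C)] \leq \BB E[\nu_n(\BB C)^2]^{1/2} \BB P[\nu_n^C(\BB C) > 0]^{1/2}$, combined with the uniform energy bound, shows that
\[
\BB P\left[ \nu_n(\BB C) \geq c, \; F_C, \; \iint |u-v|^{-\Delta} d\nu_n(u)\, d\nu_n(v) \leq M \right] \geq c'
\]
for suitable constants $c, c', M > 0$ depending only on $C, \BB s, \alpha, \gamma, \Delta$ (and $n$-independent). By the Banach--Alaoglu theorem, on a subsequence $n_k \to \infty$ the measures $\nu_{n_k}$ converge weakly to a random finite measure $\nu$; by Fatou, with positive probability $\nu$ is nonzero, $F_C$ occurs, and $\nu$ has finite $\Delta$-energy. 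Weak convergence of probability measures together with the nested structure in~\eqref{eqn-perfect-def} ensures $\op{supp}(\nu) \subset \mcl P$, so Frostman's lemma~\cite[Theorem 4.27]{peres-bm} gives $\dim_{\mcl H} \mcl P \geq \Delta$ on this event, as desired. The main technical obstacle I expect is verifying cleanly that the exponent $o_{|z-w|}(1)$ in the two-point estimate is uniform enough in $n$ to get a genuine Riemann-sum comparison to the Lebesgue double integral; this will require partitioning pairs $(z,w)$ by dyadic scales of $|z-w|$ and summing the resulting geometric series, absorbing the subpolynomial errors using that $\ep_j / \ep_{j-1} = \ep_j^{o_j(1)}$.
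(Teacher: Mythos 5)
Your proposal matches the paper's proof essentially step for step: the weighted indicator measures $\nu_n$, the first moment with $F_C$ inserted via Proposition~\ref{prop-end-across}, the second-moment and $\Delta$-energy bounds via Propositions~\ref{prop-end-1pt} and~\ref{prop-end-2pt} (with the diagonal handled separately, using $\alpha^2/2 + \beta < 2$), and the Paley--Zygmund/Markov-plus-Prokhorov/Frostman conclusion. One slip worth fixing: the extra $1/(\pi\ep_{n-3}^2)$ in your definition of $\nu_n$ gives each ball unit mass, so $\BB E[\nu_n(\BB C)] \asymp |\mcl Z_n| \asymp \ep_{n-3}^{-2} \to \infty$, contradicting your stated first-moment bound $\succeq \ep_4^2$ and breaking tightness in the weak-limit step; the paper's normalization (omit the $1/(\pi\ep_{n-3}^2)$, so each ball carries mass $\asymp \ep_{n-3}^2$) is the one your subsequent estimates implicitly use and is the one that keeps the total mass of order one.
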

\begin{proof}
For a Borel measure $\nu$ on $\BB C$ and $\Delta > 0$, we define the \emph{$\Delta$-energy} of $\nu$ by
\eqb
I_\Delta(\nu) = \iint_{\BB C\times\BB C} \frac{1}{|u-v|^\Delta} \,d\nu(u) \, d\nu(v) . 
\eqe
By Frostman's lemma~\cite[Theorem 4.27]{peres-bm}, if $X\subset \BB C$ is a closed set and there is a non-trivial Borel measure $\nu$ on $X$ with $I_\Delta(\nu) < \infty$, then $\dim_{\mcl H} X \geq \Delta$. 
Hence, to prove our lower bound for $\dim_{\mcl H} \mcl P$, we only need to show that for each $\Delta < 2 - \alpha^2/2 - \xi(Q-\alpha)$, it holds with positive probability there is a non-trivial Borel measure $\nu$ on $\mcl P$ such that $I_\Delta(\nu)< \infty$.
We will construct such a measure in the usual manner, as is done, e.g., in~\cite{beffara-dim,hmp-thick-pts,mww-nesting}.
Throughout the proof, all implicit constants and the rates of convergence of all $o(1)$ errors are required to depend only on $C,\BB s,\alpha,\gamma$. 

For $n\in\BB N$, we define a measure on $ B_{\ep_4}(z_0) $ by
\eqb
d\nu_n(u) = \sum_{z\in\mcl Z_n} \frac{\BB 1_{\mcl G^n(z)}}{\BB P[\mcl G^n(z)]} \BB 1_{u\in B_{\ep_{n-3}}(z)} \,du .
\eqe
For $C > 0$, we write $F_C := \left\{D_h\left(\bdy B_{5/6}(0) , \bdy B_{7/8}(0) \right)   \geq C\right\}$, as in~\eqref{eqn-dist-1pt-across}. 
By Proposition~\ref{prop-end-across}, 
\allb \label{eqn-first-moment}
\BB E\left[ \nu_n\left(B_{\ep_4}(z_0) \right)   \BB 1_{F_C}   \right] 
 = \sum_{z\in\mcl Z_n} \frac{ \BB P\left[ \mcl G^n(z) \cap F_C \right]   }{ \BB P\left[\mcl G^n(z) \right]   } \times \pi \ep_{n-3}^2   
 \succeq 1 .
\alle
 
By Propositions~\ref{prop-end-1pt} and~\ref{prop-end-2pt} (see Remark~\ref{remark-Z_n-choice}), 
\allb \label{eqn-second-moment}
\BB E\left[ \nu_n\left( B_{\ep_4}(z_0) \right)^2 \right] 
&= \sum_{z,w\in\mcl Z_n'} \iint_{B_{\ep_{n-3} }(z) \times B_{\ep_{n-3}(w) }}   \frac{\BB P\left[ \mcl G^n(z) \cap \mcl G^n(w) \right] }{\BB P[\mcl G^n(z)] \BB P[\mcl G^n(w)]}     \,du  \, dv\notag \\ 
&\asymp \ep_{n-3}^4 \sum_{z \in \mcl Z_n} \frac{1}{\BB P[\mcl G^n(z)]} 
 + \ep_{n-3}^4 \sum_{\substack{z,w\in \mcl Z_n \\ z\not=w}} \frac{\BB P\left[ \mcl G^n(z) \cap \mcl G^n(w) \right]}{\BB P\left[\mcl G^n(z) \right] \BB P\left[\mcl G^n(w)\right]} \notag\\
&\preceq  \ep_{n-3}^4 \sum_{z \in \mcl Z_n} \ep_n^{-\alpha^2/2 - \beta + o_n(1)} 
 + \ep_{n-3}^4 \sum_{\substack{z,w\in \mcl Z_n \\ z\not=w}} |z-w|^{-\alpha^2/2 - \xi(Q-\alpha) - o_{|z-w|}(1)}  \notag \\
&\preceq \ep_{n-3}^4 \sum_{z \in \mcl Z_n} \ep_n^{-\alpha^2/2 - \beta + o_n(1)} 
 +  \iint_{\BB D \times \BB D}  |u-v|^{-\alpha^2/2 - \xi(Q-\alpha) - o_{|u-v|}(1)} \, du\, dv .
\alle 
Since $\ep_{n-3} = \ep_n^{1+o_n(1)}$ and $\alpha\in (\xi -\sqrt{4-2\xi Q +\xi^2} ,\xi + \sqrt{4-2\xi Q +\xi^2})$, we have $\alpha^2/2 + \beta   < \alpha^2/2  + \xi(Q-\alpha)  < 2$. Hence the right side of~\eqref{eqn-second-moment} is bounded above by a constant depending only on $\BB s,\alpha,\gamma$. 

For $\Delta < 2 - \alpha^2/2  -  \xi(Q-\alpha)$, we can again use Propositions~\ref{prop-end-1pt} and~\ref{prop-end-2pt} to obtain that the expected $\Delta$-energy of $\nu_n$ satisfies
\allb \label{eqn-energy}
\BB E\left[ I_\Delta(\nu_n) \right] 
&= \sum_{z,w \in \mcl Z_n}  \frac{\BB P\left[ \mcl G^n(z) \cap \mcl G^n(w) \right]}{\BB P\left[\mcl G^n(z) \right] \BB P\left[\mcl G^n(w)\right]} \iint_{B_{\ep_{n-3}}(z) \times B_{\ep_{n-3}}(w)} \frac{1}{|u-v|^\Delta} \, du \, dv \notag\\
&= \sum_{z\in\mcl Z_n}  \frac{1}{\BB P\left[\mcl G^n(z) \right]  } \iint_{B_{\ep_{n-3}}(z) \times B_{\ep_{n-3}}(z)} \frac{1}{|u-v|^\Delta} \, du \, dv \notag\\
&\qquad +  \sum_{\substack{z,w \in \mcl Z_n \\ z\not= w}}  \frac{\BB P\left[ \mcl G^n(z) \cap \mcl G^n(w) \right]}{\BB P\left[\mcl G^n(z) \right] \BB P\left[\mcl G^n(w)\right]} \iint_{B_{\ep_{n-3}}(z) \times B_{\ep_{n-3}}(w)} \frac{1}{|u-v|^\Delta} \, du \, dv \notag\\ 
&\preceq \sum_{z\in\mcl Z_n} \ep_n^{-\alpha^2/2 - \beta - o_n(1) } \times \ep_{n-3}^{4-\Delta}    \notag\\
&\qquad +  \sum_{\substack{z,w \in \mcl Z_n \\ z\not= w}} |z-w|^{-\alpha^2/2 - \xi(Q-\alpha) - o_{|z-w|}(1)} \times \frac{\ep_{n-3}^4}{|z-w|^\Delta} \notag\\ 
&\preceq   \ep_n^{2 -\alpha^2/2 - \beta - \Delta - o_n(1)}    
 +  \ep_{n-3}^4 \sum_{\substack{z,w \in \mcl Z_n \\ z\not= w}} |z-w|^{-\Delta -\alpha^2/2 - \xi(Q-\alpha) - o_{|z-w|}(1)}  \notag\\ 
&\preceq   \ep_n^{2 -\alpha^2/2 - \beta - \Delta - o_n(1)}    
 +  \iint_{\BB D \times \BB D} |u-v|^{-\Delta -\alpha^2/2 - \xi(Q-\alpha) - o_{|u-v|}(1)} \, du\,dv   . 
\alle
Since $\alpha^2/2 + \beta + \Delta  < \alpha^2/2  + \xi(Q-\alpha)   + \Delta < 2$, this last quantity is bounded above by a constant depending only on $\BB s,\alpha,\gamma$. 
 
Due to~\eqref{eqn-first-moment},~\eqref{eqn-second-moment}, and~\eqref{eqn-energy}, we can now conclude via the usual argument (see, e.g., the proof of~\cite[Proposition 4.8]{mww-nesting}) that with positive probability, $F_C$ occurs and there is a weak subsequential limit $\nu$ of the $\nu_n$'s which is non-trivial and has finite $\Delta$-energy. 
In order to make our paper more self-contained, we will give some details of this argument.

By~\eqref{eqn-first-moment} the mean of the random variable $\BB 1_{F_C} \nu_n(B_{\ep_4}(z_0))$ is bounded below independently of $n$ and by~\eqref{eqn-second-moment} its variance is bounded above independently of $n$. 
By the Chebyshev inequality and the Paley-Zygmund inequality there is a deterministic constant $C > 1 $ such that for every $n\in\BB N$, 
\eqbn
\BB P\left[ C^{-1} \leq \BB 1_{F_C} \nu_n(B_{\ep_4}(z_0)) \leq C \right] \geq 1/C . 
\eqen
By~\eqref{eqn-energy}, we have $\BB E\left[ I_\Delta(\nu_n) \right] \preceq 1$ so we can apply Markov's inequality to get that after possibly increasing $C$,
\eqb
 \BB P\left[ \Gamma_n \right] \geq 1/C \quad \text{where} \quad \Gamma_n := \left\{ C^{-1} \leq \BB 1_{F_C} \nu_n(B_{\ep_4}(z_0)) \leq C ,\, I_\Delta(\nu_n) \leq C \right\} .
\eqe
Hence with positive probability, $\Gamma_n$ occurs for infinitely many values of $n$. Let $\Gamma$ be the event that this is the case.
On $\Gamma$, the event $F_C$ necessarily occurs.
Furthermore, by Prokhorov's theorem, on $\Gamma$ there is a (random) subsequence $(\nu_{n_k})$ of the measures $\nu_n$ which converges weakly to a non-trivial limiting measure $\nu$. 
By the Portmanteau theorem, $\int f \, d\mu \leq \liminf_{k\rta\infty} \int f d\mu_k$ whenever $\mu_k\rta \mu$ weakly and $f$ is a lower semicontinuous function which is bounded below.
Taking $f(z,w) = |z-w|^{-\Delta}$ (note that this function is bounded below on $\BB D\times \BB D$) and $\mu_k = \nu_{n_k } \otimes \nu_{n_k}$ shows that on $\Gamma$, the subsequential limiting measure $\nu$ has finite $\Delta$-energy. 
 
Since each $\nu_n$ is supported on the set $\bigcup_{z\in\mcl Z_n'} B_{\ep_{n-3}}(z)  $, it follows that each point of the support of $\nu$ is a limit as $k\rta\infty$ of points $z_{n_k} \in \mcl Z_{n_k}'$. By the definition~\eqref{eqn-perfect-def} of $\mcl P$, this means that $\nu$ is supported on $\mcl P$. 
As explained at the beginning of the proof, this shows via Frostman's lemma that with positive probability, $\dim_{\mcl H} \mcl P \geq \Delta$ and $F_C$ occurs, i.e., $D_h\left(\bdy B_{5/6}(0) , \bdy B_{7/8}(0) \right)   \geq C$.
\end{proof}

\begin{prop} \label{prop-dim-contain} 
For each $\BB s  >0$ and each $\Delta < 2-\xi(Q-\alpha) -\alpha^2/2$, it holds with positive probability that 
\eqb
\dim_{\mcl H}^0\left( \bdy\mcl B_{\BB s} \cap \mcl T_h^\alpha \cap \wh{\mcl T}_h^\alpha \right) \geq \Delta \quad \text{and} \quad \mcl B_{\BB s} \subset B_{7/8}(0) .
\eqe
\end{prop}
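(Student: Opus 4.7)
The plan is to split into the easy case $\BB s > S$ (where the result follows from Proposition~\ref{prop-perfect-dim} directly) and the harder case $\BB s \in (0,S]$ (which requires Weyl scaling combined with local absolute continuity).

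In the easy case, I will apply Proposition~\ref{prop-perfect-dim} with the given $\BB s$ and $C := \BB s + 1$; the positive-probability event so produced will satisfy $\dim_{\mcl H}^0 \mcl P \geq \Delta$ and $D_h(\bdy B_{5/6}(0),\bdy B_{7/8}(0)) \geq C$. By Lemma~\ref{lem-perfect-contain}, $\mcl P \subset \bdy\mcl B_{\BB s} \cap \mcl T_h^\alpha \cap \wh{\mcl T}_h^\alpha$, which gives the dimension bound. Since any path from $0$ to $\bdy B_{7/8}(0)$ must cross the annulus $\ol{\BB A_{5/6,7/8}(0)}$, the distance bound yields $D_h(0,\bdy B_{7/8}(0)) \geq C > \BB s$, and then $\mcl B_{\BB s} \subset B_{7/8}(0)$ follows from the path-connectedness of $\mcl B_{\BB s}$ in the length metric $D_h$ (it contains $0\in B_{7/8}(0)$ but no point of $\bdy B_{7/8}(0)$).

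For $\BB s \in (0,S]$, I will fix $\BB s_1 := 2S$, set $c := \xi^{-1}\log(\BB s/\BB s_1) < 0$, and choose a smooth function $\psi:\BB C\rta[0,1]$ identically equal to $1$ on $\ol{B_{8/9}(0)}$ and compactly supported in $\BB D$; the field $\wt h := h+c\psi$ then has the same normalization as $h$ because $\psi$ vanishes on $\bdy\BB D$. Applying the easy case to $h$ with parameter $\BB s_1$ and $C := 2\BB s_1$ produces a positive-probability event $F$ on which $\dim_{\mcl H}^0(\bdy\mcl B_{\BB s_1}(0;D_h)\cap \mcl T_h^\alpha \cap \wh{\mcl T}_h^\alpha) \geq \Delta$ and $D_h(\bdy B_{5/6}(0),\bdy B_{7/8}(0)) \geq 2\BB s_1$; in particular $\mcl B_{\BB s_1}(0;D_h)\subset B_{7/8}(0)\subset\{\psi=1\}$ on $F$. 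On $F$, any path from $0$ of $D_h$-length less than $2\BB s_1$ stays inside $\{\psi=1\}$, so by Axiom~\ref{item-metric-f} its $D_{\wt h}$-length equals $e^{\xi c}=\BB s/\BB s_1$ times its $D_h$-length; consequently $D_{\wt h}(0,u) = (\BB s/\BB s_1) D_h(0,u)$ for $u\in\ol{\mcl B_{\BB s_1}(0;D_h)}$, so $\mcl B_{\BB s}(0;D_{\wt h}) = \mcl B_{\BB s_1}(0;D_h)\subset B_{7/8}(0)$ and their boundaries coincide. Moreover, since $c\psi$ equals the constant $c$ on a neighborhood of $\ol{B_{7/8}(0)}$, circle averages and local metric diameters of $\wt h$ at points $z\in\ol{B_{7/8}(0)}$ differ from those of $h$ only by additive $c$ or multiplicative $e^{\xi c}$ at sufficiently small scales (for the metric diameters this uses LQG H\"older continuity to ensure that the relevant paths stay in $\{\psi=1\}$); these constant factors drop out in the defining limits, so $\mcl T_{\wt h}^\alpha$ and $\wh{\mcl T}_{\wt h}^\alpha$ agree with $\mcl T_h^\alpha$ and $\wh{\mcl T}_h^\alpha$ on $\ol{B_{7/8}(0)}$. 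Thus on $F$ the conclusion of the proposition holds for $(\BB s,\wt h)$ in place of $(\BB s,h)$.

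Finally, Lemma~\ref{lem-gff-abs-cont} gives mutual absolute continuity of the laws of $h$ and $\wt h = h+c\psi$ on $\mcl D'(\BB C)$. Writing $A\subset\mcl D'(\BB C)$ for the Borel event corresponding to the conclusion of the proposition with parameter $\BB s$, the previous step gives $\BB P[\wt h\in A]>0$, which is transferred by absolute continuity to $\BB P[h\in A]>0$, as required. The main technical obstacle is the verification that $\mcl T_h^\alpha$ and especially $\wh{\mcl T}_h^\alpha$ on $\ol{B_{7/8}(0)}$ are insensitive to the locally-constant perturbation $c\psi$; the remainder is a routine scaling computation combined with the Girsanov-type formula of Lemma~\ref{lem-gff-abs-cont}.
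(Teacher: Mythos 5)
Your argument is correct and follows essentially the same route as the paper: handle $\BB s > S$ via Lemma~\ref{lem-perfect-contain} and Proposition~\ref{prop-perfect-dim}, then transfer to $\BB s \in (0,S]$ by Weyl scaling combined with absolute continuity (Lemma~\ref{lem-gff-abs-cont}). The only small variation is that you perturb by the bump function $c\psi$ from the outset and then check that $\mcl T_h^\alpha$ and $\wh{\mcl T}_h^\alpha$ are unchanged on $\ol{B_{7/8}(0)}$, whereas the paper perturbs by the constant $\xi^{-1}\log(\BB s/(2S))$ (for which these sets are exactly unchanged) and introduces the bump function only at the final absolute-continuity step, after using locality to see the event is determined by $h|_{B_{7/8}(0)}$; also note that your invocation of LQG H\"older continuity when comparing $\wh{\mcl T}_{\wt h}^\alpha$ to $\wh{\mcl T}_h^\alpha$ is unnecessary, since the global two-sided bound $e^{\xi c} D_h \leq D_{\wt h} \leq D_h$ (from $c \leq c\psi \leq 0$ and Weyl scaling) already forces the $\log/\log$ limits in~\eqref{eqn-metric-thick} to agree.
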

\begin{proof}
We first consider the case when $\BB s > S$, with $S$ as in Lemma~\ref{lem-E-dist}. In this case, Lemma~\ref{lem-perfect-contain} and Proposition~\ref{prop-perfect-dim} (applied with $C=\BB s$) imply that with positive probability,
\eqbn
\dim_{\mcl H}^0\left( \bdy\mcl B_{\BB s} \cap \mcl T_h^\alpha \cap \wh{\mcl T}_h^\alpha \right) \geq \Delta \quad \text{and} \quad D_{ h}(\bdy B_{5/6}(0) , \bdy B_{7/8}(0) ) \geq \BB s .
\eqen
If $D_h(\bdy B_{5/6}(0) , \bdy B_{7/8}(0) ) \geq \BB s$, then also $D_h(0, \bdy B_{7/8}(0) ) \geq \BB s$ and hence $\mcl B_{\BB s} \subset B_{7/8}(0)$. Hence the lemma holds when $\BB s > S$. 

Now assume that $\BB s \in [0,S]$ and let $\wt h := h+\xi^{-1} \log(\BB s/(2S))$. By Weyl scaling (Axiom~\ref{item-metric-f}), $\mcl B_{2S}(0, D_{\wt h}) = \mcl B_{\BB s}$. Clearly, the definitions~\eqref{eqn-thick-def} and~\eqref{eqn-metric-thick} of $\mcl T_h^\alpha$ and $\wh{\mcl T}_h^\alpha$ are unaffected by adding a constant to $h$, so $\mcl T_h^\alpha  = \mcl T_{\wt h}^{\alpha}$ and $\wh{\mcl T}_h^\alpha  = \wh{\mcl T}_{\wt h}^{\alpha}$  Hence,
\allb \label{eqn-dim-field-compare}
&\left\{\dim_{\mcl H}^0\left( \bdy\mcl B_{\BB s} \cap \mcl T_h^\alpha \cap \wh{\mcl T}_h^\alpha \right) \geq \Delta \right\} \cap \left\{\mcl B_{\BB s} \subset B_{7/8}(0) \right\} \notag\\
&\qquad\qquad= \left\{\dim_{\mcl H}^0\left( \bdy\mcl B_{2S}(0;D_{\wt h}) \cap \mcl T_{\wt h}^\alpha \cap \wh{\mcl T}_{\wt h}^\alpha \right) \geq \Delta \right\} \cap \left\{\mcl B_{2S}(0;D_{\wt h}) \subset B_{7/8}(0) \right\} .
\alle
By locality (Axiom~\ref{item-metric-f}), the event $\{\dim_{\mcl H}^0\left( \bdy\mcl B_{2S} \cap \mcl T_h^\alpha \cap \wh{\mcl T}_h^\alpha \right) \geq \Delta\} \cap \{\mcl B_{2S} \subset B_{7/8}(0)\}$ is determined by $h|_{B_{7/8}(0)}$. By Lemma~\ref{lem-gff-abs-cont} (applied to a smooth bump function which equals $\xi^{-1} \log(\BB s/(2S)$ on $B_{7/8}(0)$ and vanishes outside of $\BB D$), the laws of the restrictions of $h$ and $\wt h$ to $B_{7/8}(0)$ are mutually absolutely continuous. Note that it is important here that we work with $B_{7/8}(0)$ instead of $\BB D$, since the circle averages of $h$ and $\wt h$ over $\bdy\BB D$ are different. 
Hence, the version of the lemma with $2S$ in place of $\BB s$ implies that the event in~\eqref{eqn-dim-field-compare} has positive probability, i.e., the lemma statement holds for $\BB s$. 
\end{proof}

\begin{proof}[Proof of Theorem~\ref{thm-thick-dim}, lower bound]
The lower bound for $\esssup \dim_{\mcl H}^0\left( \bdy\mcl B_{\BB s} \cap \mcl T_h^\alpha \cap \wh{\mcl T}_h^\alpha \right)$ is immediate from Proposition~\ref{prop-dim-contain}, which is a strictly stronger statement. 
The lower bound for\\ 
$\esssup \dim_{\mcl H}^\gamma\left( \bdy\mcl B_{\BB s} \cap \mcl T_h^\alpha \cap \wh{\mcl T}_h^\alpha \right)$ follows from the lower bound for the Euclidean dimension together with~\cite[Proposition 2.5]{gp-kpz} which says that a.s.\ for every Borel set $X\subset\BB C$ simultaneously, we have $\dim_{\mcl H}^\gamma (X\cap \mcl T_h^\alpha) \geq \frac{1}{\xi(Q-\alpha)} \dim_{\mcl H}^0(X\cap\mcl T_h^\alpha)$ (here we apply the proposition with $X = \bdy\mcl B_{\BB s} \cap\wh{\mcl T}_h^\alpha$). 
\end{proof}

\begin{proof}[Proof of Theorem~\ref{thm-bdy-dim}, lower bound]
Taking $\alpha = \xi$ in Theorem~\ref{thm-thick-dim} shows that $\esssup \dim_{\mcl H}^0 \bdy\mcl B_{\BB s} \geq 2 - \xi Q +\xi^2/2$. 
Taking $\alpha=\gamma$ in Theorem~\ref{thm-thick-dim} shows that $\esssup \dim_{\mcl H}^\gamma \bdy \mcl B_{\BB s} \geq d_\gamma-1$. 
\end{proof}

\section{One-point estimate for the event at a single scale}
\label{sec-onescale-prob}

In this section we will prove Proposition~\ref{prop-E-prob}. In Sections~\ref{sec-around-pos}, \ref{sec-internal-diam}, and~\ref{sec-rn-sup} we prove general estimates for the GFF which will help us lower-bound the probabilities of conditions~\ref{item-onescale-around}, \ref{item-onescale-diam}, and~\ref{item-onescale-rn} in the definition of $E_{z,j}$, respectively.
In Section~\ref{sec-E-prob} we conclude the proof of Proposition~\ref{prop-E-prob}.

\subsection{Comparison of distances in annuli with positive probability}
\label{sec-around-pos}

In this brief subsection we prove an estimate which allows us to lower-bound the probability of condition~\ref{item-onescale-around} in the definition of $E_{z,j}$. 
We remark that this is the only condition in the definition of $E_{z,j}$ which occurs with uniformly positive probability, but not probability close to 1. %(all of the other conditions either occur with probability which is close to 1 when $A$ is large or probability tending to zero as $j\rta\infty$). 

\begin{lem} \label{lem-around-pos}
Fix $0 < a < b < c <\infty$ and $\delta>0$. With positive probability, there is a path in $\BB A_{b,c}(0)$ which disconnects the inner and outer boundaries of $\BB A_{b,c}(0)$ and whose $D_h$-length is at most $\delta D_h\left( \bdy B_{a}(0) , \bdy B_{b}(0) \right)$.
\end{lem}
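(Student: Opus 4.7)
The plan is to use Weyl scaling (Axiom~\ref{item-metric-f}) to construct an auxiliary field $\wt h$ whose law is mutually absolutely continuous with that of $h$ and under which the desired event has positive probability, then transfer the conclusion back via Lemma~\ref{lem-gff-abs-cont}. Concretely, I would fix a closed sub-annulus $\ol{\mcl A} \subset \BB A_{a,b}(0)$ which surrounds $0$ and avoids $\bdy\BB D$ (always possible---one can choose $\ol{\mcl A}$ to lie in one of the two components of $\BB A_{a,b}(0) \setminus \bdy\BB D$, each of which is itself an annulus around $0$), together with a smooth function $\psi : \BB C \to [0,1]$ which is identically $1$ on $\ol{\mcl A}$, compactly supported in $\BB A_{a,b}(0)$, and identically $0$ on $\bdy\BB D$. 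For a parameter $N > 0$ (to be taken large), set $\wt h := h + N\psi$; since $\psi$ vanishes on $\bdy\BB D$, Lemma~\ref{lem-gff-abs-cont} gives that the laws of $\wt h$ and $h$ are mutually absolutely continuous.

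The two key consequences of Weyl scaling (Axiom~\ref{item-metric-f}) and locality (Axiom~\ref{item-metric-local}) that I would use are: (i) for any path $P \subset \BB A_{b,c}(0)$, since $\psi \equiv 0$ on a neighborhood of $P$, one has $\op{len}(P; D_{\wt h}) \leq \op{len}(P; D_h)$; and (ii) any continuous path from $\bdy B_a(0)$ to $\bdy B_b(0)$ contains a sub-arc in $\ol{\mcl A}$ joining its inner and outer boundaries, and because $\wt h|_{\ol{\mcl A}} = h|_{\ol{\mcl A}} + N$, the internal $D_{\wt h}$-metric on $\ol{\mcl A}$ equals $e^{\xi N}$ times the internal $D_h$-metric, yielding
\[
D_{\wt h}(\bdy B_a(0), \bdy B_b(0)) \geq e^{\xi N} D_h(\bdy_{\op{in}}\ol{\mcl A}, \bdy_{\op{out}}\ol{\mcl A}).
\]

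Next, I would argue that there exist $\ell_0, d_0 \in (0,\infty)$ for which the event
\[
F := \{\exists P_0 \subset \BB A_{b,c}(0) \text{ disconnecting inner/outer},\ \op{len}(P_0;D_h) \leq \ell_0\} \cap \{D_h(\bdy_{\op{in}}\ol{\mcl A}, \bdy_{\op{out}}\ol{\mcl A}) \geq d_0\}
\]
has positive probability. The second constraint is satisfied with positive probability because $D_h(\bdy_{\op{in}}\ol{\mcl A}, \bdy_{\op{out}}\ol{\mcl A}) > 0$ a.s.; the first holds a.s.\ because, choosing a closed sub-annulus $\ol{\mcl A'} \subset \BB A_{b,c}(0)$ surrounding $0$, the internal metric $D_h(\cdot,\cdot;\ol{\mcl A'})$ is a.s.\ a length metric of finite diameter (by Axiom~\ref{item-metric-length} together with the fact that $D_h$ induces the Euclidean topology on the compact set $\ol{\mcl A'}$). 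A loop around $0$ of finite $D_h$-length is then built by concatenating internal-$D_h$-geodesic arcs between sufficiently many evenly spaced points on $\bdy B_{(b+c)/2}(0)$; for enough subdivisions these arcs stay close (in Euclidean distance) to the Euclidean segments they interpolate, so the concatenation winds exactly once around $0$.

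Finally, take $N$ large enough that $\ell_0 \leq \delta e^{\xi N} d_0$. On $F$, the witness $P_0$ satisfies
\[
\op{len}(P_0; D_{\wt h}) \leq \op{len}(P_0; D_h) \leq \ell_0 \leq \delta e^{\xi N} d_0 \leq \delta\, D_{\wt h}(\bdy B_a(0), \bdy B_b(0)),
\]
so the event of the lemma holds with $\wt h$ in place of $h$. Hence this event has positive probability under the law of $\wt h$, and mutual absolute continuity transfers the conclusion to the law of $h$. The main (mildly technical) point is the a.s.\ existence of a finite-$D_h$-length loop around $0$ in $\BB A_{b,c}(0)$; the Weyl-scaling and absolute-continuity scaffolding is routine.
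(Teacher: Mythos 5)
Your proof is correct and uses the same core strategy as the paper's: add a constant multiple of a bump function to $h$, apply Weyl scaling (Axiom~\ref{item-metric-f}), and transfer the conclusion back to $h$ via mutual absolute continuity (Lemma~\ref{lem-gff-abs-cont}). The one substantive difference is the direction of the perturbation. The paper adds a large \emph{negative} multiple of a bump function supported on $\BB A_{b,c}(0)$, which shrinks the $D_h$-length of the separating loop while leaving $D_h(\bdy B_a(0),\bdy B_b(0))$ unchanged; you add a large \emph{positive} multiple of a bump function supported in $\BB A_{a,b}(0)$, which inflates $D_{\wt h}(\bdy B_a(0),\bdy B_b(0))$ while leaving the loop's $D_{\wt h}$-length unchanged. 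These are dual ways of making the same ratio small, and both are equally valid. Two smaller points: you are more explicit than the paper about choosing the bump function to vanish on $\bdy\BB D$ so that Lemma~\ref{lem-gff-abs-cont} applies verbatim (the paper elides this; a compact-set absolute continuity argument would also work, since the event depends only on $h|_{\ol{B_c(0)}}$), and your sketch of why a separating loop of a.s.\ finite $D_h$-length exists in $\BB A_{b,c}(0)$ (geodesic interpolation of many points on a circle, with winding-number control by Euclidean proximity) fleshes out a step the paper dispatches in one line via ``$D_h$ induces the Euclidean topology.''
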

\begin{proof}
Fix $r_1,r_2 \in (b,c)$ with $r_1 < r_2$. 
For a constant $C > 1$, let $E_C$ be the event that $D_h(\bdy B_a(0) , \bdy B_b(0)) \geq C^{-1}$ and there is a path in $\BB A_{r_1,r_2}(0)$ which disconnects the inner and outer boundaries of $\BB A_{r_1,r_2}(0)$ with $D_h$-length at most $C$. 
Since $D_h$ induces the Euclidean topology, there is some $C>1$ such that $\BB P[E_C] > 0$.
Let $\phi$ be a smooth bump function which is identically equal to 1 on $\BB A_{r_1,r_2}(0)$ and is supported on $\BB A_{b,c }(0)$ and let
\eqb
\wt h := h + \xi^{-1} \log(\delta/C^2) .
\eqe

By Axiom~\ref{item-metric-f}, if $E_C$ occurs then there is a path in $\BB A_{r_1,r_2}(0)$ which disconnects the inner and outer boundaries of $\BB A_{r_1,r_2}(0)$ (and hence also those of $\BB A_{b,c}(0)$) whose $D_{\wt h}$-length is at most $\delta C^{-1}$.
Furthermore, since $\phi$ vanishes on $\BB A_{a,b}(0)$, we have $D_{\wt h} (\bdy B_{a}(0) , \bdy B_b(0))  = D_h(\bdy B_{a}(0) , \bdy B_b(0))  \geq C^{-1}$. 
Consequently, it holds with positive probability that the event in the lemma statement occurs with $\wt h$ in place of $h$.
Since the laws of $h$ and $\wt h$ are mutually absolutely continuous, we now obtain the lemma statement. 
\end{proof}

\subsection{Bounds for internal diameters of annuli}
\label{sec-internal-diam}

In this subsection we prove an upper bound for the diameter of an annulus $\BB A_{a,1}(0)$ with respect to the internal metric $D_h(\cdot,\cdot ; \BB A_{a,1}(0))$. 
That is, we control internal distances all the way up to the boundary of $\BB A_{a,1}(0)$.
The estimate is needed to lower-bound the probabilities of condition~\ref{item-onescale-diam} in the definition of $E_{z,j}$ and the events $H_{z,j}^{\op{in}}$ and $H_{z,j}^{\op{out}}$ of~\eqref{eqn-H-def}. 
Note that the estimates of this section are not immediate from the results in~\cite{lqg-metric-estimates} which only control either (a) the $D_h(\cdot,\cdot;U)$-diameter of $K$, where $K\subset U$ is a compact subset; or (b) the $D_h(\cdot,\cdot;S)$-diameter of a square $S$.
We will, however, extract our bound for the internal $D_h$-diameter of an annulus from the bounds for the internal $D_h$-diameter of a square from~\cite{lqg-metric-estimates}.

\begin{lem} \label{lem-internal-diam}
There is a constant $\eta  = \eta(\gamma) > 0$ such that for each $0 < a < 1$, 
\eqb \label{eqn-internal-diam} 
\BB E\left[ \left( \sup_{u,v\in\BB A_{a,1}(0)} D_h\left(  u,v ; \BB A_{a,1}(0) \right) \right)^\eta \right] < \infty .
\eqe
\end{lem}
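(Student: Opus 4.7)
The strategy is to reduce the lemma to a fixed base annulus via a dyadic decomposition combined with scale invariance of the LQG metric, then handle the base case by a finite covering argument together with a secondary dyadic decomposition near the boundary circles.

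Step 1 (dyadic decomposition and scaling). Set $r_k := 2^{-k}$, $K := \lceil \log_2(1/a) \rceil$, and $A_k := \BB A_{r_{k+1}, r_k}(0)$ for $0 \leq k \leq K-1$. Any two points of $\BB A_{a,1}(0)$ can be joined by a path that crosses each intermediate circle $\bdy B_{r_j}(0)$ once, decomposing into pieces in consecutive $A_k$, so
\eqb
X := \sup_{u,v \in \BB A_{a,1}(0)} D_h(u,v; \BB A_{a,1}(0)) \leq \sum_{k=0}^{K-1} X_k, \qquad X_k := \sup_{u,v \in A_k} D_h(u,v; A_k).
\eqe
Axiom~\ref{item-metric-coord} applied to $\phi(z) = z/r_k$, together with the scale invariance of the whole-plane GFF modulo additive constant (i.e.\ $h(r_k \cdot) - h_{r_k}(0) \eqD h$), gives $X_k \eqD e^{\xi \frk c_k} X_0$ where $\frk c_k := h_{r_k}(0) + Q\log r_k$ is Gaussian with mean $-kQ\log 2$ and variance $k\log 2$, and $X_0 := \sup_{u,v \in A_0} D_h(u,v;A_0)$ with $A_0 = \BB A_{1/2, 1}(0)$. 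Cauchy--Schwarz and the formula $\BB E[e^{\lambda h_{r_k}(0)}] = e^{\lambda^2 k \log 2 / 2}$ yield
\eqb
\BB E[X_k^\eta] \leq \BB E[e^{2\eta \xi \frk c_k}]^{1/2} \BB E[X_0^{2\eta}]^{1/2} = 2^{-\eta \xi k(Q - \eta \xi)} \BB E[X_0^{2\eta}]^{1/2}.
\eqe
For $0 < \eta < \min(1, Q/\xi)$ this decays geometrically in $k$, and subadditivity $(\sum_k X_k)^\eta \leq \sum_k X_k^\eta$ (valid for $\eta \in (0,1]$) reduces the lemma to showing $\BB E[X_0^{2\eta}] < \infty$ for some $\eta > 0$.

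Step 2 (base case). Cover the compact core $\ol{\BB A_{9/16, 15/16}(0)} \subset A_0$ by a finite family of overlapping closed polar rectangles $R_1, \dots, R_N$ of bounded aspect ratio, each with a slight open enlargement $\tilde R_i$ satisfying $R_i \subset \tilde R_i \subset A_0$, arranged so that the overlap graph of the $R_i$ is connected. Proposition~3.9 of~\cite{lqg-metric-estimates} gives finite $p$-moments of $\sup_{u,v \in R_i} D_h(u,v; \tilde R_i)$ for all $p \in [0, 4 d_\gamma/\gamma^2)$, and chaining through the overlap graph bounds the internal diameter of the core in $A_0$ by a sum of these. To handle points of $A_0$ near $\bdy B_{1/2}(0)$ or $\bdy B_1(0)$, apply a secondary dyadic decomposition inside $A_0$: split the inner boundary strip $\BB A_{1/2, 9/16}(0)$ into sub-annuli $S_j^{\op{in}} := \BB A_{1/2 + 2^{-j-1}, 1/2 + 2^{-j}}(0)$ for $j \geq 4$, and similarly for $\BB A_{15/16, 1}(0)$. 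Each such sub-strip has closure strictly inside $A_0$, and after rescaling is a fixed-shape domain; its internal diameter in $A_0$ is controlled by Proposition~3.9 applied to the rescaled domain, and exactly the Cauchy--Schwarz computation of Step 1 shows that the $\eta$-moments of the $j$th sub-strip diameter decay geometrically in $j$. Summing and applying the triangle inequality to glue boundary strips to the core gives $\BB E[X_0^{2\eta}] < \infty$ for all sufficiently small $\eta > 0$.

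Main obstacle. The essential difficulty is that Proposition~3.9 of~\cite{lqg-metric-estimates} bounds $D_h$-distances along paths in an open set strictly containing a given compact set, whereas $D_h(\cdot,\cdot;\BB A_{a,1}(0))$ requires paths to remain in the open annulus itself; in particular one cannot cover $\BB A_{a,1}(0)$ by finitely many compact sub-domains lying strictly inside it. This forces a countable dyadic decomposition near each boundary circle, both at the global scale of Step 1 and at the local scale of Step 2, and the quantitative content of the proof is the geometric decay of the $\eta$-moments along each dyadic tower, coming from the log-normal tail estimate $\BB E[e^{2\eta \xi \frk c_k}] = r_k^{2\eta\xi(Q - \eta\xi)}$ combined with the constraint $\eta\xi < Q$.
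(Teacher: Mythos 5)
Your Step 1 is sound and is a nice reduction: the dyadic annular decomposition $A_k = \BB A_{2^{-k-1},2^{-k}}(0)$, the scaling identity $X_k = e^{\xi(h_{r_k}(0)+Q\log r_k)}\cdot Y_k$ with $Y_k\eqD X_0$, the Cauchy--Schwarz step, and the constraint $\eta<\min(1,Q/\xi)$ for geometric summability are all correct (modulo the usual minor point that the $A_k$ are disjoint open sets, so one should either take overlapping annuli or argue that internal distances extend continuously to the separating circles). This reduces matters to $\BB E[X_0^{2\eta}]<\infty$ for the fixed annulus $A_0=\BB A_{1/2,1}(0)$.

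Step 2, however, has a genuine gap, and the claim in the crucial sentence is false. The sub-strips $S_j^{\op{in}}=\BB A_{1/2+2^{-j-1},\,1/2+2^{-j}}(0)$ are \emph{not} scaled copies of a fixed-shape domain: their inner and outer radii both converge to $1/2$, so the ratio of radii tends to $1$, and there is no dilation that normalizes them. More importantly, the quantity you call ``the $j$th sub-strip diameter'', $\sup_{u,v\in S_j^{\op{in}}}D_h(u,v;A_0)$, does \emph{not} decay in $j$ at all: $S_j^{\op{in}}$ always has circumference of order $1$, so it contains pairs of points at Euclidean distance of order $1$, and its $D_h$-diameter in $A_0$ is bounded below (uniformly in $j$) by something like $D_h(\bdy B_{3/5}(0),\bdy B_{4/5}(0))$. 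The Cauchy--Schwarz computation from Step 1 simply does not apply here, because the scaling identity $X_k\eqD e^{\xi\frk c_k}X_0$ was what produced the geometric factor, and that identity is unavailable for the $S_j^{\op{in}}$. If one instead covers $S_j^{\op{in}}$ by $\asymp 2^j$ boxes of side $2^{-j}$ and applies Proposition~3.9 of~\cite{lqg-metric-estimates} to each, the resulting bound on $\BB E[(\op{diam}S_j^{\op{in}})^\eta]$ is of order $2^{j(1-\eta\xi Q+\eta^2\xi^2/2)}$, whose exponent is \emph{positive} for all $\eta\in(0,1]$ when, e.g., $\gamma=\sqrt{8/3}$ (there $\xi Q=5/6<1$), so the sum over $j$ diverges.

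What is actually needed near the boundary is a \emph{radial}, not angular, decomposition: one should bound $D_h(u,\bdy B_{3/4}(0);A_0)$ for $u$ near $\bdy B_{1/2}(0)$ by the $D_h$-length of a radial segment, which intersects only $O(1)$ dyadic boxes at each scale $k$, so that the relevant sum is $\sum_k O(1)\cdot(\text{diam of a $2^{-k}$-box})$ rather than $\sum_j(\text{diam of the whole strip }S_j^{\op{in}})$. To make this work uniformly over all boundary points and all boxes simultaneously, a moment estimate like Proposition~3.9 is not enough; one needs the simultaneous H\"older bound of~\cite[Lemma~3.20]{lqg-metric-estimates}, which gives an event of probability $1-O_\ep(\ep^{\eta_0})$ on which $\sup_{u,v\in S}D_h(u,v;S)\leq(2^{-k}\ep)^\chi$ for \emph{every} $k$ and every $2^{-k}\ep$-box $S\subset\BB A_{a,1}(0)$, with $\chi\in(0,\xi(Q-2))$. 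On that event, the radial-segment bound together with a chaining bound for the middle circle yields an internal diameter of order $\ep^{-2}$, and converting the resulting tail bound $\BB P[\op{diam}>C\ep^{-2}]=O(\ep^{\eta_0})$ into a moment bound gives the lemma directly (and makes your Step~1 reduction unnecessary). So the overall architecture is salvageable, but the sub-annular decomposition and the claimed geometric decay of its $\eta$-moments are the wrong tools for the boundary region.
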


We expect that Lemma~\ref{lem-internal-diam} is true with any $\eta < 4d_\gamma/\gamma^2$, as in the moment estimates for diameters from~\cite{lqg-metric-estimates}.
However, our goal here is to quickly establish an estimate which is sufficient for our purposes, rather than to establish an optimal estimate.

\begin{proof}[Proof of Lemma~\ref{lem-internal-diam}]
Fix $\chi \in (0,\xi(Q-2))$, chosen in a manner depending only on $\gamma$. 
For $\ep > 0$ and $k\in\BB N_0$, let $\mcl S_k^\ep$ be the set of closed $2^{-k}\ep \times 2^{-k}\ep$ squares with corners in $2^{-k}\ep \BB Z^2$ which are contained in $\BB A_{a,1}(0)$. 
By~\cite[Lemma 3.20]{lqg-metric-estimates}, there is a constant $\eta_0  =\eta_0(\gamma) > 0$ such that with probability $1-O_\ep(\ep^{\eta_0})$,  
\eqb \label{eqn-use-holder-cont}
\sup_{u,v\in S} D_h(u,v ; S) \leq (2^{-k} \ep)^\chi ,\quad\forall k\in\BB N_0 ,\quad\forall S\in\mcl S_k^\ep .
\eqe

Let $\wt{\mcl S}_0^\ep = \mcl S_0^\ep$ and for $k\in\BB N$, let $\wt{\mcl S}_k^\ep$ be the set of squares $S\in\mcl S_k^\ep$ which are maximal in the sense that there is no square of $\mcl S_{k-1}^\ep$ containing $S$ (i.e., the $2^{-k+1}\ep \times 2^{-k+1}\ep$ square with corners in $2^{-k+1}\ep \BB Z^2$ which contains $S$ is not contained in $\BB A_{a,1}(0)$). 

If $\ep < (1-a)/4$, then the middle circle $\bdy B_{(a+1)/2}(0) \subset \BB A_{a,1}(0)$ is contained in the union of at most $O_\ep(\ep^{-1})$ squares of $\wt{\mcl S}_0^\ep$. 
Therefore,~\eqref{eqn-use-holder-cont} implies that
\eqb \label{eqn-circle-diam}
\sup_{u,v \in \bdy B_{(a+1)/2}(0)} D_h\left(u,v ; \BB A_{a,1}(0) \right)  \preceq \ep^{-1+\chi}  ,
\eqe
with a deterministic implicit constant which does not depend on $\ep$. 

For each $z \in \BB A_{a,1}(0)$, the radial line segment from $z$ to $\bdy B_{(a+1)/2}(0)$ intersects at most a constant-order number of squares of $\wt{\mcl S}_k^\ep$ for each $k \in \BB N$.
For $k=0$, we bound the number of squares of $\wt{\mcl S}_0^\ep$ intersected by this line segment simply by $\#\wt{\mcl S}_0^\ep \preceq  \ep^{-2}$. 
Consequently,~\eqref{eqn-use-holder-cont} implies that
\allb \label{eqn-radial-diam}
&\sup_{z \in \BB A_{a,1}(0) } D_h\left(z ,   \bdy B_{(a+1)/2}(0)  ; \BB A_{a,1}(0) \right)   \notag\\
&\qquad \qquad \preceq \sum_{k=1 }^{\infty}  (2^{-k} \ep)^\chi 
+  \ep^{-2} 
\preceq \ep^\chi + \ep^{-2} \preceq \ep^{-2} .
\alle

By~\eqref{eqn-circle-diam} and~\eqref{eqn-radial-diam}, if~\eqref{eqn-use-holder-cont} holds then $\sup_{u,v\in\BB A_{a,1}(0)} D_h\left(  u,v ; \BB A_{a,1}(0) \right) $ is bounded above by a deterministic constant times $\ep^{-2}$.
Hence,
\eqb
\BB P\left[  \sup_{u,v\in\BB A_{a,1}(0)} D_h\left(  u,v ; \BB A_{a,1}(0) \right)  > \ep^{-2} \right]  = O_\ep(\ep^{ \eta_0})
\eqe
which implies~\eqref{eqn-internal-diam} for any $\eta < \eta_0/2$.
\end{proof}

In Section~\ref{sec-E-prob}, we will use the following variant of Lemma~\ref{lem-internal-diam}. 

\begin{lem} \label{lem-meanzero-diam}
There is a constant $\eta  = \eta(\gamma) > 0$ such that for each $0 < a < 1$, each $z\in\BB C$, and each $r >0$, 
\eqb \label{eqn-meanzero-diam} 
\BB E\left[ \left( r^{-\xi Q} \sup_{u,v\in \BB A_{a r ,r}(z)} D_{h - h_{|\cdot-z|}(z)} \left(  u,v ; \BB A_{a r,r}(z) \right) \right)^\eta \right] \preceq 1 
\eqe
with the implicit constant depending only on $a,\gamma$. 
\end{lem}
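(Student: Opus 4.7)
The plan is to reduce the statement to the case $r=1$, $z=0$ via scaling, and then compare $D_{h-h_{|\cdot|}(0)}$ with $D_h$ on $\BB A_{a,1}(0)$ using Weyl scaling together with Gaussian tail bounds for the circle-average process.

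For the first step, define $\wt h(v) := h(rv+z) - h_r(z)$. By the scale and translation invariance of the whole-plane GFF (normalized so that the circle average over $\bdy\BB D$ is zero), $\wt h \eqD h$. The circle averages satisfy $\wt h_s(0) = h_{rs}(z) - h_r(z)$, so for $u\in B_r(z)$ and $v = (u-z)/r$ we have $h(u) - h_{|u-z|}(z) = \wt h(v) - \wt h_{|v|}(0)$. Applying the conformal coordinate change axiom (Axiom~\ref{item-metric-coord}) to the affine map $\phi(v) = rv+z$, together with Weyl scaling (Axiom~\ref{item-metric-f}) to absorb the resulting constant $-Q\log r$, one obtains
\eqbn
r^{-\xi Q} \sup_{u_1,u_2 \in \BB A_{ar,r}(z)} D_{h - h_{|\cdot-z|}(z)}(u_1,u_2;\BB A_{ar,r}(z))
= \sup_{v_1,v_2\in\BB A_{a,1}(0)} D_{\wt h - \wt h_{|\cdot|}(0)}(v_1,v_2;\BB A_{a,1}(0)).
\eqen
Since $\wt h \eqD h$, the $\eta$-th moment of the RHS equals the $\eta$-th moment of the same quantity with $h$ in place of $\wt h$. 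So it suffices to prove the bound for $r=1$ and $z=0$.

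For the second step, set $f(u) := h_{|u|}(0)$ on $\BB A_{a,1}(0)$; this is a (radial) continuous function. By Weyl scaling, for any path $P$ in $\BB A_{a,1}(0)$ parametrized by $D_h$-length,
\eqbn
\int_0^{\op{len}(P;D_h)} e^{-\xi f(P(t))}\,dt \leq e^{\xi M} \op{len}(P;D_h), \quad \text{where } M := \sup_{s\in[a,1]} |h_s(0)| = \sup_{u\in\BB A_{a,1}(0)} |f(u)|,
\eqen
and taking the infimum over paths gives $D_{h-f}(u,v;\BB A_{a,1}(0)) \leq e^{\xi M} D_h(u,v;\BB A_{a,1}(0))$.

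For the third step, recall that $t\mapsto h_{e^{-t}}(0)$ is a standard linear Brownian motion started at $0$, so $M$ is distributed as $\sup_{t\in[0,\log(1/a)]}|B_t|$ for a standard Brownian motion $B$, and hence has sub-Gaussian tails; in particular $\BB E[e^{\lambda M}]<\infty$ for every $\lambda>0$, with a bound depending only on $a$. Let $\eta_0=\eta_0(\gamma)>0$ be the exponent from Lemma~\ref{lem-internal-diam}, pick $\eta := \eta_0/2$, and apply H\"older's inequality with exponents $p=q=2$:
\allb
&\BB E\left[\left( \sup_{u,v\in\BB A_{a,1}(0)} D_{h-h_{|\cdot|}(0)}(u,v;\BB A_{a,1}(0)) \right)^{\eta}\right] \notag\\
&\qquad\leq \BB E\left[e^{2\eta\xi M}\right]^{1/2} \BB E\left[\left( \sup_{u,v\in\BB A_{a,1}(0)} D_h(u,v;\BB A_{a,1}(0)) \right)^{2\eta}\right]^{1/2}.
\alle
The first factor is finite by the sub-Gaussian bound on $M$, and the second factor is finite by Lemma~\ref{lem-internal-diam} since $2\eta=\eta_0$. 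There is no real difficulty in the argument; the only thing to be careful about is keeping track of the scaling identities and the Weyl-scaling constants in the first step.
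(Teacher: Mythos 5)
Your proposal is correct and follows essentially the same route as the paper: reduce to $r=1$, $z=0$ by exact scale/translation invariance of $h - h_{|\cdot-z|}(z)$, use Weyl scaling to bound the internal $D_{h-h_{|\cdot|}(0)}$-diameter by $e^{\xi M}$ times the internal $D_h$-diameter (the paper writes the factor as $\sup_{s\in[a,1]} e^{-\xi h_s(0)}$, which is the same up to the trivial bound $\leq e^{\xi M}$), control $M$ by the Brownian-motion property of the radial circle-average process, and finish with H\"older and Lemma~\ref{lem-internal-diam}. The only cosmetic difference is that you spell out the coordinate-change/Weyl bookkeeping in step one rather than simply invoking exact scale invariance of the lateral part of the field.
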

\begin{proof}
The law of the field $h-h_{|\cdot-z|}(z)$ is exactly scale invariant and translation invariant (not just scale invariant modulo additive constant) in the the sense that the law of $h(r\cdot) - h_{|r\cdot-z|}(z)$ does not depend on $r$ or $z$. 
By this and the LQG coordinate change formula (Axiom~\ref{item-metric-f}), we see that the law of 
\eqb
r^{-\xi Q} \sup_{u,v\in \BB A_{a r ,r}(z)} D_{h - h_{|\cdot-z|}(z)} \left(  u,v ; \BB A_{a r ,r}(z) \right)
\eqe
depends only on $a$ and $\gamma$. Hence we only need to prove the lemma in the case when $z=0$ and $r=1$. 
In this case, we can use Weyl scaling (Axiom~\ref{item-metric-f}) to obtain
\eqb \label{eqn-meanzero-diam-split}
\sup_{u,v\in \BB A_{a   ,1}(0)} D_{h - h_{|\cdot|}(0)} \left(  u,v ; \BB A_{a ,1}(0) \right)
\leq \left(\sup_{s\in [a,1]} e^{-\xi h_s(0)} \right) \left( \sup_{u,v\in \BB A_{a   ,1}(0)} D_{h  } \left(  u,v ; \BB A_{a ,1}(0) \right) \right) .
\eqe
Since $t\mapsto h_{e^{-t}}(0)$ is a standard linear Brownian motion, the first factor on the right in~\eqref{eqn-meanzero-diam-split} has finite moments of all positive orders.
By Lemma~\ref{lem-internal-diam}, the second factor has a finite moment of some positive order.
We now conclude by means of H\"older's inequality.
\end{proof}

\subsection{Uniform bounds for Radon-Nikodym derivatives}
\label{sec-rn-sup}

In this subsection we prove a bound for the Radon-Nikodym derivative between a whole-plane GFF and a zero-boundary GFF plus a harmonic function which holds \emph{simultaneously} for a certain class of harmonic functions.
This estimate is needed to say that condition~\ref{item-onescale-rn} in the definition of $E_{z,j}$ occurs with high probability.

\begin{lem} \label{lem-rn-sup-plane}
Fix $A > 0$ and $0 < r < 1$. Let $\frk F = \frk F(A , r )$ the set of harmonic functions $\frk f : \BB D\rta \BB R$ which satisfy $\frk f(0) = 0$ and $\sup_{u\in \bdy B_r(0)} |\frk f(u)| \leq A$. 
Let $h$ be a whole-plane GFF normalized so that $h_1(0) = 0$, let $\rng h$ be a zero-boundary GFF on $\BB D$, and for $\frk f \in \frk F $ and $s > 0$, let $ M_{\frk f}^s =  M_{\frk f}^s(h|_{B_s(0)})$ be the Radon-Nikodym derivative of law of $(\rng h + \frk f)|_{B_s(0)}$ w.r.t.\ the law of $h|_{B_s(0)}$.
For each $s_0 \in (0,r)$ and each $p\in (0,1)$, there exists $L = L(A,r,p,s_0) > 1$ such that 
\eqb \label{eqn-rn-sup-plane}
\BB P\left[  \inf_{\frk f \in \frk F}  M_{\frk f}^s \geq L^{-1}  \: \text{and} \: \sup_{\frk f \in \frk F }  M_{\frk f}^s \leq L \right] \geq p,\quad\forall s \in (0,s_0].
\eqe 
\end{lem}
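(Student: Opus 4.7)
The plan is to first reduce the simultaneous Radon--Nikodym bound for restrictions to $B_s(0)$ to a Gaussian comparison on the circle $\bdy B_s(0)$. By Lemma~\ref{lem-rn-msrble}, $M_{\frk f}^s$ depends on the field only through its trace on $\bdy B_s(0)$, since for both laws the conditional law on $B_s(0)$ given the trace is a zero-boundary GFF on $B_s(0)$ plus the harmonic extension of the trace. So $M_{\frk f}^s$ equals the Radon--Nikodym derivative between two Gaussian measures on distributions on $\bdy B_s(0)$: the trace law of the whole-plane GFF $h$, and the trace law of $\rng h + \frk f$, which is a shift of the zero-boundary GFF trace law by $\frk f|_{\bdy B_s(0)}$.

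\noindent\textbf{Fourier diagonalization.} Next I diagonalize in the Fourier basis $\{1,\cos n\theta, \sin n\theta\}_{n\geq 1}$. Standard Green's-function computations give that each Fourier mode is an independent Gaussian under both laws, with $n$-th variance $\alpha_n = 1/(2n)$ under the whole-plane law, $\beta_n = (1-s^{2n})/(2n)$ under the zero-boundary law (for $n\geq 1$), and $\alpha_0 = \beta_0 = -\log s$ for the circle-average mode. The $n$-th shift induced by $\frk f$ equals $a_n^{\frk f} s^n$ (cosine) and $b_n^{\frk f} s^n$ (sine), where $\frk f(\rho e^{i\theta}) = \sum_{n\geq 1}(a_n^{\frk f}\cos n\theta + b_n^{\frk f}\sin n\theta)\rho^n$. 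Crucially, harmonicity forces $\frk f(0)=0$, killing the zeroth-mode shift; together with $\alpha_0 = \beta_0$, this makes the zeroth mode contribute trivially. The resulting infinite-product Gaussian density ratio factors as
\begin{equation*}
M_{\frk f}^s = C(s)\exp\left(Q(F)+L_{\op{lin}}(F,\frk f)-D(\frk f)\right),
\end{equation*}
where $C(s)=\prod_{n\geq 1}(1-s^{2n})^{-1/2}$ is a deterministic constant, $Q$ is a non-positive quadratic form in the Fourier modes $\{F_n^c,F_n^s\}_{n\geq 1}$ of $h|_{\bdy B_s(0)}$ with coefficients $-ns^{2n}/(1-s^{2n})$, $L_{\op{lin}}$ is bilinear in these modes and $(a_n^{\frk f},b_n^{\frk f})$, and $D(\frk f)\geq 0$ is quadratic in $(a_n^{\frk f},b_n^{\frk f})$.

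\noindent\textbf{Uniform bounds and main difficulty.} Harmonicity together with $|\frk f|\leq A$ on $\bdy B_r(0)$ yields $|a_n^{\frk f}|,|b_n^{\frk f}|\leq 2Ar^{-n}$. Combined with the hypothesis $s\leq s_0<r$, this yields: $C(s)\in[1,C(s_0)]$; $D(\frk f)$ is bounded by a convergent series $\preceq A^2\sum_n n(s_0/r)^{2n}$, uniformly in $\frk f\in\frk F$ and $s\leq s_0$; the expectation of $-Q(F)$ is $\preceq\sum_n s^{2n}/(1-s^{2n})\preceq 1$ uniformly in $s\leq s_0$, so by Markov's inequality $|Q(F)|\leq C_p$ on an event of probability at least $1-p/2$; and $L_{\op{lin}}(F,\frk f)$ admits the $\frk f$-uniform deterministic bound $|L_{\op{lin}}(F,\frk f)|\preceq\sum_n n(s/r)^n(|F_n^c|+|F_n^s|)$, whose expectation converges because $s_0/r<1$ and $\BB E|F_n^{c,s}|\asymp n^{-1/2}$, so Markov again places $\sup_{\frk f\in\frk F}|L_{\op{lin}}(F,\frk f)|\leq C_p$ on an event of probability at least $1-p/2$. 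On the intersection a single constant $L=L(A,r,p,s_0)$ absorbs all terms and gives $L^{-1}\leq M_{\frk f}^s\leq L$ for all $\frk f\in\frk F$ and all $s\in(0,s_0]$, on an event of probability at least $p$. The main technical obstacle is the uniform control of $L_{\op{lin}}$ over the non-compact set $\frk F$: this is possible only because the strict inequality $s_0<r$ forces the worst-case $\frk f$-coefficient growth $r^{-n}$ to be dominated by geometric decay in $s^n$, which is precisely why the hypothesis $s_0<r$ is needed.
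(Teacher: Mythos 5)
Your proof is correct in its essentials and takes a genuinely different route from the paper's. The paper first proves the analogue for a zero-boundary GFF (its Lemma~\ref{lem-rn-sup}): it writes $\rng M_{\frk f}^s$ as a conditional expectation of the explicit Cameron--Martin density $\wt M_{\frk f} = \exp\bigl((\rng h,\psi\frk f)_\nabla - \tfrac12(\psi\frk f,\psi\frk f)_\nabla\bigr)$ for a bump function $\psi$, uses the interior-derivative estimate for harmonic functions (Lemma~\ref{lem-harmonic-sup}) to get uniform-in-$\frk f$ bounds on $\|\psi\frk f\|_{C^m}$ and hence the a.s.\ finiteness of $\sup_{\frk f}(\rng h,\psi\frk f)_\nabla$ by the definition of a distribution, and invokes Borell--TIS for integrability; it then transfers to the whole-plane GFF by multiplying by the $\frk f$-independent Radon--Nikodym factor between the two base laws. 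You instead reduce directly to the trace on $\bdy B_s(0)$ via Lemma~\ref{lem-rn-msrble} and compute the density ratio explicitly in the Fourier basis, which simultaneously handles the whole-plane-to-zero-boundary discrepancy (via $\alpha_n$ versus $\beta_n$) and the harmonic shift (via the coefficients $a_n^{\frk f} s^n$, $b_n^{\frk f} s^n$), with the Cauchy-estimate bound $|a_n^{\frk f}|\preceq A r^{-n}$ playing the same role as Lemma~\ref{lem-harmonic-sup}. Your approach buys explicitness (one sees exactly why $s_0<r$ is needed: the geometric gain $(s/r)^n$ must dominate the linear-in-$n$ coefficients) and avoids Borell--TIS, at the cost of relying on the exact Green's function Fourier expansions for $\BB D$ and $\BB C$; the paper's argument is softer and would adapt more readily to other domains.

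Two small things to clean up. First, the probabilities should be tuned so that each bad event has probability at most $(1-p)/2$, not $p/2$; as written the intersection of your two good events has probability at least $1-p$, which is what you want when $p$ is close to $1$ but is not the same as $\geq p$. Second, when you say $C(s)=\prod_{n\geq 1}(1-s^{2n})^{-1/2}$, the exponent should be $-1$ rather than $-1/2$ since there are two real modes (cosine and sine) for each $n\geq 1$; this does not affect the argument since the product converges either way. It would also strengthen the write-up to note explicitly that the Feldman--Hajek conditions hold (i.e.\ $\sum_n(\alpha_n/\beta_n-1)^2<\infty$ and $\sum_n\mu_n^2/\beta_n<\infty$), so that the infinite-product formula for $M_{\frk f}^s$ is actually legitimate; the bounds you derive do implicitly confirm this, but it is worth a sentence.
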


The main step in the proof of Lemma~\ref{lem-rn-sup-plane} is a variant where we compare to a zero-boundary GFF instead of a whole-plane GFF. 

\begin{lem} \label{lem-rn-sup}
Fix $A > 0$ and $0 < r < 1$ and let $\frk F$ be as in Lemma~\ref{lem-rn-sup-plane}. 
Let $\rng h$ be a zero-boundary GFF on $\BB D$ and for $\frk f \in \frk F $ and $s > 0$, let $\rng M_{\frk f}^s = \rng M_{\frk f}^s(\rng h|_{B_s(0)})$ be the Radon-Nikodym derivative of law of $(\rng h + \frk f)|_{B_s(0)}$ w.r.t.\ the law of $\rng h|_{B_s(0)}$.
For each $s_0 \in (0,r)$ and each $p\in (0,1)$, there exists $L = L(A,r,p,s_0) > 1$ such that 
\eqb \label{eqn-rn-sup}
\BB P\left[  \inf_{\frk f \in \frk F} \rng M_{\frk f}^s \geq L^{-1}  \: \text{and} \: \sup_{\frk f \in \frk F } \rng M_{\frk f}^s \leq L \right] \geq p,\quad\forall s \in (0,s_0].
\eqe 
\end{lem}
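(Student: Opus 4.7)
The plan is to use the Cameron--Martin theorem to write $\rng M_{\frk f}^s$ as $\exp(\Lambda_{\frk f,s} - \tfrac12 V_{\frk f,s})$ for a centered Gaussian random variable $\Lambda_{\frk f,s}$ with variance $V_{\frk f,s}$, and then to show that $\sup_{\frk f \in \frk F}|\Lambda_{\frk f,s}|$ is tight uniformly in $s \in (0,s_0]$. First I would fix $r' \in (s_0,r)$ and a smooth cutoff $\psi : \BB D \to [0,1]$ with $\psi \equiv 1$ on $B_{s_0}(0)$ and $\op{supp}\psi \subset B_{r'}(0)$. Since every $\frk f \in \frk F$ satisfies $|\frk f| \leq A$ on $\ol{B_r(0)}$ by the maximum principle, the truncated function $\psi\frk f$ lies in $H_0^1(\BB D)$ and satisfies $(\psi\frk f)|_{B_s(0)} = \frk f|_{B_s(0)}$ for every $s \leq s_0$. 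Therefore the law of $(\rng h + \frk f)|_{B_s(0)}$ coincides with that of $(\rng h + \psi\frk f)|_{B_s(0)}$, and applying the standard Cameron--Martin formula to the shift $\rng h \mapsto \rng h + \psi\frk f$, followed by conditional expectation given $\rng h|_{B_s(0)}$, yields
\begin{equation*}
\rng M_{\frk f}^s = \BB E\!\left[\exp\!\left((\rng h,\psi\frk f)_\nabla - \tfrac{1}{2}(\psi\frk f,\psi\frk f)_\nabla\right) \,\Big|\, \rng h|_{B_s(0)}\right] = \exp\!\left(\Lambda_{\frk f,s} - \tfrac{1}{2} V_{\frk f,s}\right),
\end{equation*}
where $\Lambda_{\frk f,s} := \BB E[(\rng h,\psi\frk f)_\nabla \,|\, \rng h|_{B_s(0)}]$ and $V_{\frk f,s} := \op{Var}(\Lambda_{\frk f,s}) \leq (\psi\frk f,\psi\frk f)_\nabla$, the last inequality being the standard variance-reduction identity $\op{Var}(\BB E[X|Y]) = \op{Var}(X) - \BB E[\op{Var}(X|Y)]$.

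Next I would use interior estimates for harmonic functions to get uniform control on the Gaussian process $\{\Lambda_{\frk f,s}\}_{\frk f \in \frk F}$. Because $\op{supp}\psi$ is a compact subset of $B_r(0)$ and each $\frk f \in \frk F$ is harmonic and uniformly bounded by $A$ on $\ol{B_r(0)}$, the set $\frk F$ is bounded in $C^1(\op{supp}\psi)$ and in fact relatively compact there by Arzel\`a--Ascoli applied to the uniform bounds on second derivatives. A direct computation using $|\nabla(\psi g)|^2 \leq 2\psi^2|\nabla g|^2 + 2 g^2 |\nabla\psi|^2$ then gives a constant $B = B(A,r,s_0) > 0$ such that $V_{\frk f,s} \leq B^2$ and a uniform Lipschitz bound
\begin{equation*}
\BB E\!\left[(\Lambda_{\frk f_1,s} - \Lambda_{\frk f_2,s})^2\right]^{1/2} \leq \|\psi(\frk f_1 - \frk f_2)\|_\nabla \leq C\,\|\frk f_1 - \frk f_2\|_{C^1(\op{supp}\psi)},
\end{equation*}
with $B$ and $C$ independent of $s \in (0,s_0]$.

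Finally, Dudley's entropy integral (or the Borell--TIS inequality combined with the finite covering numbers of $\frk F$ in the $C^1(\op{supp}\psi)$-metric, which come from Arzel\`a--Ascoli) yields a constant $T_p = T_p(A,r,s_0)$ so that $\BB P[\sup_{\frk f \in \frk F}|\Lambda_{\frk f,s}| \leq T_p] \geq p$ uniformly in $s \in (0,s_0]$. Taking $L := \exp(T_p + B^2/2)$ then gives $L^{-1} \leq \rng M_{\frk f}^s \leq L$ simultaneously for all $\frk f \in \frk F$ on this event, which is precisely~\eqref{eqn-rn-sup}. The main obstacle is ensuring that all of these bounds are uniform in $s$; this is what forces us to work with the single $s$-independent shift $\psi\frk f \in H_0^1(\BB D)$, whose Dirichlet norm does not depend on $s$, and to bound the natural Cameron--Martin metric on $\frk F$ by the stronger and $s$-independent $C^1(\op{supp}\psi)$-metric, on which $\frk F$ is totally bounded by elliptic regularity.
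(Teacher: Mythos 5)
Your proposal is essentially correct and follows the same basic skeleton as the paper's proof: replace $\frk f$ by the $s$-independent compactly supported function $\psi\frk f \in \mcl H(\BB D)$, apply Cameron--Martin, take a conditional expectation given $\rng h|_{B_s(0)}$, and control the resulting Gaussian process uniformly over $\frk F$.

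Where you diverge from the paper is in how the supremum of the Gaussian process is controlled, and here your route has one advantage and one gap. The advantage: you rewrite $\rng M_{\frk f}^s = \exp(\Lambda_{\frk f,s} - \tfrac12 V_{\frk f,s})$ with $\Lambda_{\frk f,s} = \BB E[(\rng h,\psi\frk f)_\nabla \mid \rng h|_{B_s(0)}]$, and use variance reduction to get $V_{\frk f,s} \leq (\psi\frk f,\psi\frk f)_\nabla$ and $\|\Lambda_{\frk f_1,s}-\Lambda_{\frk f_2,s}\|_2 \leq \|\psi(\frk f_1-\frk f_2)\|_\nabla$, both independent of $s$. This makes the uniform-in-$s$ two-sided bound an immediate consequence of a uniform-in-$s$ tail bound for $\sup_{\frk f}|\Lambda_{\frk f,s}|$ — arguably cleaner than the paper, which works with the unconditioned $\wt M_{\frk f}$ via the Jensen inequalities $\inf_{\frk f}\BB E[\wt M_{\frk f}\mid\mcl F_s] \geq \BB E[\inf_{\frk f}\wt M_{\frk f}\mid\mcl F_s]$ and $\sup_{\frk f}\BB E[\wt M_{\frk f}\mid\mcl F_s] \leq \BB E[\sup_{\frk f}\wt M_{\frk f}\mid\mcl F_s]$, and then needs an additional Markov-type step to get the uniform-in-$s$ probabilistic bound (applied, e.g., to $\BB E[\sup_{\frk f}\wt M_{\frk f}]$ and to $\BB E[\sup_{\frk f} 1/\wt M_{\frk f}]$).

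The gap: in the final step you invoke ``Dudley's entropy integral (or the Borell--TIS inequality combined with the finite covering numbers of $\frk F$ in the $C^1(\op{supp}\psi)$-metric, which come from Arzel\`a--Ascoli).'' Arzel\`a--Ascoli gives you total boundedness, hence finiteness of $N(\ep)$ for each $\ep > 0$, but that is not enough: Dudley requires $\int_0^\infty \sqrt{\log N(\ep)}\,d\ep < \infty$, and Borell--TIS presupposes $\sup_{\frk f}|\Lambda_{\frk f,s}| < \infty$ a.s., which you have not yet established. To close this gap you would need a quantitative covering-number estimate (for harmonic functions restricted to an interior ball the Fourier coefficients decay geometrically, which gives $\log N(\ep) = O((\log(1/\ep))^2)$ and hence a convergent entropy integral — but this is a non-trivial extra step). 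The paper sidesteps the entropy computation entirely by the following trick: Lemma~\ref{lem-harmonic-sup} gives $\sup_{\frk f\in\frk F}\sup_{\BB k\in\{1,2\}^m}\sup_u|\partial_{\BB k}(\psi\frk f)(u)| < \infty$ for every $m$, so $\{\psi\frk f : \frk f\in\frk F\}$ is bounded in the $C^\infty$ topology; since $\rng h$ is a.s.\ a continuous linear functional on smooth compactly supported test functions, $g \mapsto (\rng h,g)_\nabla$ is a.s.\ bounded on this set, i.e.\ $\sup_{\frk f}(\rng h,\psi\frk f)_\nabla < \infty$ a.s. Once a.s.\ finiteness is in hand, Borell--TIS gives the Gaussian tail for free. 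If you replace the covering-number step by this ``$\rng h$ is a distribution'' observation, your argument is complete and, for the uniform-in-$s$ step, a bit tighter than the paper's.
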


The idea of the proof of Lemma~\ref{lem-rn-sup} is that $\rng M_{\frk f}^s$ can be expressed in terms of $(\rng h,\psi\frk f)_\nabla$ where $\psi$ is a smooth bump function which is identically equal to 1 on $B_s(0)$. An elementary estimate for harmonic functions (Lemma~\ref{lem-rn-sup} below) allows us to bound all of the partial derivatives of $\psi\frk f$, of all orders, for $\frk f \in\frk F$ in terms of $A$. Since $h$ is a distribution, the mapping $f\mapsto (h,f)_\nabla$ is continuous with respect to the topology of uniform convergence of partial derivatives of all orders. This gives the needed uniform bound for $(\rng h,\psi\frk f)_\nabla$.

\begin{lem} \label{lem-harmonic-sup}
Let $0 < s < r$ and let $\frk f$ be a harmonic function on a neighborhood of $B_r(0)$ such that $\frk f(0)= 0$.
For each $m\in\BB N$, there is a constant $C_m = C_m(s,r) > 0$ such that for each multi-index $\BB k \in \{1,2\}^m$, 
\eqb \label{eqn-harmonic-sup}
\sup_{u \in B_s(0)} |\partial_{\BB k} \frk f(u)| \leq  C_m \sup_{u\in \bdy B_r(0)} |\frk f(u)| .
\eqe
\end{lem}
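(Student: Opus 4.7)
The plan is to prove this via the Poisson integral representation of harmonic functions, which yields what are often called Cauchy-type estimates for harmonic functions. Since $\frk f$ is harmonic on a neighborhood of $\ol{B_r(0)}$ (choosing $r' \in (r,\rho)$ slightly smaller than the radius $\rho$ of the neighborhood of harmonicity, and then taking $r' \to r$, or equivalently applying the Poisson formula at radius $r$ directly using continuity up to the boundary), the Poisson integral formula gives
\[
\frk f(u) = \int_{\bdy B_r(0)} P_r(u,v) \, \frk f(v) \, d\sigma(v) \quad \text{for } u \in B_r(0),
\]
where $P_r(u,v) = \frac{1}{2\pi r} \cdot \frac{r^2 - |u|^2}{|u-v|^2}$ is the Poisson kernel for $B_r(0)$.

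Next, I would observe that for $u \in B_s(0)$ and $v \in \bdy B_r(0)$ we have $|u-v| \geq r - s > 0$. Hence $P_r(u,v)$ extends to a smooth function of $u$ on a neighborhood of $\ol{B_s(0)}$, and for each $m\in\BB N$ and each multi-index $\BB k\in\{1,2\}^m$, the partial derivative $\partial_{\BB k}^{(u)} P_r(u,v)$ is bounded uniformly on $\ol{B_s(0)} \times \bdy B_r(0)$ by some constant $C'_m$ depending only on $m$, $s$, and $r$. This is an elementary calculation: the denominator in $P_r$ stays bounded below by $(r-s)^2$, so differentiating any number of times in $u$ yields a rational function whose denominator is a power of $|u-v|^2$ and whose numerator is a polynomial in the coordinates of $u$ and $v$ of bounded degree, all of which is uniformly bounded on the compact set $\ol{B_s(0)} \times \bdy B_r(0)$.

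Differentiating under the integral sign is then justified by dominated convergence, and for any multi-index $\BB k \in \{1,2\}^m$ and any $u \in B_s(0)$,
\[
|\partial_{\BB k} \frk f(u)| \leq \int_{\bdy B_r(0)} \left|\partial_{\BB k}^{(u)} P_r(u,v)\right| |\frk f(v)| \, d\sigma(v) \leq 2\pi r \, C'_m \sup_{v\in\bdy B_r(0)} |\frk f(v)|,
\]
which gives~\eqref{eqn-harmonic-sup} with $C_m := 2\pi r \, C'_m$. The hypothesis $\frk f(0) = 0$ plays no role in this estimate; it appears simply because it is part of the definition of the family $\frk F$ in Lemma~\ref{lem-rn-sup}. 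There is no real technical obstacle here — this is a textbook elliptic estimate, and one could equivalently deduce it by writing $\frk f$ as the real part of a holomorphic function $F$ on a simply connected neighborhood of $\ol{B_r(0)}$ (using that $B_r(0)$ is simply connected) and applying the standard Cauchy estimates to $F$ on circles of radius $(r-s)/2$ around each $u\in B_s(0)$.
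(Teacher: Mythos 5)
Your proof is correct, and it takes a genuinely different route from the paper. The paper proves the estimate by induction on $m$: the base case $m=1$ is cited as a standard consequence of the mean value property, and for $m \geq 2$ it introduces the intermediate radius $s' = (r+s)/2$, applies the $m=1$ estimate to the harmonic function $\partial_{\mathbf{k}'}\frk f$ on $B_{s'}(0)$, and then applies the inductive hypothesis on $B_{s'}(0) \subset B_r(0)$ to reach $\sup_{\partial B_r(0)}|\frk f|$. Your approach instead differentiates the Poisson integral representation under the integral sign and bounds the $u$-derivatives of the Poisson kernel uniformly on $\overline{B_s(0)} \times \partial B_r(0)$, which gives all orders $m$ at once without any induction. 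Both are textbook arguments; the paper's version is slightly more self-contained in that it defers the analytic input to the single statement ``$m=1$ follows from the mean value property,'' while yours makes the mechanism (smoothness of the Poisson kernel away from the diagonal) explicit and arguably more transparent. Your observation that the hypothesis $\frk f(0)=0$ plays no role here is also correct and consistent with the paper's proof, which does not use it either.
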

\begin{proof}
The case when $m =1$ is a standard estimate for harmonic functions, and can be easily deduced from the mean value property. 
In general, assume $m\geq 2$ and the needed estimate has been proven with $m-1$ in place of $m$. 
Let $s' := (r+s)/2$ and for $\BB k\in \{1,2\}^m$, let $\BB k' \in \{1,2\}^{m-1}$ be obtained by deleting the $m$th component. 
Note that $\partial_{\BB k'}\frk f$ is harmonic. 
We now apply~\eqref{eqn-harmonic-sup} with $m=1$ and $s'$ in place of $r$; followed by~\eqref{eqn-harmonic-sup} with $m-1$ in place of $m$, $s'$ in place of $s$, and $\partial_{\BB k'}\frk f$ in place of $\frk f$. This gives
\eqb
\sup_{u \in B_s(0)} |\partial_{\BB k} \frk f(u)|
\leq C_1(s,s') \sup_{u \in \bdy B_{s'}(0)} |\partial_{\BB k'} \frk f(u)| 
\leq  C_1(s,s') C_{m-1}(s',r) \sup_{u\in \bdy B_r(0)} |\frk f(u)| .
\eqe
Thus~\eqref{eqn-harmonic-sup} holds with $C_m(s,r) =  C_1(s,s') C_{m-1}(s',r)$. 
\end{proof}

\begin{proof}[Proof of Lemma~\ref{lem-rn-sup}]
Fix $s_0 \in (0,r)$. 
We first write down a formula for $\rng M_{\frk f}^s$ which is valid for $s \in (0,s_0]$ 
Let $\psi : \BB C \rta [0,1]$ be a smooth bump function which is identically equal to 1 on $B_{s_0}(0)$ and which vanishes outside of $B_{(r+s_0)/2}(0)$. 
By the zero-boundary GFF analog of Lemma~\ref{lem-gff-abs-cont}, for each fixed $\frk f \in \frk F $, the law of $ \rng h + \psi\frk f$ is absolutely continuous with respect to the law of $\rng h$, with Radon-Nikodym derivative
\eqb \label{eqn-zero-bdy-rn0}
 \wt M_{\frk f}  =   \exp\left( (\rng h , \psi \frk f )_\nabla - \frac12 \left(\psi \frk f , \psi \frk f   \right)_\nabla \right) .
\eqe  
For $s \in (0,s_0]$, we have $(\rng h + \frk f)|_{B_s(0)} = (\rng h + \psi \frk f)|_{B_{s}(0)} $, so we can take the conditional expectation given $\rng h|_{B_{s}(0)}$ to get that
\eqb \label{eqn-zero-bdy-rn}
\rng M_{\frk f}^s = \BB E\left[ \wt M_{\frk f}  \,\big|\, \rng h|_{B_{s}(0)} \right] .
\eqe

By Lemma~\ref{lem-harmonic-sup} (applied with $s = (r+s_0)/2$) and the product rule (recall that $\psi$ is supported on $B_{(r+s_0)/2}(0)$), for each $m\in\BB N$ there is a constant $C_m = C_m(r,s_0,\psi)$ such that for each $\frk f \in \frk F$, each $m\in\BB N$, and each multi-index $\BB k \in \{1,2\}^m$, 
\eqb
\sup_{u\in B_s(0)} | \partial_{\BB k} (\psi \frk f) (u)  | \leq C_m \sup_{u\in \bdy B_r(0)} |\frk f(u)|   \leq C_m A .
\eqe
In particular, 
\eqb \label{eqn-rn-deriv-sup}
\sup_{\frk f \in \frk F} \sup_{\BB k \in \{1,2\}^m} \sup_{u\in B_s(0)} | \partial_{\BB k} (\psi \frk f) (u)  | < \infty,  \quad \forall m \in \BB N 
\eqe
and
\eqb \label{eqn-rn-dirichlet-sup}
\sup_{\frk f \in \frk F} \left( \psi \frk f  , \psi \frk f    \right)_\nabla < \infty .
\eqe
Since $\rng h$ is a random distribution, i.e., a continuous linear functional from the space of smooth compactly supported functions on $\BB D$ into $\BB R$, it follows from~\eqref{eqn-rn-deriv-sup} that a.s.\
\eqb \label{eqn-rn-dist-sup}
\sup_{\frk f \in \frk F} (\rng h , \psi \frk f )_\nabla < \infty .
\eqe

By plugging~\eqref{eqn-rn-dirichlet-sup} and~\eqref{eqn-rn-dist-sup} into~\eqref{eqn-zero-bdy-rn0}, we get that a.s.\ $\inf_{\frk f \in \frk F} \wt M_{\frk f} > 0$, which by~\eqref{eqn-zero-bdy-rn} implies that a.s.\
\eqb \label{eqn-rn-sup-lower}
\inf_{\frk f\in \frk F} \rng M_{\frk f}^s 
= \inf_{\frk f\in \frk F} \BB E\left[\wt M_{\frk f} \,\big|\, \rng h|_{B_{s}(0)} \right]
\geq \BB E\left[ \inf_{\frk f\in \frk F} \wt M_{\frk f} \,\big|\, \rng h|_{B_{s}(0)} \right] .
\eqe
The right side of~\eqref{eqn-rn-sup-lower} is an a.s.\ positive random variable which depends only on $A,r,s_0$. 
This leads to the lower bound in~\eqref{eqn-rn-sup}. 
 
We now upper-bound $\sup_{\frk f\in\frk F} \rng M_{\frk f}^s$. The random variables $(\rng h , \psi \frk f)_\nabla$ are jointly centered Gaussian with variances $(\psi \frk f , \psi \frk f)_\nabla$. 
By~\eqref{eqn-rn-dirichlet-sup}, \eqref{eqn-rn-dist-sup}, and the Borell-TIS inequality~\cite{borell-tis1,borell-tis2} (see, e.g.,~\cite[Theorem 2.1.1]{adler-taylor-fields}), $\sup_{\frk f \in \frk F} (\rng h , \psi \frk f )_\nabla$ has a Gaussian tail, so in particular the exponential of this supremum has finite expectation.  
By~\eqref{eqn-zero-bdy-rn0} and~\eqref{eqn-zero-bdy-rn}, we now obtain that a.s.\ 
\eqb \label{eqn-rn-sup-upper}
\sup_{\frk f \in \frk F} \rng M_{\frk f}^s
\leq \sup_{\frk f \in \frk F} \BB E\left[  \wt M_{\frk f} \,\big|\, \rng h|_{B_{s}(0)} \right] 
\leq \BB E\left[  \sup_{\frk f \in \frk F} \ \wt M_{\frk f} \,\big|\, \rng h|_{B_{s}(0)} \right] ,
\eqe
which is an a.s.\ finite random variable depending only on $A,r,s_0$.
\end{proof}

\begin{proof}[Proof of Lemma~\ref{lem-rn-sup-plane}]
By the Markov property of $h$ and the zero-boundary GFF analog of Lemma~\ref{lem-gff-abs-cont}, we know that the law of $\rng h|_{B_s(0)}$ is absolutely continuous w.r.t.\ the law of $h|_{B_s(0)}$, and the Radon-Nikodym derivative is a.s.\ finite and positive.
The Radon-Nikodym derivative of the law of $(\rng h + \frk f)|_{B_s(0)}$ w.r.t.\ the law of $h|_{B_s(0)}$ is equal to the product of the Radon-Nikodym derivative $\rng M_{\frk f}^s$ from Lemma~\ref{lem-rn-sup}, evaluated at $h$ instead of at $\rng h$, with the Radon-Nikodym derivative of the law of $\rng h|_{B_s(0)}$ w.r.t.\ the law of $h|_{B_s(0)}$. 
Combining this with Lemma~\ref{lem-rn-sup} concludes the proof.
\end{proof}

\subsection{Proof of Proposition~\ref{prop-E-prob}}
\label{sec-E-prob}

In this subsection we conclude the proof of our one-point estimate for the events $E_{z,j}$ of Section~\ref{sec-short-def}. 
Note that it suffices to prove Proposition~\ref{prop-E-prob} with $K=A$, since increasing $K$ only increases the probability of $E_{z,1}$ (and does not affect the definition of $E_{z,j}$ for $j\not=1$). 
We will use the following lemma to decompose the event $E_{z,j}$ into the intersection of two independent events such that one has probability of order $(\ep_j/\ep_{j-1})^{\alpha^2/2  + o_j(1)}$ and the other has constant-order probability. 

\begin{lem} \label{lem-gff-split}
Let $h$ be a whole-plane GFF normalized so that $h_1(0) =0$ and let $0 < a < b < 1$. 
Then circle average process $\{h_r(0) - h_b(0)\}_{r\in [a,b]}$ is independent from the triple $\left(  (h-h_a(0))|_{B_a(0)} , h|_{\BB C\setminus B_b(0)},  h - h_{|\cdot|}(0) \right)$. 
\end{lem}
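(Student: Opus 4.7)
The plan is to reduce the independence claim to two standard facts about the whole-plane GFF recalled in Appendix~\ref{sec-gff-prelim}: first, that $t \mapsto h_{e^{-t}}(0)$ is a two-sided standard Brownian motion (using the normalization $h_1(0)=0$); and second, that the lateral part $\wt h := h - h_{|\cdot|}(0)$ is independent of the entire circle-average process $\{h_r(0)\}_{r>0}$. Abbreviate $B_t := h_{e^{-t}}(0)$ and set $\tau_1 := \log(1/b)$, $\tau_2 := \log(1/a)$, so $\tau_1 < \tau_2$.

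I would then check that each of the three components of the triple is a measurable function of $(\wt h, B|_{(-\infty,\tau_1]}, (B - B_{\tau_2})|_{[\tau_2,\infty)})$, while the process $\{h_r(0) - h_b(0)\}_{r\in[a,b]}$ is a measurable function of $(B - B_{\tau_1})|_{[\tau_1,\tau_2]}$. Concretely, the decomposition $h = \wt h + h_{|\cdot|}(0)$ gives
\eqbn
(h-h_a(0))|_{B_a(0)} = \wt h|_{B_a(0)} + \bigl(u \mapsto B_{\log(1/|u|)} - B_{\tau_2}\bigr)|_{B_a(0)},
\eqen
whose second summand depends only on $\{B_t - B_{\tau_2}\}_{t \geq \tau_2}$ since $|u|\leq a$. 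Similarly,
\eqbn
h|_{\BB C\setminus B_b(0)} = \wt h|_{\BB C\setminus B_b(0)} + \bigl(u\mapsto B_{\log(1/|u|)}\bigr)|_{\BB C\setminus B_b(0)},
\eqen
whose second summand depends only on $\{B_t\}_{t\leq \tau_1}$ since $|u|\geq b$, and $\wt h$ is itself the first component. Finally $\{h_r(0) - h_b(0)\}_{r\in[a,b]} = \{B_{\log(1/r)} - B_{\tau_1}\}_{r\in[a,b]}$ depends only on $\{B_t - B_{\tau_1}\}_{t\in[\tau_1,\tau_2]}$.

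To conclude, I would invoke the independent-increments property of Brownian motion, which gives that the three processes $B|_{(-\infty,\tau_1]}$, $(B - B_{\tau_1})|_{[\tau_1,\tau_2]}$, and $(B - B_{\tau_2})|_{[\tau_2,\infty)}$ are jointly independent (the increment $B_{\tau_2}-B_{\tau_1}$ is absorbed into the middle piece, so the outer two pieces are not linked through any shared randomness). Together with the independence of $\wt h$ from $B$, this gives that $(\wt h, B|_{(-\infty,\tau_1]}, (B-B_{\tau_2})|_{[\tau_2,\infty)})$ is independent of $(B-B_{\tau_1})|_{[\tau_1,\tau_2]}$, which by the measurability reductions above yields the claimed independence.

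There is no real obstacle in this proof; the only thing to be careful about is the bookkeeping that ensures no shared randomness leaks between the Brownian increments on $[\tau_1,\tau_2]$ and the pieces indexing the triple—this is exactly why one should write $B|_{[\tau_2,\infty)}$ relative to its value at $\tau_2$ (rather than absolutely), since $B_{\tau_2}$ itself is a function of the middle increment and the left piece.
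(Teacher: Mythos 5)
Your proof is correct and follows essentially the same route as the paper: both decompose $h$ into the lateral part $h-h_{|\cdot|}(0)$ and the radial part, identify the radial part (in log-time) as a two-sided Brownian motion, and combine independence of the lateral/radial parts with the independent-increments property to split off the middle increment $\{h_r(0)-h_b(0)\}_{r\in[a,b]}$. One small note: the independence of $h-h_{|\cdot|}(0)$ from the circle-average process is not in fact stated in Appendix~\ref{sec-gff-prelim}; the paper attributes it to \cite[Lemma~4.9]{wedges}, so you would want to cite that (or prove the Dirichlet-orthogonality of the radial and mean-zero-on-circles subspaces) rather than point to the appendix.
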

\begin{proof}
The random distributions $h_{|\cdot|}(0)$ and $h-h_{|\cdot|}(0)$ are the projections of $h$ onto the space of radially symmetric functions on $\BB C$ and the space of functions which have mean zero on every circle in $\BB C$, respectively.
By~\cite[Lemma 4.9]{wedges}, these spaces are orthogonal w.r.t.\ the Dirichlet inner product, so $h_{|\cdot|}(0)$ and $h-h_{|\cdot|}(0)$ are independent.
The process $t\mapsto h_{e^{-t}}(0)$ is a standard linear Brownian motion. By the independent increments property of Brownian motion, $\{h_r(0) - h_b(0)\}_{r\in [a,b]}$, $\{h_r(0)\}_{r\geq b}$, and $\{(h_r(0) - h_a(0)\}_{r\leq a}$ are independent. 
The distribution $ (h-h_a(0))|_{B_a(0)}$ (resp.\ $h|_{\BB C\setminus B_b(0)}$) is determined by $h-h_{|\cdot|}(0)$ and $\{(h_r(0) - h_a(0)\}_{r\leq a}$ (resp.\ $\{h_r(0)\}_{r\geq b}$). The lemma statement thus follows.
\end{proof}

We will apply Lemma~\ref{lem-gff-split} to the field $h(\ep_{j-1}\cdot + z) -h_{\ep_{j-1}}(z) \eqD h$ with $a = 6 \ep_j /\ep_{j-1} $ and $b = 1/3$. 
We now describe a decomposition of $E_{z,j} \cap H_{z,j-1}^{\op{out}} \cap H_{z,j-1}^{\op{in}}$ as the intersection of events depending on the four parts of the field involved in in Lemma~\ref{lem-gff-split}. That is, we will define events depending on $(h-h_{\ep_{j-1}}(z))|_{B_{6\ep_j}(z)}$, $(h-h_{\ep_{j-1}}(z))|_{\BB C\setminus B_{\ep_{j-1}/3}(z)}$, $h - h_{|\cdot-z|}(z)$, and $\{h_r(z) - h_{\ep_{j-1}}(z) \}_{r\in [6\ep_j , \ep_{j-1}/3]}$ whose intersection is contained in $h$. 
We will also lower-bound the probabilities of these events. The probability of the intersection of the events depending on  $(h-h_{\ep_{j-1}}(z))|_{B_{6\ep_j}(z)}$, $(h-h_{\ep_{j-1}}(z))|_{\BB C\setminus B_{\ep_{j-1}/3}(z)}$, and $h - h_{|\cdot-z|}(z)$ will be of constant order (Lemmas~\ref{lem-Earound-prob}, \ref{lem-Ein-prob}, \ref{lem-Eout-prob}, and~\ref{lem-Edagger-prob}). The probability of the event depending on $\{h_r(z) - h_{\ep_{j-1}}(z) \}_{r\in [6\ep_j , \ep_{j-1}/3]}$ will be $(\ep_j/\ep_{j-1})^{\alpha^2/2 + o_j(1)}$ (Lemma~\ref{lem-Ecirc-prob}). Using the independence statement of Lemma~\ref{lem-gff-split}, we can multiply the probabilities of the events to get $\BB P[E_{z,j} ] \geq (\ep_j/\ep_{j-1})^{\alpha^2/2 + o_j(1)}$.

\subsubsection{Condition~\ref{item-onescale-around}}

Let $E_{z,j}^{\ref{item-onescale-around}}$ be the event that condition~\ref{item-onescale-around} in the definition of $E_{z,j}$ occurs.
By the locality of the metric (Axiom~\ref{item-metric-f}),
\eqb \label{eqn-Earound-msrble}
E_{z,j}^{\ref{item-onescale-around}} \in \sigma\left( (h-h_{\ep_{j-1}}(z))|_{B_{6\ep_j}(z)} \right) . 
\eqe
We treat $E_{z,j}^{\ref{item-onescale-around}}$ separately since condition~\ref{item-onescale-around} is the only condition in the definition of $E_{z,j}$ which occurs with uniformly positive probability, but not probability close to 1.

\begin{lem} \label{lem-Earound-prob}
There is a constant $p_0 = p_0(\gamma) > 0$ such that for each $z\in\BB C$ and $j\in\BB N$, we have $\BB P[E_{z,j}^{\ref{item-onescale-around}}] \geq p_0$. 
\end{lem}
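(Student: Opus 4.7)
The plan is to reduce the event $E_{z,j}^{\ref{item-onescale-around}}$ to a fixed-scale event for a whole-plane GFF (independent of $z$ and $j$) by a scaling and translation argument, and then invoke Lemma~\ref{lem-around-pos}.

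Concretely, I would introduce the rescaled field
\begin{equation*}
\wt h(u) := h(\ep_j u + z) - h_{\ep_j}(z),
\end{equation*}
and use the scale and translation invariance of the whole-plane GFF modulo additive constant, together with the fact that $\wt h$ has zero average on $\bdy B_1(0)$, to note that $\wt h \eqD h$. The map $\phi(u) = \ep_j u + z$ is a complex affine map, so by the conformal coordinate change formula (Axiom~\ref{item-metric-coord}) combined with Weyl scaling (Axiom~\ref{item-metric-f}), one gets the identity
\begin{equation*}
D_h(\phi(u),\phi(v)) = \ep_j^{\xi Q} e^{\xi h_{\ep_j}(z)} D_{\wt h}(u,v),\quad\forall u,v\in\BB C,
\end{equation*}
with the analogous identity for the $D_h$-length of any curve. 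Hence every $D_h$-distance and every $D_h$-length appearing in condition~\ref{item-onescale-around} gets multiplied by the same positive factor $\ep_j^{\xi Q} e^{\xi h_{\ep_j}(z)}$ when we pass from $h$ (on annuli centered at $z$ of Euclidean radii $4\ep_j,5\ep_j,6\ep_j$) to $\wt h$ (on annuli centered at $0$ of Euclidean radii $4,5,6$).

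This shows that $E_{z,j}^{\ref{item-onescale-around}}$ occurs if and only if, for the field $\wt h$, there is a path in $\BB A_{5,6}(0)$ which disconnects the inner and outer boundaries of $\BB A_{5,6}(0)$ and whose $D_{\wt h}$-length is at most $\tfrac{1}{100} D_{\wt h}(\bdy B_4(0),\bdy B_5(0))$. Since $\wt h \eqD h$, the probability of this latter event is exactly the probability of the event of Lemma~\ref{lem-around-pos} applied with $a=4$, $b=5$, $c=6$, and $\delta=1/100$. Lemma~\ref{lem-around-pos} gives that this probability is strictly positive, and it depends only on $\gamma$ (since the radii and $\delta$ are universal constants). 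Setting $p_0=p_0(\gamma)$ to be this probability yields the bound $\BB P[E_{z,j}^{\ref{item-onescale-around}}]\geq p_0$ uniformly in $z$ and $j$.

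There is no real obstacle here; the only point to check is that the factor $\ep_j^{\xi Q} e^{\xi h_{\ep_j}(z)}$ cancels out of the inequality defining $E_{z,j}^{\ref{item-onescale-around}}$ because the same factor multiplies both sides. The use of Axiom~\ref{item-metric-coord} requires $\phi$ to be a deterministic complex affine map, which it is, so this is direct.
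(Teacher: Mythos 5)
Your proof is correct and follows the same route as the paper: both use scale and translation invariance of $h$ (modulo additive constant) together with Weyl scaling and the conformal coordinate change axiom to show $\BB P[E_{z,j}^{\ref{item-onescale-around}}]$ is independent of $z$ and $j$, then invoke Lemma~\ref{lem-around-pos} with fixed radii to get a positive lower bound. The paper states this in two sentences; you spell out the cancellation of $\ep_j^{\xi Q} e^{\xi h_{\ep_j}(z)}$, which is exactly the point being left implicit.
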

\begin{proof}
By the translation and scale invariance of the law of $h$ and the Weyl scaling and coordinate change properties of the metric (Axioms~\ref{item-metric-f} and~\ref{item-metric-coord}), the probability of condition~\ref{item-onescale-around} does not depend on $z$ or $j$. 
Hence the lemma statement follows from Lemma~\ref{lem-around-pos}. 
\end{proof}

\subsubsection{Event depending on the inner part of the field}

We now define an event depending on $(h-h_{\ep_{j-1}}(z))|_{B_{6\ep_j}(z)}$ and show that it occurs with high probability. 
Let $E_{z,j}^{\op{in}}$ be the event that the following is true. 
\begin{enumerate}[a.]
\item Conditions \ref{item-onescale-annulus}, \ref{item-onescale-dirichlet}, and~\ref{item-onescale-harmonic} in the definition of $E_{z,j} $ occur, with $K=A$. \label{item-in-event-E}
\item $\sup_{r \in [\ep_j , 6\ep_j]} |h_r(z) - h_{\ep_j}(z)| \leq A$ (which is related to condition~\ref{item-onescale-thick}).\label{item-in-event-thick}
\item $\sup_{u\in \BB A_{\ep_{j }/3,\ep_{j }/2}(z)} |\frk h_{z,j }^{\op{out}}(z)| \leq A$ (which is part of condition~\ref{item-onescale-out}). \label{item-in-event-out}
\item $\BB P\left[H_{z,j}^{\op{in}}   \,\big|\, (h-h_{\ep_j}(z))|_{\BB A_{\ep_j,6\ep_j}(z)} \right] \geq \frac78 $ (which is related to condition~\ref{item-onescale-cond}). \label{item-in-event-cond}
%\item $\sup_{u,v\in \BB A_{\ep_j ,6\ep_j}(z)} D_h\left( u , v ; \BB A_{\ep_j ,6\ep_j}(z)  \right)  \leq \ep_j^{\xi Q} e^{\xi h_{\ep_j}(z)}$ (which is related to condition~\ref{item-onescale-diam}). 
%\item The event $H_{z,j}^{\op{in}}$ occurs. \label{item-in-event-H}
\end{enumerate}
Due to the Weyl scaling and locality properties of the metric (Axioms~\ref{item-metric-local} and~\ref{item-metric-f}) along with Lemma~\ref{lem-H-msrble}, the event $E_{z,j}^{\op{in}}$ is determined by $(h-h_{\ep_j}(z))|_{\BB A_{\ep_j , 6\ep_j}(z)}$. 
Since $h_{\ep_j}(z) - h_{ \ep_{j-1} }(z)$ is determined by $(h-h_{ \ep_{j-1} }(z))|_{\BB A_{\ep_j , 6\ep_j}(z)}$, it follows that
\eqb \label{eqn-Ein-msrble}
E_{z,j}^{\op{in}} \in \sigma\left( (h-h_{\ep_{j-1}}(z))|_{\BB A_{\ep_j , 6\ep_j}(z)} \right) \subset \sigma\left( (h - h_{\ep_{j-1}}(z) ) |_{B_{6\ep_j}(z)} \right) . 
\eqe

To lower-bound the probability of $E_{z,j}^{\op{in}}$, we need the following lemma.

\begin{lem} \label{lem-H-prob}
For each $p\in (0,1)$, there exists $A = A(p,\gamma) > 0$ such that for each $z \in \BB C$ and $j\in\BB N$, 
\allb
\BB P\left[ H_{z,j}^{\op{out}} \right] \geq p  ,\quad 
\BB P\left[ H_{z,j}^{\op{in}} \right] \geq p , \quad
\BB P\left[ \BB P\left[H_{z,j-1}^{\op{out}}    \,\big|\, (h-h_{\ep_j})|_{\BB A_{\ep_j,\ep_{j-1}/3}(z)} \right]  \geq p \right] \geq  p  \notag \\
\qquad \text{and} \qquad \BB P\left[ \BB P\left[H_{z,j}^{\op{in}}    \,\big|\, (h-h_{\ep_j})|_{\BB A_{\ep_j,\ep_{j-1}/3}(z)} \right]  \geq p \right] \geq  p .
\alle
\end{lem}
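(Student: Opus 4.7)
The plan is to reduce everything to a single scale-invariant setting via Axioms~\ref{item-metric-f} and~\ref{item-metric-coord}, establish the unconditional bounds by showing that the relevant quantities are a.s.\ finite, and then deduce the conditional bounds from the unconditional ones via conditional independence together with the reverse Markov inequality.

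First I would observe that by the translation and scale invariance of the whole-plane GFF modulo additive constant, combined with Weyl scaling and the coordinate change formula applied to $\phi(u) = \ep_j u + z$, the rescaled quantities
$$\ep_j^{-\xi Q}\sup_{u,v\in \BB A_{\ep_j/3,\ep_j/2}(z)} D_{h_{z,j}^{\op{out}}}\!\left(u,v;\BB A_{\ep_j/3,\ep_j/2}(z)\right),\qquad \sup_{r\in [\ep_j/3,\ep_j/2]} |(h_{z,j}^{\op{out}})_r(z)|$$
have laws depending only on $\gamma$, and similarly for the ``in'' versions.  Hence $\BB P[H_{z,j}^{\op{out}}]$ and $\BB P[H_{z,j}^{\op{in}}]$ depend only on $A,\gamma$.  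To prove the unconditional bounds (the first two assertions of the lemma), I would work in the scale-1 case and decompose $h_{0,0}^{\op{out}} = \mathring h_{\BB A} + \frk h_{0,0}^{\op{in}}$ where $\mathring h_{\BB A}$ is a zero-boundary GFF on $\BB A_{1/3,1}(0)$ and $\frk h_{0,0}^{\op{in}}$ is a.s.\ smooth in the interior of the annulus.  Since $\ol{\BB A_{1/3,1/2}(0)}$ is compactly contained in $\BB A_{1/3,1}(0)$ away from $\bdy B_1(0)$, the harmonic part is continuous on the relevant region, so the internal diameter and the circle average sup are a.s.\ finite random variables whose distribution depends only on $\gamma$ (alternatively, Lemma~\ref{lem-meanzero-diam} supplies a direct moment bound).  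It follows that for every $q\in(0,1)$ one can choose $A=A(q,\gamma)$ large enough so that $\BB P[H_{z,j}^{\op{out}}]\geq q$, and the same reasoning gives $\BB P[H_{z,j}^{\op{in}}]\geq q$.

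For the two conditional statements, the key observation is that by Lemma~\ref{lem-in-out-ind}, $h_{z,j}^{\op{in}}$ is conditionally independent from $(h-h_{\ep_j}(z))|_{\BB C\setminus\BB A_{\ep_j/3,\ep_j}(z)}$ given $\mcl G := \sigma((h-h_{\ep_j}(z))|_{\bdy B_{\ep_j}(z)})$, and similarly for $h_{z,j-1}^{\op{out}}$ with $\mcl G' := \sigma((h-h_{\ep_{j-1}}(z))|_{\bdy B_{\ep_{j-1}/3}(z)})$.  Writing $\mcl F := \sigma((h-h_{\ep_j})|_{\BB A_{\ep_j,\ep_{j-1}/3}(z)})$, one checks that $\mcl F$ sits between the relevant boundary trace $\sigma$-algebra and the full outside $\sigma$-algebra in each case (the boundary trace is recovered as a distributional limit of the interior data).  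A standard tower-property argument then gives
$$\BB P[H_{z,j}^{\op{in}}\mid \mcl F] \;=\; \BB P[H_{z,j}^{\op{in}}\mid \mcl G]\qquad\text{and}\qquad \BB P[H_{z,j-1}^{\op{out}}\mid \mcl F] \;=\; \BB P[H_{z,j-1}^{\op{out}}\mid \mcl G']\quad \text{a.s.}$$

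Writing $W$ for either of these conditional probabilities, we have $W\in[0,1]$ and $\BB E[W]=\BB P[H_{z,j}^{\op{in}}]$ (resp.\ $\BB P[H_{z,j-1}^{\op{out}}]$).  By choosing $A$ large enough so that $\BB E[W]\geq 2p-p^2$, the reverse Markov inequality $\BB P[W\geq p]\geq (\BB E[W]-p)/(1-p)$ yields $\BB P[W\geq p]\geq p$, which is exactly the desired conditional bound.  The main technical obstacle will be the inclusion $\mcl G\subset\mcl F$ (and the analogue for $\mcl G'$), which requires identifying the boundary trace of the GFF on $\bdy B_{\ep_j}(z)$ as a measurable function of the restriction of $h-h_{\ep_j}(z)$ to the annulus whose closure contains this circle; this is a routine consequence of the GFF trace theory but must be spelled out carefully.
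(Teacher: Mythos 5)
Your proof is correct, and the core mechanism is the same as the paper's: make the unconditional probability of the $H$-events at least $1-(1-p)^2$ by taking $A$ large (justified via Lemma~\ref{lem-internal-diam} / a finiteness-of-internal-diameter argument and Markov), then pass to the conditional statements via the tower property plus Markov's inequality. Your "reverse Markov inequality" on $W := \BB P[H\,|\,\mcl F]$ is algebraically identical to the paper's Markov bound on $1-W = \BB P[H^c\,|\,\mcl F]$.

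However, the material on conditional independence (invoking Lemma~\ref{lem-in-out-ind} to replace $\BB P[H\,|\,\mcl F]$ with $\BB P[H\,|\,\mcl G]$ for a smaller boundary-trace $\sigma$-algebra $\mcl G$) is a detour that your argument never uses. The only fact your reverse-Markov step needs is $\BB E[W] = \BB P[H]$, and you correctly note this follows from the tower property applied to $W = \BB P[H\,|\,\mcl F]$ directly, for \emph{any} $\sigma$-algebra $\mcl F$. Consequently the inclusion $\mcl G \subset \mcl F$, which you single out as the "main technical obstacle," is irrelevant to the proof and would be wasted effort. The paper's argument is the same but leaner: one application of Markov for the unconditional bounds, then the tower property and another Markov for the conditional ones, with no appeal to any structural properties of the GFF beyond translation/scale invariance and Lemma~\ref{lem-internal-diam}.
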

\begin{proof}
We prove the statement of the lemma for $H_{z,j}^{\op{out}}$; the statement for $H_{z,j}^{\op{in}}$ is proven similarly. 
By the definition~\eqref{eqn-H-def} of $H_{z,j}^{\op{out}}$, the Weyl scaling and conformal covariance properties of $D_h$ (Axioms~\ref{item-metric-f} and~\ref{item-metric-coord}), and the translation and scale invariance of the law of $h$ modulo additive constant, $\BB P[H_{z,j}^{\op{out}}]$ does not depend on $z$ or $j$. 
By Lemma~\ref{lem-internal-diam} and Markov's inequality, together with the continuity of the circle average process, we can find some $A  = A(p,\gamma) > 0$ such that $\BB P[H_{0,1}^{\op{out}}] \geq 1 -(1-p)^2$, and hence $\BB P[H_{z,j}^{\op{out}}] \geq 1-(1-p)^2$ for every $z \in \BB C$ and $j\in\BB N$. 
This estimate with $j-1$ in place of $j$ shows that
\eqbn
\BB E\left[ \BB P\left[  (H_{z,j-1}^{\op{out}})^c \,\big| \,  (h-h_{\ep_j})|_{\BB A_{\ep_j,\ep_{j-1}/3}(z)} \right]   \right]  \leq (1-p)^2 .
\eqen
By Markov's inequality, this implies that
\eqbn
\BB P\left[ \BB P\left[  (H_{z,j-1}^{\op{out}})^c \,\big| \,  (h-h_{\ep_j})|_{\BB A_{\ep_j,\ep_{j-1}/3}(z)} \right]  \geq 1-p  \right]  \leq 1-p ,
\eqen
equivalently, $\BB P\left[ \BB P\left[H_{z,j-1}^{\op{out}}    \,\big|\, (h-h_{\ep_j})|_{\BB A_{\ep_j,\ep_{j-1}/3}(z)} \right]  \geq p \right] \geq  p$. 
\end{proof}
%Let $X =  \BB P\left[H_{z,j-1}^{\op{out}}    \,\big|\, (h-h_{\ep_j})|_{\BB A_{\ep_j,\ep_{j-1}/3}(z)} \right]$. Then $\BB E[X] \geq 1-(1-p)^2$, so $1-(1-p)^2 \leq p  \BB P[X \leq p] + \BB P[X \geq p] = p + (1-p) \BB P[X\geq p]$. Re-arranging gives $\BB P[X \geq p] \geq (1-p-(1-p)^2)/(1-p) = p$. 

\begin{lem} \label{lem-Ein-prob}
If $A $ is chosen to be sufficiently large, in a manner depending only on $\gamma$, then $\BB P[E_{z,j}^{\op{in}}] \geq 1 - p_0/100$ for each $z\in\BB C$ and $j\in\BB N$. 
\end{lem}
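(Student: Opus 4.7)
The plan is to reduce to a fixed scale and location by invariance, control each of the six conditions defining $E_{z,j}^{\op{in}}$ separately, and conclude by a union bound. Each of the conditions (\ref{item-onescale-annulus}, \ref{item-onescale-dirichlet}, \ref{item-onescale-harmonic}, b, c, d) is determined by $(h - h_{\ep_j}(z))|_{\BB A_{\ep_j, 6\ep_j}(z)}$: conditions \ref{item-onescale-annulus} and b by direct inspection together with Axiom~\ref{item-metric-local}, conditions \ref{item-onescale-dirichlet}, \ref{item-onescale-harmonic} and c via the boundary data on $\bdy B_{\ep_j}(z)$ (using that $\frk h_{z,j}$ and $\frk h_{z,j}^{\op{out}}$ are harmonic extensions), and condition d by Lemma~\ref{lem-H-msrble}. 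Since the rescaled field $u \mapsto (h - h_{\ep_j}(z))(\ep_j u + z)$ restricted to $\BB A_{1,6}(0)$ has a law independent of $(z,j)$, and Axioms~\ref{item-metric-f}--\ref{item-metric-coord} make each condition covariant under this rescaling, $\BB P[E_{z,j}^{\op{in}}]$ does not depend on $(z,j)$, and I may reduce to $(z,j) = (0,1)$.

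Next I would verify that each sub-condition's failure probability tends to zero as $A \to \infty$, so each can be made at most $p_0/600$ for a sufficiently large $A = A(\gamma)$. Condition \ref{item-onescale-annulus} follows from the negative moment bound of \cite[Proposition 3.1]{lqg-metric-estimates} applied to $\ep_j^{-\xi Q} e^{-\xi h_{\ep_j}(z)} D_h(\bdy B_{2\ep_j}(z), \bdy B_{3\ep_j}(z))$, combined with the Weyl scaling decomposition used in the proof of Lemma~\ref{lem-diam-tail}. Condition \ref{item-onescale-dirichlet} is a Gaussian tail bound on $(h, \phi_{z,j})_\nabla$, which is centered Gaussian with variance $(\phi, \phi)_\nabla$. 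Condition \ref{item-onescale-harmonic} and condition c follow because $\frk h_{z,j}$ and $\frk h_{z,j}^{\op{out}}$ are centered continuous Gaussian processes on $B_{\ep_j/2}(z)$ and $\BB A_{\ep_j/3,\ep_j/2}(z)$ respectively, with covariance functions (given by integration against the Poisson kernel of the relevant disk or annulus) that are bounded uniformly in $(z,j)$ after the $\ep_j$-rescaling, so the Borell--TIS inequality yields Gaussian tails on the suprema. Condition b is the Gaussian tail bound for the oscillation of the Brownian motion $t \mapsto h_{e^{-t}}(z)$ over an interval of length $\log 6$. Condition d mirrors the proof of Lemma~\ref{lem-H-prob}: the unconditional probability of $H_{z,j}^{\op{in}}$ tends to $1$ as $A \to \infty$ by Lemma~\ref{lem-internal-diam} and continuity of the circle average process, and Markov's inequality applied to the conditional probability given $(h - h_{\ep_j}(z))|_{\BB A_{\ep_j, 6\ep_j}(z)}$ upgrades this to the required high-probability bound on the conditional probability.

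The main technical step I expect is establishing the Gaussian supremum bounds for $\frk h_{z,j}$ and $\frk h_{z,j}^{\op{out}}$: although harmonic extensions of GFF boundary data are very well understood, one needs to spell out that the relevant covariance kernels, viewed on the rescaled annulus $\BB A_{1,6}(0)$, produce a uniformly bounded continuous Gaussian field. This boils down to the smoothness of the Poisson kernel away from the boundary, but is the only step that is not completely routine. Once each of the six sub-conditions has failure probability at most $p_0/600$ (a finite number of thresholds on $A$, each depending only on $\gamma$), a union bound yields $\BB P[E_{z,j}^{\op{in}}] \geq 1 - p_0/100$ for all sufficiently large $A$ depending only on $\gamma$, completing the proof.
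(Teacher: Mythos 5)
Your proposal is correct and follows essentially the same route as the paper: reduce to a fixed scale and location via the translation and scale invariance of $h$ (modulo additive constant) together with Axioms~\ref{item-metric-f} and~\ref{item-metric-coord}, then observe that conditions a, b, c hold with probability tending to $1$ as $A \to \infty$, and handle condition d via the Markov property (conditioning on $(h-h_{\ep_j}(z))|_{\BB A_{\ep_j,6\ep_j}(z)}$ is equivalent to conditioning on the full outer region once you have the boundary data on $\bdy B_{\ep_j}(z)$) together with Lemma~\ref{lem-H-prob}. The one difference is stylistic: for conditions a, b, c the paper uses a soft argument---after rescaling, each condition asks that an a.s.\ finite (resp.\ positive) random variable with a fixed law be at most $A$ (resp.\ at least $A^{-1}$), which automatically has probability $\to 1$ as $A \to \infty$ with no explicit tail estimates needed---whereas you supply explicit quantitative tail bounds (negative moments from~\cite{lqg-metric-estimates}, Gaussian tails for the Dirichlet inner product, Borell--TIS for the harmonic suprema). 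Both reach the same conclusion; the paper's version is cleaner because it exploits the fact that the statement only requires a qualitative "probability $\to 1$" and so no kernel smoothness estimates need to be spelled out.
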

\begin{proof}
By Axioms~\ref{item-metric-f} and~\ref{item-metric-coord} (Weyl scaling and coordinate change) and the scale and translation invariance of the law of $h$, modulo additive constant, the probabilities of each of the conditions in the definition of $ E_{z,j}^{\op{in}} $ does not depend on $z$ or $j$.   
Each part of conditions~\ref{item-in-event-E}, \ref{item-in-event-thick}, and~\ref{item-in-event-out} in the definition of $E_{z,j}^{\op{in}}$ simply requires that some a.s.\ finite (resp.\ positive) random variable is bounded above by $A$ (resp.\ below by $A^{-1}$). 
Consequently, if $A$ is chosen sufficiently large, in a manner depending only on $  \gamma$, then the probability of the intersection of these three conditions is at least $1-p_0/200$. 
Since $H_{z,j}^{\op{in}}$ is determined by $(h-h_{\ep_j}(z)) |_{\BB A_{\ep_j/3,\ep_j}(z)}$ (by~Lemma~\ref{lem-H-msrble}), the Markov property of the field implies that
\eqbn
\BB P\left[H_{z,j}^{\op{in}}   \,\big|\, (h-h_{\ep_j}(z))|_{\BB A_{\ep_j,6\ep_j}(z)} \right]
= \BB P\left[H_{z,j}^{\op{in}}   \,\big|\, (h-h_{\ep_j}(z))|_{\BB A_{\ep_j, \ep_{j-1}/3}(z)} \right] .
\eqen
By Lemma~\ref{lem-H-prob}, after possibly increasing $A$, we can arrange that the probability of condition~\ref{item-in-event-cond} in the definition of $E_{z,j}^{\op{in}}$ is also at least $1-p_0/200$.
\end{proof}

\subsubsection{Event depending on the outer part of the field}

Let $E_{z,j}^{\op{out}}$ be the event that the following is true.
\begin{enumerate}[a.]
\item Condition~\ref{item-onescale-rn} in the definition of $E_{z,j}$ occurs.\label{item-out-event-E}
\item $|h_{\ep_{j-1}/3}(z) - h_{\ep_{j-1}}(z)| \leq A$ (which is relevant to condition~\ref{item-onescale-thick}). \label{item-out-event-thick}
\item $\sup_{u\in \BB A_{\ep_{j-1}/2,\ep_{j-1} }(z)} |\frk h_{z,j-1}^{\op{in}}(z)| \leq A$ (which is part of condition~\ref{item-onescale-out}). \label{item-out-event-out}
\item $\BB P\left[H_{z,j-1}^{\op{out}}   \,\big|\, (h-h_{\ep_{j-1}})|_{\bdy B_{ \ep_{j-1}/3}(z)} \right] \geq \frac78 $ (which is part of condition~\ref{item-onescale-cond}). \label{item-out-event-cond}
\item The event $H_{z,j-1}^{\op{out}} \cap H_{z,j-1}^{\op{in}}$ occurs. \label{item-out-event-H}
\end{enumerate}
By Weyl scaling and locality (Axioms~\ref{item-metric-local} and~\ref{item-metric-f}) together with Lemma~\ref{lem-rn-msrble} (for condition~\ref{item-out-event-E}) and Lemma~\ref{lem-H-msrble} (for condition~\ref{item-out-event-H}),
\eqb \label{eqn-Eout-msrble}
E_{z,j}^{\op{out}} \in \sigma\left( (h-h_{\ep_{j-1}}(z))|_{\BB A_{\ep_{j-1}/3 , \ep_{j-1}}(z)} \right) .
\eqe

\begin{lem} \label{lem-Eout-prob}
If $A $ is chosen to be sufficiently large, in a manner depending only on $\gamma$, and then $L$ is chosen sufficiently large, in a manner depending only on $A$, then $\BB P[E_{z,j}^{\op{out}}] \geq 1 - p_0/100$ for each $z\in\BB C$ and $j\in\BB N$. 
\end{lem}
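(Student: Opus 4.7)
By the Weyl scaling and conformal coordinate change properties of $D_h$ (Axioms~\ref{item-metric-f} and~\ref{item-metric-coord}) together with the translation and scale invariance of the law of $h$ modulo additive constant, the probability of $E_{z,j}^{\op{out}}$ does not depend on $z$ or $j$. Hence it suffices to bound $\BB P[(E_{z,j}^{\op{out}})^c]$ by $p_0/100$ for a single convenient choice of $(z,j)$, which we do by a union bound over the five conditions (a)--(e) in the definition.

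Conditions (b), (c), (d), and (e) can be handled with $L$ playing no role, by choosing $A$ large enough in a manner depending only on $\gamma$. For (b), the random variable $h_{\ep_{j-1}/3}(z) - h_{\ep_{j-1}}(z)$ is centered Gaussian with variance $\log 3$, so its probability of exceeding $A$ in absolute value tends to $0$ as $A\rta\infty$. For (c), the harmonic function $\frk h_{z,j-1}^{\op{in}}$ vanishes on $\bdy B_{\ep_{j-1}}(z)$ and has boundary data coming from $(h-h_{\ep_{j-1}}(z))|_{\bdy B_{\ep_{j-1}/3}(z)}$; the maximum principle bounds $\sup_{u\in\BB A_{\ep_{j-1}/2,\ep_{j-1}}(z)} |\frk h_{z,j-1}^{\op{in}}(u)|$ by the supremum of $\frk h_{z,j-1}^{\op{in}}$ on $\bdy B_{\ep_{j-1}/2}(z)$, which by Poisson integral estimates (or by harmonic function estimates analogous to Lemma~\ref{lem-harmonic-sup}) is controlled by the Gaussian random variables $\{h_r(z)-h_{\ep_{j-1}}(z) : r\in[\ep_{j-1}/3,\ep_{j-1}]\}$ together with the non-radial part of $h$ on the inner boundary circle; by continuity of the circle average process and basic tail bounds for the GFF restricted to a circle, this sup is an a.s.\ finite random variable whose law is independent of $z,j$, so it is at most $A$ with probability $\geq 1-p_0/500$ for $A$ large enough. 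Conditions (d) and (e) are immediate from Lemma~\ref{lem-H-prob}: choosing $p\in(0,1)$ sufficiently close to $1$ in that lemma makes each of $\BB P[(H_{z,j-1}^{\op{out}}\cap H_{z,j-1}^{\op{in}})^c]$ and $\BB P[\BB P[H_{z,j-1}^{\op{out}}\,|\,(h-h_{\ep_{j-1}})|_{\bdy B_{\ep_{j-1}/3}(z)}]<7/8]$ smaller than $p_0/500$, again at the cost of enlarging $A$.

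Condition (a) is the one that requires the parameter $L$. Once $A$ has been fixed as above, we apply Lemma~\ref{lem-rn-sup-plane} to the translated and rescaled field $h(\ep_{j-1}\cdot+z)-h_{\ep_{j-1}}(z)\eqD h$ on $\BB D$, with $r=1/2$, $s_0=1/3$, the parameter $A$ as chosen, and $p=1-p_0/500$. The set $\frk F_{z,j-1}$ in the definition of $E_{z,j}$ corresponds exactly to the family $\frk F(A,1/2)$ of harmonic functions on $\BB D$ with $\frk f(0)=0$ and boundary-disk sup $\leq A$ used in Lemma~\ref{lem-rn-sup-plane}, and the Radon-Nikodym derivatives match under the scaling. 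Hence we may choose $L=L(A,1/2,1-p_0/500,1/3)$ so that condition (a) holds with probability at least $1-p_0/500$. Summing the five failure probabilities gives $\BB P[(E_{z,j}^{\op{out}})^c]\leq p_0/100$, as required.

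The main obstacle to beware of is condition (c), since $\frk h_{z,j-1}^{\op{in}}$ is not literally a circle average of $h$ but a harmonic extension from an inner boundary; one must verify that the sup of $|\frk h_{z,j-1}^{\op{in}}|$ on a slightly smaller annulus has a tail comparable to that of a finite-variance Gaussian. This is routine but requires combining the Poisson kernel representation on $\BB A_{\ep_{j-1}/3,\ep_{j-1}}(z)$ with Gaussian concentration (e.g.\ Borell--TIS) for the distribution of $(h-h_{\ep_{j-1}}(z))|_{\bdy B_{\ep_{j-1}/3}(z)}$. All other conditions reduce to invocations of lemmas already proven in Sections~\ref{sec-short-range} and~\ref{sec-rn-sup}.
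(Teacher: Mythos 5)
Your proof is correct and follows essentially the same approach as the paper's: scale/translation invariance to reduce to a single $(z,j)$, then a union bound over the five conditions, with conditions (b)--(e) fixing $A$ (via Gaussian tail bounds, harmonic function estimates, and Lemma~\ref{lem-H-prob}) and condition (a) then fixing $L$ via Lemma~\ref{lem-rn-sup-plane} with $r=1/2$, $s_0 = 1/3$. The paper dismisses conditions (b) and (c) as ``trivial'' where you provide the Gaussian variance and the Poisson-kernel/Borell--TIS detail; that extra elaboration is accurate but not a departure from the paper's argument.
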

\begin{proof}
As in the proof of Lemma~\ref{lem-Ein-prob}, $\BB P[E_{z,j}^{\op{out}}]$ does not depend on $z$ or $j$. 
By Lemma~\ref{lem-rn-sup-plane} (applied to the field $h(\ep_{j-1} \cdot  +z) - h_{\ep_{j-1}}(z) \eqD h$ and with $r = 1/2$ and $s =1/3$), if we are given $A$ and we choose $L$ to be sufficiently large, in a manner depending only on $A$, then the probability of condition~\ref{item-out-event-E} in the definition of $E_{z,j}^{\op{out}}$ (i.e., condition~\ref{item-onescale-rn} in the definition of $E_{z,j}$) is at least $1-p_0/500$. 
Trivially, if $A$ is at least some constant depending only $p_0$ (and hence only on $\gamma$) then the probabilities of each of conditions~\ref{item-out-event-thick} and~\ref{item-out-event-out} in the definition of $E_{z,j}^{\op{out}}$ is at least $1-p_0/500$. 
By the Markov property of the GFF (applied as in the proof of Lemma~\ref{lem-Ein-prob}) and Lemma~\ref{lem-H-prob}, after possibly increasing $A$ we can arrange that the probabilities of conditions~\ref{item-out-event-cond} and~\ref{item-out-event-H} are also each at least $1-p_0/500$. 
\end{proof}

\subsubsection{Event depending on the mean-zero part of the field}

We now define an event depending on the mean-zero part $h-h_{|\cdot-z|}(z)$ of the field.
Let $M_j := \lfloor \log(\ep_{j-1}/(3\ep_j)) \rfloor + 1$. 
We can find a deterministic collection of $M_j$ annuli $\{\mcl A_{z,j}^m\}_{m\in [1,M_j]_{\BB Z}}$, each of which is centered at $z$ and has aspect ratio $e$, such that  
\eqb \label{eqn-Edagger-annulus}
 \BB A_{\ep_j , \ep_{j-1}/3}(z) = \bigcup_{m=1}^{M_j}  \mcl A_{z,j}^m  .
\eqe 
Note that the annuli $\mcl A_{z,j}^m$ have a small amount of overlap. 
%For $m \in [1,M_j]_{\BB Z}$, let $r_{z,j}^m$ be the radius of the outer boundary of $\mcl A_{z,j}^m$. 
We enumerate the annuli $\mcl A_{z,j}^m$ from outside to inside, so that 
\eqb \label{eqn-Edagger-radius}
\left(\text{radius of outer boundary of $\mcl A_{z,j}^m$} \right) \asymp e^{-m} \ep_{j-1} ,
\eqe
with universal implicit constants.

Fix $\zeta \in (0,\xi(Q-\alpha))$ and define the event 
\eqb \label{eqn-Edagger-def}
E_{z,j}^\dagger := \left\{ \sup_{u,v\in \mcl A_{z,j}^m } D_{h-h_{|\cdot-z|}(z)}\left( u ,v ; \mcl A_{z,j}^m \right) \leq A   e^{-(\xi Q - \zeta) m}  \ep_{j-1}^{\xi Q} ,\: \forall m \in [1,M_j]_{\BB Z} \right\} .
\eqe
Then
\eqb \label{eqn-Edagger-msrble}
E_{z,j}^\dagger \in \sigma\left( h-h_{|\cdot-z|}(z) \right) .
\eqe

\begin{lem} \label{lem-Edagger-prob}
If $A$ is chosen to be sufficiently large, in a manner depending only on $\gamma$ and $\zeta$, then $\BB P\left[ E_{z,j}^\dagger \right] \geq 1 - p_0/100$ for each $z\in\BB C$ and $j\in \BB N$.  
\end{lem}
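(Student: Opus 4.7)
The plan is to apply Lemma~\ref{lem-meanzero-diam} to each annulus $\mcl A_{z,j}^m$ individually (each of aspect ratio $e$ centered at $z$), and then union bound over $m$. The key gain is that the threshold in the definition of $E_{z,j}^\dagger$ contains the extra factor $e^{\zeta m}$ beyond the natural scale $e^{-\xi Q m}\ep_{j-1}^{\xi Q}$ predicted by Weyl scaling; this will furnish geometric decay in $m$ after applying a moment estimate.

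Concretely, let $r_m$ denote the radius of the outer boundary of $\mcl A_{z,j}^m$, so by~\eqref{eqn-Edagger-radius} we have $r_m \asymp e^{-m}\ep_{j-1}$ with universal constants. Since each $\mcl A_{z,j}^m$ has aspect ratio $e$ (a fixed constant), Lemma~\ref{lem-meanzero-diam} applied to the annulus $\mcl A_{z,j}^m$ yields the moment bound
\eqb \label{eqn-plan-moment}
\BB E\left[ \left( r_m^{-\xi Q} \sup_{u,v\in \mcl A_{z,j}^m} D_{h-h_{|\cdot-z|}(z)}\left(u,v ; \mcl A_{z,j}^m\right) \right)^\eta \right] \leq C
\eqe
for some $\eta = \eta(\gamma) > 0$ and some constant $C = C(\gamma)$, both independent of $z,j,m$. (This uses only the aspect ratio, not the radii.)

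Since $r_m \asymp e^{-m}\ep_{j-1}$, the threshold $A e^{-(\xi Q-\zeta)m}\ep_{j-1}^{\xi Q}$ equals a universal constant times $A e^{\zeta m} r_m^{\xi Q}$. Applying Markov's inequality to~\eqref{eqn-plan-moment} therefore gives
\eqb
\BB P\!\left[ \sup_{u,v\in \mcl A_{z,j}^m} D_{h-h_{|\cdot-z|}(z)}\!\left(u,v;\mcl A_{z,j}^m\right) > A e^{-(\xi Q-\zeta)m}\ep_{j-1}^{\xi Q} \right] \leq \frac{C'}{A^\eta\, e^{\zeta\eta m}}
\eqe
for a constant $C' = C'(\gamma,\zeta)$ not depending on $z,j,m$.

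Finally, a union bound over $m \in [1,M_j]_{\BB Z}$ gives
\eqb
\BB P\!\left[ (E_{z,j}^\dagger)^c \right] \leq \sum_{m=1}^{M_j} \frac{C'}{A^\eta e^{\zeta\eta m}} \leq \frac{C'}{A^\eta}\cdot \frac{1}{1-e^{-\zeta\eta}},
\eqe
which is a bound uniform in $z,j$ (since the tail of the geometric series does not depend on $M_j$). Choosing $A$ sufficiently large, in a manner depending only on $\gamma$ and $\zeta$, makes this at most $p_0/100$. The main (and only) subtlety is the scale-uniformity of the constant in~\eqref{eqn-plan-moment}, which is precisely what Lemma~\ref{lem-meanzero-diam} provides via the exact scale and translation invariance of $h - h_{|\cdot - z|}(z)$; without this uniformity one could not sum over the unboundedly many scales $m \leq M_j$ as $j\to\infty$.
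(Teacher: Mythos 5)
Your proof is correct and follows essentially the same argument as the paper: apply the scale-uniform moment bound from Lemma~\ref{lem-meanzero-diam}, use Markov's inequality (the paper calls it Chebyshev) to get decay $A^{-\eta}e^{-\zeta\eta m}$ in each annulus, and sum the geometric series over $m$. One tiny inaccuracy: the constant $C'$ in your Markov bound depends only on $\gamma$ (via the universal aspect-ratio constant and $\eta$), not on $\zeta$; the $\zeta$-dependence enters only through the geometric sum, but this does not affect the conclusion.
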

\begin{proof}
By Lemma~\ref{lem-meanzero-diam}, the Chebyshev inequality, and~\eqref{eqn-Edagger-radius}, there are constants $c_0,c_1 > 0$ depending only on $\gamma$ such that for each $z \in \BB C$ and $j,m\in\BB N$,
\eqb
\BB P\left[ \sup_{u,v\in \mcl A_{z,j}^m } D_{h-h_{|\cdot-z|}(z)}\left( u ,v ; \mcl A_{z,j}^m \right)  > A  e^{-(\xi Q  - \zeta) m}  \ep_{j-1}^{\xi Q}  \right]  \leq c_0 A^{-c_1} e^{-c_1 \zeta m} .
\eqe 
Summing this estimate over all $m\in [1,M_j]_{\BB Z}$ gives
\allb
\BB P\left[ (E_{z,j}^\dagger)^c \right] 
\leq c_0 A^{-c_1}   \sum_{m=1}^{M_j} e^{-c_1\zeta m} ,
\alle
which is smaller than $p_0/100$ for a large enough choice of $A$, depending only on $\gamma$ and $\zeta$. 
\end{proof}

\subsubsection{Event depending on the circle average process}

Finally, let
\eqb \label{eqn-Ecirc-def}
E_{z,j}^\circ := \left\{ | h_r(z) - h_{\ep_{j-1}/3}(z)  - \alpha  \log( \ep_{j-1} / r)| \leq A \log(\ep_{j-1}/\ep_j)^{3/4}  ,\: \forall r \in [ 6\ep_j , \ep_{j-1}/3 ]   \right\} .
\eqe
Then
\eqb \label{eqn-Ecirc-msrble}
E_{z,j}^\circ \in \sigma\left( \left\{  h_r(z) - h_{\ep_{j-1}/3}(z) \right\}_{r\in [ 6\ep_j , \ep_{j-1}/3 ] }   \right) .
\eqe

\begin{lem} \label{lem-Ecirc-prob}
If $A$ is chosen to be sufficiently large, in a manner depending only on $\alpha,\gamma$, then $\BB P[E_{z,j}^\circ] \geq (\ep_j/\ep_{j-1})^{\alpha^2/2 + o_j(1)}$, with the rate of the $o_j(1)$ uniform over all $z\in\BB C$. 
\end{lem}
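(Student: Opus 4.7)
The key observation is that the process $t \mapsto h_{e^{-t}}(z) - h_{\ep_{j-1}/3}(z)$, started at $t_0 := \log(3/\ep_{j-1})$, is a standard linear Brownian motion in $t$. My plan is to change variables $r = \ep_{j-1}/(3e^u)$ so that the event $E_{z,j}^\circ$ becomes an event about a Brownian motion $B_u := h_{\ep_{j-1}/(3 e^u)}(z) - h_{\ep_{j-1}/3}(z)$ staying close to the drifted line $u \mapsto \alpha u$ on the time interval $u \in [0,T]$, where $T := \log(\ep_{j-1}/(18\ep_j))$. Since $\log(\ep_{j-1}/r) = u + \log 3$, the condition in the definition of $E_{z,j}^\circ$ becomes
\eqb \label{eqn-plan-Ecirc}
|B_u - \alpha u| \leq A \log(\ep_{j-1}/\ep_j)^{3/4} - |\alpha|\log 3, \quad \forall u\in [0,T],
\eqe
and for $A$ sufficiently large (depending only on $\alpha$) and $j$ large the right hand side is at least, say, $A T^{3/4}/2$.

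I would then estimate the probability of the event~\eqref{eqn-plan-Ecirc} by Girsanov / a direct Cameron–Martin computation. Under the tilted measure $\wt{\BB P}$ with Radon–Nikodym derivative $d\wt{\BB P}/d\BB P = \exp(\alpha B_T - \alpha^2 T/2)$, the process $W_u := B_u - \alpha u$ is a standard Brownian motion. Hence
\allb \label{eqn-plan-girsanov}
\BB P\!\left[ |B_u - \alpha u| \leq A T^{3/4}/2 ,\, \forall u\in [0,T] \right]
&= \BB E_{\wt{\BB P}}\!\left[ e^{-\alpha B_T + \alpha^2 T/2}\, \BB 1_{\{|W_u| \leq A T^{3/4}/2 ,\, \forall u\in [0,T]\}} \right] \notag\\
&= e^{-\alpha^2 T/2}\, \BB E_{\wt{\BB P}}\!\left[ e^{-\alpha W_T}\, \BB 1_{\{|W_u| \leq A T^{3/4}/2 ,\, \forall u\in [0,T]\}} \right].
\alle
On the event in the indicator, $|W_T| \leq A T^{3/4}/2$, so $e^{-\alpha W_T} \geq e^{-|\alpha| A T^{3/4}/2}$. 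Finally, by Brownian scaling
\eqbn
\wt{\BB P}\!\left[ \sup_{u\in [0,T]} |W_u| \leq A T^{3/4}/2 \right]  = \wt{\BB P}\!\left[ \sup_{u\in [0,1]} |W_u| \leq A T^{1/4}/2 \right] \xrta 1 \text{ as } T\rta\infty,
\eqen
so this probability is at least $1/2$ for all sufficiently large $T$ (hence all sufficiently large $j$, since $T = \log(\ep_{j-1}/\ep_j)+O(1)\rta\infty$).

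Combining these estimates gives a lower bound of $\tfrac12 e^{-\alpha^2 T/2 - |\alpha| A T^{3/4}/2}$. Since $T = \log(\ep_{j-1}/\ep_j)(1 + o_j(1))$, this is $(\ep_j/\ep_{j-1})^{\alpha^2/2 + o_j(1)}$, where the $o_j(1)$ term absorbs both the $T^{3/4}/T = o_j(1)$ contribution from the Cameron–Martin correction and the $O(1)/T$ contribution from the boundary-of-scale adjustments; importantly, uniformity in $z$ is automatic since the law of the circle-average increments does not depend on $z$. The only step requiring care is the bookkeeping of the small $O(1)$ and $O(T^{3/4})$ errors to confirm that they are absorbed into the stated $o_j(1)$ exponent, but this is routine given that $\log(\ep_{j-1}/\ep_j)\rta\infty$ as $j\rta\infty$ (by the choice of $\ep_j$ in~\eqref{eqn-ep-choice}).
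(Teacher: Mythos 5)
Your proposal is correct and reaches the same conclusion by a slightly different (but essentially equivalent) computational route. The paper first estimates the probability that the terminal increment $h_{6\ep_j}(z) - h_{\ep_{j-1}/3}(z)$ lands within $O(1)$ of $\alpha\log(\ep_{j-1}/\ep_j)$ (via the explicit Gaussian density, giving the factor $(\ep_j/\ep_{j-1})^{\alpha^2/2+o_j(1)}$), then conditions on the terminal value so that the remaining process is a Brownian bridge, whose probability of staying in a tube of width $\asymp T^{3/4} \gg \sqrt T$ is bounded below by a constant. You instead apply a Girsanov/Cameron--Martin tilt by $\exp(\alpha B_T - \alpha^2 T/2)$, which centers the drift so that the tube event has probability close to $1$ under the tilted measure, and then read off the exponent from the tilt factor together with the $e^{-\alpha W_T}$ correction. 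Both are standard ways to encode the large-deviation cost $\alpha^2/2$ of a linear drift; the paper's conditioning argument is marginally more elementary (it needs only the Gaussian density and a bridge tail bound, no stochastic calculus), while your change-of-measure argument is slicker in that the bridge never appears and the error terms are all collected in a single exponential. Your bookkeeping of the $O(1)$ shift from $\log 3$, the $T^{3/4}$ Cameron--Martin correction, and the scaling of the tube probability are all correct, and your observation about translation-invariance giving uniformity in $z$ is exactly right. One small point you could make explicit: for the finitely many small $j$ where $T \leq 0$ the defining interval $[6\ep_j, \ep_{j-1}/3]$ is empty and $E_{z,j}^\circ$ is vacuously the whole probability space, and for the finitely many $j$ with $T$ positive but not yet large enough for the tube probability to exceed $1/2$ one has $\BB P[E_{z,j}^\circ] > 0$ trivially; in both cases the constant is absorbed into the $o_j(1)$ exponent because $\ep_j/\ep_{j-1} \to 0$.
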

\begin{proof}
The process $t\mapsto h_{(\ep_{j-1}/3) e^{-t}}(z) - h_{\ep_{j-1}/3}(z)$ is a standard linear Brownian motion~\cite[Section 3.1]{shef-kpz}, so this follows from a straightforward Brownian motion estimate. More precisely, the explicit formula for the Gaussian density shows that with probability at least $(\ep_j / \ep_{j-1})^{\alpha^2/2 + o_j(1)}$, we have $|h_{6\ep_j}(z) - h_{\ep_{j-1}/3}(z)  - \alpha  \log(\ep_j /\ep_{j-1}) | \leq 1$. If we condition on a particular realization of $h_{6\ep_j}(z) - h_{\ep_{j-1}/3}(z)$ for which this is the case, then $t\mapsto h_{(\ep_{j-1}/3) e^{-t}}(z) - h_{\ep_{j-1}/3}(z)$ evolves as a Brownian bridge and it is easily seen that the conditional probability of $E_{z,j}^\circ$ is bounded below by a deterministic $\alpha,\gamma$-dependent constant for a large enough choice of $A$. 
\end{proof}

\subsubsection{Conclusion of the proof of Proposition~\ref{prop-E-prob}}

Obviously, $\BB P[E_{z,1}]$ is increasing in $K$, and $E_{z,j}$ for $j\geq 2$ does not depend on $K$. 
Hence we only need to prove the proposition in the case when $K=A$.

By combining Lemmas~\ref{lem-Earound-prob}, \ref{lem-Ein-prob}, \ref{lem-Eout-prob}, and \ref{lem-Edagger-prob}, we find that for every $z \in \BB C$ and $j\in\BB N$, 
\eqb \label{eqn-E-pos}
\BB P\left[ E_{z,j}^{\ref{item-onescale-around}} \cap E_{z,j}^{\op{in}} \cap E_{z,j}^{\op{out}} \cap E_{z,j}^\dagger \right] 
\geq p_0/2 .
\eqe
By the measurability conditions~\eqref{eqn-Ein-msrble}, \eqref{eqn-Eout-msrble}, and \eqref{eqn-Edagger-msrble}, and~\eqref{eqn-Ecirc-msrble} together with Lemma~\ref{lem-gff-split} (applied to the field $h(\ep_{j-1}\cdot + z) - h_{\ep_{j-1}}(z)\eqD h$) we find that the event $ E_{z,j}^{\ref{item-onescale-around}} \cap E_{z,j}^{\op{in}} \cap E_{z,j}^{\op{out}} \cap E_{z,j}^\dagger$ is independent of the event $E_{z,j}^{\circ}$. 
By multiplying the estimates of~\eqref{eqn-E-pos} and Lemma~\ref{lem-Ecirc-prob}, we therefore get that for an appropriate choice of $A$ and $L$ (depending only on $\alpha,\gamma$),
\eqb
\BB P\left[ \wt E_{z,j} \right] \geq (\ep_j/\ep_{j-1})^{\alpha^2/2 + o_j(1)} ,\quad \text{where} \quad 
\wt E_{z,j} = \wt E_{z,j}(A,L) := E_{z,j}^{\ref{item-onescale-around}} \cap E_{z,j}^{\op{in}} \cap E_{z,j}^{\op{out}} \cap E_{z,j}^\dagger \cap E_{z,j}^\circ .
\eqe

We will now conclude the proof by showing that $\wt E_{z,j}(A,L)  \subset E_{z,j}(A',L) \cap H_{z,j-1}^{\op{out}}(A) \cap H_{z,j }^{\op{in}}(A)$ for a constant $A' = A'(A,\gamma) \geq A$. 
Henceforth assume that $\wt E_{z,j}(A,L)$ occurs.
By the definition~\eqref{eqn-Ecirc-def} of $E_{z,j}^\circ$ combined with condition~\ref{item-in-event-thick} in the definition of $E_{z,j}^{\op{in}}$ and condition~\ref{item-out-event-thick} in the definition of $E_{z,j}^{\op{out}}$, we find that condition~\ref{item-onescale-thick} in the definition of $E_{z,j}$ occurs with $3A$ in place of $A$.
By the definition of $E_{z,j}^{\ref{item-onescale-around}}$, condition~\ref{item-onescale-around} in the definition of $E_{z,j}(A,L)$ occurs.
By condition~\ref{item-in-event-E} in the definition of $E_{z,j}^{\op{in}}$, conditions \ref{item-onescale-annulus}, \ref{item-onescale-dirichlet}, and~\ref{item-onescale-harmonic} in the definition of $E_{z,j} $ occur.
By condition~\ref{item-out-event-E} in the definition of $E_{z,j}^{\op{out}}$, condition~\ref{item-onescale-rn} in the definition of $E_{z,j}$ occurs. 
By conditions~\ref{item-in-event-out} and~\ref{item-in-event-cond} in the definition of $E_{z,j}^{\op{in}}$ and conditions~\ref{item-out-event-out} and~\ref{item-out-event-cond} in the definition of $E_{z,j}^{\op{out}}$, conditions~\ref{item-onescale-out} and~\ref{item-onescale-cond} in the definition of $E_{z,j}$ occur.
By condition~\ref{item-out-event-H} in the definition of $E_{z,j}^{\op{out}}$, also $H_{z,j-1}^{\op{out}}  \cap H_{z,j-1 }^{\op{in}}$ occurs.

It remains to deal with condition~\ref{item-onescale-sup} (behavior of $h_r(z)$ for $z\in [\ep_j,\ep_{j-1}/3]$) and~\ref{item-onescale-diam} ($D_h$-diameter of $\BB A_{\ep_j,\ep_{j-1}/3}(z)$). This will require us to further increase $A$. 
The definition~\eqref{eqn-Ecirc-def} of $E_{z,j}^\circ$ bounds $h_r(z) - h_{\ep_{j-1}/3}(z)$ for $r\in [6\ep_j,\ep_{j-1}/3]$ whereas condition~\ref{item-in-event-thick} in the definition of $E_{z,j}^{\op{in}}$ and condition~\ref{item-out-event-thick} in the definition of $E_{z,j}^{\op{out}}$ bound $h_r(z) - h_{6\ep_j}(z)$ for $r\in [\ep_j,6\ep_j]$ and $h_{\ep_{j-1}/3}(z) - h_{\ep_{j-1}}(z)$, respectively. Combining these conditions shows that condition~\ref{item-onescale-sup} in the definition of $E_{z,j}$ occurs for an appropriate choice of $\wt A > A$. 

To deal with condition~\ref{item-onescale-diam}, we recall the annuli $\mcl A_{z,j}^m \subset \BB A_{\ep_j,\ep_{j-1}/3}(z)$ as in~\eqref{eqn-Edagger-annulus}. 
Let $a_{z,j}^m$ and $b_{z,j}^m$ be the radii of the inner and outer boundaries of $\mcl A_{z,j}^m$, so that $\mcl A_{z,j}^m = \BB A_{a_{z,j}^m , b_{z,j}^m}(z)$. By~\eqref{eqn-Edagger-radius}, $a_{z,j}^m = b_{z,j}^m / e \asymp e^{-m} \ep_{j-1}$. 
For each $m\in [1,M_j]_{\BB Z}$, 
\allb \label{eqn-annulus-diam-split}
 \sup_{u,v\in \mcl A_{z,j}^m } D_h\left( u ,v ; \mcl A_{z,j}^m \right) 
 \leq \left( \sup_{r \in [a_{z,j}^m , b_{z,j}^m] } e^{\xi h_r(z)}   \right) \left( \sup_{u,v\in \mcl A_{z,j}^m } D_{h-h_{|\cdot-z|}(z)}\left( u ,v ; \mcl A_{z,j}^m \right) \right)  .
\alle
By condition~\ref{item-onescale-sup} in the definition of $E_{z,j}$ (with $\wt A$ in place of $A$), the first factor on the right side of~\eqref{eqn-annulus-diam-split} is bounded above by $e^{\xi h_{\ep_{j-1}}(z)} e^{(\xi \alpha + o_j(1)) m}$, where the rate of convergence of the $o_j(1)$ depends only on $\wt A$, $\alpha$, and $\gamma$.  
By the definition~\eqref{eqn-Edagger-def} of $E_{z,j}^\dagger$, the second factor on the right side of~\eqref{eqn-annulus-diam-split} is bounded above by $A e^{-(\xi Q -\zeta) m} \ep_{j-1}^{\xi Q}$. 
Plugging these bounds into~\eqref{eqn-annulus-diam-split} then summing over all $m\in [1,M_j]_{\BB Z}$ gives
\allb
\sup_{u,v\in \BB A_{ \ep_j,\ep_{j-1}/3}(z)} D_h\left(u,v ; \BB A_{ \ep_j , \ep_{j-1}/3}(z)\right) 
&\leq \sum_{m=1}^{M_j} \sup_{u,v\in \mcl A_{z,j}^m } D_h\left( u ,v ; \mcl A_{z,j}^m \right)  \notag \\
&\leq A \ep_{j-1}^{\xi Q} e^{\xi h_{\ep_{j-1}}(z)} \sum_{m=1}^{M_j} e^{-( \xi Q - \xi \alpha  -\zeta - o_j(1) ) m} . 
\alle
Since $\alpha \leq 2 <  Q$, we can choose $\zeta < \xi(Q-\alpha)$, so this last quantity is bounded above by a constant $A' = A'(\wt A  , \alpha ,\gamma) > \wt A $ times $\ep_{j-1}^{\xi Q} e^{\xi h_{\ep_{j-1}}(z)} $. 
Therefore, condition~\ref{item-onescale-diam} in the definition of $E_{z,j}$ holds with $A'$ in place of $A$.

Thus, we have established~\eqref{eqn-E-prob}. Since $h_{\ep_0}(z)$ for $z\in\BB D$ is centered Gaussian with variance bounded above by a universal constant and by Lemma~\ref{lem-H-prob}, we trivially obtain $\BB P[E_{z,0}]\succeq 1$, which concludes the proof. \qed

\appendix

\section{Gaussian free field review}
\label{sec-gff-prelim}

Here we briefly review the definition and basic properties of the zero-boundary and whole-plane Gaussian free field (GFF). More detail can be found in~\cite{shef-gff}, the introductory sections of~\cite{ss-contour,shef-kpz,ig1,ig4,dubedat-coupling}, and/or the notes~\cite{berestycki-lqg-notes}.   

\subsubsection*{GFF definition}

For an open domain $U\subset \BB C$ with harmonically non-trivial boundary (i.e., Brownian motion started from a point in $U$ a.s.\ hits $\bdy U$), we define $\mcl H(U)$ be the Hilbert space completion of the set of smooth, compactly supported functions on $U$ with respect to the \emph{Dirichlet inner product},
\eqb \label{eqn-dirichlet}
(\phi,\psi)_\nabla = \frac{1}{2\pi} \int_U \nabla \phi(z) \cdot \nabla \psi(z) \,dz .
\eqe
In the case when $U= \BB C$, constant functions $c$ satisfy $(c,c)_\nabla = 0$, so to get a positive definite norm in this case we instead take $\mcl H(\BB C)$ to be the Hilbert space completion of the set of smooth, compactly supported functions $\phi$ on $\BB C$ with $\int_{\BB C} \phi(z) \,dz = 0$, with respect to the same inner product~\eqref{eqn-dirichlet}.  
 
The \emph{(zero-boundary) Gaussian free field} on $U$ is defined by the formal sum
\eqb \label{eqn-gff-sum}
h = \sum_{j=1}^\infty X_j \phi_j 
\eqe
where the $X_j$'s are i.i.d.\ standard Gaussian random variables and the $\phi_j$'s are an orthonormal basis for $\mcl H(U)$. The sum~\eqref{eqn-gff-sum} does not converge pointwise, but it is easy to see that for each fixed $\phi \in \mcl H(U)$, the formal inner product $(h ,\phi)_\nabla$ is a mean-zero Gaussian random variable and these random variables have covariances $\BB E [(h,\phi)_\nabla (h,\psi)_\nabla] = (\phi,\psi)_\nabla$. In the case when $U \not=\BB C$ and $U$ has harmonically non-trivial boundary, one can use integration by parts to define the ordinary $L^2$ inner products $(h,\phi) := -2\pi (h,\Delta^{-1}\phi)_\nabla$, where $\Delta^{-1}$ is the inverse Laplacian with zero boundary conditions, whenever $\Delta^{-1} \phi \in \mcl H(U)$. 

The case when $U=\BB C$ is our primary interest in this paper. In this case, one can similarly define $(h ,\phi) := -2\pi (h ,\Delta^{-1}\phi)_\nabla$ where $\Delta^{-1}$ is the inverse Laplacian normalized so that $\int_{\BB C} \Delta^{-1} \phi(z) \, dz = 0$. With this definition, one has $(h+c , \phi) = (h ,\phi) + (c,\phi) = (h,\phi)$ for each $\phi \in \mcl H(\BB C)$, so the whole-plane GFF is only defined modulo a global additive constant. 
We almost always fix the additive constant by requiring that the circle average $h_1(0)$ of $h$ over the unit circle $\bdy\BB D$ (whose definition we recall just below) is zero, i.e., we consider the field $h - h_1(0)$ which is well-defined not just modulo additive constant. 
See~\cite[Section 2.2]{ig4} for more on the whole-plane GFF.

\subsubsection*{Circle averages}

If $U$ and $h$ are as above, then for $z\in U$ and $r > 0$ such that $\bdy B_r(z) \subset U$ we can define the circle average $h_r(z)$ over $\bdy B_r(z)$, following~\cite[Section 3.1]{shef-kpz}. One way to define $h_r(z)$ is as follows. Let $f_{z,r}$ be the function on $U$ such that $- \Delta f_{z,r}$ (with the Laplacian defined in the distributional sense) is $2\pi$ times the uniform measure on $\bdy B_r(z)$ and $f_{z,r}|_{\bdy U} = 0$ (if $U\not=\BB C$) or $\int_{\BB C} f_{z,r}(u)\,du = 0$ (if $U=\BB C$). 
We then set $h_r(z) := (h,f_{z,r})_\nabla$. 

It is shown in~\cite[Section 3.1]{shef-kpz} that the circle average process a.s.\ admits a modification which is continuous in $z$ and $r$. We always assume that $h_r(z)$ has been replaced by such a modification. Furthermore,~\cite[Section 3.1]{shef-kpz} provides an explicit description of the law of the circle average process (it is a centered Gaussian process with an explicit covariance structure). For our purposes, we only need the following basic facts about this process. Let $h$ be a whole-plane GFF normalized so that $h_1(0) = 0$. 
\begin{itemize}
\item The law of $h$ is scale and translation invariant modulo additive constant, which means that for $z\in\BB C$ and $r>0$ one has $h(r\cdot +z) - h_r(z) \eqD h  $. 
\item For each fixed $z\in\BB C$, the process $\BB R\ni t \mapsto h_{e^{-t}}(z) - h_1(z)$ is a standard two-sided linear Brownian motion. 
\item If $U\subset \BB C$ is bounded, then for $z\in U$ and $r\in (0,1]$, $h_r(z)$ is centered Gaussian with variance $\log r^{-1} + O (1)$, with the $O (1)$ only depending on $U$. 
\end{itemize}

\subsubsection*{Markov property and restrictions}

If $h$ is a (whole-plane or zero-boundary, as appropriate) GFF on $U$ and $V\subset U$ is open, then we can make sense of $h|_V$ as a random distribution on $V$: it is simply the restriction of the mapping $f\mapsto (h,f)$ to functions $f$ which are supported on $V$. If $K\subset U$ is closed, we define the $\sigma$-algebra $\sigma(h|_K) := \bigcap_{\ep > 0} \sigma(h|_{B_\ep(K)})$. Hence we can talk about conditioning on $h|_K$ or say that a random variable is determined by $h|_K$. 

The zero-boundary GFF on a domain $U$ with harmonically non-trivial boundary possesses the following Markov property (see, e.g.,~\cite[Section 2.6]{shef-gff}).  Let $V\subset U$ be a sub-domain with harmonically non-trivial boundary.
Then we can write $h = \frk h + \rng h$, where $\frk h$ is a random distribution on $U$ which is harmonic on $V$ and is determined by $h|_{U\setminus V}$; and $\rng h$ is a zero-boundary GFF on $V$ which is independent from $h|_{U\setminus V}$.
We call $\rng h$ and $\frk h|_V$ the \emph{zero-boundary part} and \emph{harmonic part} of $h|_V$, respectively.  

In the whole-plane case, the Markov property is slightly more complicated due to the need to fix the additive constant.  
We use the following version, which is proven in~\cite[Lemma 2.2]{gms-harmonic}.

\begin{lem}[Markov property of the whole-plane GFF] \label{lem-whole-plane-markov}
Let $h$ be a whole-plane GFF with the additive constant chosen so that $h_1(0) = 0$. 
For each open set $V\subset\BB C$ with harmonically non-trivial boundary, we have the decomposition $h = \rng h + \frk h$
where $\frk h$ is a random distribution which is harmonic on $V$ and is determined by $h|_{\BB C\setminus V}$ and $\rng h$ is independent from $\frk h$ and has the law of a zero-boundary GFF on $V$ minus its average over $\bdy \BB D \cap V$. If $V$ is disjoint from $\bdy \BB D$, then $\rng h$ is a zero-boundary GFF and is independent from $h|_{\BB C\setminus V}$. 
\end{lem}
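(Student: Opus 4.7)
The plan is to reduce to the standard Markov property for the whole-plane GFF \emph{modulo additive constant} and then carefully track how our specific normalization $h_1(0)=0$ interacts with the decomposition. Let $\wt h$ denote the whole-plane GFF viewed modulo additive constant, so that $h = \wt h - \wt h_1(0)$ for any choice of representative. The standard Markov property for $\wt h$ (see e.g.\ \cite{shef-gff,ig4}) says that we can write $\wt h = \wt{\frk h} + \rng h$, where $\rng h$ is a genuine zero-boundary GFF on $V$, where $\wt{\frk h}$ is a distribution on $\BB C$ which is harmonic on $V$ and determined (modulo additive constant) by $\wt h|_{\BB C\setminus V}$, and where $\rng h$ is independent from $\wt{\frk h}$. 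This is standard and follows from the orthogonal decomposition $\mcl H(\BB C) = \mcl H_0(V) \oplus \mcl H_{\op{harm}}(V)$ into functions supported in $V$ with Dirichlet-zero trace and functions harmonic on $V$, together with the defining series representation~\eqref{eqn-gff-sum}.

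The first step is to absorb the additive constant correction into the decomposition. Write
\alb
h = \wt h - \wt h_1(0) = \wt{\frk h} + \rng h - \wt{\frk h}_1(0) - \rng h_1(0) .
\ale
Let $c := \rng h_1(0)$, which is the circle average of $\rng h$ over $\bdy\BB D$; since $\rng h$ has zero trace on $\bdy V$ (and is supported on $\ol V$), this is really the integral of $\rng h$ against the uniform probability measure on $\bdy\BB D \cap V$ in the sense of Gaussian Hilbert space theory. Set
\alb
\rng h' := \rng h - c \qquad \text{and} \qquad \frk h := \wt{\frk h} - \wt{\frk h}_1(0) - c,
\ale
so that $h = \frk h + \rng h'$. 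By construction, $\rng h'$ is a zero-boundary GFF on $V$ minus its circle average over $\bdy \BB D \cap V$, which gives the asserted law of $\rng h$ in the lemma.

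The second step is to verify the measurability and independence claims. The function $\frk h$ is harmonic on $V$ since $\wt{\frk h}$ is harmonic there and we have only subtracted (random) constants. To see that $\frk h$ is determined by $h|_{\BB C\setminus V}$: $\rng h$ is supported on $\ol V$, so $h|_{\BB C\setminus V}$ agrees with $(\wt{\frk h} - \wt h_1(0))|_{\BB C\setminus V} = \frk h|_{\BB C\setminus V}$; since $\frk h$ is harmonic on $V$ with prescribed behavior on $\BB C\setminus V$, it is determined by its restriction to $\BB C\setminus V$. For independence, note that $c$ is determined by $\rng h$, and $\wt{\frk h}$ (viewed modulo additive constant) and $\wt{\frk h}_1(0)$ together determine $\wt{\frk h}$ as a genuine distribution; since $\rng h$ is independent of $\wt{\frk h}$ (modulo constant) by the Markov property of $\wt h$, and the additive constant $\wt{\frk h}_1(0)$ can be chosen as a deterministic function of $\wt{\frk h}$ modulo constant (e.g., the choice of representative), a standard conditional independence argument shows $\rng h'$ is independent from $\frk h$.

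The third step handles the special case $V \cap \bdy\BB D = \emptyset$. Here $\bdy\BB D \subset \BB C\setminus V$, so $\rng h$ integrates to zero against the uniform measure on $\bdy\BB D$ (as it is supported on $\ol V$, which is disjoint from $\bdy\BB D$), giving $c = 0$ and hence $\rng h' = \rng h$ is a genuine zero-boundary GFF. Moreover $h|_{\BB C\setminus V}$ determines $h_1(0) = 0$, which together with $\wt h|_{\BB C\setminus V} = (\wt{\frk h})|_{\BB C\setminus V}$ pins down $\wt{\frk h}_1(0)$ as a function of $h|_{\BB C\setminus V}$, so the independence of $\rng h$ and $\wt{\frk h}$ upgrades to independence of $\rng h$ and $h|_{\BB C\setminus V}$.

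The step I expect to be the main obstacle is the measurability/independence bookkeeping in step two: one must carefully say in what sense $c = \rng h_1(0)$ makes sense (since $\rng h$ is a random distribution on $V$, not on $\BB C$, and $\bdy\BB D$ may only partially lie in $V$) and then track the additive constants between $\wt h$, $\wt{\frk h}$, and $h$ without circular dependencies. Once this is carried out rigorously — using the Gaussian Hilbert space pairing of $\rng h$ against the harmonic measure on $\bdy\BB D \cap V$ as in~\cite[Section 3.1]{shef-kpz} — the remaining statements follow by routine manipulation.
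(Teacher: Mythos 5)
The paper does not prove this lemma itself; it cites~\cite[Lemma 2.2]{gms-harmonic}. So the relevant question is whether your argument is correct, not whether it matches the paper's route.

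There is a genuine arithmetic error in your construction of the decomposition, and it propagates into the measurability and independence claims. You set $\rng h' := \rng h - c$ and $\frk h := \wt{\frk h} - \wt{\frk h}_1(0) - c$, where $c = \rng h_1(0)$. But then
\[
\frk h + \rng h' = \wt{\frk h} + \rng h - \wt{\frk h}_1(0) - 2c = \wt h - \wt h_1(0) - c = h - c,
\]
since $\wt h_1(0) = \wt{\frk h}_1(0) + c$. So your two pieces do \emph{not} sum to $h$ unless $c = 0$; you have subtracted $c$ twice. The correct definition is $\frk h := \wt{\frk h} - \wt{\frk h}_1(0)$ (with no extra $-c$). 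Note that your $\frk h$ \emph{does} equal $\wt{\frk h} - \wt h_1(0)$, which is precisely the distribution that agrees with $h$ on $\BB C\setminus V$ (since $\rng h$ vanishes there); this is why your step-two harmonic-extension argument goes through for your (wrong) $\frk h$. But that $\frk h$ is the harmonic part from the decomposition $h = \frk h + \rng h$ with $\rng h$ a \emph{genuine} zero-boundary GFF — and in that decomposition $\frk h$ and $\rng h$ are \emph{not} independent, because $\frk h$ depends on $c = \rng h_1(0)$. The lemma instead asserts the independent decomposition, which is the one with $\frk h := \wt{\frk h} - \wt{\frk h}_1(0)$ and $\rng h' := \rng h - c$.

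Once the definition of $\frk h$ is corrected, your step-two reasoning needs to change in two ways. First, the measurability of $\frk h$ no longer follows from ``$\frk h|_{\BB C\setminus V} = h|_{\BB C\setminus V}$ plus harmonic extension'' (that identity becomes $\frk h|_{\BB C\setminus V} = h|_{\BB C\setminus V} + c$). Instead one argues that $\wt{\frk h}$ modulo additive constant is determined by $h|_{\BB C\setminus V}$ (which equals $\wt h|_{\BB C\setminus V}$ modulo constant), and that $\frk h = \wt{\frk h} - \wt{\frk h}_1(0)$ is a well-defined measurable function of the equivalence class $\wt{\frk h}$-mod-constant. Second, independence becomes completely trivial: $\rng h'$ is a function of $\rng h$ alone and $\frk h$ is a function of $\wt{\frk h}$-mod-constant alone, and these are independent by the modulo-constant Markov property; there is no need for the hand-waved ``standard conditional independence argument.'' One also has $\frk h_1(0) = 0$ and $(\rng h')_1(0) = 0$ automatically, which is what makes the normalization $h_1(0)=0$ consistent. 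I would also tighten your parenthetical about ``the uniform probability measure on $\bdy\BB D\cap V$'': what is actually being subtracted is the circle average $\rng h_1(0)$ (with the $\frac{1}{2\pi}$-normalization over the full circle, of $\rng h$ extended by zero), not an average normalized by the length of $\bdy\BB D\cap V$, and only the former makes $\frk h_1(0)$ and $(\rng h')_1(0)$ both vanish.
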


\subsubsection*{Absolute continuity}

Adding a function in $\mcl H(U)$ to the GFF affects its law in an absolutely continuity way, with an explicit Radon-Nikodym derivative.
In this paper we frequently use the whole-plane version of this fact, which follows from~\cite[Proposition 2.9]{ig4} and reads as follows.

\begin{lem} \label{lem-gff-abs-cont}
Let $h$ be a whole-plane GFF normalized so that $h_1(0) = 0$ and let $f : \BB C\rta\BB C$ be a deterministic continuous function such that $(f,f)_\nabla < \infty$ and the average of $f$ over $\bdy\BB D$ is zero. Then the laws of $h$ and $h+f$ are mutually absolutely continuous. The Radon-Nikodym derivative of the law of $h+f$ w.r.t.\ the law of $h$ and the Radon-Nikodym derivative of the law of $h$ w.r.t.\ the law of $h+f$ are given, respectively, by
\eqb
\exp\left( (h,f)_\nabla - \frac12 (f,f)_\nabla \right) \quad \text{and} \quad \exp\left( -(h+f,f)_\nabla + \frac12(f,f)_\nabla \right).
\eqe
\end{lem}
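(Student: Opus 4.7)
The plan is to prove this as a standard Cameron--Martin shift statement for the whole-plane GFF, taking care of the additive-constant normalization. First I would observe that the hypotheses on $f$ are precisely what is needed to view $f$ as an element of the Cameron--Martin space $\mcl H(\BB C)$: the condition $(f,f)_\nabla < \infty$ gives finiteness of the Dirichlet norm, while the condition that the average of $f$ over $\bdy\BB D$ is zero ensures that $f$ lies in the ``correctly normalized'' representative class (i.e.\ it is orthogonal to the constants in the same sense in which the normalization $h_1(0)=0$ picks out a particular representative of $h$ modulo additive constant). In particular, $(h+f)_1(0) = h_1(0) + \int_{\bdy\BB D} f = 0$, so $h+f$ is normalized in the same way as $h$.

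Next I would expand in an orthonormal basis $\{\phi_j\}$ of $\mcl H(\BB C)$: writing $h = \sum_j X_j \phi_j$ with $X_j$ i.i.d.\ standard Gaussians and $f = \sum_j a_j \phi_j$ with $\sum_j a_j^2 = (f,f)_\nabla < \infty$, the field $h+f$ corresponds to the Gaussian sequence $(X_j + a_j)$. This reduces the problem to the classical Cameron--Martin / Girsanov computation in an infinite product of standard Gaussians, which says that the law of $(X_j+a_j)$ is absolutely continuous with respect to the law of $(X_j)$ with Radon--Nikodym derivative $\exp\bigl(\sum_j a_j X_j - \frac12 \sum_j a_j^2\bigr)$. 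Recognizing $\sum_j a_j X_j = (h,f)_\nabla$ (as a limit of partial sums convergent in $L^2$ and a.s., since $f \in \mcl H(\BB C)$) and $\sum_j a_j^2 = (f,f)_\nabla$ then yields the first stated Radon--Nikodym derivative $\exp\bigl((h,f)_\nabla - \tfrac12 (f,f)_\nabla\bigr)$.

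For the reverse Radon--Nikodym derivative, I would simply apply the result just obtained with $-f$ in place of $f$: since the map $h \mapsto h+f$ is a bijection, the derivative of the law of $h$ with respect to the law of $h+f$ is the reciprocal of the forward derivative evaluated at $h+f$, which is $\exp\bigl(-(h+f,f)_\nabla + \tfrac12 (f,f)_\nabla\bigr)$, matching the stated formula.

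The main technical point, and the one worth a careful sentence in the write-up, is the justification that $(h,f)_\nabla$ is well defined as a Gaussian random variable of variance $(f,f)_\nabla$ and that the orthonormal-expansion identification is rigorous; this is where the hypothesis that $f$ has mean zero on $\bdy\BB D$ is used to place $f$ in $\mcl H(\BB C)$ rather than in a larger quotient. Since the statement we need is already extracted from~\cite[Proposition 2.9]{ig4}, which handles exactly this normalization subtlety for the whole-plane GFF, one can cite that result and limit the proof to verifying that the hypotheses on $f$ here imply the hypotheses of that proposition.
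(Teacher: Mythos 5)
Your proposal is correct and is consistent with the paper's approach: the paper gives no proof of its own here and simply cites \cite[Proposition 2.9]{ig4}, and your orthonormal-expansion Cameron--Martin sketch is precisely the standard argument underlying that reference. You also correctly flag the role of the zero-average-over-$\bdy\BB D$ hypothesis (it forces $h+f$ to have the same additive-constant normalization as $h$, so the two laws are not supported on disjoint affine slices of the sample space) as the one genuinely delicate point in the whole-plane case.
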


\bibliography{cibib}
\bibliographystyle{hmralphaabbrv}

\end{document}